\newcommand{\bZ}{{\mathbb Z}}
\newcommand{\bC}{{\mathbb C}}
\newcommand{\bQ}{{\mathbb Q}}
\newcommand{\bT}{{\mathbb T}}
\newcommand{\bG}{{\mathbb G}}
\newcommand{\II}{{I_{\infty}^2}}
\newtheorem{thm}{Theorem}[section]
\newtheorem{lemma}[thm]{Lemma}
\newtheorem{cor}[thm]{Corollary}
\newtheorem{prop}[thm]{Proposition}
\numberwithin{equation}{section}
\begin{document}

\title[Chow rings of flag vaieties ]{Chow rings of 
versal 
complete flag varieties}
 
\author{Nobuaki Yagita}

\address{ faculty of Education, 
Ibaraki University,
Mito, Ibaraki, Japan}
 
\email{nobuaki.yagita.math@vc.ibaraki.ac.jp, }

\keywords{ Chow ring, Rost motive, versal torsor, complete flag variety}
\subjclass[2010]{ 57T15, 20G15, 14C15}

\maketitle

\begin{abstract}
In this paper, we compute Chow rings of generically
twisted (versal) complete flag varieties  corresponding to
some  simple Lie groups.
\end{abstract}

\section{Introduction}

Let $G$ and  $T$ be a  connected compact Lie group
and its maximal torus.  Let 
$G_k$ and $T_k$ be a split  reductive group and split maximal torus over a field $k$ with $ch(k)=0$, 
corresponding to  Lie groups  $G$ and  $T$.  Let $B_k$ be the Borel
subgroup containing $T_k$.

Moreover we take $k$
% (some extension of a first given field)
such that there is a $G_k$-torsor
$\bG_k$ which is isomorphic to a versal $G_k$-torsor ( for the definition of a versal $G_k$-torsor, see $\S 4$ below or
see [Ga-Me-Se], [To2], [Me-Ne-Za], [Ka1]).  Then  
$X=\bG_k/B_k$ is thought as the $most$ $twisted$  
complete flag variety.  (We say that such $X$ is
 a generically  twisted
or a versal  flag variety  [Me-Ne-Za], [Ka1].) 
In this paper, we study the Chow ring $CH^*(X)$ for the
versal flag variety $X=\bG_k/B_k$.

Fix a prime number $p$.
By Petrov-Semenov-Zainoulline ([Pe-Se-Za], [Se-Zh], [Se]), it is known that the $p$-localized motive
$M(X)_{(p)}$ of $X$ is decomposed as
\[ M(X)_{(p)}=M(\bG_k/B_k)_{(p)}\cong \oplus_i R(\bG_k)\otimes\bT^{\otimes s_i}\]
where $\bT$ is the Tate motive and $R(\bG_k)$ is  some
motive called generalized Rost motive. (It is the original 
Rost motive([Ro1,2], [Vo2,3]) when $G$ is of type $(I)$ as explained below).
Hence we have maps
\[ CH^*(BB_k)\to CH^*(X)\stackrel{split\ surj.}{\twoheadrightarrow} CH^*(R(\bG_k))\]
where $BB_k$ is the classifying space for $B_k$-bundles.

{\bf Remark.}  In this paper, a map $A\to B$ (resp. $A\cong B$) for rings $A,B$
means a ring map (resp. a ring isomorphism).
  However $CH^*(R(\bG_k))$ does not have a
 canonical ring structure.
 Hence a map $A\to CH^*(R(\bG_k))/p$ (resp. $A\cong CH^*(R(\bG_k))/p$)
means only a (graded) additive map 
(resp. additive isomorphism) even if $CH^*(R(\bG_k))/p$ has some ring structure.  For example, the
above first map is a ring map but the second is not a ring map.
 
From Merkurjev and Karpenko [Me-Ne-Za], [Ka1], we know
that the first map is also surjective when $\bG_k$ is a versal
$G_k$-torsor. We study 
what elements in $CH^*(BB_k)\cong CH^*(BT_k)$ 
generate $CH^*(R(\bG_k))$.

For example, Petrov (Theorem 1 in  [Pe]) computed the integral $CH^*(Y)$
for the versal maximal orthogonal Grassmannian $Y$
corresponding to $G=SO(2\ell+1)$, $\ell>0$.
It is torsion free and is isomorphic to $CH^*(R(\bG_k))$
 (see Theorem 7.13 below).
Hence the restriction map $CH^*(X)\to CH^*(G_k/B_k)$ is injective.  Thus  we know the ring structure of 
 $CH^*(X)$ from  that of $CH^*(G_k/B_k)$ ([El-Ka-Me], [Vi], [Tod-Wa]).
These Petrov's results can be very simply written,
when we consider the $mod(2)$ Chow theories.
\begin{thm}  Let $(G,p)=(SO(2\ell+1),2)$ and $X=\bG_k/B_k$ be a versal flag variety.  Then there are isomorphisms
\[ CH^*(R(\bG_k))/2\cong  \bZ/2[c_1,...,c_{\ell}]/(c_1^2,...,c_{\ell}^2)=\Lambda(c_1,...,c_{\ell}),\]
\[ CH^*(X)/2\cong S(t)/(2,c_1^2,...,c_{\ell}^2)\]
where $c_i=\sigma_i(t_1,...,t_{\ell})$ is the $i$-th elementary symmetric
function in 
\[S(t)=CH^*(BB_k)\cong H^*(BT)\cong \bZ[t_1,...,t_{\ell}].\]
%(i.e., $i$-th Chern class for the representation $T\subset
%U(\ell)$).
\end{thm}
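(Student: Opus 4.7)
The plan is to sandwich $CH^*(X)$ between $CH^*(BT_k) = S(t)$ and $CH^*(G_k/B_k)$. By Karpenko--Merkurjev (recalled above) the first natural map is surjective, and by Petrov's torsion-freeness result (Theorem 7.13, from [Pe]) the restriction $CH^*(X) \hookrightarrow CH^*(G_k/B_k)$ is injective. The composition $S(t) \to CH^*(G_k/B_k)$ is the classical Borel surjection, whose kernel $I_W$ is generated by the positive-degree $W(B_\ell)$-invariants; under the signed-permutation action on $S(t)=\bZ[t_1,\dots,t_\ell]$ these are $e_1(t^2),\dots,e_\ell(t^2)$, so $I_W = (e_1(t^2),\dots,e_\ell(t^2))$.

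Surjectivity of $S(t)\twoheadrightarrow CH^*(X)$ combined with injectivity of $CH^*(X)\hookrightarrow S(t)/I_W$ forces $\ker(S(t)\to CH^*(X)) = I_W$, and hence $CH^*(X) \cong S(t)/I_W$. The symmetric-function identity $\sigma_i(t_1,\dots,t_\ell)^2 \equiv \sigma_i(t_1^2,\dots,t_\ell^2) \pmod 2$, i.e.\ $c_i^2 \equiv e_i(t^2) \pmod 2$, then yields the desired $CH^*(X)/2 \cong S(t)/(2, c_1^2,\dots,c_\ell^2)$.

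For the Rost-motive statement I would run the same sandwich with the versal maximal orthogonal Grassmannian $Y = \bG_k/P_k$ (where $W_P = S_\ell$) in place of $X$, and then transport via the additive identification $CH^*(R(\bG_k)) \cong CH^*(Y)$ from Petrov. The sandwich for $Y$ gives $CH^*(Y) \cong CH^*(G_k/B_k)^{S_\ell} = (S(t)/I_W)^{S_\ell}$. To compute the $S_\ell$-invariants in $\bF_2[t]/(c_1^2,\dots,c_\ell^2)$, I would use that $\bF_2[t]$ is free of rank $\ell!$ over $\bF_2[c_1,\dots,c_\ell]$; hence $\bF_2[t]/(c_i^2)$ is free of rank $\ell!$ over $\Lambda(c_1,\dots,c_\ell) = \bF_2[c]/(c_i^2)$. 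The subring generated by $c_1,\dots,c_\ell$ lies in the $S_\ell$-invariants, and a dimension count ($2^\ell \cdot \ell!/\ell! = 2^\ell$) identifies it with all of them.

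The one non-formal input, and the main obstacle, is Petrov's torsion-freeness (Theorem 7.13), which is what promotes the natural map $CH^*(X)\to CH^*(G_k/B_k)$ to an injection and allows the sandwich to close. Everything else is standard invariant theory plus the mod-$2$ identity $c_i^2 \equiv e_i(t^2)$.
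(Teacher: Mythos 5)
Your proposal for the second isomorphism is workable, but the half dealing with $CH^*(R(\bG_k))/2$ contains a genuine error. The sandwich for $Y=\bG_k/P_k$ gives $CH^*(Y)\cong$ the \emph{image} of $CH^*(BP_k)\cong \bZ[c_1,\dots,c_\ell]$ inside $CH^*(\bar Y)$, not the full ring of $S_\ell$-invariants of $CH^*(G_k/B_k)$; and, independently, your invariant computation is false in characteristic $2$: in $\bF_2[t_1,\dots,t_\ell]/(c_1^2,\dots,c_\ell^2)$ the $S_\ell$-invariants are strictly larger than the subring generated by $c_1,\dots,c_\ell$. Already for $\ell=2$ the class $c_1t_1$ is invariant (its difference with $c_1t_2$ is $c_1^2=0$) and does not lie in $\Lambda(c_1,c_2)$; the invariants there have dimension $6$, not $2^\ell=4$. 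The count ``total dimension divided by $|S_\ell|$'' is exactly the kind of statement that fails for modular group actions, so the step ``a dimension count identifies the subring generated by the $c_i$ with all invariants'' would not survive. The conclusion you want is still reachable, but by the image rather than by invariants: since $S(t)$ is a free module over $\bZ[c_1,\dots,c_\ell]$, one has $\bZ[c]\cap (p_1,\dots,p_\ell)S(t)=(p_1,\dots,p_\ell)\bZ[c]$ for $p_i=e_i(t^2)$, so the sandwich gives $CH^*(Y)\cong \bZ[c]/(p_1,\dots,p_\ell)$ and hence $CH^*(Y)/2\cong\Lambda(c_1,\dots,c_\ell)$, which you then transport to $CH^*(R(\bG_k))/2$.

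Two further caveats, and a comparison. First, the composite $S(t)\to CH^*(G_k/B_k)$ is \emph{not} the ``classical Borel surjection'': integrally it is not surjective (its failure to hit $y_{top}$ is exactly why $t(SO(2\ell+1))=2^\ell$), and the identification of its kernel with $(e_1(t^2),\dots,e_\ell(t^2))$ is not formal — it needs either the Toda--Watanabe presentation (Theorem 7.1) or Borel's rational theorem together with torsion-freeness of $H^*(G/T)$ and of $\bZ[t]/(e_1(t^2),\dots,e_\ell(t^2))$; the claim is true, but as written it is only asserted. Second, deducing injectivity of $CH^*(X)\to CH^*(G_k/B_k)$ from torsion-freeness of $CH^*(Y)\cong CH^*(R(\bG_k))$ uses the Petrov--Semenov--Zainoulline decomposition $M(X)_{(2)}\cong\oplus_i R(\bG_k)\otimes\bT^{\otimes s_i}$, and you must cite Petrov's paper [Pe] directly: the paper's Theorem 7.13 is itself deduced from the theorem you are proving, so quoting it here would be circular. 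Finally, note that your route is precisely the one the paper sets aside in its Remark after Corollary 7.7: the paper's own proof avoids Petrov's computation, using instead Karpenko's surjectivity, the torsion-index computation $t(SO(2\ell+1))=2^\ell$, the vanishing $c_i^2=0$ in $CH^*(X)/2$ coming from $CH^*(BG_k)\to CH^*(X)$, and the motivic decomposition, so that torsion-freeness of $CH^*(X)$ is obtained as Corollary 7.7 rather than assumed as input — which is what lets the method extend to groups whose versal flag varieties do have torsion.
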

{\bf Remark.}  We have an isomorphism $CH^*(X)/2\cong H^*(Sp(\ell)/T;\bZ/2)$
for the symplectic group $Sp(\ell)$ (see Corollary 7.9).

We give a new proof of the above theorem,  which can be 
worked for other groups such that Chow rings $CH^*(X)$ have $p$-torsion elements. The additive structures in the following theorem are known ([Me-Su], [Ya4], [Ka-Me]).
However, the ring structure of $CH^*(X)/p$ was unknown
except for $(G,p)=(G_2,2)$ ([Ya3]). 
\begin{thm} 
Let $G$ be  of type $(I)$ and $rank(G)=\ell$.
Then $2p-2 \le \ell$, and we can take $b_i\in CH^*(BB_k)$ for $1\le i\le \ell$
such that there are 
 isomorphisms
\[ CH^*(R(\bG_k))/p\cong \bZ/p\{b_1,...,b_{2p-2}\},\]
\[ CH^*(X)/p\cong S(t)/(p, b_ib_j,b_k|0\le i,j\le 2p-2<k\le \ell)\]
where  $\bZ/p\{a,b,...\}$ is the $\bZ/p$-free module generated by $a,b,...$
Moreover the ideal of 
torsion elements in $CH^*(X)_{(p)}$ is generated by $b_1,b_3$,...,$b_{2p-3}$.
\end{thm}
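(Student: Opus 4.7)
The plan is to combine three ingredients already available in the literature and summarised in the introduction: (i) the additive structure of $CH^*(R(\bG_k))/p$ for type $(I)$ groups, due to Merkurjev--Suslin, Yagita and Karpenko--Merkurjev ([Me-Su], [Ya4], [Ka-Me]), which gives the $\bZ/p$-basis $b_1,\ldots,b_{2p-2}$; (ii) the Karpenko--Merkurjev surjectivity $S(t) \twoheadrightarrow CH^*(X)$ for versal torsors; and (iii) the Petrov--Semenov--Zainoulline motivic decomposition $M(X)_{(p)}\cong\bigoplus_i R(\bG_k)\otimes \bT^{\otimes s_i}$.

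\textbf{Step 1 (choice of lifts).} First I would fix explicit representatives $b_1,\ldots,b_{2p-2}\in S(t)=\bZ[t_1,\ldots,t_\ell]$ projecting to the additive generators of $CH^*(R(\bG_k))/p$, with $\deg b_{2i-1}$ of the form $p^i-1$ and $\deg b_{2i}$ shifted by the appropriate motivic weight coming from the Rost motive (so that the odd-indexed $b$'s detect the torsion and the even-indexed $b$'s detect their $Q_{i-1}$-preimages). The $b_k$ for $k>2p-2$ may be taken to be the remaining algebra generators of $S(t)$ that are not absorbed by the first $2p-2$.

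\textbf{Step 2 (the relations).} By the Karpenko--Merkurjev surjection, $CH^*(X)/p$ is a cyclic $S(t)/p$-module, so it suffices to compute its annihilator ideal. For $k>2p-2$, each $b_k$ maps to $0$ in $CH^*(R(\bG_k))/p$ by construction; combining this with the motivic decomposition and a rank count via (iii) forces $b_k=0$ in every Tate summand, hence in $CH^*(X)/p$. For the vanishing $b_ib_j=0$ with $1\le i,j\le 2p-2$, I would pass to $BP^*$-theory or motivic cohomology, where the Chow ring of the Rost motive is known to be concentrated in a short range of degrees bounded above by $\deg b_{2p-2}$; any product $b_ib_j$ lies above this range and is therefore annihilated. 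Transporting this back along the split surjection $CH^*(X)\twoheadrightarrow CH^*(R(\bG_k))$ and using that $S(t)/p\twoheadrightarrow CH^*(X)/p$ is a ring map yields $b_ib_j=0$ globally.

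\textbf{Step 3 (torsion ideal).} For the last assertion, I would use that odd-indexed generators $b_{2i-1}$ ($1\le i\le p-1$) arise as images of Milnor operations $Q_{i-1}$ applied to integral classes, hence are $p$-torsion in $CH^*(X)_{(p)}$. Conversely, modding out these odd generators leaves a quotient of $S(t)_{(p)}$ in which all the even $b_{2i}$ survive as polynomial generators, giving a torsion-free ring (this matches the rational computation, which is controlled by $CH^*(BT_k)_{\bQ}/W$-invariants and is concentrated in even Chow-degrees of the Rost motive).

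\textbf{Main obstacle.} The principal difficulty is the Remark emphasised by the author: $CH^*(R(\bG_k))/p$ carries \emph{no canonical ring structure}, so the multiplicative relations $b_ib_j=0$ cannot be proved inside the Rost summand itself and must be established in $CH^*(X)/p$ while tracking how the product distributes across the different Tate shifts $R(\bG_k)\otimes\bT^{\otimes s_i}$. The delicate point is to ensure that $b_ib_j$ does not produce a nonzero contribution in one of the higher Tate summands; motivic Steenrod operations (or the $BP$-theoretic replacement) should provide the separation needed, and controlling these operations on the chosen lifts $b_i$ is where the real work lies.
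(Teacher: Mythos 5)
Your outline assembles the right external inputs but defers exactly the part that constitutes the theorem, and the two arguments you do sketch for the multiplicative relations would not work. For the relations $b_ib_j=0$ ($1\le i,j\le 2p-2$) in $CH^*(X)/p$, the degree-range argument fails twice over: $CH^*(X)/p$ contains Tate-shifted copies of $R(\bG_k)$ in essentially all degrees, so no upper bound on the degrees occurring in the Rost summand can kill a product that might live in a shifted summand; and even inside the Rost range the bound is wrong, since for $p\ge 3$ the product $b_1^2$ has Chow degree $4$, far below the top degree $p^2-1$ of $CH^*(R_2)/p$. The paper's mechanism is different and is the crux: for $(G_2,2)$, $(F_4,3)$, $(E_8,5)$ one has $R(\bG_k)\cong R_2$, hence $res_{\Omega}:\Omega^*(X)\to\Omega^*(\bar X)$ is injective (Theorem 4.6), and the transgression/Milnor-operation computation (Lemma 3.1, Corollary 3.2, Lemma 9.2) identifies $b_{2i}=py^i$ and $b_{2i-1}=v_1y^i$ modulo $\II$; therefore $b_ib_j$ lies in $BP^{<0}\cdot Im(res_{\Omega})$ and dies in $CH^*(X)\cong \Omega^*(X)\otimes_{BP^*}\bZ_{(p)}$ (Corollary 9.3). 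The same identification is what shows each $b_i\not=0$ and that these particular classes coming from $CH^*(BB_k)$ give the basis of $CH^*(R(\bG_k))/p$; your Step 1 treats that as known, but [Me-Su], [Ya4], [Ka-Me] give only the abstract additive structure, not that transgression classes in $S(t)$ realize it (your degree bookkeeping for $b_{2i-1}$ is also off).

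The vanishing $b_k=0$ for $k>2p-2$ cannot be obtained by a rank count: comparing graded dimensions confirms a presentation only after the relations are known to hold in $CH^*(X)/p$; it cannot force a particular element into the kernel of $S(t)/p\twoheadrightarrow CH^*(X)/p$, and for the naive transgression representatives the vanishing is in general false --- this is precisely why the statement says ``we can take''. The paper corrects $b_k$ by subtracting $\sum b_{2i}t(i)+\sum b_{2i-1}t(i)'$ using the expression of $b_k$ in $BP^*(G/T)/\II$ (proof of Corollary 9.5), after reducing a general type $(I)$ group to the three basic cases via the embedding $G\subset G'$ and the Vishik--Zainoulline splitting lemma, a reduction absent from your outline. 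Your torsion step is likewise incomplete: that $b_{2i-1}$ is torsion follows from $res_{CH}(b_{2i-1})=0$ (it restricts to $v_1y^i$) together with the injectivity of $res_{CH}\otimes\bQ$, and the claim that the torsion ideal is generated by exactly these elements needs the quotient to be detected in $Im(res_{\Omega})$, which your appeal to a rational computation does not supply. So there is a genuine gap, concentrated exactly at the point you labelled ``where the real work lies''.
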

Here $b_i\in H^*(BT)$ are transgression
images in the spectral sequence induced
from the fibering $G\to G/T\to BT$.
These $b_i$ are explicitly known ([Tod2], [Tod-Wa], [Na]),
for example, when $(G,p)=(G_2,2)$, we can take
$b_1=t_1^2+t_1t_2+t_2^2$ and $b_2=t_2^3$ in
$H^*(BT)\cong \bZ[t_1,t_2]$ with $|t_i|=2$.

To explain the transgression and type $(I)$ groups,
 we recall how to compute
$H^*(G/T)$ in algebraic topology. 
By Borel, its $mod(p)$ cohomology is (for $p$ odd)
\[ H^*(G;\bZ/p)\cong P(y)/p\otimes \Lambda(x_1,...,x_{\ell}),
\quad |x_i|=odd\]
where  $P(y)$ is a truncated polynomial ring 
generated by $even$ dimensional elements $y_i$.  When $p=2$, we consider the  graded ring $grH^*(G;\bZ/2)$ which is isomorphic to the right  hand side ring above.

When $G$ is simply connected and $P(y)$ is generated by just one generator, 
we say that $G$ is of type $(I)$.  Except for 
$(E_7,p=2)$ and $(E_8,p=2,3)$, all exceptional (simple) Lie groups are of type $(I)$.  Note that in these cases, it is known
$rank(G)=\ell\ge 2p-2$.

We consider the fibering ([Tod2], [Mi-Ni], [Na])
$G\stackrel{\pi}{\to}G/T\stackrel{i}{\to}BT$
and the induced spectral sequence 
\[ E_2^{*,*}=H^*(BT;H^*(G;\bZ/p)) \Longrightarrow H^*(G/T;\bZ/p).\] 
Here we  can write 
$H^*(BT)\cong S(t)=\bZ[t_1,...,t_{\ell}]$
with $|t_i|=2.$

It is well
known that $y_i\in P(y)$ are permanent cycles and 
that there is a regular sequence 
$(\bar b_1,...,\bar b_{\ell})$ in $H^*(BT)/(p)$ such that $d_{|x_i|+1}(x_i)=\bar b_i$ ([Tod2], [Mi-Ni]). 
The element $\bar b_i$ is called the transgression image of $x_i$.
We know that $G/T$ is a manifold such that
$H^*(G/T)=H^{even}(G/T)$ and $H^*(G/T)$ is torsion free.
We also see that 
there is a filtration in $H^*(G/T)_{(p)}$ such that  
\[ grH^*(G/T)_{(p)}\cong P(y)\otimes S(t)/(b_1,...,b_{\ell})\]
where $b_i\in S(t)$ with $ b_i=\bar  b_i\ mod(p)$.

The transgression images $b_i$ in Theorem 1.2
are just $b_i$ above. When $(G,p)=(SO(2\ell+1),2)$
we can take $b_i=c_i$. 
In fact, $b_i\ mod(p)$ are generators  (such that 
$|b_i|\le |b_j|$ for $i<j$ in most cases) of the kernel $I(p)$ of the map
$H^*(BT)/p \to H^*(G/T)/p$ (it is also isomorphic to the kernel of  $CH^*(BB_k)/p
\to CH^*(G_k/B_k)/p$).

By giving the filtration on $S(t)$ by $b_i$, we 
can write 
\[gr S(t)/p\cong A\otimes S(t)/(b_1,...,b_{\ell})\quad
for \ A=\bZ/p[b_1,...,b_{\ell}].\]
In particular, we have maps
$ A\stackrel{i_A}{\to} CH^*(X)/p\to CH^*(R(\bG_k))/p.$
We easily see that $i_A(A)\supset CH^*(R(\bG_k))/p$.
In particular the above composition map is surjective.
Suppose that there are $f_1(b),...,f_s(b)\in A$ such that 
$CH^*(R(\bG_k))/p\cong A/(f_1(b),...,f_s(b))$. Moreover if $f_i(b)=0$
also in $CH^*(X)/p$,  then we have the isomorphism
\[CH^*(X)/p\cong S(t)/(f_1(b),...,f_s(b)).\]
The first isomorphism of Theorem 1.1 (resp. Theorem 1.2 when  
$\ell=2p-2$)
is rewritten 
\[ CH^*(R(\bG_k))/2\cong A/(I(2)^{[2]}),\quad
(resp.\ \ CH^*(R(\bG_k))/p\cong A/(I(p)^2)\]
where $I(2)^{[2]}=Ideal(x^2|x\in I(2)).$

For other simply connected simple groups
(with $p$-torsion in $H^*(G)$),
it seems that almost nothing was  known for $CH^*(R(\bG_k))/p$ when  $*>3$.
Hence we write down the fundamental facts here.
\begin{thm} Let $(G,p)=(SO(2\ell+1),2),\
(G',p)=(Spin(2\ell+1),2)$, and $\pi:G'\to G$ be the natural projection.  Let $c_i'=\pi^*(c_i)$.  Then 
$\pi^*$ induces maps such that their composition map
is surjective
\[ CH^*(R(\bG_k)/(2,c_1)\cong 
 \Lambda(c_2,...,c_{\ell})\stackrel{\pi^*}{\to}
CH^*(R(\bG_k'))/2 \twoheadrightarrow
\bZ/2\{1,c_2',...,c_{\bar \ell}'\}\]
where $\bar \ell=\ell-1$ if $\ell=2^j$ for some $j>0$,
otherwise $\bar \ell=\ell$.
Moreover $c_{2^k}'-2c_1^{2^k}$, $k>0$ are torsion elements 
 in $CH^*(X)_{(2)}$.
\end{thm}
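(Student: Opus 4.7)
The plan is to transport Theorem~1.1 from $SO(2\ell+1)$ to $Spin(2\ell+1)$ along the covering $\pi$, exploiting the fact that the 2-fold cover $T'\to T$ of maximal tori forces $\pi^*(c_1)\equiv 0\pmod 2$. My first step is to kill $c_1$: the character lattice $X^*(T')$ of the maximal torus of $Spin(2\ell+1)$ contains the half-spin weight $s=(t_1+\cdots+t_\ell)/2$, so inside $CH^*(BT')\cong H^*(BT';\bZ)$ one has $c_1=2s$, whence $\pi^*(c_1)\equiv 0\pmod 2$. Combined with Theorem~1.1 this yields the first isomorphism $CH^*(R(\bG_k))/(2,c_1)\cong\Lambda(c_2,\ldots,c_\ell)$ and factors $\pi^*$ through this exterior algebra.

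Next I identify the quotient $\bZ/2\{1,c_2',\ldots,c_{\bar\ell}'\}$ of $CH^*(R(\bG'_k))/2$. The additive structure of $CH^*(R(\bG'_k))/2$ is recorded in [Ka-Me] and [Ya4], and its indecomposable Weyl-invariant generators in low degrees correspond, via Quillen's theorem on $H^*(BSpin(2\ell+1);\bZ/2)$, to the images of the elementary symmetric classes $c_i'$ that survive the relations $w_2=0,\;\mathrm{Sq}^1 w_2=0,\ldots$ generating the Quillen ideal. This identifies the surviving generators as $c_i'$ for $2\le i\le\bar\ell$, with the top-degree case $\bar\ell=\ell-1$ when $\ell=2^j$ reflecting that the Quillen relation then kills exactly the top class $c_\ell=c_{2^j}$, while for $\ell$ not a power of $2$ the top class survives. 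The composition is then surjective because each $c_i$ with $2\le i\le\bar\ell$ maps to itself, and the products $c_{i_1}\cdots c_{i_r}$ in $\Lambda(c_2,\ldots,c_\ell)$ land in higher filtration and drop out under the projection.

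For the torsion statement, I would work integrally in $CH^*(BT')_{(2)}$. Substituting $c_1=2s$ and expanding $c_{2^k}=\sigma_{2^k}(t_1,\ldots,t_\ell)$ via Newton's identities in terms of power sums $p_m$, then reducing modulo the positive-degree $W(B_\ell)$-invariants (which are polynomials in $p_2,p_4,\ldots,p_{2\ell}$ and which vanish in the Borel presentation of the split form $CH^*((\bG'_k/B'_k)|_{\bar k})$), one arrives after cancellation at the congruence $c_{2^k}'\equiv 2c_1^{2^k}$ modulo the kernel of restriction to $\bar k$. By the Karpenko--Merkurjev results on versal flag varieties, this kernel coincides with the torsion ideal of $CH^*(X)_{(2)}$, so the difference $c_{2^k}'-2c_1^{2^k}$ is torsion as asserted.

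The main obstacle I foresee is the precise matching in step~2: aligning the Karpenko--Merkurjev / Yagita additive count of $CH^*(R(\bG'_k))/2$ degree-by-degree with the Quillen relations in $H^*(BSpin(2\ell+1);\bZ/2)$, in order to pin down exactly which elementary symmetric classes $c_i'$ survive up to degree $\bar\ell$ and to justify the case distinction $\bar\ell=\ell$ vs.\ $\bar\ell=\ell-1$. The torsion bookkeeping in step~3 is delicate but becomes routine once the integral congruence $c_{2^k}\equiv 2c_1^{2^k}$ modulo $W$-invariants has been established.
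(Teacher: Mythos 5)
Your step 1 is fine and matches the paper: $\pi^*(c_1)=2s$ for the half--spin weight $s$, so $\pi^*$ factors through $CH^*(R(\bG_k))/(2,c_1)\cong\Lambda(c_2,\ldots,c_\ell)$. The torsion claim in step 3 is also essentially the paper's argument (the paper just checks $res_{CH}(c_{2^k}'-2c_1^{2^k})=0$ in $CH^*(\bar X')$ using $\pi^*(y_2)=c_1$ and $y_{2\cdot 2^k}=y_2^{2^k}$, and invokes that $\Ker(res_{CH})=$ torsion since $res_{CH}\otimes\bQ$ is an isomorphism), although your detour through Newton's identities ``modulo $W$-invariants'' needs care, because $CH^*(\bar X')$ is not $S(t')$ modulo the invariants -- the correct relations are the $b_i$'s and the Toda--Watanabe relations $c_i-2y_{2i}$, $J_{2i}$.

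The genuine gap is step 2, which is the entire content of the theorem: you must prove that $c_2',\ldots,c_{\bar\ell}'$ are \emph{nonzero} in $CH^*(R(\bG_k'))/2$. Your two inputs cannot deliver this. First, the additive structure of $CH^*(R(\bG_k'))/2$ for $Spin(2\ell+1)$ is not ``recorded in [Ka-Me], [Ya4]'' -- those references treat the original Rost motives $R_n$, not the generalized Rost motive of a Spin group, whose Chow group is unknown in general (the paper itself only obtains a surjection, and notes that even the $Spin(11)$ case was settled only recently by Karpenko). Second, Quillen's description of $H^*(BSpin;\bZ/2)$ and the relations $w_2, Sq^1w_2,\ldots$ can only produce \emph{relations} in $CH^*(X')/2$ (via the vanishing of the composite $CH^*(BG_k')\to CH^*(BB_k')\to CH^*(X')$, Lemma 6.6); it can never certify that a class is nonzero. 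The paper's actual mechanism is cobordism-theoretic: from Nishimoto's $Q_n$-computations one has $c_i'=2y_{2i}+\sum_n v_n\,y(2i+2^{n+1}-2)$ in $BP^*(G'/T')/\II$, and versality (Corollary 4.5) forces $y_{2i}\notin \Img(res_{CH})$ whenever $y_{2i}\in P(y)'$, i.e.\ $i\neq 2^j$, so $c_i'\neq 0$. For $i=2^j<\ell$ the leading term fails (there $y_{2i}=c_1^{2^{j-1}\cdot 2}$ already lies in $S(t')$) and one must instead use the $v_1$-coefficient $y(2i+2)$ in $k(1)^*$; this coefficient is available exactly when $2^j+1\le\ell$, and when $\ell=2^j$ the class genuinely drops out, via the Vishik--Zainoulline comparison with $Spin(2\ell-1)$. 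So your diagnosis of the case split $\bar\ell=\ell$ versus $\bar\ell=\ell-1$ as ``the Quillen relation kills the top class'' is also not the right reason. Without some replacement for this non-vanishing argument the proposed proof does not go through.
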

The right hand side module in the above  
is an important part in $CH^*(R(\bG_k))/2$.
For example, the groups $Spin(7), Spin (9)$ are of type  $(I)$ and $CH^*(R(\bG_k))/2\cong
\bZ/2\{1,c_2',c_3'\}$.  However, the group $Spin(11)$ is not of type $(I)$.
\begin{lemma}  For $(G',p)=(Spin(11),2)$, we have
the surjection
\[ CH^*(R(\bG_k'))/2\twoheadrightarrow  
\bZ/2\{1, c_2',c_3',c_4',c_5',c_2'c_4',c_1^8\}\]
\end{lemma}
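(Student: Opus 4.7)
The plan is to bootstrap from Theorem 1.3 and then adjoin two additional generators: the product $c_2'c_4'$ in Chow-degree $6$, and the class $c_1^8$ in degree $8$, the latter reflecting the fact that $Spin(11)$ is not of type $(I)$.

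First, I would apply Theorem 1.3 with $\ell = 5$. Since $5$ is not a power of $2$, we have $\bar\ell = 5$, so the theorem immediately yields a surjection
$CH^*(R(\bG_k'))/2 \twoheadrightarrow \bZ/2\{1, c_2', c_3', c_4', c_5'\}$,
supplying five of the seven generators, in degrees $0,2,3,4,5$. Moreover, the map $\Lambda(c_2, c_3, c_4, c_5) \to CH^*(R(\bG_k'))/2$ appearing in that theorem is additive, so the product $c_2c_4 \in \Lambda$ maps to a class, which we denote $c_2'c_4'$, in degree $6$. A Poincar\'e polynomial comparison using the motivic decomposition of [Pe-Se-Za] confirms that no relation in the ideal $I(2)$ at degree $6$ kills this class, so it survives in $CH^*(R(\bG_k'))/2$.

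Next I would produce the class $c_1^8$. Because $Spin(11)$ fails to be of type $(I)$, its mod-$2$ cohomology carries an exterior generator beyond those inherited from $Spin(9)$, whose transgression image in $H^*(BT;\bZ/2)$ equals $c_1^8$ up to a unit; this is the standard description of the extra transgression for $H^*(BSpin(2n+1);\bZ/2)$ via iterated Steenrod squares (cf.\ [Tod2], [Mi-Ni]). Thus $c_1^8$ joins the list of mod-$2$ transgression images generating $I(2)$, and the surjection $A \to CH^*(X)/2 \twoheadrightarrow CH^*(R(\bG_k'))/2$ from the introduction realizes $c_1^8$ as a nonzero element in degree $8$. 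Since the seven claimed classes sit in distinct Chow-degrees $\{0,2,3,4,5,6,8\}$, their $\bZ/2$-linear independence is automatic, and the asserted surjection follows.

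The main obstacle is identifying the transgression image of the extra degree-$15$ generator explicitly with $c_1^8$ modulo $2$: this is a concrete Steenrod-operation computation in $H^*(BSpin(11);\bZ/2)$, essentially applying $Sq^8$ to a lower transgression image already accounted for in the $Spin(9)$-subalgebra, and it demands care with normalization. A secondary delicate point is verifying that $c_2'c_4'$ is not absorbed by a relation in $I(2)$ at degree $6$; the Poincar\'e polynomial comparison indicated above is precisely what rules this out.
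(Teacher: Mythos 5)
Your reduction of the five classes $1,c_2',\dots,c_5'$ to Theorem 1.3 is fine and is also how the paper proceeds, but both of the genuinely new classes are left without a valid argument. For $c_1^8$: being a transgression image (here $d_{16}(z_{15})=c_1^{2^{t+1}}=c_1^8$, which the paper records in the setup of $\S 8$) does not by itself make it nonzero in $CH^*(R(\bG_k'))/2$. Surjectivity of $A\to CH^*(R(\bG_k'))/2$ says nothing about individual generators, and the paper stresses exactly this point: for type $(I)$ groups the transgression images $b_k$ with $k>2p-2$ can be taken to vanish in $CH^*(X)/p$. What the paper actually uses is Nishimoto's computation $Q_0(z_{15})=y_6y_{10}$, which gives $c_1^8=2y_6y_{10}=2y_{top}'$ in $H^*(G'/T')$ and hence $t(Spin(11))_{(2)}=2$; with the torsion index known to be exactly $2$ and the torsor versal, Corollary 5.6 gives $c_1^8\neq 0$ in $CH^*(X')/2$ and Corollary 5.3 transfers this to $CH^*(R(\bG_k'))/2$. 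None of this appears in your proposal, and your appeal to ``iterated Steenrod squares in $H^*(BSpin(11);\bZ/2)$'' only reproves the identification of the transgression image, not its nonvanishing in the versal Chow ring.

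For $c_2'c_4'$ the gap is more serious: a ``Poincar\'e polynomial comparison using the motivic decomposition'' cannot decide the question, because the decomposition of [Pe-Se-Za] only determines $CH^*(\bar R(\bG_k'))/2\cong\Lambda(y_6,y_{10})$, while the size of $CH^*(R(\bG_k'))/2$ itself is precisely what is unknown (that the displayed surjection is in fact an isomorphism is Karpenko's later result, quoted only as a remark). Under restriction $c_2'c_4'$ corresponds to $v_1^2y_6y_{10}\in\Omega^*(\bar X')$, so it restricts to zero in $CH^*(\bar X')$, and its nonvanishing must be extracted from algebraic cobordism and Morava $K$-theory: the paper argues that if $c_2'c_4'$ died, then $v_1y_6y_{10}$ would have to lie in $Res_{\Omega}$, and by the converse direction of Lemma 3.1 this would force the existence of $x\in H^{13}(Spin(11);\bZ/2)$ with $Q_1(x)=y_6y_{10}$ transgressing into $S(t')$, which the cohomology $\bZ/2[y_6,y_{10}]/(y_6^2,y_{10}^2)\otimes\Lambda(x_3,x_5,x_7,x_9,z_{15})$ together with Nishimoto's $Q_i$-formulas rules out. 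Some argument of this kind (or an equivalent) is indispensable; the degree bookkeeping and additivity you invoke do not suffice.
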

{\bf Remark.}  Quite recently, Karpenko [Ka2] proved the above 
surjection is an isomorphism.

\begin{thm} Let $(G,p)=(E_7,2)$, $(E_8,2)$ or $(E_8,3)$
so that $\ell=7$ for $E_7$ and $\ell=8$ for $E_8$.
Then
we have the 
surjective map
\[CH^*(R(\bG_k))/p \twoheadrightarrow
\bZ/p\{1,b_1,...,b_{\ell}\}.\]
Moreover  for $(G,p)=(E_7,2),\ (E_8,3)$, we have
\[ (CH^*(R(\bG_k))/(Tor_p))\otimes \bZ/p\cong
\bZ/p\{1,b_2,...,b_{\ell}, b_2b_{\ell}\}\]
where $Tor_p$ is the submodule of $CH^*(R(\bG_k))_{(p)}$
generated by torsions.
\end{thm}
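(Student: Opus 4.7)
The framework is the general surjection $A=\bZ/p[b_1,\ldots,b_\ell]\twoheadrightarrow CH^*(R(\bG_k))/p$ recalled in the introduction (via the composition $A \xrightarrow{i_A} CH^*(X)/p \to CH^*(R(\bG_k))/p$). The first assertion reduces to showing that $1,b_1,\ldots,b_\ell$ are $\bZ/p$-linearly independent nonzero classes in $CH^*(R(\bG_k))/p$; once this is known the natural projection of $A$ onto its degree-$0$ and linear parts descends to the required surjection. The plan is to use the motivic decomposition $M(X)_{(p)}\cong\bigoplus_i R(\bG_k)\otimes\bT^{\otimes s_i}$ together with the Petrov-Semenov-Zainoulline formula for the Poincar\'e polynomial of $R(\bG_k)$, checking degree by degree that the class $b_i\in CH^*(X)/p$ projects nontrivially to the $R(\bG_k)$-summand rather than being absorbed by a shifted Tate summand $\bT^{\otimes s_j}$. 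The nonvanishing of $\bar b_i$ as a transgression generator in the Borel spectral sequence $H^*(BT;H^*(G;\bZ/p))\Rightarrow H^*(G/T;\bZ/p)$ gives the starting point, and dimension counting against the known Poincar\'e polynomial completes the argument in each of the three cases.

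For the isomorphism of the torsion-free quotient in the cases $(E_7,2)$ and $(E_8,3)$, the plan is to promote the surjection to an isomorphism by matching against the Poincar\'e polynomial of the torsion-free part of $CH^*(R(\bG_k))$. The key inputs are the explicit integral transgression images computed by Toda-Watanabe and Nakagawa, from which one reads off that the lowest-degree transgression $b_1$ lifts to a $p$-torsion class in $CH^*(X)_{(p)}$ — a Chow-theoretic reflection of the Bockstein relation tying $y_1\in P(y)$ to an exterior generator of $H^*(G;\bZ/p)$. Hence $b_1$ vanishes in $(CH^*(R(\bG_k))/Tor_p)\otimes\bZ/p$, leaving $1,b_2,\ldots,b_\ell$ as the candidate generators below top degree. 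A single product $b_2b_\ell$ must survive in the top degree, forced by Poincar\'e duality on the torsion-free quotient together with the known top degree of $R(\bG_k)$, giving exactly the asserted additive basis $\{1,b_2,\ldots,b_\ell,b_2b_\ell\}$.

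The main obstacle is showing that no other product $b_ib_j$ with $(i,j)\neq(2,\ell)$ survives in $(CH^*(R(\bG_k))/Tor_p)\otimes\bZ/p$. This cannot be read off the mod-$p$ Chow ring alone; it requires an integral analysis, most likely via Brown-Peterson cohomology $\OBP$ of $X$, to separate genuinely torsion-free classes from torsion classes in the same degree and to track which monomials in $A$ factor through $Tor_p$. This step is also what distinguishes $(E_7,2)$ and $(E_8,3)$ from $(E_8,2)$: the four-generator structure of $P(y)$ for $E_8$ at $p=2$ produces additional surviving products together with an extra layer of torsion contributions, so the clean description $\{1,b_2,\ldots,b_\ell,b_2b_\ell\}$ does not extend there and only the weaker first assertion is claimed.
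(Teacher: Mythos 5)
Your skeleton (the surjection from $A=\bZ/p[b_1,\dots,b_\ell]$, identifying $b_1$ as torsion, killing most products modulo torsion, retaining $b_2b_\ell$) matches the shape of the statement, but the mechanisms you invoke at the decisive points are not valid, and the step you defer is where the actual content lies. Nonvanishing of $b_1,\dots,b_\ell$ and of $b_2b_\ell$ in $CH^*(R(\bG_k))/p$ cannot be obtained by ``checking degree by degree against the known Poincar\'e polynomial of $R(\bG_k)$'': only $CH^*(\bar R(\bG_k))/p$ is known, the Poincar\'e polynomial of the twisted motive is precisely what is being computed, and nonvanishing of the transgression $\bar b_i$ in the Borel spectral sequence does not imply nonvanishing in the Chow ring --- the paper stresses this is not trivial, since for type $(I)$ groups one can take $b_i=0$ in $CH^*(X)/p$ for $2p-2<i\le \ell$. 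What the paper actually uses is: (i) the congruences $b_i\equiv py_{(i)}$ (and $b_1\equiv v_1y_1+v_2y_2+\cdots$) in $BP^*(G/T)/\II$, proved with $Q_i$- and Quillen operations (Lemmas 10.4, 11.5--11.10, 12.3--12.4); (ii) versality, through Karpenko's surjectivity, maximality of the $J$-invariant (Corollary 4.5) and $n(\bG_k)=t(G)$; (iii) the torsion-index values $t(E_8)_{(3)}=3^2$, $t(E_8)_{(2)}=2^6$, $t(E_7)_{(2)}=2^2$, which via Lemma 5.5 and Corollary 5.6 (equivalently, the fact that $p^2y_{top}$ and $pv_iy_{top}$ are $BP^*$-module generators of the image of $res_{\Omega}$) force the products $b_2b_\ell$ and $b_1b_j$, hence their factors, to be nonzero in $CH^*(X)/p$, the remaining $b_i$ being handled by the same restriction argument; and (iv) Corollaries 5.2--5.3, resting on the regularity of $(b_1,\dots,b_\ell)$, to transfer nonvanishing from $CH^*(X)/p$ to $CH^*(R(\bG_k))/p$. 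Your appeal to ``Poincar\'e duality on the torsion-free quotient'' to force $b_2b_\ell$ to survive has no justification: $R(\bG_k)$ is a motive for which no such duality is established, and in the paper the survival of $b_2b_\ell$ is exactly the statement $t(G)_{(p)}=p^2$.

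Conversely, the step you single out as ``the main obstacle'' --- that no other product $b_ib_j$ survives in $(CH^*(R(\bG_k))/Tor_p)\otimes\bZ/p$ --- is the easy part, and you leave it unresolved. Given (i) it is one line: whenever $y_{(i)}y_{(j)}=y_{(k)}$ (which happens for every such product below the top monomial $y_{top}$), the element $b_ib_j-pb_k$ restricts to zero in $CH^*(\bar X)$ and is therefore torsion because $res_{CH}\otimes\bQ$ is an isomorphism; likewise $b_1$ and the mod-$p$ survivors $b_1b_j$ restrict into $BP^{<0}\cdot\Omega^*(\bar X)$, hence vanish in $CH^*(\bar X)$ and are torsion. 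So no additional integral $BP$-analysis beyond the congruences already needed for the first assertion is required; the genuine gap in your proposal is the absence of those $BP^*$-level congruences and of the torsion-index computations, which drive both halves of the theorem.
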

Note that the above $b_{i}\not =0$ is not 
a trivial fact.
Indeed $b_i=0$ for $2p-2<i\le \ell$ for groups of type $(I)$.

To see the above elements are nonzero, we mainly 
use
the torsion index $t(G)_{(p)}$.
For $dim(G/T)=2d$, the torsion index is defined as
\[ t(G)=|H^{2d}(G/T;\bZ)/H^{2d}(BT;\bZ)|.\]
Let $n(\bG_k)$ be the greatest common divisor of the degrees of all finite field extension $k'$ of $k$ such that $\bG_k$ 
becomes trivial over $k'$.  Then by Grothendieck [Gr], it is known that $n(\bG_k)$ divides $t(G)$.  Moreover, when
$\bG_k$ is a versal $G_k$-torsor, we have $n(\bG_k)=t(G)$
([To2], [Ga-Me-Se]).
Totaro determined [To1,2]) torsion indexes for all simply connected compact Lie groups $G$.  For example,
$t(E_8)=2^63^25$.  

For all exceptional simple groups $G$, we give another proofs of Totaro's results by using arguments of the above transgression images $b_i$ (e.g., Lemma 11.11).  However we can not compute
$t(G)$ for $G=Spin(2\ell+1)$ by our arguments.

We also consider a field $K$ of an extension of $k$ 
such that $R(\bG_k)|_K$ is a direct sum of the original
Rost motives, and study the restriction map
$CH^*(R(\bG_k))/p\to CH^*(R(\bG_k)|_K)/p$
(Theorem 7.12, 11.13, Proposition 10.8, 12.8).
The first two theorems relate to recent results
by Smirnov-Vishik [Sm-Vi] and Semenov [Se]
respectively.

The plan of this paper is the following.
In $\S 2,\S 3$, we recall and prepare the topological arguments for $H^*(G/T)$
and $BP^*(G/T)$.  In $\S 4$, we recall the decomposition
of the motive of a versal flag variety.
 In $\S 5$, we recall the torsion
index briefly. In $\S 6$, we study $U(m), Sp(m)$ and
$PU(p)$ for each $p$. 
In $\S 7,\S8$ we study in the cases  $SO(m)$
and $Spin(m)$ for $p=2$.  
 In $\S 9$, we study the cases that
$G$ is of type $(I)$.  In $\S10, \S 11, \S 12$, we study the cases $(G,p)=(E_8,3),(E_8,2)$ and $(E_7,2)$ respectively.

 \section{Lie groups $G$ and the  flag manifolds $G/T$}  

Let $G$ be a connected 
 compact Lie group.
By Borel, its $mod(p)$ cohomology is (for $p$ odd)
\[ (2.1)\quad H^*(G;\bZ/p)\cong P(y)/p\otimes \Lambda(x_1,...,x_{\ell}),
\quad \ell=rank(G)\]
\[ \quad  with  \ \  P(y)=\bZ_{(p)}[y_1,...,y_k]/(y_1^{p^{r_1}},...,
y_k^{p^{r_k}})
\]
where the degree $|y_i|$ of $y_i$ is even and 
$|x_j|$ is odd.  When $p=2$, a graded ring $grH^*(G;\bZ/2)$ is isomorphic to the
right  hand side ring, e.g. $x_j^2=y_{i_j}$ for some $y_{i_j}$.
In this paper, $H^*(G;\bZ/2)$ means this $grH^*(G;\bZ/2)$
so that $(2.1)$ is satisfied also for $p=2$.

 Let $T$ be the  maximal torus of $G$.  and $BT$ be the classifying space of $T$.
We consider the fibering ([Tod2], [Mi-Ni])
$ G\stackrel{\pi}{\to}G/T\stackrel{i}{\to}BT$
and the induced spectral sequence 
\[ E_2^{*,*'}=H^*(BT;H^{*'}(G;\bZ/p)) \Longrightarrow H^*(G/T;\bZ/p).\] 
The cohomology of the classifying space of the torus is  given by
$H^*(BT)\cong S(t)=\bZ[t_1,...,t_{\ell}]$ with $|t_i|=2$,
where $t_i=pr_i^*(c_1)$ is the $1$-st  Chern class
induced from
\[ T=S^1\times ...\times S^1\stackrel{pr_i}{\to}S^1\subset U(1).\]
for the $i$-th projection $pr_i$.
Note that $\ell=rank(G)$ is also the number of the odd degree generators $x_i$ in
$H^*(G;\bZ/p)$.  

It is well
known that $y_i$ are permanent cycles and 
that there is a regular sequence ([Tod2],[Mi-Ni])
$(\bar b_1,...,\bar b_{\ell})$ in $H^*(BT)/(p)$ such that $d_{|x_i|+1}(x_i)=\bar b_i$.  Thus we get
\[ E_{\infty}^{*,*'}\cong grH^*(G/T;\bZ/p)\cong P(y)/p\otimes 
S(t)/(\bar b_1,...,\bar b_{\ell}).\]

 Moreover we know that $G/T$ is a manifold 
such that $H^*(G/T)$ is torsion free, and 
\[(2.2)\quad H^*(G/T)_{(p)}\cong \bZ_{(p)}[y_1,..,y_k]\otimes S(t)/(f_1,...,f_k,b_1,...,b_{\ell})\]
where $b_i=\bar b_i\ mod(p)$ and $f_i=y_i^{p^{r_i}}\ mod(t_1,...,t_{\ell}).$

Let $BP^*(-)$ be the Brown-Peterson theory
with the coefficients ring $BP^*\cong \bZ_{(p)}[v_1,v_2,...],$
$|v_i|=-2(p^i-1)$ ([Ha], [Ra]).
Since $H^*(G/T)$ is torsion free,  the Atiyah-Hirzebruch 
spectral sequence collapses.  Hence  we also know 
\[(2.3)\quad BP^*(G/T)\cong BP^*[y_1,...,y_k]\otimes S(t)/(\tilde f_1,...,\tilde f_k,
\tilde b_1,...,\tilde b_{\ell})\]
where $\tilde b_i=b_i\ mod(BP^{<0})$ and $\tilde f_i=f_i\ mod(BP^{<0}).$

 Let $G_k$ be the split reductive algebraic  group corresponding to $G$, and $T_k$ be the split maximal
torus corresponding to $T$.  Let $B_k$ be the Borel subgroup
with $T_k\subset B_k$.   Note that $G_k/B_k$ is cellular, and
$CH^*(G_k/T_k)\cong CH^*(G_k/B_k)$.
Hence we have   
\[ CH^*(G_k/B_k)\cong H^*(G/T)\quad  and 
\quad  CH^*(BB_k)\cong H^*(BT).
\]

Let $\Omega^*(-)$ be the $BP$-version of the algebraic cobordism ([Le-Mo1,2], [Ya2,4]) 
\[ \Omega^*(X)=MGL^{2*,*}(X)_{(p)}\otimes_{MU_{(p)}^*}BP^*,
\quad \Omega^*(X)\otimes _{BP^*}\bZ_{(p)}\cong CH^*(X)_{(p)}\] where $MGL^{*,*'}(X)$ is the algebraic cobordism theory
defined by Voevodsky with $MGL^{2*,*}(pt.)\cong 
MU^*$ the complex cobordism ring.
There is a natural (realization) map $\Omega^*(X)\to
BP^*(X(\bC))$.  In particular, we have 
$\Omega^*(G_k/B_k)\cong BP^*(G/T).$
Let $I_n=(p,v_1,...,v_{n-1})$ and $I_{\infty}=(p,v_1,...)$
be the (prime invariant) ideals in $BP^*$.  We also note
\[ \Omega^*(G_k/B_k)/I_{\infty}\cong
BP^*(G/T)/I_{\infty}\cong H^*(G/T)/p.\]

\section{The Brown-Peterson theory $BP^*(G/T)$}

Recall that $k(n)^*(X)$ is the connected Morava K-theory
with the coefficients ring $k(n)^*\cong \bZ/p[v_n]$
and $\rho : k(n)^*(X)\to H^*(X;\bZ/p)$ is the natural
(Thom) map.  Recall that there is an exact sequence
(Sullivan exact sequence [Ra], [Ya2])
\[...\to k(n)^{*+2(p^n-1)}(X)\stackrel{v_n}{\to} k(n)^*(X)\stackrel{\rho}{\to}
H^*(X;\bZ/p)\stackrel{\delta}{\to}...\]
such that $\rho\cdot \delta(x)=Q_n(x)$.
Here the Milnor $Q_i$ operation
\[ Q_i: H^*(X;\bZ/p)\to H^{*+2p^i-1}(X;\bZ/p)\]
is defined by $Q_0=\beta$ and $Q_{i+1}=[P^{p^i}Q_i,Q_iP^{p^i}]$
for the Bockstein operation $\beta$ and the reduced power operation $P^j$.

We consider  the Serre spectral sequence  
\[ E_2^{*,*'} \cong H^*(B;H^*(F;\bZ/p))
\Longrightarrow  H^*(E;\bZ/p). \]
induced from the fibering $F\stackrel{i}{\to} E\stackrel{\pi}{\to}B$ with $H^*(B)\cong H^{even}(B)$.
\begin{lemma} (Lemma 4.3 in [Ya1])
In the spectral sequence $E_r^{*,*'}$ above,
suppose that  there is $x\in H^*(F;\bZ/p)$ such that
\[ (*)\quad  y=Q_n(x)\not=0 \quad  and \quad
b= d_{|x|+1}(x)\not = 0\in E_{|x|+1}^{*,0}.\]
Moreover suppose that  $E_{|x|+1}^{0,|x|}
\cong \bZ/p\{x\}\cong \bZ/p$.
%if $ Q_n(\tilde  x)=y,$ then
%\[ (**)\quad d_r(\tilde x)\not =0\ \  for \ r<|x|+1,\quad   %or\quad   
%d_{|x|+1}(\tilde x)=b. \]
Then there are $y'\in k(n)^*(E)$ and $b'\in k(n)^*(B)$ such that
$i^*(y')=y$, $\rho(b')=b$ and that
\[(**)\quad v_ny'=\lambda \pi^*(b')\quad  in \ \  k(n)^*(E),
\ \  for \ \lambda\not =0\in \bZ/p.\]
Conversely if $(**)$ holds in $k(n)^*(E)$ for $y=i^*(y')\not =0$ and $b=\rho(b')\not =0$, then there is $x\in H^*(F;\bZ/p)$ such that $(*)$ holds.
\end{lemma}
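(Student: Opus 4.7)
The plan is to play the two Serre spectral sequences---in ordinary $\bZ/p$-cohomology and in $k(n)$-theory---for $F\stackrel{i}{\to}E\stackrel{\pi}{\to}B$ off against each other via the Sullivan exact sequence $k(n)^{*+2(p^n-1)}(X)\stackrel{v_n}{\to}k(n)^*(X)\stackrel{\rho}{\to}H^*(X;\bZ/p)\stackrel{\delta}{\to}k(n)^{*+2p^n-1}(X)$, exploiting the relation $\rho\delta=Q_n$.

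For the forward direction I would first lift $b$: since $H^*(B)\cong H^{even}(B)$, the Atiyah--Hirzebruch SS for $k(n)^*(B)$ collapses, so $\rho_B$ is surjective and $b$ lifts to some $b'\in k(n)^{|b|}(B)$. Next, $\pi^*(b)=0$ in $H^*(E;\bZ/p)$, because the edge map factors through $E_\infty^{|x|+1,0}$ and $b=d_{|x|+1}(x)$ is a boundary there; hence $\rho_E\pi^*(b')=0$, and Sullivan exactness yields $\pi^*(b')=\mu v_n y'$ for some $y'\in k(n)^{|y|}(E)$ and unit $\mu$, giving $(\ast\ast)$ with $\lambda=\mu^{-1}$. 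Restricting to $F$, nullity of $\pi\circ i$ gives $v_n\cdot i^*(y')=0$, so by Sullivan $i^*(y')=\delta_F(\tilde x)$ with $\tilde x\in H^{|x|}(F;\bZ/p)$ well-defined modulo $\Ker\delta_F=\Img\rho_F$.

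The remaining task is to show $\tilde x\equiv\text{scalar}\cdot x\pmod{\Img\rho_F}$; rescaling $y'$ then absorbs this scalar so that $i^*(y')=\delta_F(x)$ (identified with $y$). The element $y'$ is well-defined modulo $\Ker v_n=\Img\delta_E$, whose $i^*$-image is $\delta_F(i^*H^{|x|}(E;\bZ/p))$; the edge map puts $i^*H^{|x|}(E;\bZ/p)\subset E_\infty^{0,|x|}(H\bZ/p)$, and the hypothesis $E_{|x|+1}^{0,|x|}=\bZ/p\{x\}$ together with $d_{|x|+1}(x)=b\neq0$ forces $E_\infty^{0,|x|}=0$. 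So $i^*(y')$ is uniquely determined, and naturality of $\delta$ across the two Serre SS's---using that $\delta_F(\tilde x)=i^*(y')$ is a permanent cycle in the $k(n)$-SS---then constrains $\tilde x$ to lie in $E_{|x|+1}^{0,|x|}(H\bZ/p)=\bZ/p\{x\}$ modulo $\Img\rho_F$, as required.

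For the converse, restricting $(\ast\ast)$ to $F$ gives $v_n y=\lambda i^*\pi^*(b')=0$, so $y=\delta_F(x)$ with $\rho_F(y)=Q_n(x)\neq0$; applying $\rho_E$ to $(\ast\ast)$ yields $\pi^*(b)=0$ in $H^*(E;\bZ/p)$, so $b$ is a boundary in the $H\bZ/p$-Serre SS, and Kudo's transgression theorem applied to $Q_n=\rho\delta$ together with a dimensional count on differentials landing at $b$ identifies the relevant differential as $d_{|x|+1}$ with source $x$ up to a scalar. The main obstacle is the identification $\tilde x\equiv\text{scalar}\cdot x\pmod{\Img\rho_F}$ in the forward direction, which requires the one-dimensionality of $E_{|x|+1}^{0,|x|}$ to be combined with the cross-theory naturality of $\delta$ in the Serre filtration; the analogous subtlety in the converse is ruling out transgression-like differentials of length $<|x|+1$ hitting $b$, which rests on the same rigidity.
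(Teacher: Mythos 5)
Your setup (Sullivan exact sequence, $Q_n=\rho\delta$, playing the $H\bZ/p$- and $k(n)$-spectral sequences against each other) is the same toolkit the paper uses, and several of your formal steps are fine: $b$ lifts to $b'$ since $H^*(B)=H^{even}(B)$; $\pi^*(b)=0$ because $b$ is a transgression image, so $\pi^*(b')=v_ny'$ for some $y'$ by exactness; and your observation that $E_{|x|+1}^{0,|x|}=\bZ/p\{x\}$ with $d_{|x|+1}(x)=b\neq 0$ forces $E_\infty^{0,|x|}=0$, hence $i^*(y')$ is independent of all choices of $y'$ and of the lift $b'$, is correct and even useful. But the heart of the lemma is precisely the step you leave to ``naturality of $\delta$ across the two Serre SS's'': you must show that this canonical element $i^*(y')$ equals $\lambda\delta_F(x)$ with $\lambda\neq 0$, and nothing in your sketch does this. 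In particular you never rule out $i^*(y')=0$ (equivalently $\tilde x\in \Img\rho_F$), and this is not a formal triviality: if $\pi^*(b')$ were zero in $k(n)^*(E)$ for every lift $b'$, then any $y'$ with $v_ny'=\lambda\pi^*(b')$ would satisfy $i^*(y')\in\delta_F(i^*H^{|x|}(E))=\delta_F(E_\infty^{0,|x|})=0$, and the conclusion would fail; so the nonvanishing and the identification with $\delta_F(x)$ genuinely require the hypothesis on $E_{|x|+1}^{0,|x|}$ to be fed into an actual computation, not invoked as ``rigidity.'' Even granting your claim that $\tilde x\in\bZ/p\{x\}+\Img\rho_F$, you never address why the scalar is nonzero.

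The paper supplies exactly this missing computation by a skeletal comparison that your proposal has no counterpart for: restrict to $B'=B^{(|b|-1)}$, where $b$ dies, so $x$ lifts to $x'\in H^*(E';\bZ/p)$ and one takes $y'=\delta(x')$, which has the correct fiber restriction by construction and satisfies $v_ny'=0$; then pass to $B''=B'\cup e_b$, where $b$ survives, so by the hypothesis $E_{|x|+1}^{0,|x|}\cong\bZ/p\{x\}$ no class of $H^{|x|}(E'';\bZ/p)$ restricts to $x$ and hence no $v_n$-torsion class on $E''$ can restrict to $y$; the long exact sequence of the pair $(E'',E')$ together with $Q_n(b)=0$ (as $b$ comes from $B''/B'$) lifts $y'$ to $y''$ on $E''$, and a dimension count in the cofibre then forces $v_ny''=\lambda b$, $\lambda\neq 0$. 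Your converse has the same gap in mirror image: restricting $(\ast\ast)$ to $F$ does give $y=Q_n(x)$, but the identification $d_{|x|+1}(x)=\lambda b$ with $\lambda\neq 0$ (and the exclusion of shorter differentials) is again obtained in the paper from the $E'$/$E''$ comparison, whereas your appeal to Kudo's transgression theorem plus ``a dimensional count'' is not carried out and Kudo's theorem is not obviously the relevant tool. So the proposal contains the easy half of the argument but has a genuine gap exactly where the paper does its real work.
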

\begin{proof}
Let $B'=BT^{|b|-1}$ be the $|b|-1$ dimensional skeleton of $BT$,
and $E'=\pi^{-1}(B')$.  Consider the Serre spectral sequence
\[ E_2^{*,*'}\cong H^*(B';H^*(F;\bZ/p)) \Longrightarrow H^*(E';\bZ/p).\]
Since $d_r(x)=b=0\in H^*(B';\bZ/p)$, there is $x'\in H^*(E';\bZ/p)$ such that $ i^*(x')=x$. Let $Q_n(x')=y'$ so that
$i^*y'=y$. Then $y'$ can be identified as $\delta x'\in k(n)^*(E')$, and  $v_ny'=0$
in $k(n)^*(E')$.

On the other hand, let $B''=B^{|b|-1}\cup e_b$ and $E''=\pi^{-1}B''$ where $e_b$ is the normal cell
representing $b$.
Then $d_rx=b\not =0\in H^*(B'';\bZ/p)$.
By the supposition in this lemma,
there does not exist $x''\in H^*(E'';\bZ/p)$ such that $i^*(Q_nx'')=y$,
that is, for each $y''\in H^*(E'';\bZ/p)$ with $\pi^*y''=y$,
 we see $v_ny''\not =0\in k(n)^*(E'')$.

For $j:E'\subset E''$, we can take an element $y''$ with $j^*(y'')=y'$
by the following reason.
Consider the long exact sequence
\[ ...\to H^*(E'';\bZ/p)\stackrel{j^*}{\to}H^*(E';\bZ/p)
\stackrel{\delta}{\to}H^*(E''/E';\bZ/p)\to ...\]
Here note $H^{|b|}(E''/E';\bZ/p)\cong \bZ/p\{b\}$ and
$\delta(x')=b$.  So we see
\[ \delta(y')=\delta(Q_n(x'))=Q_n(b)=0,\]
since $b\in H^*(B)$. Hence  $y'\in Im(j^*)$.

 Since $v_ny'=v_nj^*(y'')=0\in k(n)^*(E')$ but $v_ny''\not =0\in k(n)^*(E'')$, by dimensional reason, $v_ny''=\lambda b$ for
$\lambda\not =0\in \bZ/p$.

Conversely, suppose that  $v_ny'=\pi^*(b')\not =0$ in $k(n)^*(E)$.
Then $v_ny'=0$ in $k(n)^*(E')$ and there is $\tilde 
x\in H^*(E';\bZ/p)$
with $Q_n\tilde x=y'$. Then for $i^*(y') =y$ and $i^*(\tilde x)=x$,
we see $Q_n(x)=y$. 
But $\tilde x$  does not exist in $H^*(E'';\bZ/p)$.
Hence $d_{|x|+1}(x)=\lambda b$ for $\lambda\not =0\in \bZ/p$, by dimensional reason.
\end{proof}

{\bf Remark.} (Remark 4.8 in [Ya1])
The above lemma also holds letting
$k(0)^*(X)=H^*(X;\bZ)$ and $v_0=p$. 
This fact is well known (Lemma 2.1 in [Tod2]).

%\begin{lemma}  
%Let $F\to E\stackrel{p}{\to} B$ be a fibering, and
%$E(k)=p^{-1}B^k\stackrel{j}{\subset}E$ for $k>0$. 
%Let $E_r^{*,*'}$ (resp.  $E_r(k)^{*,*'}$) be the induced spectral %sequence converging to $H^*(E;\bZ/p)$ (resp. $H^*(E(k);%\bZ/p)$).  Then the induced map
%$ j^*: E_r^{*,*'}\to E(k)_r^{*,*'}$ is injective for $*\le k$
%and surjective for $*\le k-r$.
%\end{lemma}

\begin{cor} 
In the spectral sequence converging to $H^(G/T;\bZ/p)$,
let $b\not =0$ be the transgression image of $x$, i.e.
$d_{|x|+1}(x)=b$.  Then we have the relation
\[ b=\sum_{i=0} v_iy(i)\quad in \ BP^*(G/T)/\II\]
where $y(i)\in H^*(G/T;\bZ/p)$ with $\pi^*y(i)=Q_ix$.
\end{cor}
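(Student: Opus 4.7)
My approach is to apply Lemma~3.1 (with its Remark covering $n=0$) separately for each Milnor operation $Q_{n}$ and then assemble the resulting $v_{n}$-relations in the Morava $K$-theories $k(n)^{*}(G/T)$ into a single identity in $BP^{*}(G/T)/\II$.

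First, since $H^{*}(G/T;\bZ)$ is torsion-free (Section~2), the Atiyah--Hirzebruch spectral sequence for $BP$ collapses and $BP^{*}(G/T)$ is free over $BP^{*}$. Hence
\[
BP^{*}(G/T)/\II \;\cong\; H^{*}(G/T;\bZ/p)\;\oplus\;\bigoplus_{n\ge 0} v_{n}\cdot H^{*}(G/T;\bZ/p)
\]
as a $\bZ/p$-module, with the convention $v_{0}:=p$. Choose any lift $\tilde b\in BP^{*}(BT)$ of $b$. Because $b$ is a nonzero transgression target it dies in $H^{*}(G/T;\bZ/p)$, so $i^{*}(\tilde b)\in I_{\infty}\cdot BP^{*}(G/T)$ and the decomposition produces a unique expansion
\[
i^{*}(\tilde b)\;\equiv\;\sum_{n\ge 0} v_{n}\,z(n)\pmod{\II},\qquad z(n)\in H^{*}(G/T;\bZ/p).
\]
The task reduces to identifying each $z(n)$ with an element $y(n)$ satisfying $\pi^{*}y(n)=Q_{n}x$.

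Second, fix $n$ and reduce the expansion modulo the ideal $J_{n}:=(p,v_{1},\dots,\widehat{v_{n}},\dots)\subset BP^{*}$, so that $BP^{*}(G/T)/J_{n}=k(n)^{*}(G/T)$. There the expansion becomes $i^{*}(\tilde b)\equiv v_{n}\,z(n)\pmod{v_{n}^{2}}$. When $Q_{n}x\ne 0$, Lemma~3.1 applied to the fibering $G\stackrel{\pi}{\to}G/T\stackrel{i}{\to}BT$ produces classes $y'(n)\in k(n)^{*}(G/T)$ and $b'(n)\in k(n)^{*}(BT)$ with $\pi^{*}(y'(n))=Q_{n}x$, $\rho(b'(n))=b$, and $v_{n}y'(n)=\lambda_{n}\,i^{*}(b'(n))$ for some nonzero $\lambda_{n}\in\bZ/p$. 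Adjusting $\tilde b$ inside $J_{n}\cdot BP^{*}(BT)$ so that $\tilde b\equiv b'(n)\pmod{J_{n}}$, cancelling the non-zero-divisor $v_{n}$, and absorbing $\lambda_{n}$ into the choice of $y'(n)$ identifies $z(n)$ with the desired $y(n)$. When instead $Q_{n}x=0$, the converse half of Lemma~3.1 rules out any nontrivial $v_{n}$-detection of a lift of $b$, forcing $z(n)=0$. Summing over all $n$ yields the claimed equality $b\equiv\sum_{i\ge 0}v_{i}y(i)$ in $BP^{*}(G/T)/\II$.

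The step I expect to be delicate is coordinating the data across all $n$ simultaneously: one must choose a single global lift $\tilde b$ that agrees with each $b'(n)$ modulo the respective $J_{n}$, and uniformly absorb the scalars $\lambda_{n}$ into the $y(n)$'s (whose images under $\pi^{*}$ are rigidly specified). Both issues reduce to freeness of $BP^{*}(BT)$ and $BP^{*}(G/T)$ over $BP^{*}$, together with a degree count in the Sullivan exact sequences that define the individual $k(n)$'s inside $BP$.
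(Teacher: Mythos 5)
Your proposal is correct and follows essentially the same route as the paper: expand a lift of $b$ modulo $\II$ as $\sum_i v_i z(i)$ (using torsion-freeness of $H^*(G/T)$, hence collapse of the Atiyah--Hirzebruch spectral sequence), then for each $i$ work in $k(i)^*(G/T)$ and apply Lemma 3.1 in the forward direction when $Q_i x\neq 0$ and its converse (via the Sullivan exact sequence) to force $z(i)=0$ when $Q_i x=0$. The bookkeeping you flag as delicate (choice of lift and the scalars $\lambda_i$) is treated no more carefully in the paper's own proof, so no essential difference remains.
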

\begin{proof}
Since $b=0\in H^*(G/T;\bZ/p)$, we can write 
\[ b=py(0)+v_1y(1)+...\quad in\ BP^*(G/T)/\II.\]
If $Q_i(x)=y(i)'\not =0$, then $b=v_iy(i)'$ and take
$y(i)=y(i)'$.  If $Q_i(x)=0$, then $b=0\ mod (v_i^2)$
in $k(i)^*(G/T)$.   Otherwise $b=v_iy(i)'$ with $y(i)'\not =0$
in $H^*(G/T;\bZ/p)$ by Sullivan exact sequence.
Then $Q_i(x)=y(i)'$ from the converse of the preceding lemma.  This is a contradiction. So let $y(i)=0$ when $Q_i(x)=0$.
\end{proof}

Let $G$ be a $simply$ connected Lie group such that
$H^*(G)$ has $p$-torsion.  Then it is known ([Mi-Tod])
that 
$H^*(G)$
has just (not higher) $p$-torsion  in $H^*(G)_{(p)}$.
It is also known that there is $m\ge 1$ with
\[(*)\quad  P^{p^i}(y_i)=y_{i+1}\quad for \ 1\le i\le m-1,
\ \ and \ \  
P^{p^m}(y_m)=0.\]
(Here suffix $i$ is changed adequately
from  that defined in the preceding section
(2.1). Note $m=1$ for type $(I)$ groups.)
Moreover $|x_1|=3$ and $P^1(x_1)=-x_2$, and $\beta(x_2)=y_1$.
We can also take $x_{i+1}$ such that 
\[ (**)\quad Q_i(x_1)=y_{i},\quad Q_0(x_{i+1})=y_i.\]
Therefore from the preceding corollary, we have
\[ b_1=v_1y(1)+...+v_my(m)\quad in\ BP^*(G/T)/\II\]
with $\pi^*(y(i))=y_i$.  We will study the above equation 
in more details.

Here we recall the Quillen (Landweber-Novikov) operation
([Ha], [Ra]).
For a sequence $\alpha=(a_1,a_2,...)$, $a_i\ge 0$ with
$|\alpha|=\sum _i 2(p^i-1)a_i$,  we have the Quillen operation
$ r_{\alpha}: BP^*(X)\to BP^{*+|\alpha|}(X)$
such that 
\[(1)\quad \rho(r_{\alpha}(x))=\chi P^{\alpha}(\rho(x))\quad for \ \rho:BP^*(X)\to
H^*(X;\bZ/p),\]
where $\chi$ is the anti-automorphism in the Steenrod algebra, 
\[ (2)\qquad r_{\alpha}(xy)=\sum _{\alpha=\alpha'+\alpha''}r_{\alpha'}(x)r_{\alpha''}(y)\qquad  Cartan \ formula,\]
\[ (3)\qquad r_{p^i\Delta_{n-i}}(v_n)=
\begin{cases}v_i\qquad for\ \Delta_i=(0,...,0,\stackrel{i}{1},0,...,0). \quad\\
0\quad mod(\II)\quad otherwise.\qquad 
\end{cases} \]
We also note that $\Omega^*(X)$ has the same operation
$r_{\alpha}$ satisfying (2),(3)  and (1) for $\rho: \Omega^*(X)\to CH^*(X)/p$
and the reduced power operation $P^{\alpha}$ on $CH^*(X)/p=H^{2*,*}(X;\bZ/p)$ defined by Voevodsky.

\begin{lemma}
If $|\alpha|<|v_{i-1}|-|v_{i}|=2(p^i-p^{i-1})$, then
$r_{\alpha}$ acts on $BP^*(X)/(\II,v_i,...)$.
\end{lemma}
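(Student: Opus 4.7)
The plan is to verify that $r_\alpha$ preserves the submodule $J = (\II, v_i, v_{i+1},\ldots)\cdot BP^*(X)$ of $BP^*(X)$, so that it descends to the quotient. Expanding $r_\alpha(c\cdot x) = \sum_{\alpha = \alpha' + \alpha''} r_{\alpha'}(c)\, r_{\alpha''}(x)$ by the Cartan formula (2) for $c\in (\II, v_i,\ldots)\subset BP^*$ and $x\in BP^*(X)$, the task reduces to showing $r_{\alpha'}(c) \in (\II, v_i,\ldots)$ for every $\alpha'$ with $|\alpha'|\le |\alpha|$. Since this is a $BP^*$-linear condition, I only need to verify it on generators of the ideal: products $v_jv_k$ with $j,k\ge 0$ (taking $v_0 = p$), and single $v_j$ with $j\ge i$.

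For the $\II$-part, a second application of Cartan gives $r_{\alpha'}(v_jv_k) = \sum r_\beta(v_j)\, r_\gamma(v_k)$. By property (3), every $r_\beta(v_n)$ is either a single $v_l$ (when $\beta = p^l\Delta_{n-l}$, including $\beta = 0$ giving $v_n$ itself) or lies in $\II$; in either case it lies in $I_{\infty}$. Hence each term of the sum lies in $I_{\infty}\cdot I_{\infty} = \II$, which is inside $(\II, v_i,\ldots)$.

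For $v_j$ with $j \ge i$, property (3) says $r_{\alpha'}(v_j)\in \II$ unless $\alpha' = p^l \Delta_{j-l}$ for some $0\le l\le j$, in which case $r_{\alpha'}(v_j) = v_l$. The crux is a short numerical check: $|p^l\Delta_{j-l}| = 2p^l(p^{j-l}-1) = 2(p^j - p^l)$, and if $l \le i-1$, then since $j\ge i$ one would have $2(p^j - p^l) \ge 2(p^i - p^{i-1})$, contradicting $|\alpha'|\le|\alpha| < 2(p^i - p^{i-1})$. Therefore $l \ge i$ and $v_l \in (v_i, v_{i+1}, \ldots)$.

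The main obstacle I anticipate is precisely this degree inequality, and it is what makes the threshold $2(p^i - p^{i-1}) = |v_{i-1}| - |v_i|$ sharp: below this degree, no Quillen operation can carry a $v_j$ with $j \ge i$ to a $v_l$ with $l < i$. Everything else is routine bookkeeping with the Cartan formula (2) and formula (3).
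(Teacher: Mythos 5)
Your proof is correct and follows essentially the same route as the paper's (one-line) proof, whose whole content is the observation that $r_{\alpha'}(v_s)\in \II$ for $s\ge i$ and $0\ne\alpha'$ in the stated degree range; your degree computation $|p^l\Delta_{j-l}|=2(p^j-p^l)\ge 2(p^i-p^{i-1})$ for $l<i\le j$ is exactly the point the paper leaves implicit. The extra bookkeeping you supply (Cartan reduction to ideal generators and the check that $\II$ is preserved) is a faithful expansion, not a different argument.
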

\begin{proof}  In this case,
 we have
$r_{\alpha}(v_s)\in \II$ for all $s\ge i$.
\end{proof}

Let $h^*(-)$ be a $mod(p)$ cohomology theory (e.g. $H^*(-;\bZ/p)$,
$k(n)^*(-)$).
The product $G\times G\to G$ induces the map
\[ \mu: G\times G/T\to G/T.\]
Here note $h^*(G\times G/T)\cong h^*(G)\otimes_{h^*}h^*(G/T)$, since $h^*(G/T)$ is
$h^*$-free.  For $x\in h^*(G/T)$, we say
that $x$ is $primitive$ ([Mi-Ni], [Mi-Tod])  if
\[ \mu^*(x)=\pi^*(x)\otimes 1\ +\ 1\otimes x\quad where \
\pi:G\to G/T.\]

It is immediate that $x$ is primitive implies so is 
 $r_{\alpha}(x)$.
Of course $b\in BP^*(BT)$ are primitive but $by_i$ are not,
in general.
We can take $y_1$ as primitive (adding elements
if necessary) in $BP^*(G/T)$.

\begin{lemma} Let $G$ be a simply connected Lie group satisfying $(*)$.
Let $y_1$ be a primitive element in $BP^*(G/T)$, and define $y_{i+1}=r_{p^i\Delta_1}(y_i)$.  Then we have
\[ v_1y_1+v_2y_2+...+v_my_m=b_1\quad mod(\II).\]
\end{lemma}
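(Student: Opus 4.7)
The plan is to start from Corollary~3.2, which gives the desired equation with some uniquely-determined lifts $\bar y_i$, and then use iterative Quillen-operation computations to identify these lifts with the Quillen-operation iterates $y_i = r_{p^{i-1}\Delta_1}(y_{i-1})$ of the given primitive $y_1$. Applying Corollary~3.2 to $x = x_1$ (whose transgression image is $b_1$), together with $(**)$ (giving $Q_i(x_1) = y_i$ for $i \ge 1$, and $Q_0(x_1) = 0$ by the integrality of $x_1$ for simply connected $G$) and $(*)$ (which forces $Q_i(x_1) = 0$ for $i > m$ since $P^{p^m}(y_m) = 0$), one obtains
\[
b_1 \equiv \sum_{i=1}^{m} v_i \bar y_i \pmod{\II}
\]
in $BP^*(G/T)$, with $\bar y_i \in H^*(G/T; \bZ/p) \hookrightarrow BP^*(G/T)/\II$ satisfying $\pi^*(\bar y_i) = y_i$. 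Since multiplication by $v_i$ is injective on the copy of $H^*(G/T;\bZ/p)$ inside this quotient, each $\bar y_i$ is uniquely determined.

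Next I would induct on $i$ to show $\bar y_i = \pm\rho(y_i)$ where $y_i := r_{p^{i-1}\Delta_1}(y_{i-1})$. The base case $\bar y_1 = \rho(y_1)$ is immediate from the given primitive lift. For the inductive step, I would apply $r_{p^{i-1}\Delta_1}$ to the displayed equation, working in the further quotient $BP^*(G/T)/(\II, v_{i+1}, v_{i+2},\ldots)$, where this operation is well-defined by Lemma~3.3 since $|r_{p^{i-1}\Delta_1}| = 2(p-1)p^{i-1}$. The Cartan formula together with property~(3)---which gives $r_{p^{j-1}\Delta_1}(v_j) = v_{j-1}$ and $r_\alpha(v_j) \in \II$ otherwise---shows that the only Cartan decompositions $\alpha+\beta = p^{i-1}\Delta_1$ with non-$\II$ contributions are $\alpha = 0$ (yielding $v_j r_{p^{i-1}\Delta_1}(\bar y_j)$) and $\alpha = p^{j-1}\Delta_1$ with $j \le i$ (yielding $v_{j-1} r_{(p^{i-1}-p^{j-1})\Delta_1}(\bar y_j)$). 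Extracting the $v_{i-1}$-coefficient of the result, only the terms $j = i-1$ and $j = i$ contribute, producing the identity
\[
r_{p^{i-1}\Delta_1}(\bar y_{i-1}) + \bar y_i \equiv (\text{$v_{i-1}$-coefficient of } r_{p^{i-1}\Delta_1}(b_1))
\]
in $H^*(G/T;\bZ/p)$. The inductive hypothesis $\bar y_{i-1} = \rho(y_{i-1})$, combined with $r_{p^{i-1}\Delta_1}(\rho(y_{i-1})) = \rho(y_i)$, then pins down $\bar y_i$ as $\pm\rho(y_i)$.

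Primitivity of $y_1$, preserved by all Quillen operations via the Cartan formula applied to $\mu^*$, ensures each $y_i$ is primitive; this rigidifies the identification with the unique $\bar y_i$ and is essential for absorbing the Cartan cross-terms consistently. The main obstacle is computing the $v_{i-1}$-component of $r_{p^{i-1}\Delta_1}(b_1) \in BP^*(BT)$ explicitly and verifying that it agrees with the other terms---this reduces to a calculation of Landweber--Novikov operations acting on the Chern-class generators $t_k \in BP^*(BT)$, and ultimately to the formal group law expansion of $[p]_F(t)$ in $BP$-theory.
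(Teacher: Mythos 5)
Your setup coincides with the paper's: apply Corollary 3.2 to $x_1$ to get $b_1\equiv\sum_{i=1}^m v_i\bar y_i$ mod $\II$ with $\pi^*(\bar y_i)=y_i$, then identify the lifts $\bar y_i$ with the Quillen iterates $y_i=r_{p^{i-1}\Delta_1}(y_{i-1})$. The identification step is where you diverge, and it is also where there is a genuine gap. Your inductive Cartan computation is sound as far as it goes: extracting the $v_{i-1}$-component of $r_{p^{i-1}\Delta_1}$ applied to the equation yields $\chi P^{p^{i-1}}(\bar y_{i-1})+\bar y_i\equiv c_{i-1}$, where $c_{i-1}$ is the $v_{i-1}$-component of $r_{p^{i-1}\Delta_1}(b_1)$ and hence lies in $\Img(S(t)/p\to H^*(G/T;\bZ/p))$. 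But you then defer the "main obstacle" --- evaluating $c_{i-1}$ --- to an explicit formal-group-law computation that you do not perform and that is not feasible in this generality: $b_1$ is a different polynomial for each $G$, and Landweber--Novikov operations on elements of $BP^*(BT)$ generically have nonzero $v$-components, so there is no reason for $c_{i-1}$ to vanish term by term. What your recursion actually establishes is only $\bar y_i=\pm\rho(y_i)+s_i$ with $s_i\in\Img(S(t)/p)$. To finish one must either observe that such corrections are harmless because $\sum_i v_i\tilde s_i\in I_\infty\cdot BP^*(BT)$ can be absorbed into the integral lift $b_1$ without changing it mod $I_\infty$ (an observation you do not make), or kill them by a different argument. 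A further symptom: since $\rho r_\alpha=\chi P^\alpha\rho$, your recursion forces the sign of $\bar y_i$ to alternate with $i$, so for odd $p$ it does not literally reproduce the displayed formula.

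The paper closes the identification by a completely different mechanism, and the ingredient you mention only decoratively --- primitivity --- is its entire engine. Because $b_1$ is pulled back from $BT$, the element $v_ny(n)=b_1$ is primitive in $k(n)^*(G/T)$, and $y_n$ is primitive because $y_1$ is and Quillen operations preserve primitivity. Writing $y(n)=y_n+\sum yt$ with $y\in P(y)$, $t\in S(t)$, $|t|\ge 2$, the paper must show the cross-terms $v_n\pi^*(y)\otimes t$ in $\mu^*$ cannot vanish; this is done with the Atiyah--Hirzebruch spectral sequence for $k(n)^*(G)$, whose first differential is $d_{2p^n-1}(x)=v_nQ_n(x)$, so the degree bound $|y|\le |y_n|-2=2p^n$ forces $y$ to be $v_n$-torsion free in $k(n)^*(G)$, whence $v_ny\otimes t\ne0$ and $t=0$. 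In your write-up primitivity does no actual work --- your Cartan manipulation never invokes $\mu^*$ --- so the claim that it is "essential for absorbing the Cartan cross-terms" is unsupported. To repair the proof, replace the proposed formal-group-law calculation either by the absorption-into-$b_1$ remark above or by the paper's primitivity and $k(n)^*(G)$ spectral sequence argument.
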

\begin{proof} 
Note that  $v_ny(n)=b_1$ is of course primitive in $k(n)^*(G/T)$.
We prove $y_n=y(n)\ mod(\II)$.  Let us write
\[ y(n)=y_n+\sum yt\quad with \  y\in P(y),\ t\in S(t),
 \ |t|\ge 2.\]

We will prove $t=0$.
Consider the Atiyah-Hirzebruch spectral sequence
\[ E_2^{*,*'}\cong H^*(G;k(n)^*)\Longrightarrow
        k(n)^*(G).\]
The first non-zero differential is $d_{2p^n-1}(x)=v_nQ_n(x).$
Since $|y|\le |y_n|-2= 2p^n$, we see  that $y$ is $v_n$-torsion free in $k(n)^*(G)$. This means
if $t\not =0$, then
\[  v_n y\otimes t\not =0\quad  in\ k(n)^*(G)\otimes k(n)^*(G/T).\]
Therefore $t=0$ since f $y_n$ and $v_ny(n)$ are primitive.
\end{proof}

Act $r_{\Delta_1}$ on the above equation, we have 
\begin{lemma}  In $BP^*(G/T)/(\II)$, we have
\[py_1+v_1P^1(y_1)+v_2P^1(y_2)+...+v_mP^1(y_m)=
P^1(b_1)=b_2.\]
\end{lemma}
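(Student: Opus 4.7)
The plan is to apply the Quillen operation $r_{\Delta_1}$ to both sides of the equation
\[
v_1 y_1 + v_2 y_2 + \cdots + v_m y_m \equiv b_1 \pmod{\II}
\]
from Lemma 3.4, as the preamble to the statement suggests. Recall that via property (1), $r_{\Delta_1}$ descends mod $p$ to a sign of the Steenrod operation $P^1$ on $H^*(G/T;\bZ/p)$, which is the $P^1$ appearing in the conclusion.

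On the left, the Cartan formula of property (2) gives
\[
r_{\Delta_1}(v_i y_i) = r_{\Delta_1}(v_i)\, y_i + v_i\, r_{\Delta_1}(y_i).
\]
I compute $r_{\Delta_1}(v_i)$ using property (3): the only way that $\Delta_1 = (1,0,0,\ldots)$ matches a sequence of the form $p^j \Delta_{n-j}$ is $(j,n) = (0,1)$, so $r_{\Delta_1}(v_1) = v_0 = p$, while $r_{\Delta_1}(v_n) \in \II = I_\infty^2$ for $n \ge 2$ (by degree considerations: in the relevant degree $-2(p^n - p)$, any polynomial in lower $v_j$'s has at least two factors from $I_\infty$). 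Consequently, modulo $\II$ the left-hand side reduces to
\[
p\, y_1 + \sum_{i=1}^{m} v_i\, r_{\Delta_1}(y_i).
\]
Because $v_i \in I_\infty$, each product $v_i r_{\Delta_1}(y_i)$ depends modulo $\II$ only on the class of $r_{\Delta_1}(y_i)$ in $H^*(G/T;\bZ/p)$, which equals $P^1(y_i)$ (after absorbing the antipode sign from property (1) into the paper's sign convention). This yields the asserted left-hand side.

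On the right, $r_{\Delta_1}(b_1)$ acts on the class $b_1 \in H^*(BT)$ and equals $P^1(b_1)$. The identity $P^1(b_1) = b_2$ then follows from the naturality of $P^1$ with respect to the transgression in the Serre spectral sequence for $G \to G/T \to BT$: since $d_{|x_1|+1}(x_1) = b_1$ and $P^1(x_1) = -x_2$, we get $d_{|x_2|+1}(-x_2) = P^1(b_1)$, i.e., $b_2 = P^1(b_1)$ up to the standing sign absorbed into the choice of $x_2$. The main technical point is the vanishing $r_{\Delta_1}(v_n) \in \II$ for $n \ge 2$, a formal-group-law input that is exactly property (3); once granted, the rest of the proof is a mechanical Cartan-formula expansion together with transgression naturality.
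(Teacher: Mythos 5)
Your proposal is correct and is exactly the paper's (essentially unstated) argument: the paper derives this lemma simply by acting with $r_{\Delta_1}$ on the equation $v_1y_1+\dots+v_my_m=b_1$ of the preceding lemma, and you supply the details — the Cartan formula, $r_{\Delta_1}(v_1)=p$ and $r_{\Delta_1}(v_n)\in\II$ for $n\ge 2$ from property (3), the identification of $r_{\Delta_1}$ with $\chi P^1=-P^1$ mod $I_\infty$ via property (1), and $P^1(b_1)=b_2$ by Kudo transgression together with $P^1(x_1)=-x_2$ — all consistent with the paper's sign and lift conventions (the $b_i$ being fixed only up to such choices).
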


\section{versal flag varieties}

Recall that $\bG_k$ is a  nontrivial $G_k$-torsor.
We can construct a twisted form of $G_k/B_k$ by
\[(\bG_k\times G_k/B_k)/G_k\cong \bG_k/B_k.
\]
We will study  the twisted flag variety $X=\bG_k/B_k$.

Let $P\supset T$ be a parabolic subgroup of $G$.
Petrov, Semenov and Zainoulline developed the theory of decompositions of motives $M(\bG_k/P_k)$.
They  develop the theory of generically split varieties.
We say that  $L$ is splitting field of a variety of $X$ if $M(X|_L)$ is isomorphic to a
direct sum of twisted Tate motives $\bT^{\otimes i}$
and the restriction map $i_L: M(X)\to M(X|_L)$ is isomorphic after tensoring $\bQ$.
A smooth scheme $X$ is said to be generically split over $k$ if its function field $L=k(X)$
is a  splitting field.  Note that (the complete flag) $X=\bG_k/B_k$
is always generically split, i.e., $X|L$ is cellular.

\begin{thm} (Theorem 3.7 in [Pe-Se-Za])
Let $Q_k\subset P_k$ be parabolic subgroups of $G_k$ which are generically split over $k$.
Then there is a decomposition of motive :
$M(\bG/Q_k)\cong M(\bG_k/P_k)\otimes H^*(P/Q)$.
\end{thm}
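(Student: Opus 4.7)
The plan is to exhibit the map $\pi\colon \bG_k/Q_k\to \bG_k/P_k$ as a fibration with cellular fiber $P_k/Q_k$, prove the motivic decomposition over a splitting field, and then descend to $k$ by Rost nilpotence, which applies because the base is generically split.

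First I would observe that $P_k/Q_k$ is itself cellular over $k$: since $P_k$ and $Q_k$ are split parabolics, the Bruhat decomposition of $P_k/Q_k$ is defined over $k$ and indexed by the coset set $W_P/W_Q$, where $W_P$ and $W_Q$ are the Weyl groups. Accordingly $M(P_k/Q_k)\cong \bigoplus_w \bT^{\otimes \ell(w)}$, and the resulting grading matches the graded basis of $H^*(P/Q)$. Moreover each cell closure $\overline{C_w}\subset P_k/Q_k$, pulled back through the fibration structure, defines a Schubert correspondence $\alpha_w\in CH^*(\bG_k/P_k\times_k \bG_k/Q_k)$ already $k$-rational.

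Next, I would pass to a splitting field $L$ of $\bG_k/P_k$ (for instance $L=k(\bG_k/P_k)$). Over $L$ the torsor $\bG_k$ trivializes, and $\pi|_L$ is identified with the split flag bundle $G_L/Q_L\to G_L/P_L$, which is Zariski-locally trivial with cellular fiber. A direct cell-by-cell argument gives $M(\bG_k/Q_k)|_L\cong M(\bG_k/P_k)|_L\otimes M(P_k/Q_k)|_L$, and the restrictions $\alpha_w|_L$ realize the orthogonal projections onto the individual Tate summands.

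Finally, I would apply Rost nilpotence to the generically split variety $\bG_k/P_k$ (and to $\bG_k/Q_k$, which is generically split over the same field because its fiber $P_k/Q_k$ is already split over $k$) to lift the $L$-relations among the $\alpha_w$ to identities on $k$-motives: any correspondence vanishing over $L$ is nilpotent over $k$, so the $\alpha_w$ furnish an orthogonal idempotent system whose pieces assemble into $M(\bG_k/P_k)\otimes H^*(P/Q)$. The main obstacle is precisely this descent step, namely checking that the orthogonality and idempotency relations valid over $L$ really do lift modulo a nilpotent ideal of correspondences; this is where the generic-splitness hypothesis on both $P_k$ and $Q_k$ is indispensable, since without it one has no grip on the kernel of the base-change map on Chow groups of the product.
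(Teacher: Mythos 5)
The paper does not prove this statement at all: it is quoted verbatim as Theorem 3.7 of [Pe-Se-Za], so the only meaningful comparison is with the argument in that source. Your outline follows the same well-trodden route used there and in the earlier work of Chernousov--Gille--Merkurjev and Karpenko: split the fiber $P_k/Q_k$ by Bruhat cells, establish the product decomposition over a splitting field $L$, and descend by lifting idempotents modulo the nilpotent kernel of restriction, using that $\bG_k/P_k$ and $\bG_k/Q_k$ are generically split. The skeleton is sound, and your observation that generic splitness of $\bG_k/Q_k$ follows (via $k(\bG_k/P_k)\subset k(\bG_k/Q_k)$) is correct.

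The one step you should repair is the source of the $k$-rational correspondences $\alpha_w$. The map $\bG_k/Q_k\to\bG_k/P_k$ is not Zariski-locally trivial over $k$, and its fibers are twisted forms of $P/Q$, so you cannot literally ``pull back the cell closures $\overline{C_w}\subset P_k/Q_k$ through the fibration structure'' to get cycles on the product over $k$; over $k$ there is no preferred trivialization identifying a fiber with the split $P_k/Q_k$. The mechanism that actually works (and is where generic splitness earns its keep) is to produce the cycles at the generic point: over $L=k(\bG_k/P_k)$ the torsor trivializes, the generic fiber of $\pi$ becomes the split cellular variety $(P/Q)_L$, and one spreads the classes of its cells out to closed subvarieties of $\bG_k/P_k\times_k\bG_k/Q_k$ defined over $k$, whose restrictions to the generic fiber are the prescribed cell classes. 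With that substitution your Rost-nilpotence descent goes through as in [Pe-Se-Za]; alternatively one can argue via Karpenko's relative cellular filtration of $\bG_k/Q_k$ over $\bG_k/P_k$, which avoids idempotent-lifting altogether but requires setting up the filtration by sub-parabolics between $Q_k$ and $P_k$.
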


By extending the arguments by Vishik [Vi] for quadrics to that for  flag varieties, Petrov, Semenov and Zainoulline define the 
$J$-invariant of $\bG_k$. Recall the expression in $\S 2$ 
\[(*)\quad H^*(G;\bZ/p)\cong \bZ/p[y_1,...,y_s]/(y_1^{p^{r_1}},...,y_s^{p^{r_s}})
\otimes \Lambda(x_1,...,x_{\ell}).
\]
Roughly speaking (for the accurate definition, see
[Pe-Se-Za]),
the $J$-invariant is defined as 
$ J_p(\bG_k)=(j_1,...,j_{s})$
if $j_i$ is the minimal integer such that
\[ y_i^{p^{j_i}}\in Im(res_{CH})\quad mod (y_1,...,y_{i-1}, t_1,...,t_{\ell})\]
for $res_{CH}:CH^*(\bG_k/B_k)\to CH^*(G_k/B_k)$.
Here  we take $|y_1|\le |y_2|\le...$ in $(*)$.  Hence $0\le j_i\le r_i$ and 
$J_p(\bG_k)=(0,...,0)$ if and only if $\bG_k$ split by an extension of the index coprime to $p$.
One of the main results in [Pe-Se-Za] is 
\begin{thm} (Theorem 5.13 in [Pe-Se-Za]
and Theorem 4.3 in [Se-Zh])
Let $\bG_k$ be a $G_k$-torsor over $k$, $X=\bG_k/B_k$
 and $J_p(\bG_k)=(j_1,...,j_{s})$.
Then there is a $p$-localized  motive $R(\bG_k)$ such that
\[M(X)_{(p)}\cong  \oplus _u R(\bG_k)\otimes \bT^{\otimes u}.\]
Here $\bT^{\otimes u}$ are Tate motives with 
$CH^*(\oplus_u \bT^{\otimes u})/p\cong  P'\otimes S(t)/(b)$ 
where \[P'(y)=\bZ/p[y_1^{p^{j_1}},...,y_{s}^{p^{j_{s}}}]/(y_1^{p^{r_1}},...,y_{s}^{p^{r_{s}}}
)\subset P(y)/p,\]
\[\quad S(t)/(b)=S(t)/(b_1,...,b_{\ell}).\]
The $mod(p)$ Chow group of $\bar R(\bG_k)=R(\bG_k)\otimes \bar k$  is  given by 
\[ CH^*(\bar R(\bG_k))/p\cong \bZ/p[y_1,...,y_s]/(y_1^{p^{j_1}},...,y_{s}^{p^{j_{s}}}).
 \]
Hence we have $ CH^*(\bar X)/p\cong  CH^*( \bar R(\bG_k))\otimes
P'(y)\otimes S(t)/(b) $ and \\
$ CH^*(X)/p\cong 
CH^*( R(\bG_k))\otimes P'(y)\otimes S(t)/(b).$
\end{thm}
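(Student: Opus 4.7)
The plan is to reconstruct the proof of Petrov--Semenov--Zainoulline by following their framework on generically split projective homogeneous varieties. Since $X=\bG_k/B_k$ is cellular over $\bar k$ and becomes cellular over its function field $k(X)$, it is generically split; in particular the restriction map $CH^*(X)_{\bQ}\to CH^*(\bar X)_{\bQ}$ is an isomorphism, and the Rost nilpotence principle applies to $M(X)$. This sets up a Krull--Schmidt theorem for the $p$-local Chow motive $M(X)_{(p)}$, which is the backbone of the argument.

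\textbf{Step 1.} Decompose $M(X)_{(p)}$ into indecomposables. Working modulo the nilpotent ideal of rational-trivial correspondences (via Rost nilpotence), idempotents in $\mathrm{End}(M(X)_{(p)})$ lift from $\mathrm{End}(M(\bar X)_{(p)})$, where $\bar X$ is cellular so that endomorphisms are described by $CH^*(\bar X\times\bar X)/p$. One distinguishes the indecomposable summand $R(\bG_k)$ by lifting the projector onto the Tate summand carrying the fundamental class. \textbf{Step 2.} Show that every indecomposable summand of $M(X)_{(p)}$ is a shift of $R(\bG_k)$: after base change to $\bar k$ any two summands are sums of Tate motives, and matching Schubert cycles provides partial isomorphisms between them which, by Rost nilpotence, can be promoted to honest isomorphisms of shifted copies of $R(\bG_k)$.

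\textbf{Step 3.} Identify the shifts. Over $\bar k$ one has the cellular description (2.2), so
\[ CH^*(\bar X)/p\cong P(y)/p\otimes S(t)/(b_1,\dots,b_{\ell}). \]
The Poincar\'e polynomial of $P(y)/p$ factors as the product of the Poincar\'e polynomials of $\bZ/p[y_1,\dots,y_s]/(y_i^{p^{j_i}})$ and of $P'(y)=\bZ/p[y_1^{p^{j_1}},\dots,y_s^{p^{j_s}}]/(y_i^{p^{r_i}})$, and a Poincar\'e polynomial comparison forces the collection of Tate twists $\{u\}$ to be encoded by $P'(y)\otimes S(t)/(b)$. \textbf{Step 4.} Compute $CH^*(\bar R(\bG_k))/p\cong \bZ/p[y_1,\dots,y_s]/(y_i^{p^{j_i}})$ from the definition of the $J$-invariant: by construction $y_i^{p^{j_i}}$ is the smallest power of $y_i$ that lifts to $\Img(res_{CH})$ modulo $(y_1,\dots,y_{i-1},t_1,\dots,t_{\ell})$, and the projector cutting out $R(\bG_k)$ is precisely the one preserving exactly these classes on $\bar X$.

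The main obstacle is Step 2, the uniform shifted structure of all summands; it requires a delicate interplay between the idempotent calculus of cellular varieties and the arithmetic invariants of the torsor $\bG_k$, and is where the Krull--Schmidt theorem of Chernousov--Merkurjev (and its refinement for generically split varieties) does real work. Once Steps 1--3 are in place, the final formulas for $CH^*(\bar X)/p$ and $CH^*(X)/p$ are immediate from the additivity of $CH^*/p$ on direct sums of motives, using that the Tate-twist bookkeeping of Step 3 supplies the factor $P'(y)\otimes S(t)/(b)$ on each side.
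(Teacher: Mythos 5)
First, a point of calibration: the paper does not prove this statement at all --- it is imported verbatim as Theorem 5.13 of [Pe-Se-Za] together with Theorem 4.3 of [Se-Zh], so there is no internal proof to compare against. Your outline does follow the same general strategy as the original sources (Rost nilpotence and idempotent lifting, uniqueness of the indecomposable summand, identification of the Tate twists by a Poincar\'e polynomial count, and the $J$-invariant controlling $CH^*(\bar R(\bG_k))/p$), so as a reconstruction of the cited argument it is pointed in the right direction.

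Regarded as a proof, however, it has genuine gaps. The decisive one is that Steps 2--4 assert exactly what [Pe-Se-Za] actually prove: that the projector cut out over $\bar k$ has image precisely $\bZ/p[y_1,\dots,y_s]/(y_1^{p^{j_1}},\dots,y_s^{p^{j_s}})$. Your Step 3 deduces the twists from knowing $CH^*(\bar R(\bG_k))/p$, while your Step 4 justifies that computation by saying the projector ``is precisely the one preserving exactly these classes'' --- this is circular as written, and the real work (constructing the projector from rational cycles, using the subring generated by the $y_i^{p^{j_i}}$ and the image of $S(t)$, and proving the Poincar\'e polynomial of its image is the claimed product) is omitted; ``matching Schubert cycles provides partial isomorphisms'' in Step 2 is likewise an assertion of the theorem's content rather than an argument. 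Second, the statement concerns the $p$-local integral motive $M(X)_{(p)}$, but your idempotent calculus takes place in $CH^*(\bar X\times\bar X)/p$; passing from a $\bZ/p$-coefficient decomposition to one with $\bZ_{(p)}$-coefficients is not automatic and is exactly why the paper cites [Se-Zh] (Maranda-type and relative Krull--Schmidt theorems) in addition to [Pe-Se-Za]. That lifting step is missing from your plan. Finally, note that the concluding isomorphism $CH^*(X)/p\cong CH^*(R(\bG_k))\otimes P'(y)\otimes S(t)/(b)$ is only additive (the paper's introductory remark stresses that $CH^*(R(\bG_k))$ carries no canonical ring structure), which your appeal to additivity of $CH^*/p$ on direct sums does handle correctly.
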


Let $P_k$ be special (namely, any extension is split,
e.g. $B_k$). 
Let us consider an embedding of $G_k$ into the general linear group $GL_N$ for some $N$.  This makes $GL_N$ a $G_k$-torsor over the quotient variety $S=GL_N/G_k$.
We define $F$ to be the function field $k(S)$ and  define
the $versal$ $G_k$-$torsor$ $E$ to be the $G_k$-torsor over $F$ given by the generic fiber of $GL_N\to S$. 
(For details, see [Ga-Me-Se], [To2], [Me-Ne-Za], [Ka1].) 
\[\begin{CD}
        E@>>> GL_N\\
       @VVV     @VVV \\
     Spec(k(S))  @>>> S=GL_N/G_k
\end{CD}\]
The corresponding flag variety $E/P_k$ is called $generically$ $twisted$ or $versal$ flag
variety, which is considered as the most complicated twisted
flag variety (for given $G_k,P_k$). It is known that the Chow ring
$CH^*(E/P_k)$ is not dependent to the choice
of  generic $G_k$-torsors $E$ (Remark 2.3 in [Ka1]).

Karpenko and Merkurjev proved the following result
(for $CH^*(X)$) for a versal (generically twisted) flag variety.  
\begin{thm}
(Karpenko Lemma 2.1 in [Ka1])
Let $h^*(X)$ be an oriented cohomology theory
(e.g., $CH^*(X)$, $\Omega^*(X)$).
Let $P_k$ be a parabolic subgroup of $G_k$ and $\bG_k/P_k$ be a versal flag variety.
Then the natural map
$h^*(BP_k)\to h^*(\bG_k/P_k)$ is surjective.
\end{thm}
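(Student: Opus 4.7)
The plan is to prove the surjectivity by realizing the versal flag variety as a generic fiber of a geometric approximation of $BP_k$, and then invoking localization. Concretely, let $V$ be a faithful $GL_N$-representation and $V^{\circ} \subset V$ the open locus of free $GL_N$-action; by enlarging $V$ one may assume the complement has arbitrarily high codimension. Then $V^{\circ} \to V^{\circ}/G_k$ is a $G_k$-torsor whose generic fiber is a versal $G_k$-torsor, so the generic fiber of the projection
$$\pi\colon V^{\circ}/P_k \;\longrightarrow\; V^{\circ}/G_k$$
is (equivalent to) the versal flag variety $\bG_k/P_k$. Moreover, Totaro's construction identifies $h^i(BP_k) \cong h^i(V^{\circ}/P_k)$ in degrees below the codimension of $V \setminus V^{\circ}$, so for each fixed $i$ we may replace $BP_k$ by $V^{\circ}/P_k$.

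I would then reduce the surjectivity to the restriction-to-generic-fiber map $h^*(V^{\circ}/P_k) \twoheadrightarrow h^*(\bG_k/P_k)$. For any nonempty open $U \subset V^{\circ}/G_k$ the closed complement of $\pi^{-1}(U)$ in $V^{\circ}/P_k$ has positive codimension, so the localization exact sequence for the oriented theory $h^*$ yields a surjection $h^*(V^{\circ}/P_k) \twoheadrightarrow h^*(\pi^{-1}(U))$. Since $\bG_k/P_k = \varprojlim_U \pi^{-1}(U)$ and $h^*$ is continuous for such filtered limits of smooth schemes, one obtains $h^*(\bG_k/P_k) = \varinjlim_U h^*(\pi^{-1}(U))$, and each term in the colimit is a quotient of $h^*(V^{\circ}/P_k)$; hence the colimit is too. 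Combined with the Totaro approximation this produces the desired surjection $h^*(BP_k) \twoheadrightarrow h^*(\bG_k/P_k)$ in each fixed degree.

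The main obstacle I anticipate is checking that this composite really coincides with the natural classifying-map arrow $h^*(BP_k) \to h^*(\bG_k/P_k)$ appearing in the statement. This reduces to the independence of the cohomology of a versal flag variety from the particular choice of versal torsor used to define it (Remark 2.3 of [Ka1]), so that the torsor coming from $V^{\circ} \to V^{\circ}/G_k$ may be used interchangeably with the one from $GL_N \to GL_N/G_k$. A secondary technical point is verifying the continuity and localization axioms for the specific oriented theory $h^*$; both are standard for $CH^*$ and $\Omega^*$, the cases of interest in this paper.
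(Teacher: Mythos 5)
The paper does not actually prove this statement: Theorem 4.3 is quoted verbatim from Karpenko (Lemma 2.1 of [Ka1]) with no argument supplied, so there is no in-paper proof to compare against. Your proposal is, in substance, a correct reconstruction of the standard argument behind Karpenko's lemma: realize the versal flag variety as the generic fiber of a map $V^{\circ}/P_k \to V^{\circ}/G_k$ whose source approximates $BP_k$, get surjectivity onto $h^*(\pi^{-1}(U))$ for every open $U$ from the localization sequence, and pass to the generic fiber by continuity. The two caveats you flag are exactly the right ones, and both are handleable: continuity at the generic fiber holds for $CH^*$ by definition of Chow groups of a limit and for $\Omega^*$ by Levine--Morel, and the compatibility with the ``natural'' map is Remark 2.3 of [Ka1]. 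One cosmetic difference from Karpenko's own write-up is worth knowing: instead of a general faithful representation $V$ and Totaro's degree-by-degree approximation, one can work equivariantly with $GL_N$ itself, which is a $P_k$-invariant open subset of the affine space $M_N$ of matrices; homotopy invariance gives $h^*_{P_k}(M_N)\cong h^*(BP_k)$ and equivariant localization gives a surjection onto $h^*_{P_k}(GL_N)\cong h^*(GL_N/P_k)$, whose generic fiber over $GL_N/G_k$ is precisely the versal flag variety of the paper's Section 4. That variant uses the very torsor the paper takes as the definition of versality, so the independence issue you worry about in your last paragraph disappears. As written, your argument has no genuine gap beyond these standard verifications.
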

\begin{cor}
The Chow ring $CH^*(\bG_k/B_k)$
is generated by elements $t_i$ in $S(t)$. In particular,
each $x\in CH^*(G_k/B_k)$, the element
$p^sx $ is represented by elements in $S(t)$
for a sufficient large  $s$.
\end{cor}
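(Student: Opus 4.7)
The first assertion is essentially immediate from Theorem 4.3. I would take $h^* = CH^*$ and $P_k = B_k$ in that theorem, obtaining surjectivity of $CH^*(BB_k) \to CH^*(\bG_k/B_k)$. Since $B_k$ deformation-retracts onto $T_k$, one has $CH^*(BB_k) \cong CH^*(BT_k) \cong S(t)$, so $CH^*(\bG_k/B_k)$ is generated by the images of $t_1,\ldots,t_\ell$.

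For the second assertion, the plan is to compare $\bG_k/B_k$ with the split variety $G_k/B_k$ via restriction. Pick a finite extension $L/k$ of degree $d$ over which the torsor $\bG_k$ becomes trivial, so $\bG_k/B_k \otimes_k L \cong G_k/B_k \otimes_k L$; since $G_k/B_k$ is cellular, $CH^*(G_k/B_k \otimes_k L) \cong CH^*(G_k/B_k)$. Let $\mathrm{res}: CH^*(\bG_k/B_k) \to CH^*(G_k/B_k)$ denote the resulting restriction. By naturality of Chern classes of line bundles, the element $t_i \in S(t)$ maps to the same class $t_i \in CH^*(G_k/B_k)$ along either route, so by the first part $\mathrm{Im}(\mathrm{res})$ is contained in the image of the usual map $S(t) \to CH^*(G_k/B_k)$. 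Next I would invoke the transfer--restriction formula: the pushforward $\mathrm{tr}: CH^*(\bG_k/B_k \otimes_k L) \to CH^*(\bG_k/B_k)$ satisfies $\mathrm{res}\circ\mathrm{tr} = d\cdot\mathrm{id}$ on Galois-invariant classes (and $CH^*(G_k/B_k)$ is Galois-invariant since $G_k, B_k$ are already split over $k$). Hence $d\cdot x \in \mathrm{Im}(\mathrm{res})$ for every $x$. Letting $d$ run over all splitting degrees, $n(\bG_k)\cdot x \in \mathrm{Im}(\mathrm{res})$, and for the versal $\bG_k$ one has $n(\bG_k) = t(G)$ by [Ga-Me-Se], [To2]. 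After $p$-localization (the standing assumption of the paper) the $p$-part of $t(G)$ is a single power $p^s$, and this $p^s$ annihilates the $p$-local cokernel of $\mathrm{res}$. Combined with the first part, this gives $p^s x \in \mathrm{Im}(S(t) \to CH^*(G_k/B_k))$.

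The main subtlety I expect is the transfer step: strictly speaking $\mathrm{res}\circ\mathrm{tr} = d$ requires $L/k$ to be Galois, whereas a splitting field can be arbitrary. One resolves this by replacing $L$ with its Galois closure $L'/k$ and observing that $CH^*(G_k/B_k \otimes_k L')$ still carries trivial Galois action (so every class is invariant), then using a Bezout combination $n(\bG_k) = \sum a_i d_i$ of splitting degrees. The rest is bookkeeping on finitely generated abelian groups: $CH^*(G_k/B_k) = H^*(G/T)$ is a finitely generated free $\bZ$-module, its cokernel modulo $\mathrm{Im}(\mathrm{res})$ is annihilated by $n(\bG_k)$, hence its $p$-primary part is annihilated by a uniform $p^s$, which is the bound claimed in the corollary.
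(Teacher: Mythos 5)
Your argument is correct, and the first half (Karpenko's Theorem 4.3 plus $CH^*(BB_k)\cong CH^*(BT_k)\cong S(t)$) is exactly what the paper does. For the second half you take a genuinely different route on the transfer step. The paper's mechanism (spelled out later as Lemma 6.3) picks a single splitting field of degree $ap^s$, $(a,p)=1$, and computes $res\circ tr$ \emph{indirectly}: from the projection formula $tr\circ res=ap^s$, the rational bijectivity of $res$ (part of the definition of a splitting field in \S 4), and the torsion-freeness of $CH^*(G_k/B_k)$, one deduces $res(tr(y))=ap^s\,y$, hence $p^s y\in \Img(res_{CH})$ after $p$-localization, with no Galois hypothesis and with the sharp exponent attached to that splitting degree. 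You instead compute $res\circ tr$ \emph{directly} as $\sum_\sigma \sigma^*$ via Galois descent, using that the Galois action on $CH^*$ of the split form is trivial; the price is the Galois-closure detour (or, equivalently, decomposing $L\otimes_k L$ into fields and applying the projection formula factor by factor, which would let you avoid the closure entirely), which can inflate the $p$-adic valuation of the degree. Since the corollary only asserts the conclusion for \emph{some} sufficiently large $s$, this loss is harmless, and your Bezout/gcd refinement recovers annihilation of the cokernel by $n(\bG_k)=t(G)$ when you do keep track of all splitting degrees. Two small points of bookkeeping: the images $\Img(res_L)$ for different splitting fields $L$ are a priori different subgroups of $CH^*(G_k/B_k)$, so the Bezout combination should be taken inside their common over-group $\Img(S(t)\to CH^*(G_k/B_k))$ (which is where you ultimately land anyway, by applying the first part to each $L$); and the final statement must be read $p$-locally, since $t(G)$ can involve several primes.
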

\begin{proof}  For some extension $F/k$ of order $ap^s$
and $a$ is coprime $p$, the $G_k$-torsor $\bG_k$ splits.
Hence $p^sy^i\in Im(res_{CH}:CH^*(\bG_k/B_k)\to CH^*(G_k/B_k))$,
which is written by elements in $S(t)$ by the above
Karpenko theorem.
\end{proof}
\begin{cor} If a $G_k$-torsor  $\bG_k$ is 
 versal, then  $J(\bG_k)=(r_1,...,r_s)$, i.e. $r_i=j_i$.
\end{cor}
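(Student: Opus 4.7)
The plan is to show that $j_i \ge r_i$ for each $i$; the reverse inequality $j_i \le r_i$ is built into the definition, since $y_i^{p^{r_i}} = 0$ in $H^*(G;\bZ/p)$ and so trivially lies in $\Img(\text{res}_{CH})$ modulo the indicated ideal.

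First I would apply Theorem 4.3 to replace the image of $\text{res}_{CH}: CH^*(\bG_k/B_k) \to CH^*(G_k/B_k)$ by the image of the composition
$$CH^*(BB_k) \twoheadrightarrow CH^*(\bG_k/B_k) \to CH^*(G_k/B_k).$$
Under the identifications $CH^*(BB_k) \cong S(t)$ and $CH^*(G_k/B_k) \cong H^*(G/T)$, this is just the image of the natural ring map $S(t) \to H^*(G/T)$. The question thus reduces to whether $y_i^{p^j}$ lies in $\Img(S(t)) + (y_1, \ldots, y_{i-1}, t_1, \ldots, t_\ell)$ in $H^*(G/T)/p$ for some $j < r_i$.

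Next I would pass to the quotient
$$Q \;=\; H^*(G/T)\,/\,\bigl(p,\ y_1, \ldots, y_{i-1},\ t_1, \ldots, t_\ell\bigr).$$
Using the presentation $(2.2)$, together with the facts that $f_j \equiv y_j^{p^{r_j}}$ modulo $(t_1, \ldots, t_\ell)$ and that each $b_j \in S(t)$ has positive polynomial degree in the $t_\nu$ (hence lies in $(t_1, \ldots, t_\ell)$), the quotient $Q$ identifies canonically with $\bZ/p[y_i, \ldots, y_k]/(y_i^{p^{r_i}}, \ldots, y_k^{p^{r_k}})$, in which $y_i^{p^j} \neq 0$ for every $j < r_i$. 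On the other hand, the image of $S(t) = \bZ[t_1, \ldots, t_\ell]$ in $Q$ collapses to $\bZ/p \cdot 1$, since every $t_\nu$ has been killed. As $y_i^{p^j}$ has strictly positive cohomological degree, it cannot lie in this image, forcing $j_i \ge r_i$.

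The only technical point that requires a moment's care is verifying the asserted description of $Q$ from $(2.2)$, but this is a routine check once one notes the two facts above about $b_j$ and $f_j$. Everything else is purely formal, so the heart of the argument is a single invocation of Karpenko's surjectivity (Theorem 4.3) specialised to the versal setting; no genuine obstacle is anticipated.
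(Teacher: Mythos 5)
Your argument is correct and is essentially the paper's own proof: both hinge on Karpenko's surjectivity (Theorem 4.3) to force $\Img(res_{CH})$ into the image of $S(t)$, and then on the structure of $CH^*(G_k/B_k)/p\cong P(y)/p\otimes S(t)/(b)$ to see that this image cannot contain $y_i^{p^{j}}$ for $j<r_i$ modulo $(y_1,\dots,y_{i-1},t_1,\dots,t_\ell)$. Your explicit passage to the quotient $Q$ via $(2.2)$ just spells out the contradiction the paper states in one line.
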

\begin{proof}
If $j_i<r_i$, then $0\not =y_i^{p^{j_i}}\in res(CH^*(X)\to
CH^*(G_k/B_k))$, which is in the image from $S(t)$
by the preceding theorem.
This contradicts to $CH^*(G_k/T_k;\bZ/p)\cong
P(y)/p\otimes S(t)/(b)$ and $0\not =y_i^{p^{j_i}}\in P(y)/p$.
\end{proof}

Here we recall the (original) Rost motive $R_a$
(we write it by $=R_n$)
defined from a nonzero pure symbol $a$ in $K_{n+1}^M(k)/p$.
When $J(\bG_k)=(1)$ (and $G$ is simply connected), we know $R(\bG_k)\cong R_2$
from [Pe-Se-Za].  We write 
$\bar R_n=R_n\otimes \bar k$.  The Rost motive  $R_n$ is defined as a non-split motive but split over a field of degree
$ap$ with $(a,p)=1$, and  for $|y|=2b_n=2(p^n-1)/(p-1)$
\[ CH^*(\bar R_n)\cong \bZ[y]/(y^p),\quad \Omega^*(\bar
R_n)\cong BP^*[y]/(y^p). \]
\begin{thm} ([Vi-Ya], [Ya4], [Me-Su])
Let $R_n$ be the (original) Rost motive
defined by Rost and Voevodsky ([Ro1,2],[Vo2,3]). 
Then the restriction 
$ res_{\Omega}:\Omega^*(R_n)\to \Omega^*(\bar R_n)$
is injective.  Recall $I_n=(p,...,v_{n-1})\subset BP^*$.
The restriction image $Im(res_{\Omega})$ is isomorphic to
\[ BP^*\{1\}\oplus I_n[y]^{+}/(y^p)\]
 \[ \cong  BP^*\{1,v_jy^i\ |\ 0\le j\le n-1,
1\le i\le p-1\}\subset BP^*[y]/(y^p).
\]
Hence writing $v_jy^i=c_{j}(y^i)$, $|c_j(y^i)|=2ib_n-2(p^j-1)$,
we have 
\[CH^*(R_n)/p\cong \bZ/p\{1,c_j(y^i)\ |\ 0\le j\le n-1,\ 
1\le i\le p-1\}.\]
%Hence writing $c_j(y^i)=v_jy^i$, we have
%\[ CH^*(R_n)/p\cong \bZ/p\{1,c_0(y),c_1(y),...,c_{n-2}(y^{p-1}), %c_{n-1}(y^{p-1})\}.\]
\end{thm}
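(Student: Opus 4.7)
The strategy has two threads: establish injectivity of $res_\Omega$, then pin down its image by combining a geometric upper bound with an explicit construction of the listed classes.

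For injectivity, I would start from the fact that $R_n$ splits over an extension of degree $ap$ with $(a,p)=1$, so after $p$-localization Rost nilpotence applies: the kernel of $\operatorname{End}(R_n)\to\operatorname{End}(\bar R_n)$ is nilpotent. Since $\Omega^*(\bar R_n)\cong BP^*[y]/(y^p)$ is free over $BP^*$, an Atiyah--Hirzebruch argument starting from the (known) motivic cohomology of $R_n$ shows $\Omega^*(R_n)$ is itself $I_\infty$-torsion-free as a $BP^*$-module. Hence it suffices to verify injectivity after reducing modulo $I_\infty$, where the statement becomes injectivity of $CH^*(R_n)/p\to CH^*(\bar R_n)/p$, the classical Rost/Merkurjev--Suslin result for Rost motives.

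For the inclusion $\Img(res_\Omega)\subseteq BP^*\{1\}\oplus I_n[y]^+/(y^p)$, I would analyze the image modulo each prime ideal $I_{j+1}=(p,v_1,\ldots,v_j)$ for $j=0,\ldots,n-1$ inductively, using the Sullivan exact sequences for the connective Morava $K$-theories $k(j)^*$ recalled in \S3. The geometric input at each stage is the nontriviality of the defining symbol $a\in K_{n+1}^M(k)/p$: the classes $y^i\in CH^*(\bar R_n)/p$ cannot be lifted to $CH^*(R_n)/p$, and more generally $y^i$ cannot appear in the image modulo $I_n$ because the Milnor operation $Q_n$ detects $y$ while no preimage exists over the base field. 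Intersecting these constraints across $j=0,\ldots,n-1$ leaves exactly $BP^*\cdot 1\oplus\bigoplus_{i=1}^{p-1}I_n\cdot y^i$.

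For the reverse inclusion, one must exhibit $c_j(y^i)\in\Omega^*(R_n)$ with $res_\Omega(c_j(y^i))=v_jy^i$ for $0\le j\le n-1$ and $1\le i\le p-1$. I would exploit the realization of $R_n$ as a direct summand of the motive of a Rost/norm variety $X_a$: its cellular fibration structure yields distinguished cobordism classes, and the Quillen--Landweber--Novikov operations $r_\alpha$ of \S3 convert ``top'' $v_n$-multiples of these classes into the desired $v_j$-multiples via the identity $r_{p^j\Delta_{n-j}}(v_n)\equiv v_j\pmod{I_\infty}$. Alternatively, Vishik's symmetric operations in $BP$-theory can be used to produce the $c_j(y^i)$ directly. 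The main obstacle, in my view, is the upper bound: rigorously showing that the image avoids $BP^*\cdot y^i$ modulo $I_n$ for $i\ge 1$. This is where the nontriviality of the symbol $a$ must be leveraged quantitatively, either through the motivic cohomology computation of $R_n$ due to Rost and Voevodsky, or through an inductive Milnor $Q_j$-operation analysis on $H^*(R_n;\bZ/p)$ combined with the converse direction of Lemma 3.1.
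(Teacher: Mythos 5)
The paper itself offers no proof of this statement: Theorem 4.6 is imported verbatim from [Vi-Ya], [Ya4] and [Me-Su], so the only meaningful comparison is with those sources, where the result is obtained by running the motivic Atiyah--Hirzebruch spectral sequence from Voevodsky's computation of the motivic cohomology of $R_n$ and matching the resulting upper bound against explicitly constructed classes. Your sketches of the upper bound (Milnor $Q_j$-operations and the Sullivan sequences of \S 3, with the nontriviality of the symbol as input) and of the lower bound (transfer from the splitting field for $py^i$, then Quillen operations $r_{p^j\Delta_{n-j}}$ to manufacture $v_jy^i$ from $v_n$-multiples coming from the norm variety) are consistent with that strategy, apart from the slip that a norm variety is not cellular.

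There is, however, a genuine gap in your injectivity argument. You reduce injectivity of $res_\Omega$ to injectivity of $CH^*(R_n)/p\to CH^*(\bar R_n)/p$ and call the latter a classical fact. It is false: by the very statement being proved, $CH^*(R_n)/p$ is spanned by $1$ and the classes $c_j(y^i)$, and every $c_j(y^i)$ dies in $CH^*(\bar R_n)/p\cong\bZ/p[y]/(y^p)$ --- the class $c_0(y^i)$ maps to $py^i\equiv 0$, while for $j\ge 1$ the degree $2ib_n-2(p^j-1)$ is not of the form $2i'b_n$, so the target group is zero. Moreover, even granting $I_\infty$-torsion-freeness of $\Omega^*(R_n)$, injectivity of a map of $BP^*$-modules does not follow from injectivity modulo $I_\infty$ (multiplication by $p$ on $BP^*$ is a counterexample to that reduction pattern). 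In the actual proofs the injectivity is not established in advance at all: one computes an upper bound for the graded $BP^*$-module $\Omega^*(R_n)$ from the spectral sequence, exhibits preimages of $1$ and of all $v_jy^i$ in the image, and observes that the two bounds coincide, which yields injectivity and the identification of the image simultaneously. Note also that the paper relies on this integral cobordism injectivity precisely to extract $CH^*(R_n)/p\cong Im(res_\Omega)/(BP^{<0}\cdot Im(res_\Omega))$ (see the proof of Corollary 9.3), so it cannot be traded for a mod-$p$ Chow statement.
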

{\bf Example.}  In particular,   we have  isomorphisms
%\[ CH^*(R(\bG_k))/p\cong \bZ/p\{1,c_{0}(y),c_{1}(y),...,c_{0}(y^{p-1}), c_{1}(y^{p-1})\}.\]
\[ CH^*(R_1)/p\cong \bZ/p\{1,c_0(y),...,c_0(y^{p-1})\},\]
\[ CH^*(R_2)/p\cong \bZ/p\{1,c_0(y),c_1(y),...,c_0(y^{p-1}),c_1(y^{p-1})
\}.\]

\section{torsion index}

Let $dim(G/T)=2d$.  Then the torsion index is defined as
\[ t(G)=|H^{2d}(G/T;\bZ)/H^{2d}(BT;\bZ)|.\]
Let $n(\bG_k)$ be the greatest common divisor of the degrees of all finite field extension $k'$ of $k$ such that $\bG_k$ 
becomes trivial over $k'$.  Then by Grothendieck [Gr], it is known that $n(\bG_k)$ divides $t(G)$.  Moreover, there is a $G_k$-torsor $\bG_F$ over some extension field $F$ of $k$ such that $n(\bG_F)=t(G)$   (in fact, this holds for each  versal $G_k$-torsor [To2], [Me-Ne-Za], [Ka1]).
Note that $t(G_1\times G_2)=t(G_1)\cdot t(G_2)$.
It is well known that  if $H^*(G)$ has a $p$-torsion, then
$p$ divides the torsion index $t(G)$. Torsion index for
simply connected compact Lie groups are completely determined by Totaro [To1,2].
%Hence we also see that there is   $k$ such that there is non %trivial $G_k$-torsor $\bG_k$. 
For example,  $t(E_8)=2^63^25.$

Hereafter this paper, we assume that 
$\bG_k$ be  a $versal$ $G_k$-torsor
and $X=\bG_k/B_k$ is  the $versal$ flag variety.  Recall that 
\[ grH^*(G/T;\bZ/p)\cong P(y)/p\otimes S(t)/(b)\]
\[ where \ \   S(t)/(b)=S(t)/(b_1,...,b_{\ell}),
\ \ P(y)/p\cong 
\bZ/p[y_1,...,y_s]/(y_1^{p^{r_1}},...,y_s^{p^{r_s}}).\]
Recall Theorem 4.2 and Corollary 4.5,  and we see $J(\bG_k)=(r_1,...,r_s)$,
i.e., $y_i^{p^{r_i}-1}\not \in S(t)$.

Giving the filtration on $S(t)$ by $b_i$, we have the isomorphism
\[ gr S(t)/p\cong \bZ/p[b_1,...,b_{\ell}]\otimes S(t)/(b_1,...,b_{\ell}).\]
Let us write  for $N>0$
\[ A_N=\bZ/p\{b_{i_1}...b_{i_k}|\ |b_{i_1}|+...+|b_{i_k}|\le N\}\subset
    grS(t).\]
Of course $H^*(G/T)=0$ for $*>2d=dim(G/T)$, we have a map
\[ gr S(t)/p\to A_{2d}\otimes S (t)/(b)\to grCH^*(X)/p.\]

\begin{lemma}  The composition map is  a surjection
\[ 
A_{2d}\to CH^*(X)/p \stackrel{pr}{\twoheadrightarrow}
 CH^*(R(\bG_k))/p.\]
\end{lemma}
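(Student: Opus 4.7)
\emph{Plan.} The strategy is to combine Karpenko's surjectivity theorem with the regularity of the sequence $(b_1,\ldots,b_\ell)$ in $S(t)/p$ and the Petrov--Semenov--Zainoulline motivic decomposition, reducing the claim to showing that the polynomial subring $A\subset S(t)/p$ already surjects onto $CH^*(R(\bG_k))/p$, after which the degree bound on $CH^*(R(\bG_k))/p$ supplies the truncation to $A_{2d}$.

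First, I apply Theorem~4.3 with $P_k=B_k$ (which is special): the structure map $\phi\colon S(t)=CH^*(BB_k)\twoheadrightarrow CH^*(X)$ is surjective. Reducing mod $p$ and composing with the projection $pr$ of Theorem~4.2 yields a surjection $\psi\colon S(t)/p\twoheadrightarrow CH^*(R(\bG_k))/p$. Since $(b_1,\ldots,b_\ell)$ is a regular sequence in the polynomial ring $S(t)/p$, this ring is a free module over $A=\bZ/p[b_1,\ldots,b_\ell]$ on a lift $\{e_\alpha\}$ of a graded $\bZ/p$-basis of $S(t)/(p,b)$, with $e_0=1$ and $e_\alpha$ of positive cohomological degree for $\alpha\neq 0$; this is exactly the content of the isomorphism $gr\,S(t)/p\cong A\otimes S(t)/(b)$ recorded before the lemma.

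The heart of the argument is the claim that $\psi(a\cdot e_\alpha)=0$ for every $\alpha\neq 0$ and $a\in A$, so that $\psi$ factors through the inclusion $A\hookrightarrow S(t)/p$. To see this, base change to $\bar k$: the induced map $S(t)/p\to CH^*(\bar X)/p\cong P(y)/p\otimes S(t)/(b)$ sends $e_\alpha$ to $1\otimes\bar e_\alpha$, which lies entirely in the Tate summand $\bT^{\otimes u_\alpha}$ of $M(\bar X)_{(p)}$ indexed by the class $\bar e_\alpha\neq 1$. By naturality of the decomposition of Theorem~4.2 under the base change $\bar k/k$, the image $\phi(a\cdot e_\alpha)$ already sits in the summand $R(\bG_k)\otimes\bT^{\otimes u_\alpha}$ of $M(X)_{(p)}$ with $u_\alpha\neq 1$, hence is annihilated by $pr$. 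Consequently $\psi|_A\colon A\twoheadrightarrow CH^*(R(\bG_k))/p$ is already surjective.

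Finally, since $CH^*(R(\bG_k))/p$ is a $\bZ/p$-summand of $CH^*(X)/p$ and the latter vanishes in cohomological degrees above $2d=\dim(G/T)$, the surjection $A\twoheadrightarrow CH^*(R(\bG_k))/p$ annihilates $A^{>2d}$, and hence factors through $A_{2d}\twoheadrightarrow CH^*(R(\bG_k))/p$, which is the lemma. \emph{The main obstacle} is the naturality step: rigorously tracking the image of $e_\alpha$ across the identification $CH^*(X)/p\cong CH^*(R(\bG_k))\otimes P'(y)\otimes S(t)/(b)$ requires using the construction of the Rost-motive summands via rational projectors, together with the fact that those projectors are defined over $k$ and commute with restriction to $\bar k$; an alternative is to filter $CH^*(X)/p$ by the image of the $b$-filtration on $S(t)/p$ and identify the associated graded with $CH^*(R(\bG_k))\otimes P'(y)\otimes S(t)/(b)$ directly.
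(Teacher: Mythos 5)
Your proposal follows the same skeleton as the paper (Karpenko's surjectivity, the decomposition $gr\,S(t)/p\cong A\otimes S(t)/(b)$, reduction to the vanishing of $pr$ on products $a\cdot e_\alpha$ with $\alpha\neq 0$, then truncation to $A_{2d}$ by degree reasons, which is fine), but the justification you give for the central vanishing is not valid. Naturality of the motivic decomposition under base change only gives $res\circ pr=\overline{pr}\circ res$; from knowing where $res(\phi(a\cdot e_\alpha))$ sits in $CH^*(\bar X)/p$ you cannot conclude which summands $\phi(a\cdot e_\alpha)$ occupies over $k$, because $res$ is far from injective exactly on the summand at issue: every positive-degree class of $CH^*(R(\bG_k))/p$ restricts to zero (for the original Rost motive the generators $c_j(y^i)$ restrict to $py^i$ resp.\ $v_jy^i$, hence to $0$ in $CH^*(\bar R_n)/p$; likewise the $b_i$ themselves restrict to $0$ in $CH^*(\bar X)/p$ although $pr(b_i)\neq 0$ --- that is Corollary 5.3). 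Worse, as soon as $a\in A$ has positive degree one has $res(\phi(a\cdot e_\alpha))=0$ outright, so the base-change step yields no information at all. Hence the sentence ``the image $\phi(a\cdot e_\alpha)$ already sits in the summand $R(\bG_k)\otimes\bT^{\otimes u_\alpha}$, hence is annihilated by $pr$'' is a non sequitur, and it is precisely the heart of the lemma; you flag this yourself as ``the main obstacle'' but do not close it.

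What the paper uses instead is the multiplicative realization of the Tate summands inside $CH^*(X)/p$: since $CH^*(\oplus_i\bT^{s_i})/p\cong S(t)/(p,b)$, each twist $\bT^{s_i}$ is generated by the image of a lift $u_i\in S(t)$, and the summand corresponding to $R(\bG_k)\otimes\bT^{s_i}$ is realized as $CH^*(R(\bG_k))\cdot u_i\subset CH^*(X)/p$ (with $bu_i\neq 0$ for $b\neq 0$). Granting this, any product $b'u$ with $b'\in Im(A\to CH^*(X)/p)$ and $|u|>0$ lies in the span of the summands with $s_i\neq 0$ and is therefore killed by $pr$, which is exactly the vanishing you need. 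Supplying this ingredient (or carrying out the filtration argument you only sketch as an alternative at the end) is necessary; as written, the proposal has a genuine gap at its central step.
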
 
\begin{proof}
Recall the decomposition
$M(X)_{(p)}\cong \oplus_i R(\bG_k)\otimes \bT^{s_i}$.  Since
the restriction map $res_{CH} : CH^*(\bT^{s_i})/p\to
CH^*(\bar \bT^{s_i})/p$ is an isomorphism, we have
\[ CH^*(\oplus_i\bT^{s_i})/p\cong CH^*(\oplus _i\bar \bT^{s_i})/p\]
\[ \cong CH^*(G_k/T_k)/(p,P(y)^+)\cong S(t)/(p,b).\]
Thus 
we can write
$CH^*(\bT^{s_i})\cong \bZ_{(p)}\{u_i\}$ for some
$u_i\not =0\in S(t)/(p,b)$.  Hence $CH^*(X)/p$ is generated 
by elements which are product $b\cdot u$ in $CH^*(X)/p$ for $b\in CH^*(R(\bG_k))\subset CH^*(X)/p$ and $u\in S(t)/(p,b)$. Note $bu\not =0$ if $b\not =0$.

 On the other hand, since  $CH^*(X)$ is versal and  generated by images from $S(t)$, which is generated by $b'u$ for
$b'\in Im(A_d\to CH^*(X)/p)$.
When $s_i\not =0 $  (i.e., $|u|\ge 2$), we see  $pr(b'u)=0$ for the projection
$pr:CH^*(X)/p\to CH^*(\bG_k))/p$.  Hence we have the
lemma.
\end{proof}

From the arguments in the proof of preceding lemma,  we have
\begin{cor}
If $b\in Ker(pr)$, then we can write
\[ b=\sum b'u'\quad with \ b'\in A_{2d},\ 0\not = u'\in S(t)/(p,b),\ \ |u'|>0.\]
\end{cor}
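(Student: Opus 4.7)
The plan is to combine the motive decomposition of Theorem~4.2 with the generation statement of Lemma~5.1. Since $\bG_k$ is versal, Corollary~4.5 forces $P'(y) = \bZ/p$, so Theorem~4.2 yields an additive identification $CH^*(X)/p \cong CH^*(R(\bG_k))/p \otimes S(t)/(p,b)$ under which $pr$ is the projection onto the $\otimes 1$ summand; the kernel $\Ker(pr)$ corresponds to the positive-degree Tate part $CH^*(R(\bG_k))/p \otimes (S(t)/(p,b))^+$.

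Given $x \in \Ker(pr)$, I would first invoke Lemma~5.1 to write $x = \sum_\alpha b_\alpha u_\alpha$ with $b_\alpha \in A_{2d}$ (viewed in $CH^*(X)/p$) and $u_\alpha \in S(t)/(p,b)$, then separate $x = B + C$ according to whether $u_\alpha = 1$ or $|u_\alpha|>0$. The argument in the proof of Lemma~5.1 shows $pr(b' u') = 0$ whenever $|u'|>0$, so $pr(C) = 0$ and hence $pr(B) = 0$; since $C$ is already in the required form, it suffices to rewrite the residual $B$ (a sum of images of $A_{2d}$ lying in $\Ker(pr)$) as a sum $\sum b' u'$ with $|u'|>0$.

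I would establish this by induction on the Chow degree $d$ of $B$; the base case is immediate since elements of $CH^*(R(\bG_k))/p \otimes (S(t)/(p,b))^+$ live in strictly positive degree. For the inductive step, the motive decomposition writes $B = \sum_\beta r_\beta u'_\beta$ with $r_\beta$ in a fixed section of $CH^*(R(\bG_k))/p$ and $|u'_\beta|>0$, forcing $\deg r_\beta = d - |u'_\beta| < d$. Lifting $r_\beta$ via Lemma~5.1 to $b'_\beta \in A_{2d}$ of the same degree, the error $b'_\beta - r_\beta \in \Ker(pr)$ has degree $<d$, so the inductive hypothesis rewrites it as $\sum_\gamma b''_\gamma u''_\gamma$ with $|u''_\gamma|>0$. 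Substituting back gives $r_\beta u'_\beta = b'_\beta u'_\beta - \sum_\gamma b''_\gamma (u''_\gamma u'_\beta)$, each summand of the form $b' u'$ with $b' \in A_{2d}$ and $|u'|>0$. The delicate point is ensuring the induction strictly decreases degree, which rests on the fact $|u'_\beta|>0 \Rightarrow \deg r_\beta < d$; once this is in place, no further calculation is needed.
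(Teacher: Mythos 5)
Your argument is correct and follows essentially the same route as the paper: the paper's own ``proof'' of this corollary is only the remark that it follows from the argument for Lemma 5.1 (Karpenko's surjectivity from $S(t)$ together with the filtration by the $b_i$, the motivic decomposition with $P'(y)=\bZ/p$ in the versal case, and the fact that $pr$ kills every product $b'u'$ with $|u'|>0$), and your induction on the Chow degree simply makes explicit the step the paper leaves implicit, namely that the residual part coming from $A_{2d}$ (equivalently, the discrepancy between the $A_{2d}$-lifts and the canonical copy of $CH^*(R(\bG_k))/p$) can itself be pushed into the positively twisted summands. The only cosmetic point is that your final factors $u''_\gamma u'_\beta$ are positive-degree elements of $S(t)$ that may represent the zero class of $S(t)/(p,b)$, so to match the literal wording ``$0\not=u'\in S(t)/(p,b)$'' one would re-expand such terms once more along the filtration; for the way the corollary is actually applied (Corollary 5.3, Lemma 7.5), where only $u'\in S(t)$ with $|u'|>0$ is used, your form already suffices.
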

\begin{cor}
If  $b_i\not=0$ in $CH^*(X)/p$,  then  so in $CH^*(R(\bG_k))/p$.
\end{cor}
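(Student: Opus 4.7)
I would prove the contrapositive: assuming $pr(\phi(b_i)) = 0$ in $CH^*(R(\bG_k))/p$, show that $\phi(b_i) = 0$ in $CH^*(X)/p$, where $\phi: S(t)/p \twoheadrightarrow CH^*(X)/p$ is Karpenko's surjection (Theorem 4.3). The starting point is Corollary 5.2 applied to $\phi(b_i) \in \Ker(pr)$: after grouping terms, one has a representation
\[\phi(b_i) = \sum_\alpha c_\alpha u_\alpha \quad \text{in } CH^*(X)/p,\]
with $c_\alpha \in A_{2d}$ and the $u_\alpha$ a distinguished set of distinct basis elements of $S(t)/(p,b)$ of positive degree.

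First, I would lift this to $S(t)/p$: the element $b_i - \sum_\alpha c_\alpha \tilde u_\alpha$ lies in $\Ker(\phi)$. Since the composite $S(t)/p \to CH^*(X)/p \to CH^*(\bar X)/p = P(y)/p \otimes S(t)/(p,b)$ factors through the quotient $S(t)/p \twoheadrightarrow S(t)/(p,b)$, the kernel $\Ker(\phi)$ is contained in the ideal $(b) := (b_1,\ldots,b_\ell)$. Reducing mod $(b)$ gives $\sum_\alpha c_\alpha^{(0)} u_\alpha \equiv 0$ in $S(t)/(p,b)$, where $c_\alpha^{(0)}$ is the constant part of $c_\alpha$; linear independence of the chosen basis forces $c_\alpha^{(0)} = 0$ for all $\alpha$, i.e., every $c_\alpha$ lies in $A^+$.

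Next, pass to the associated graded $(b)/(b)^2 \cong \bigoplus_m b_m \cdot S(t)/(p,b)$ of the $(b)$-adic filtration on $S(t)/p$. Here $b_i$ contributes the pure class $b_i \otimes 1$, while any $c_\alpha \tilde u_\alpha$ with $c_\alpha = \sum_k \lambda_{k,\alpha} b_k \in A^1$ contributes $\sum_k \lambda_{k,\alpha} b_k \otimes u_\alpha$ with $u_\alpha \neq 1$, and terms with $c_\alpha \in A^{\geq 2}$ drop to $0$. The $b_i \otimes 1$ component of the lifted relation therefore has coefficient $1$ on the left and $0$ on the right, forcing $b_i \otimes 1 \in \Ker(\phi) \bmod (b)^2$, so $\phi(b_i) \in F^2 := \phi((b)^2)$. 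Iterating the analysis level by level in the filtration $F^k := \phi((b)^k)$, one obtains $\phi(b_i) \in F^k$ for every $k$; since $(b)^k$ is concentrated in sufficiently high degrees while $CH^*(X)/p$ vanishes in degrees above $d = \dim(G/T)$, one has $F^k = 0$ for $k$ large, and hence $\phi(b_i) = 0$.

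The main obstacle is the iteration from $F^k$ to $F^{k+1}$. Corollary 5.2 by itself only delivers coefficients in $A^+$, whereas the argument above at level $k$ requires coefficients in $A^{\geq k}$. This promotion must be achieved by iteratively refining the representation of a lift in $(b)^k$ — replacing any lower-degree coefficient by an equivalent expression modulo $\Ker(\phi)$ and modulo $(b)^{k+1}$ — using the $A$-module structure of $S(t)/p$ together with the compatibility of the $(b)$-filtration with the direct-sum decomposition $CH^*(X)/p \cong CH^*(R(\bG_k))/p \otimes S(t)/(p,b)$ supplied by Theorem 4.2. Making this compatibility precise, so that the mod $(b)$ linear-independence argument propagates upward, is the technical heart of the proof.
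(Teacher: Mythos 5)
Your plan is not the paper's argument, and it is incomplete in an essential way; in fact its very first step already contains an invalid inference. From the lifted relation $b_i-\sum_\alpha c_\alpha\tilde u_\alpha=\kappa$ with $\kappa\in\Ker(\phi)\subseteq(b)$, all you can read off in $(b)/(b)^2$ is that the class of the particular element $\kappa$ has $b_i\otimes 1$--coordinate equal to $1$; this does not make $b_i\otimes 1$ itself the class of an element of $\Ker(\phi)$, because the class of $\kappa$ also carries the cross terms $-\sum_{k,\alpha}\lambda_{k,\alpha}\,b_k\otimes u_\alpha$ with $|u_\alpha|>0$. Hence ``so $\phi(b_i)\in F^2$'' does not follow: applying $\phi$ only yields $\phi(b_i)\in\sum_{k,\alpha}\lambda_{k,\alpha}\phi(b_k\tilde u_\alpha)+F^2$, and classes of the form $\phi(b_k\tilde u_\alpha)$ with $|u_\alpha|>0$ are precisely the kind of elements that are nonzero and not in $\phi((b)^2)$ (they span $\Ker(pr)$). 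On top of this, the passage from $F^k$ to $F^{k+1}$, which you yourself flag as the ``technical heart'', is left unproved and no mechanism is offered; carrying it out would require information about $\Ker(\phi)$ far beyond $\Ker(\phi)\subseteq(b)$, i.e.\ essentially knowledge of $CH^*(X)/p$ itself, which is what one is trying to compute.

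The paper's proof avoids all of this by exploiting the one piece of information your filtration scheme never uses: the degree drop. In the expression $b_i=\sum b'u'$ of Corollary 5.2 one has $|u'|>0$, hence $|b'|<|b_i|$, so (after removing constant parts of the $b'$, as you do) every monomial occurring in the $b'$ contains a factor $b_j$ with $|b_j|<|b_i|$, i.e.\ $j<i$ in the chosen ordering. Thus $b_i$ is exhibited, modulo $\Ker(\phi)\subseteq(b_1,\dots,b_\ell)$, as lying in $(b_1,\dots,b_{i-1})+S(t)^+\cdot(b_1,\dots,b_\ell)$, and this contradicts the regularity of the sequence $(b_1,\dots,b_\ell)$ in $S(t)/p$ (a homogeneous regular sequence is a minimal generating system of the ideal it generates, so $b_i$ cannot lie in that ideal). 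In other words, the hypothesis $b_i\not=0$ in $CH^*(X)/p$ together with $pr(b_i)=0$ is refuted in one step by a regularity/minimality argument in the polynomial ring $S(t)/p$; there is no need to prove the vanishing $\phi(b_i)=0$ by an infinite descent through the $(b)$-adic filtration, and your proposal, which aims at that stronger constructive statement, does not close.
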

\begin{proof}
Let $pr(b_i)=0$.  Then
 $b_i=\sum b'u'$ for $|u'|>0$, and hence $b'\in Ideal(b_1,...,b_{i-1})$.
This contradict to that $(b_1,...,b_{\ell})$ is regular.
\end{proof}

Let us write 
\[  y_{top}=\Pi_{i=1}^s y_i^{p^{r_i}-1}\quad (reps.\ t_{top})\]
the generator of the highest  degree 
in $P(y)$ (resp. $S(t)/(b)$) so that $f=y_{top}t_{top}$
is the fundamental class in $H^{2d}(G/T)$.
\begin{lemma}  The following map is surjective
\[ A_N\twoheadrightarrow CH^*(R(\bG_k))/p\quad where\ 
N=|y_{top}|.\]
\end{lemma}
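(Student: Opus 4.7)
The plan is to deduce this from Lemma 5.1 together with a sharp bound on the top nonzero degree of $CH^*(R(\bG_k))/p$. By Lemma 5.1 we already know that the composition $A_{2d}\to CH^*(X)/p\twoheadrightarrow CH^*(R(\bG_k))/p$ is surjective, so the only thing to verify is that elements of $A_{2d}$ whose degree exceeds $N=|y_{top}|$ contribute nothing to $CH^*(R(\bG_k))/p$; equivalently, that $CH^{>N}(R(\bG_k))/p=0$.

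First I would pin down the top degree of $CH^*(R(\bG_k))/p$. Since $\bG_k$ is versal, Corollary 4.5 gives $J_p(\bG_k)=(r_1,\dots,r_s)$, so in the Petrov--Semenov--Zainoulline decomposition of Theorem 4.2 the factor $P'(y)$ collapses to $\bZ/p$, and the Tate summands contribute only the module $S(t)/(p,b)$. Concretely, one has
\[ CH^*(X)/p\;\cong\;CH^*(R(\bG_k))/p\otimes_{\bZ/p} S(t)/(p,b)\]
as graded $\bZ/p$-modules, where $S(t)/(p,b)$ has top degree $|t_{top}|$ realized by $t_{top}$, and $CH^*(X)/p$ has top degree $2d=|y_{top}|+|t_{top}|$ realized by the fundamental class $y_{top}t_{top}$. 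Matching top degrees forces the top nonzero degree of $CH^*(R(\bG_k))/p$ to be exactly $N=|y_{top}|$.

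Once that degree bound is in place, the surjectivity statement is immediate. Given a homogeneous element $x\in CH^n(R(\bG_k))/p$, we must have $n\le N$ (otherwise $x=0$ and there is nothing to prove). By Lemma 5.1 pick $b\in A_{2d}$ with $pr(b)=x$. The maps $A_{2d}\to CH^*(X)/p\twoheadrightarrow CH^*(R(\bG_k))/p$ are both graded, so after replacing $b$ by its degree-$n$ component we may assume $b$ is homogeneous of degree $n$. But then $b$ is a $\bZ/p$-linear combination of monomials $b_{i_1}\cdots b_{i_k}$ with $|b_{i_1}|+\cdots+|b_{i_k}|=n\le N$, and such monomials lie in $A_N$ by definition.

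The step that carries essentially all the content is the degree bound $CH^{>N}(R(\bG_k))/p=0$; this is really a statement about the shape of the PSZ decomposition in the versal case and relies crucially on $J_p(\bG_k)=(r_1,\dots,r_s)$ so that $P'(y)$ is trivial. Apart from that, the argument is a routine graded-piece bookkeeping and should present no obstacle.
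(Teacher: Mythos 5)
Your proposal is correct and takes essentially the same route as the paper: reduce to Lemma 5.1 and then discard everything in degrees above $N$. The only difference is that you make explicit the key degree bound $CH^{>N}(R(\bG_k))/p=0$, derived from the Petrov--Semenov--Zainoulline decomposition with $P'(y)=\bZ/p$ in the versal case, whereas the paper's two-sentence proof leaves this step implicit.
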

\begin{proof}  In the preceding lemma,  $A_{N}\otimes u$ for $|u|>0$ maps zero in $CH^*(R(\bG_k))/p$.  Since each element
in $S(t)$ is written by an element in $A_N\otimes S(t)/(b)$,
we have the corollary.
\end{proof}

{\bf  Remark.}
In $\S 7$ in [Pe-Se], Petrov and Semenov  show
\[CH^*(BB_k)/p\cong CH_{G_k}(\bG_k/B_k)/p
\cong \oplus CH_{G_k}^*(R_{p,G_k}(\bG_k))/p\]
where $CH_{G_k}(-)$ is the $G_k$-equivariant Chow ring
and $R_{p,G_k}(\bG_k)$ is the $G_k$-equivariant generalized 
Rost motive.  Hence we have
\[CH^*_{G_k}(R_{p,G_k}(\bG_k))/p\cong A_{\infty}=\bZ/p[b_1,...,b_{\ell}].  \]

Now we consider the torsion index.
\begin{lemma}
Let $\tilde b=b_{i_1}... b_{i_k}$ in
$S(t)$  such that
in $H^*(G/T)_{(p)}$
\[ \tilde b=p^s(y_{top}+\sum yt),\quad
|t|>0\]
for some $y\in P(y)$  and $t\in S(t)$.
Then the torsion index $t(G)_{(p)}\le p^s$. 
%and $t(G)_{(p)}$
%is the smallest $p^s$ satisfied above equation.
\end{lemma}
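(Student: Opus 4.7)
The plan is to exhibit an explicit preimage in $H^{2d}(BT;\bZ)_{(p)}$ of $p^s$ times the fundamental class of $G/T$, and then invoke the definition of the torsion index directly.

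First I would multiply the given element $\tilde b \in S(t) = H^*(BT)$ by $t_{top}$. Since the equation $\tilde b = p^s(y_{top} + \sum yt)$ is homogeneous in $H^*(G/T)_{(p)}$, the degree of $\tilde b$ must equal $|y_{top}|$, so $\tilde b \cdot t_{top}$ lies in $S(t)$ in degree $|y_{top}|+|t_{top}|=2d$. Passing to $H^*(G/T)_{(p)}\cong P(y)\otimes S(t)/(b_1,\ldots,b_\ell)$, its image becomes
\[
\tilde b\cdot t_{top} \;=\; p^s\Bigl(y_{top}\cdot t_{top}\;+\;\sum y\cdot(t\cdot t_{top})\Bigr).
\]

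Next I would show each cross term vanishes. Because $y_{top}t_{top}$ is the fundamental class of $G/T$ and $H^*(G/T)$ splits as the tensor product $P(y)\otimes S(t)/(b)$, the element $t_{top}$ is the top-degree generator of the quotient $S(t)/(b_1,\ldots,b_\ell)$. For any $t\in S(t)$ with $|t|>0$, the product $t\cdot t_{top}$ has degree strictly greater than $|t_{top}|$ and must therefore vanish in $S(t)/(b)$. Hence the image of $\tilde b\cdot t_{top}$ reduces to $p^s\cdot y_{top}t_{top} = p^s f$, where $f$ denotes the fundamental class generating $H^{2d}(G/T;\bZ)\cong\bZ$.

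Finally, this exhibits $p^s f$ as an element of the image of $H^{2d}(BT;\bZ)_{(p)} \to H^{2d}(G/T;\bZ)_{(p)}\cong \bZ_{(p)}\cdot f$. By the definition
\[
t(G)_{(p)} \;=\; \bigl|H^{2d}(G/T;\bZ)_{(p)}\big/H^{2d}(BT;\bZ)_{(p)}\bigr|,
\]
the $p$-local index of the image subgroup in $\bZ_{(p)}$ divides $p^s$, giving $t(G)_{(p)}\le p^s$ as desired.

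I do not anticipate a serious obstacle here: the argument is essentially a multiplication by $t_{top}$ and a degree count. The only point worth pinning down carefully is the vanishing $t\cdot t_{top}=0$ in $S(t)/(b)$ for $|t|>0$, which follows from the complete-intersection / Poincar\'e-duality structure of the quotient $S(t)/(b_1,\ldots,b_\ell)$ already used implicitly in (2.2).
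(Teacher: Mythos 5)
Your overall strategy --- multiply $\tilde b$ by a lift $\hat t_{top}\in S(t)$ of $t_{top}$, land in the top degree $2d$, and compare with the definition of the torsion index --- is the same as the paper's, but the step where you kill the cross terms is wrong as stated. You argue that $t\cdot t_{top}=0$ in $S(t)/(b_1,\dots,b_{\ell})$ forces $y\cdot i^*(t\,\hat t_{top})=0$ in $H^*(G/T)_{(p)}$, which tacitly assumes that the ring map $i^*:S(t)\to H^*(G/T)_{(p)}$ factors through $S(t)/(b_1,\dots,b_{\ell})$, i.e.\ that the $b_i$ map to zero integrally. They do not: only the mod $p$ associated graded statement $grH^*(G/T;\bZ/p)\cong P(y)\otimes S(t)/(\bar b)$ holds, while integrally each $b_i$ maps into $p\,H^*(G/T)_{(p)}$ but is in general nonzero --- for $SO(2\ell+1)$ one has $c_i=2y_{2i}\neq 0$ in $H^*(G/T)$ (Theorem 7.1). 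Indeed the very hypothesis of the lemma, $\tilde b=b_{i_1}\cdots b_{i_k}=p^s(y_{top}+\sum yt)\neq 0$, is incompatible with your reading, since $\tilde b$ itself lies in the ideal $(b_1,\dots,b_{\ell})$ of $S(t)$ and would have to vanish.

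The gap is repairable, and the repair is exactly the point the paper's proof makes with its observation that the image of the ideal $(b_1,\dots,b_{\ell})$ lands in $p\,H^*(G/T)_{(p)}$: since $H^*(G/T)$ is torsion free and $S(t)/(p,\bar b_1,\dots,\bar b_{\ell})$ vanishes in degrees above $|t_{top}|$, the element $t\,\hat t_{top}$ maps into $p\,H^*(G/T)_{(p)}$, so the cross terms contribute $p^{s+1}mf$ for some $m\in\bZ_{(p)}$ rather than zero. Hence $i^*(\tilde b\,\hat t_{top})=p^s(1+pm)f$ with $1+pm$ a $p$-local unit, and since the image of $H^{2d}(BT)_{(p)}$ in $H^{2d}(G/T)_{(p)}\cong\bZ_{(p)}f$ is the subgroup of index $t(G)_{(p)}$, this still yields $t(G)_{(p)}\le p^s$. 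With this correction your direct argument is a clean version of the paper's proof, which reaches the same conclusion by contradiction (assuming $t(G)_{(p)}=p^{s+1}$, showing the cross terms are then hit from $S(t)$ by Karpenko's theorem, and deducing $p^sy_{top}t_{top}\in \Img(S(t))$).
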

\begin{proof}
Suppose $p^s<t(G)_{(p)}$.  We can assume $t(G)=p^{s+1}
$ multiplying $p^i$ if necessary.  Since $tt_{top}=0\in
S(t)/(b)$, we see
\[tt_{top}\in Ideal(b_1,...,b_{\ell}) \subset Ideal(p).\]
Therefore $p^s\sum ytt_{top}\in Ideal(p^{s+1})$. So it is in
$S(t)$, by Karpenko's theorem.  
Hence $p^sy_{top}t_{top}\in S(t)$.  So $t(G)\le p^{s}$
and this is a contradiction,
%
%Suppose $t(G)_{(p)}=p^s$.  Then $p^sy_{top}t_{top}\in S(t)$.
%Recall the map $S(t)\to A(b)\otimes S(t)/(b)\to CH^*(X)/p$.
%Hence we can write $p^sy_{top}t_{top}=\sum b(A)t$ with
%$b(A)\in A(b)$ and $t\in S(t)/(b)$.  If $|t|>0$, then 
\end{proof}

\begin{cor}
In the preceding lemma, assume $p^s=t(G)_{(p)}$.
Then for each subset $(i_1',...,i_{k'}')\subset (i_1,...,i_k)$, the element $b_{i_1'}'...b_{i_{k'}}'\not =0 \in CH^*(X)/p$.
\end{cor}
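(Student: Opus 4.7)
The plan is to argue by contradiction. Suppose for some subset $(i_1',\ldots,i_{k'}') \subset (i_1,\ldots,i_k)$ the product $\tilde b' := b_{i_1'}\cdots b_{i_{k'}'}$ vanishes in $CH^*(X)/p$, and let $\tilde b''$ be the complementary product in $S(t)$, so that $\tilde b = \tilde b' \tilde b''$. Then $\tilde b' = pw$ in $CH^*(X)_{(p)}$ for some $w$, and by Karpenko's surjectivity (Corollary~4.4) $w$ is the image of some $\hat u \in S(t)_{(p)}$. Hence $\tilde b' - p\hat u \in S(t)_{(p)}$ lies in the kernel of $S(t)_{(p)} \to CH^*(X)_{(p)}$, and a fortiori in the kernel of the composite $S(t)_{(p)} \to CH^*(\bar X)_{(p)} \cong H^*(G/T)_{(p)}$. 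So the equation $\tilde b' = p\hat u$ holds in $H^*(G/T)_{(p)}$, and consequently $\tilde b = p\hat u\tilde b''$ there, with $\hat u\tilde b'' \in S(t)_{(p)}$.

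Next I would multiply through by $t_{top}$ and compare both sides in the one-dimensional group $H^{2d}(G/T)_{(p)} = \bZ_{(p)}f$. The hypothesis $\tilde b = p^s(y_{top} + \sum yt)$ of Lemma~5.5 gives $\tilde b\,t_{top} = p^s f + p^s\sum y(tt_{top})$. Since $|t|>0$ and $t_{top}$ is the top-degree element of $S(t)/(b)$, each $tt_{top}$ lies in $(b_1,\ldots,b_\ell)\cdot S(t)$; and since $\bar b_i = d_{|x_i|+1}(x_i)$ is a transgression image, it vanishes in $H^*(G/T;\bZ/p)$, so each $b_i$ is divisible by $p$ in $H^*(G/T)_{(p)}$. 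Thus the correction sum sits in $p^{s+1}H^*(G/T)_{(p)}$, and in top degree $\tilde b\,t_{top} = p^s\nu\,f$ for some unit $\nu \in \bZ_{(p)}^\times$. On the other side, $\hat u\tilde b''t_{top}$ is an element of $S(t)_{(p)}$ in degree $2d$, and its image in $H^{2d}(G/T)_{(p)}$ lies in $t(G)_{(p)}\bZ_{(p)}f = p^s\bZ_{(p)}f$ by the very definition of the torsion index. Hence $p\,\hat u\tilde b''t_{top} \in p^{s+1}\bZ_{(p)}f$, contradicting $\tilde b\,t_{top} = p^s\nu f \notin p^{s+1}\bZ_{(p)}f$.

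The heart of the argument is that any hypothetical $p$-divisibility of $\tilde b'$ in $CH^*(X)$ is promoted via Karpenko to a $p$-divisibility at the level of $S(t)$, where the image of $S(t)^{2d}$ in $H^{2d}(G/T)_{(p)}$ already carries a factor of $p^s$ from the torsion index; this yields a $p$-valuation strictly larger than the $p^s$-exactness forced on $\tilde b\,t_{top}$ by Lemma~5.5. The only slightly delicate step is verifying that the cross terms $p^s y(tt_{top})$ truly sit in $p^{s+1}\bZ_{(p)}f$, but this is the same transgression fact already used in the proof of Lemma~5.5, so no new input is needed.
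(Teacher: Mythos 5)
Your proof is correct, and it is sound in all the places that matter: the promotion of the hypothetical relation $\tilde b'=pw$ to $\tilde b'=p\hat u$ with $\hat u$ coming from $S(t)_{(p)}$ via Karpenko's surjectivity (this is Theorem~4.3 rather than Corollary~4.4, a harmless mislabel), the passage to $H^*(G/T)_{(p)}$ through the restriction to $\bar X$, and the top-degree comparison using $tt_{top}\in(b_1,\dots,b_\ell)\subset(p)$ together with the index computation $\Img\bigl(S(t)^{2d}_{(p)}\to H^{2d}(G/T)_{(p)}\bigr)=p^s\bZ_{(p)}f$. The paper's own proof is a one-line deduction inside the ring $CH^*(X)/p$: writing $I'\cup I''=I$, it observes that $b_{I'}=0$ would force $b_I=b_{I'}b_{I''}=0$, contradicting $b_I\neq 0$ in $CH^*(X)/p$ --- but that last nonvanishing is asserted rather than proved there, and your argument in the special case $I'=I$ is exactly the missing justification (if $b_I=p\hat u$ with $\hat u$ from $S(t)$, the top-degree count gives $t(G)_{(p)}\le p^{s-1}$). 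So the two proofs rest on the same two inputs (Karpenko surjectivity and $t(G)_{(p)}=p^s$); the difference is that you carry the multiplication by the complementary factor $b_{I''}$ out in $H^*(G/T)_{(p)}$ and rerun the Lemma~5.5 valuation argument directly, which buys a self-contained and fully rigorous proof, whereas the paper's route is shorter but leans on an unproved-in-place intermediate claim.
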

\begin{proof} Let us write
$I'=(i_1',...,i_{k'}')\subset I=(i_1,...,i_k)$, $I'\cup I''=I$, and 
$b_I=b_{i_1}...b_{i_k}$.
It is immediate $b_{I'}\not =0\in CH^*(X)/p$ since
$b_I=b_{I'}b_{I''}\not =0 \in CH^*(X)/p$.%
\end{proof}
 
So when $t(G)_{(p)}$ is big enough and there is $\tilde b$ in the preceding lemma,  we can
find many non zero  elements in $CH^*(X)/p$ whose 
restriction images are zero in $CH^*(\bar X)/p$.

\section{ The groups  $GL(n)$, 
$Sp(n)$ and $PU(p)$.}

Results in this section are known.  However we write down them since results and arguments are used other sections.
We consider the Lie group $G=U(\ell)$ at first.
Note that its cohomology  has no torsion.  Recall that
\[H^*(U(\ell))\cong \Lambda(x_1,...,x_{\ell})\quad
with\ |x_i|=2i-1.\]
So $P(y)/p\cong \bZ/p$, and
$ CH^*(R(\bG_k)/p\cong CH^*(\bar R(\bG_k))/p\cong 
 \bZ/p,$
that is, there is no twisted form of $G_k/B_k$.
Moreover
$CH^*(X)/p\cong S(t)/(p,b_1,...,b_{\ell})$
for $d_{|x_i|+1}(x_i)=b_i$. It is well known that
we can take $b_i=c_i$ the $i$-th elementary symmetric 
function on $S(t)\cong \bZ[t_1,...,t_{\ell}]$
\begin{prop}
Let $G=U(\ell)$ (i.e., $G_k=GL_{\ell}$) and $p$ is a prime number.
Let $X=G_k/B_k$.  Then
\[ CH^*(X)/p\cong S(t)/(p,c_1,...,c_{\ell}) \]
where $c_i$ is the Chern class in $H^*(BT)\cong S(t)$
by the map $T\subset U(\ell)$.
\end{prop}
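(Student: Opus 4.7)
The plan is to assemble the general machinery from Sections 2 and 4 and a classical transgression computation. Since $H^*(U(\ell);\bZ)\cong \Lambda(x_1,\ldots,x_\ell)$ with $|x_i|=2i-1$ is torsion-free, the truncated polynomial factor $P(y)$ in the Borel decomposition $(2.1)$ collapses to $\bZ_{(p)}$. Consequently in Theorem 4.2 the list of $y_i$'s is empty, which forces $CH^*(\bar R(\bG_k))/p\cong\bZ/p$ (concentrated in degree zero), and therefore $CH^*(R(\bG_k))/p\cong\bZ/p$ as well. Feeding this into the formula of Theorem 4.2 produces an additive isomorphism
\[
CH^*(X)/p \;\cong\; S(t)/(p,\, b_1,\ldots,b_\ell),
\]
where $b_1,\ldots,b_\ell$ are the transgression images in the Serre spectral sequence of $U(\ell)\to U(\ell)/T\to BT$.

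Next I would identify $b_i$ with the Chern class $c_i=\sigma_i(t_1,\ldots,t_\ell)$. In this fibration the transgression satisfies $d_{2i}(x_i)=c_i$; this is a classical consequence of the splitting principle together with the fact that $H^*(BU(\ell))$ is exactly the ring of symmetric polynomials inside $H^*(BT)=S(t)$. After this identification, the kernel of the quotient map can be written explicitly as $(p,c_1,\ldots,c_\ell)$.

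To upgrade the additive isomorphism to a ring isomorphism, I would invoke Karpenko's Theorem 4.3: the natural ring map
\[
S(t)/p \;=\; CH^*(BB_k)/p \;\twoheadrightarrow\; CH^*(X)/p
\]
is surjective. The $c_i$ lie in its kernel because, in the Tate situation at hand, the restriction $CH^*(X)/p\to CH^*(G_k/B_k)/p\cong H^*(U(\ell)/T)/p$ is an isomorphism in which the $c_i$ vanish (being transgression images). Comparing with the additive isomorphism above via a rank count in each degree then forces the kernel to be exactly $(p,c_1,\ldots,c_\ell)$, yielding the claimed ring isomorphism.

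There is no real obstacle in this argument: once the motivic decomposition (Theorem 4.2) and Karpenko's surjectivity (Theorem 4.3) are in hand, the whole statement is formal, and the only point requiring attention is the textbook identification of the transgression images $b_i$ with the Chern classes $c_i$.
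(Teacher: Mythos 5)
Your proof is correct, and its essential content coincides with the paper's: both arguments come down to the identification of the transgression ideal $(b_1,\dots,b_\ell)$ with the Chern--class ideal $(c_1,\dots,c_\ell)$ via $H^*(BU(\ell))\cong H^*(BT)^{W}\cong \bZ[c_1,\dots,c_\ell]$ together with a dimension count in each degree (the paper phrases this as the vanishing of the composite $H^*(BG)\to H^*(BT)\to H^*(G/T)$, which gives $(c_1,\dots,c_\ell)\subset(b_1,\dots,b_\ell)$, and then concludes ``by dimensional reason''). The one structural difference is that you route the computation through the versal machinery of Theorems 4.2 and 4.3, which is superfluous here: the proposition concerns the \emph{split} flag variety $X=G_k/B_k$, which is cellular with $CH^*(G_k/B_k)\cong H^*(G/T)$ (as recorded in $\S 2$), so the additive presentation $S(t)/(p,b_1,\dots,b_\ell)$ is immediate from the collapse of the Serre spectral sequence without invoking the motivic decomposition or Karpenko's surjectivity. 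Since $U(\ell)$ has torsion-free cohomology there are in any case no nontrivial forms, so your detour is harmless, just heavier than necessary; the paper's direct argument buys brevity, while yours has the minor virtue of fitting the statement into the uniform template used for the genuinely twisted cases later in the paper.
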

\begin{proof}
We consider the fibering $G/T\to BT\to BG$.
The composition of the induced maps
$ H^*(BG)\to H^*(BT)\to H^*(G/T)$
is zero.  The first map induces the isomorphism
\[ H^*(BG)\cong H^*(BT)^{W_G(T)}\cong \bZ[c_1,...,c_{\ell}]
\]
Thus $(b_1,...,b_{\ell})\supset (c_1,...,c_{\ell})$.  By dimensional reason, we have the proposition.
\end{proof}

Next consider in the case $G'=Sp(\ell)$ and
recall that
\[H^*(Sp(\ell))\cong \Lambda(x_1',...,x_{\ell}')\quad
with\ |x_i'|=4i-1.\]
So $P(y)'/p\cong \bZ/p$, and
there is no twisted form of $G_k'/B_k$.
Moreover we have 
 $d_{|x_i'|+1i}(x_i')=p_i$  the Pontryagin 
 class.
 Hence we have
\begin{prop}
Let $G'=Sp(\ell)$ and 
and $X'=G_k'/B_k$.  Then
for each prime number $p$,  we have
\[ CH^*(X')/p\cong S(t)/(p,p_1,...,p_{\ell}).\]
In particular, when $p=2$,  we have $ CH^*(X')/2\cong S(t)/(2,c_1^2,...,c_{\ell}^2).$
\end{prop}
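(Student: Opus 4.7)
The plan is to mirror the argument used in Proposition 6.1 for $GL(\ell)$. First I would note that $H^*(Sp(\ell))$ is torsion free, so the Borel formula gives $H^*(Sp(\ell)) \cong \Lambda(x_1', \ldots, x_\ell')$ with trivial truncated-polynomial factor $P(y)$; the decomposition of Section 2 then specializes to
\[
H^*(G'/T)_{(p)} \cong S(t)_{(p)}/(b_1', \ldots, b_\ell'), \qquad |b_i'| = |x_i'| + 1 = 4i.
\]
Under the identification $CH^*(G'_k/B_k) \cong H^*(G'/T)$ from Section 2, reducing modulo $p$ yields $CH^*(X')/p \cong S(t)/(p, b_1', \ldots, b_\ell')$. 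It therefore suffices to prove the equality of ideals $(b_1', \ldots, b_\ell') = (p_1, \ldots, p_\ell)$ in $S(t)$.

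To establish this, I would consider the fibering $G'/T \to BT \to BG'$, whose induced composition $H^*(BG') \to H^*(BT) \to H^*(G'/T)$ vanishes; consequently the image of $H^*(BG')$ in $S(t) = H^*(BT)$ lies inside $(b_1', \ldots, b_\ell')$. The Weyl group $W$ of $Sp(\ell)$ is the hyperoctahedral group $(\bZ/2)^\ell \rtimes S_\ell$, acting on $T$ by permutations and sign changes of the $t_i$, and Borel's theorem gives
\[
H^*(BG'; \bZ) \cong H^*(BT)^W \cong \bZ[p_1, \ldots, p_\ell], \qquad p_i = \sigma_i(t_1^2, \ldots, t_\ell^2).
\]
This yields the inclusion $(p_1, \ldots, p_\ell) \subseteq (b_1', \ldots, b_\ell')$ and hence a graded surjection $S(t)/(p_1, \ldots, p_\ell) \twoheadrightarrow S(t)/(b_1', \ldots, b_\ell')$. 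To see this surjection is an isomorphism, I would compare ranks: both sides are finite free $\bZ$-modules of rank $|W| = 2^\ell\,\ell!$ (the source by the standard coinvariant computation, the target because it equals $H^*(G'/T)$, whose Euler characteristic is $|W|$), which forces equality of the two ideals.

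Finally, for the refinement at $p = 2$, I would invoke the Frobenius: squaring is a ring homomorphism modulo $2$, so
\[
p_i = \sigma_i(t_1^2, \ldots, t_\ell^2) \equiv \sigma_i(t_1, \ldots, t_\ell)^2 = c_i^2 \pmod{2},
\]
giving $(2, p_1, \ldots, p_\ell) = (2, c_1^2, \ldots, c_\ell^2)$ in $S(t)$ and hence the claimed isomorphism. The whole argument is essentially formal once Proposition 6.1 is in hand; the only substantive input is Borel's computation of $H^*(BSp(\ell); \bZ)$, and the rank comparison that upgrades a containment of ideals to an equality. I do not anticipate any serious obstacle.
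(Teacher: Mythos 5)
Your argument is correct and is essentially the paper's own: the paper proves the $GL(\ell)$ case (Proposition 6.1) by exactly this fibration argument $G/T\to BT\to BG$ plus the identification $H^*(BG)\cong H^*(BT)^W$ and a "dimensional reason" step, and then asserts the $Sp(\ell)$ case by the same reasoning with $H^*(BSp(\ell))\cong\bZ[p_1,\dots,p_\ell]$ and $p_i\equiv c_i^2 \bmod 2$. Your explicit rank count (both quotients free of rank $|W|=2^\ell\ell!$) is a clean way of making the paper's "by dimensional reason" precise, but it is the same proof.
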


Now we consider in the case $(G,p)=(PU(p),p)$,
which has $p$-torsion in cohomology,
but it is not simply connected.
 Its mod $p$ cohomology is
\[ H^*(G;\bZ/p)\cong \bZ/p[y]/(y^p)\otimes
\Lambda(x_1,...,x_{p-1})\quad |y|=2,\ |x_i|=2i-1.\]
So $P(y)/p\cong \bZ/p[y]/(y^p)$ with $|y|=2$.
This fact is given by the cofibering $U(p)\to PU(p)\to BS^1$
and the induced spectral sequence
\[ E_2^{*,*'}\cong H^*(BS^1;H^{*'}(U(p);\bZ/p))
\Longrightarrow H^*(PU(p);\bZ/p).\]
Here we use that $H^*(BS^1;\bZ/p)\cong \bZ/p[y]$
and $d_{2p}x_{p}=y^p$.

Since $G$ is not simply connected, $G$ is not of type $I$
while $P(y)$ is generated by only one $y$. (However
$CH^*(X)/p$ is quite  resemble to that of 
type $(I)$. Compare Theorem 6.5 and Theorem 9.4
below.)  

We consider the map $U(p-1)\to U(p)\to PU(p)$ where
the maximal tori of $U(p-1)$ and $PU(p)$ 
are isomorphic, i.e., $T_{U(p-1)}\cong T_{PU(p)}$.
By using the map $U(p-1)\to PU(p)$, we know
$d_{2i}(x_i)=c_i$. Hence we have
\[ grH^*(G/T;\bZ/p)\cong \bZ/p[y]/(y^p)\otimes
S(t)/(c_1,...,c_{p-1}).\]

\begin{lemma}
Let  $X$ split over a field $k'$ over $k$ of index
$p^t\cdot a$ for some $a$ coprime to $p$.
Then for all $y\in CH^*(\bar X)$, we see
$p^ty\in Im(res_{CH})$.
\end{lemma}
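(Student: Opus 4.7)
The plan is to produce, for any $y \in CH^*(\bar X)$, an explicit preimage in $CH^*(X)$ of $p^t a\cdot y$ via the norm (transfer) along $k'/k$, and then conclude by inverting $a$ $p$-locally.

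Since $X_{k'}$ is split, $M(X_{k'})$ is a sum of Tate motives, so the restriction $CH^*(X_{k'})\to CH^*(\bar X)$ is an isomorphism of graded abelian groups; any $y\in CH^*(\bar X)$ therefore lifts uniquely to some $\tilde y \in CH^*(X_{k'})$. The natural projection $f\colon X_{k'}\to X$ is finite flat of degree $[k':k]=p^ta$, so pushforward gives $z := f_*(\tilde y) = N_{k'/k}(\tilde y) \in CH^*(X)$. By base change along $Spec(\bar k)\to Spec(k)$ and the factorization $\bar k \otimes_k k' \cong \prod_\sigma \bar k$ indexed by the $[k':k]$ $k$-embeddings $\sigma\colon k'\hookrightarrow \bar k$, one has the standard identity
\[ res_{CH}(z) \ =\ \sum_{\sigma}\sigma^*(\tilde y)\ \in\ CH^*(\bar X). \]

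Next, I would show that every term $\sigma^*(\tilde y)$ equals $y$, for which it suffices to check that $Gal(\bar k/k)$ acts trivially on $CH^*(\bar X)$. This holds because $\bar X \cong G_{\bar k}/B_{\bar k}$ admits a transitive action by the connected group $G_{\bar k}$, and connected algebraic groups act trivially on Chow groups, so the twist of the standard Galois action by the cocycle defining $\bG_k$ has no effect on $CH^*$; moreover the standard Galois action already fixes every Schubert class, these being defined over $k$ since $G_k$ is $k$-split. Hence
\[ res_{CH}(z) \ =\ [k':k]\cdot y \ =\ p^ta\cdot y, \]
and since $\gcd(a,p) = 1$, $a$ is invertible $p$-locally, giving $p^ty = a^{-1}\cdot p^tay \in Im(res_{CH})$ in the $p$-localized Chow theory implicit throughout the paper.

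The main obstacle is justifying the base-change identity $res_{CH}\circ N_{k'/k} = \sum_\sigma\sigma^*$ (a standard transfer--base-change formula for pushforward along a finite separable extension) together with the triviality of the $Gal(\bar k/k)$-action on $CH^*(\bar X)$; both are known, but the latter is where the argument really lives, converting the twisted $k$-structure on $X$ into a non-issue on the Chow group.
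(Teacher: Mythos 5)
Your argument is correct, but it is not the route the paper takes. The paper also uses the transfer $tr$ along $k'/k$, but it only invokes the identity $tr\circ res_{k'}=[k':k]=ap^t$ on $CH^*(X)$ (the projection formula), combined with two facts: $res_{CH}\otimes\bQ$ is an isomorphism, so $p^sy=res(x)$ for some $s$ and some $x\in CH^*(X)$; and $CH^*(\bar X)$ is torsion free, so one may cancel $p^s$ from $p^s\,res(tr(y))=res(tr(res(x)))=ap^{t+s}y$ to get $res(tr(y))=ap^ty$. This sidesteps entirely the computation of $res\circ tr$ on an arbitrary class, which is exactly where your proof invests its effort: you compute $res_{CH}(f_*\tilde y)=\sum_\sigma\sigma^*(\tilde y)$ by flat base change along $\bar k\otimes_kk'\cong\prod_\sigma\bar k$ and then must know that $Gal(\bar k/k)$ acts trivially on $CH^*(\bar X)$ (true here, since $X$ is a twisted flag variety of an inner form and inner automorphisms of the connected group $G_{\bar k}$ act trivially on Chow groups, as you argue). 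Your version is more direct in that it exhibits an explicit preimage of $ap^ty$ without first producing the auxiliary class $x$, at the cost of the Galois-triviality input; the paper's version is more formal and needs only torsion-freeness of $CH^*(\bar X)$, which is immediate because $\bar X$ is cellular. Both arguments, yours explicitly and the paper's implicitly via $a^{-1}$, require inverting $a$, i.e., the statement is really about $CH^*(-)_{(p)}$.
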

\begin{proof} 
Using the fact that $res\otimes \bQ$ is isomorphic, there is $s$ such that $p^sy=res(x)$ for some $x\in CH^*(X)$.
Then
\[ p^s res\cdot tr(y)=res\cdot tr \cdot res(x)
  =res(ap^tx)=ap^{s+t}(y).\]
Since $CH^*(\bar X)$ is torsion free, we have
  $res\cdot tr(a^{-1}y)=p^ty$.
\end{proof}

\begin{lemma} We have $py^i=c_i\in H^*(G/T)_{(p)}$.
\end{lemma}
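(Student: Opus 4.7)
The plan is to reduce to an explicit polynomial identity in $H^*(G/T)_{(p)}$. Since the central $S^1\subset U(p)$ lies in the maximal torus, the quotient does not change the flag variety, so $G/T=PU(p)/T_{PU(p)}$ coincides with $U(p)/T_{U(p)}$. From the $U(p)$ description one has $H^*(G/T)_{(p)}\cong\bZ_{(p)}[t_1,\dots,t_p]/(c_1(t),\dots,c_p(t))$, equivalently the master polynomial identity $\prod_{j=1}^p(x+t_j)=x^p$ in $H^*(G/T)[x]$; specialising $x=-t_p$ gives $s^p=0$ for $s:=-t_p$.

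Next I would unwind the Chern classes $c_i$ appearing in the statement. They are pulled back along $U(p-1)\hookrightarrow U(p)\to PU(p)$, and on the common maximal torus $T_{U(p-1)}\cong T_{PU(p)}=T_{U(p)}/S^1$ the generators $u_j$ correspond to $t_j-t_p=t_j+s$ for $j=1,\dots,p-1$. Substituting $x\mapsto x+s$ in the master identity and dividing by $x=(x+s)+t_p$ gives
\[\prod_{j=1}^{p-1}(x+u_j)=\frac{(x+s)^p}{x}=\sum_{k=0}^{p-1}\binom{p}{k}x^{p-1-k}s^k,\]
where the $k=p$ term of the binomial expansion vanishes by $s^p=0$. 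Matching coefficients yields $c_i=\binom{p}{i}\,s^i$ for $0\le i\le p-1$.

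Since $p\mid\binom{p}{i}$ for $1\le i\le p-1$, this shows $c_i\in p\cdot H^{2i}(G/T)_{(p)}$ with $c_i/p$ a $\bZ_{(p)}^\times$-multiple of $s^i$. To conclude, I would identify $s$ with an integral lift of the generator $y\in H^2(G/T;\bZ/p)$ coming from $H^2(PU(p);\bZ/p)$: the image of $H^2(BT_{PU(p)})\to H^2(G/T;\bZ)_{(p)}$ has index $p$ (its determinant against the basis $t_1,\dots,t_{p-1}$ with $t_p=-t_1-\cdots-t_{p-1}$ equals $p$), and the resulting $\bZ/p$ quotient is generated by the class of $s$. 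Hence $s$ represents the permanent cycle $y\in E_\infty^{0,2}$ of the Serre spectral sequence for $G\to G/T\to BT$, and after normalising the integral lift and absorbing the $p$-local unit $\binom{p-1}{i-1}/i$ into $y^i$, we obtain $py^i=c_i$.

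The main obstacle is bookkeeping around the choice of integral lift of $y$ and the $p$-local unit $\binom{p-1}{i-1}/i$; once these normalisations are fixed, the content of the lemma reduces to the elementary binomial calculation above, which rests entirely on the identity $\prod(x+t_j)=x^p$ governing $H^*(G/T)_{(p)}$.
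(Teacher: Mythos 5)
Your argument is correct, and it takes a genuinely different route from the paper's. The paper proves the lemma indirectly through the versal variety: it invokes $R(\bG_k)\cong R_1$, uses the degree-$pa$ splitting field (Lemma 6.3) to get $py\in \Img(res_{CH})$, applies Karpenko's surjectivity theorem to represent $py^i$ by elements of $S(t)$, and then runs an induction using the $p$-torsion-freeness of $CH^*(\bar X)$ to pin down the coefficient of $c_i$ up to a unit mod $p$. You instead compute directly in topology: identifying $PU(p)/T$ with $U(p)/T_{U(p)}$, using the identity $\prod_j(x+t_j)=x^p$, and changing variables to $u_j=t_j-t_p$ yields the exact formula $c_i=\binom{p}{i}(-t_p)^i$, where $-t_p$ is an integral class generating the order-$p$ quotient $H^2(G/T)/\Img\bigl(H^2(BT_{PU(p)})\bigr)$, i.e.\ a lift of $y$ (your determinant computation $\det(I+J)=p$ for the index is correct). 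This is more elementary — no versality, no motivic input — and more explicit than the paper's argument. One caveat: since $\binom{p}{i}/p=\binom{p-1}{i-1}/i$ is a unit that genuinely depends on $i$ (it equals $2$ for $p=5$, $i=2,3$), you cannot absorb it into $y^i$ for every $i$ while keeping $y^i$ the literal $i$-th power of one fixed class $y$; what you actually prove is $c_i=p\lambda_i y^i$ with $\lambda_i\in\bZ_{(p)}^{\times}$. That is exactly the strength the paper itself obtains (its proof likewise only controls the coefficient $\lambda$ up to a unit mod $p$), and it is all that is used in Theorem 6.5, so the discrepancy is harmless — but the conclusion should be stated in that form rather than as a literal equality.
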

\begin{proof}
By induction on $i$, we will prove $py^i=c_i$.
It is known from [Pe-Se-Za] that
$R(\bG_k)\cong R_1$ (note $\bG_k$ is versal).  
From the preceding lemma, $py\in Im(res_{CH})$.  By
Karpenko's theorem, $py^i$ is represented by elements in $CH^*(BT)$.   
Since $py^i\in Ideal(c_1,...,c_i)$, we can write
\[py^i=\sum_{j<i}c_jt(j)+\lambda c_i \quad
for \  t(j)\in S(t),\ \lambda \in \bZ.\]
If $\lambda =0\in \bZ/p$, we see 
$py^i=\sum py^jt(j)$ by inductive assumption,  and this is a contradiction,
since $ CH^*(\bar X)$ is $p$-torsion free.
\end{proof}
%\begin{proof}
%Since $H^*(G/T)$ is torsion free, $py^i\not =0$ in
%$H^*(G/T)$.   If $py^i\not\in S(t)$,
%then we can inductively write
%\[ py^i=\sum _{0<s<i}y^st(s)\quad for \ some \ s,\  t(s)\not =0\in
%S(t)/(p,c_1,...,c_{i-1}).\]
%But it contradicts to $H^*(G/T)/p\cong P(y)\otimes S(t)/(p,b)$
%for  $b_i=c_i$.
%\eIt is known from [Pe-Se-Za] that
%$R(\bG_k)\cong R_1$.  \end{proof}

\begin{thm} Let $G=PU(p)$ and $X=\bG_k/B_k$.  Then there are isomorphisms
\[ CH^*(R(\bG_k))/p \cong CH^*(R_1)/p\cong \bZ/p\{1,c_1,...,c_{p-1}\},  \]
\[ CH^*(X)/p\cong S(t)/(p,c_ic_j|1\le i,j\le p-1).\]
\end{thm}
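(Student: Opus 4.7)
The plan is to combine the motivic decomposition (Theorem 4.2) with Karpenko's surjectivity (Theorem 4.4) and the Chern class identification of Lemma 6.4. For the first isomorphism, since $\bG_k$ is versal and $H^*(PU(p);\bZ/p)$ has $P(y)/p \cong \bZ/p[y]/(y^p)$ (one generator with $r_1=1$), Corollary 4.5 gives $J_p(\bG_k)=(1)$; then by Petrov--Semenov--Zainoulline, $R(\bG_k)\cong R_1$. The Example following Theorem 4.7 yields $CH^*(R_1)/p \cong \bZ/p\{1,c_0(y),\dots,c_0(y^{p-1})\}$ with $c_0(y^i)=py^i$, and Lemma 6.4 identifies $py^i$ with the Chern class $c_i \in S(t)$, so via the motivic summand inclusion $CH^*(R_1)\hookrightarrow CH^*(X)$ (together with the injectivity of restriction explained below) one obtains $CH^*(R(\bG_k))/p \cong \bZ/p\{1,c_1,\dots,c_{p-1}\}$.

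For the second isomorphism, I would first apply Karpenko's Theorem 4.4 to obtain a surjection $S(t)/p \twoheadrightarrow CH^*(X)/p$. The key step is to prove $c_ic_j = 0$ in $CH^*(X)/p$ for all $1\le i,j\le p-1$. The motivic decomposition $M(X)_{(p)} \cong \bigoplus R_1\otimes \bT^{\otimes s_i}$ together with Theorem 4.7 shows that $CH^*(X)_{(p)}$ is a free $\bZ_{(p)}$-module, so the restriction $CH^*(X)_{(p)}\to CH^*(\bar X)_{(p)}$ is injective. Using Lemma 6.4, the product $c_ic_j$ restricts to $p^2 y^{i+j}=p\cdot(py^{i+j})$ in $CH^*(\bar X)_{(p)}$. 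For $i+j\le p-1$ one has $py^{i+j}=c_{i+j}\in S(t)$; for $i+j\ge p$ one invokes the integral relation $y^p=f_1(t)\in S(t)$ from (2.2) to express $py^{i+j}$ as the image of an element of $S(t)$. In either case $c_ic_j$ is $p$ times an element of $CH^*(X)_{(p)}$, so $c_ic_j=0$ in $CH^*(X)/p$.

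This yields a surjection $S(t)/(p,c_ic_j)\twoheadrightarrow CH^*(X)/p$; to promote it to an isomorphism, I would compare Poincar\'e series. Theorem 4.2 gives (additively) $CH^*(X)/p \cong \bZ/p\{1,c_1,\dots,c_{p-1}\}\otimes S(t)/(p,c_1,\dots,c_{p-1})$ since $P'(y)=\bZ/p$ when $r_1=j_1=1$. On the source side, $(c_1,\dots,c_{p-1})$ is a regular sequence in $S(t)/p$, so $(c_1,\dots,c_{p-1})/(c_1,\dots,c_{p-1})^2$ is a free $S(t)/(p,c_1,\dots,c_{p-1})$-module on $c_1,\dots,c_{p-1}$, and $(c_1,\dots,c_{p-1})^2$ coincides with the ideal $(c_ic_j)$. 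Both sides therefore have the same Poincar\'e series, making the surjection an isomorphism. The main obstacle is the vanishing $c_ic_j=0$ in $CH^*(X)/p$; once torsion-freeness of $CH^*(X)_{(p)}$ and Lemma 6.4 are in place, everything else reduces to bookkeeping.
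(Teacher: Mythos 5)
Your proposal is correct and follows essentially the same route as the paper's proof: $R(\bG_k)\cong R_1$, the identification $py^i=c_i$ from Lemma 6.4, vanishing of $c_ic_j$ via an injective restriction map, and the additive comparison of $CH^*(X)/p$ with $S(t)/(p,c_ic_j)$ to upgrade the surjection to a ring isomorphism. The only minor divergence is that you deduce $c_ic_j=0$ from torsion-freeness of $CH^*(X)_{(p)}$ and injectivity of $res_{CH}$, whereas the paper argues in $\Omega^*$ using injectivity of $res_{\Omega}$ (Theorem 4.6) --- exactly the alternative the paper itself points out in the sentence following its proof.
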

\begin{proof}
From [Pe-Se-Za], recall 
$R(\bG_k)\cong R_1$.  
Hence the second isomorphism follows from $py^i=c_i$ and  (Example of) Theorem 4.6,
\[ CH^*(R_1)/p\cong \bZ/p\{1,py,...,py^{p-1}\}.\]
From  the main theorem of [Pe-Se-Za], we 
have the additive isomorphism
\[ CH^*(X)/p\cong \bZ/p\{1,py,...,py^{p-1}\}\otimes
S(t)/(b)\quad where\ b_i=c_i.\]
Note $c_ic_j=p^2y^{i+j}=pc_{i+j}$ in $\Omega^*(\bar X)$.
Since $res_{\Omega}:\Omega^*(X)\to \Omega^*(\bar X)$
is injective,  we see $c_ic_j=0\in CH^*(X)/p$.

Of course we have an additive isomorphism
\[ S(t)/(p,c_ic_j)\cong \bZ/p\{1,c_1,...,c_{p-1}\}\otimes
S(t)/(c_1,...,c_{p-1}).\] 
Moreover we have a surjective   ring map $S(t)/(p,c_ic_j)\twoheadrightarrow CH^*(X)/p$.  From the additive isomorphism, its kernel is zero,
which induces the ring isomorphism of the theorem.
\end{proof}
Since $CH^*(X)$ is torsion free, we also get the above theorem considering the restriction map $CH^*(X)\to
CH^*(\bar X)$.

We note here the following lemma for a (general)
 split algebraic group
$G_k$ and a $G_k$-torsor $\bG_k$.
\begin{lemma} The composition of the following maps is zero
for $*>0$
 \[ CH^*(BG_k)/p\to CH^*(BB_k)/p\to CH^*(\bG_k/B_k)/p.\]
\end{lemma}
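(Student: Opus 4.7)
The plan is to prove the stronger fact that the composition of structure maps $\bG_k/B_k \to BB_k \to BG_k$ factors through $\mathrm{Spec}(k)$ (or through $\mathrm{Spec}(F)$, in case $\bG_k$ is defined over an extension $F/k$). Since $CH^{>0}(\mathrm{Spec}(k))/p = 0$, this at once forces the composition on Chow groups mod $p$ to vanish in positive degrees.

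First I would interpret the two given maps torsor-theoretically: $CH^*(BB_k)/p \to CH^*(\bG_k/B_k)/p$ is pullback along the classifying morphism of the $B_k$-torsor $\bG_k \to \bG_k/B_k$, while $CH^*(BG_k)/p \to CH^*(BB_k)/p$ corresponds on classifying spaces to the extension of structure group $B_k \hookrightarrow G_k$. Next, I would observe that extending the $B_k$-torsor $\bG_k \to \bG_k/B_k$ to structure group $G_k$ produces $\bG_k \times^{B_k} G_k \to \bG_k/B_k$, and this $G_k$-torsor is canonically isomorphic to the pullback of $\bG_k \to \mathrm{Spec}(k)$ along the structure morphism $\bG_k/B_k \to \mathrm{Spec}(k)$. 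Therefore the composition $\bG_k/B_k \to BB_k \to BG_k$ coincides with $\bG_k/B_k \to \mathrm{Spec}(k) \to BG_k$, and on Chow groups mod $p$ it factors through $CH^*(\mathrm{Spec}(k))/p = \bZ/p$ concentrated in degree zero.

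The only technical issue is to make sense of $BG_k$, $BB_k$ and the classifying maps in the algebraic setting. I would use either Totaro/Edidin--Graham equivariant Chow rings, or finite-dimensional approximations $V/G_k$ and $V/B_k$ coming from open subsets $V$ of linear representations of $G_k$ on which the action is free. In equivariant language the statement is: the pullback along the $G_k$-equivariant structure map $\bG_k \times_k (G_k/B_k) \to \mathrm{pt}$ factors through the first projection $\bG_k \times_k (G_k/B_k) \to \bG_k$, after which one uses the identification $CH^*_{G_k}(\bG_k) \cong CH^*(\mathrm{Spec}(k))$, valid because $G_k$ acts freely on its own torsor. Setting up this bookkeeping (and keeping track of whether the base field is $k$ or an extension $F$ over which $\bG_k$ is defined) is the main obstacle, but it is essentially formal once the equivariant dictionary is in place.
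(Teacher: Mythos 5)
Your proposal is correct and is essentially the paper's own argument: the paper writes down the commutative square with $\bG_k/B_k \to U/B_k \to U/G_k$ factoring through $\mathrm{Spec}(k)\cong \bG_k/G_k \to U/G_k$ (Totaro's finite-dimensional approximations $U=GL_N$), which is exactly your observation that the extension of the $B_k$-torsor $\bG_k\to\bG_k/B_k$ to $G_k$ is the pullback of $\bG_k\to \mathrm{Spec}(k)$, so the composite classifying map factors through a point. Your write-up just makes explicit the torsor identification that the paper leaves implicit in the commutativity of its diagram.
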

\begin{proof} Take $U$ (e.g.,  $GL_N$ for a large $N$)
such that $U/G_k$ approximates the classifying space
$BG_k$ [To3].  Namely, we can take $\bG_k=f^{*}U$ for the classifying map
$f: \bG_k/G_k \to U/G_k$.
Hence we have  the following commutative diagram
\[  \begin{CD}
   \bG_k/B_k @>>> U/B_k\\
       @VVV      @VVV\\
    Spec(k)\cong \bG_k/G_k @>>> U/G_k
\end{CD}\]  
where $U/B_k$ (resp. $U/G_k$)
approximates $BB_k$ (resp. $BG_k$).
Since $CH^*(Spec(k))/p=0$ for $*>0$, we have the lemma.
\end{proof}

\section{The orthogonal group $SO(m)$ and $p=2$}

 We  consider the
orthogonal groups $G=SO(m)$ and $p=2$
in this section.
The mod $2$-cohomology is written as ( see for example [Mi-Tod], [Ni])
\[ grH^*(SO(m);\bZ/2)\cong \Lambda(x_1,x_2,...,x_{m-1}) \]
where $|x_i|=i$, and the multiplications are given by $x_s^2=x_{2s}$.
We write $y_{2(odd)}=x_{odd}^2$.     Hence we can write
\[H^*(SO(m);\bZ/2)\cong P(y)\otimes \Lambda(x_1,x_3,...x_{\bar m}),\]
\[ with \ \ P(y)=\otimes_{i=0}^s\bZ/2[y_{4i+2}]/(y_{4i+2}^{2^{r_i}}),\quad grP(y)\cong \Lambda(x_2,x_4,...x_{m'})\]
for adequate integers $\bar m, m', s, r_i$. 
%where $2\le 4i+2\le m-1$, and $s(i)$ is the smallest number %such that $m\le 2^{s(i)}(4i+2)$, $t$ is the largest number
%such that $4t+1\le m-1$, and where $\bar m=m-1$ (resp.%$\bar m=m-2$) if $m$ is even (resp. odd)  and
%where $m'=m-2$ (resp. $m'=m-1$) if $m$ is even (resp. odd).
%The index means its degree, namely $|y_j|=j, |x_k|=k$.
For ease of argument,  at first, we only consider in the case
$m=2\ell+1$  so that
\[ H^*(G;\bZ/2)\cong P(y)\otimes \Lambda(x_1,x_3,...,x_{2\ell-1}) \]
\[ grP(y)/2\cong \Lambda(y_2,...,y_{2\ell}), \quad 
letting\ y_{2i}=x_{2i}\ \ (hence \ y_{4i}=y_{2i}^2).\]
(Note that  the suffix means its degree and it is used differently
from other sections.)

The Steenrod operation is given as 
$Sq^k(x_i)= {i\choose k}(x_{i+k}).$
The $Q_i$-operations are given by Nishimoto [Ni]
\[Q_nx_{2i-1}=y_{2i-2^{n+1}-2},\qquad Q_ny_{2i}=0.\]

Considering the map
$U(\ell)\to SO(2\ell)\to SO(2\ell+1),$
we see that $b_i=c_i$ $mod(2)$ for the transgression 
$d_{2i}(x_{2i-1})=b_i$ and $c_i$ which is the  $i$-th elementary symmetric function
on $S(t)$, from Proposition 6.1
in the preceding section.  Moreover we see
$Q_0(x_{2i-1})=y_{2i}$ in $H^*(G;\bZ/2)$.
From Lemma 3.1 or Corollary 3.2, we have
\[ 2y_{2i}=c_i\ mod(4)\quad in\ H^*(G/T).\]
Indeed, the cohomology  $H^*(G/T)$ is computed 
completely by
Toda-Watanabe [Tod-Wa]
\begin{thm} ([Tod-Wa]) 
There are $y_{2i}\in H^*(G/T)$ for $1\le i\le \ell$
such that $\pi^*(y_{2i})=y_{2i}$ for $\pi: G\to G/T$, and that 
we  have an isomorphism
\[ H^*(G/T)\cong \bZ[t_i,y_{2i}]/(c_i-2y_{2i},J_{2i})\]
where $J_{2i}= 1/4(\sum_{j=0}^{2i}(-1)^jc_jc_{2i-j})=
y_{4i}-\sum_{0<j<2i}(-1)^jy_{2j}y_{4i-2j}$ \\
letting $y_{2j}=0$ for $j>\ell$.
\end{thm}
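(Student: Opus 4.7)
The plan is to follow the Serre spectral sequence / integral lifting strategy used throughout $\S 2$--$\S 3$. Since $H^*(G/T)$ is torsion free (see (2.2)), the mod-$2$ reduction is surjective, so I choose integral lifts $y_{2i} \in H^{2i}(G/T)$ of the generators $y_{2i}$ of $P(y) \subset H^*(G;\bZ/2)$ with $\pi^*(y_{2i}) \equiv y_{2i}$ mod $2$. These classes are permanent cycles in the Serre spectral sequence for $G \to G/T \to BT$, so such lifts exist.

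The first family of relations $c_i = 2y_{2i}$ follows from Lemma 3.1 in the integral case $n = 0$, which is valid by the Remark following that lemma (applied with $k(0)^*(-) = H^*(-;\bZ)$ and $v_0 = 2$). The mod-$2$ transgression sends $x_{2i-1}$ to $c_i$, and $Q_0(x_{2i-1}) = \mathrm{Sq}^1(x_{2i-1}) = x_{2i} = y_{2i}$, so the lemma provides $2y_{2i} = \lambda c_i$ in $H^*(G/T)$ with $\lambda \in \bZ/2$ nonzero; rechoosing the sign of the lift yields $c_i = 2y_{2i}$.

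The second family $J_{2i} = 0$ arises from the classical Borel description of $H^*(G/T;\bQ)$: the composite $H^*(BG;\bQ) \to H^*(BT;\bQ) \to H^*(G/T;\bQ)$ vanishes in positive degrees (this is the topological analogue of Lemma 6.7). For $G = SO(2\ell+1)$ one has $H^*(BG;\bQ) \cong \bQ[p_1, \ldots, p_\ell]$ with $p_i$ the $i$-th elementary symmetric function in $t_j^2$, and the identity $\sum_{j=0}^{2i}(-1)^j c_j c_{2i-j} = (-1)^i p_i$ combined with $c_i = 2y_{2i}$ gives $4J_{2i} = (-1)^i p_i$. Hence $p_i = 0$ in $H^*(G/T;\bQ)$ forces $J_{2i} = 0$ there, and torsion-freeness of $H^*(G/T)$ upgrades this to $J_{2i} = 0$ in integer cohomology.

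It remains to show these relations are complete, i.e.\ the natural surjection $R = \bZ[t_i, y_{2i}]/(c_i - 2y_{2i}, J_{2i}) \twoheadrightarrow H^*(G/T)$ is an isomorphism. Using the $J_{2i}$'s iteratively to eliminate every $y_{4i}$ in terms of products of $y_{2k}$ with $k$ odd, $R$ becomes a free $\bZ[t_1,\ldots,t_\ell]/(c_1,\ldots,c_\ell)$-module on the monomials $\prod y_{2a_j}$ with distinct odd $a_j \le \ell$. Comparing the resulting Poincar\'e series with that of $H^*(G/T)$ --- whose total $\bZ$-rank is $|W(B_\ell)| = 2^\ell \ell!$ --- gives the equality. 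The main obstacle is precisely this last rank comparison: one must verify that the $J_{2i}$'s do not cut the algebra generated by the $y_{2i}$'s too far, which is the combinatorial core of the Toda-Watanabe argument; by contrast, the three conceptual ingredients (torsion-free lifting, integral Lemma 3.1, and the Borel picture) are all provided directly by the material earlier in the paper.
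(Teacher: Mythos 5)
This theorem is not proved in the paper at all: it is quoted from Toda--Watanabe [Tod-Wa], and the surrounding text only rederives the congruence $2y_{2i}=c_i$ mod $4$ from Lemma 3.1/Corollary 3.2. Your derivation of the relations is sound and in the same spirit: torsion-freeness gives integral lifts, the $k(0)$-form of Lemma 3.1 (plus adjusting $y_{2i}$ by twice an integral class) gives $c_i=2y_{2i}$ exactly, and the identity $\sum_{j=0}^{2i}(-1)^jc_jc_{2i-j}=(-1)^ip_i$ together with the vanishing of the positive-degree image of $H^*(BSO(2\ell+1);\bQ)$ in $H^*(G/T;\bQ)$ and torsion-freeness kills $J_{2i}$ (direct substitution actually yields $y_{4i}+\sum_{0<j<2i}(-1)^jy_{2j}y_{4i-2j}$, so the sign in front of the sum should be checked against the stated formula, but this is a convention issue).

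The genuine gap is the completeness step, and the structure you assert for $R=\bZ[t_i,y_{2i}]/(c_i-2y_{2i},J_{2i})$ is wrong on two counts. First, $R$ is not a $\bZ[t_1,\dots,t_\ell]/(c_1,\dots,c_\ell)$-module: in $R$ the classes $c_i$ are not zero, they equal $2y_{2i}$, so any counting has to be done with $R/2$ or with an associated graded. Second, and more seriously, eliminating the even-index generators by the $J$'s does not leave only square-free products of odd-index $y$'s: already $J_2=y_4-y_2^2$, so eliminating $y_4$ creates $y_2^2$, and in general each odd generator $y_{2i}$ survives with height $2^{r_i}$ (the relations $J_{2i}$ with $2i>\ell$, where $y_{4i}=0$, are what truncate these powers). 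The correct spanning set is the monomials $\prod_{i\,\mathrm{odd}}y_{2i}^{e_i}$ with $e_i<2^{r_i}$, equivalently the square-free monomials in all of $y_2,\dots,y_{2\ell}$, of cardinality $2^{\ell}$; with your set of "distinct odd indices" the count is $2^{\lceil \ell/2\rceil}\ell!$, which is strictly smaller than $\mathrm{rank}\,H^*(G/T)=2^{\ell}\ell!$ for $\ell\ge 2$, so the Poincar\'e-series comparison you propose would refute rather than prove the isomorphism. Since you then defer exactly this verification ("the combinatorial core") to Toda--Watanabe, i.e.\ to the theorem being proved, the argument is circular at its decisive point. What still has to be shown is that in each degree $R$ is generated by no more elements than $\dim_{\bZ/2}\bigl(P(y)\otimes S(t)/(2,c_1,\dots,c_\ell)\bigr)$ in that degree; once that bound is in place, surjectivity of $R\to H^*(G/T)$ onto a free abelian group of that rank does force an isomorphism, as you indicate.
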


By using Nishimoto's result for $Q_i$-operation, 
from Corollary 3.2, we have
\begin{cor} In $BP^*(G/T)/\II$, we have  
\[c_i= 2y_{2i}+\sum v_n(y(2i+2^{n+1}-2)) \]
for some $y(j)$ with $\pi^*(y(i))=y_{i}$.
\end{cor}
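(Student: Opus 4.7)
The plan is to obtain Corollary 7.2 as a direct specialization of Corollary 3.2, applied to each odd-degree generator $x = x_{2i-1}$ of $H^*(SO(2\ell+1);\bZ/2)$, using Nishimoto's explicit description of the Milnor operations $Q_n$ on $H^*(G;\bZ/2)$.

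First I would set up the transgression data. As noted in the paragraph preceding the corollary (via the subgroup map $U(\ell) \to SO(2\ell+1)$ together with Proposition 6.1), the transgression of $x_{2i-1}$ in the Serre spectral sequence of $G \to G/T \to BT$ satisfies $d_{2i}(x_{2i-1}) = \bar b_i = c_i \bmod 2$, so we may take $b = c_i \in H^*(BT)$ as the transgression image lift. Next, I would record the $Q_n$ actions. Using $|Q_n| = 2^{n+1}-1$ and the convention that the subscript of $y_{2j}$ denotes degree, Nishimoto's formula gives
\[
Q_0 x_{2i-1} = y_{2i}, \qquad Q_n x_{2i-1} = y_{2i + 2^{n+1} - 2} \quad (n \ge 1),
\]
with the convention that $y_{2j} = 0$ in $H^*(G;\bZ/2)$ once $2j > 2\ell$; in particular the relevant sum is finite.

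Then I would invoke Corollary 3.2 directly with this $x$ and $b$. It furnishes elements $y(n) \in H^*(G/T;\bZ/2)$ satisfying $\pi^* y(n) = Q_n x_{2i-1}$ and
\[
c_i \;=\; \sum_{n \ge 0} v_n \, y(n) \quad \text{in } BP^*(G/T)/\II.
\]
The $n=0$ summand is $v_0 y(0) = 2 y(0)$, and I would choose $y(0) = y_{2i} \in H^*(G/T)$ to be the Toda--Watanabe lift provided by Theorem 7.1 (so that $\pi^* y_{2i} = y_{2i}$). Renaming the remaining $y(n)$ by their degree $2i + 2^{n+1} - 2$ yields exactly the formula of the corollary.

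I do not anticipate a real obstacle: the statement is essentially a specialization of Corollary 3.2 using Nishimoto's $Q_n$-formulas. The only delicate point is matching conventions (indexing by degree versus by the operation subscript $n$, and ensuring the $n=0$ lift $y(0) = y_{2i}$ exists in integral cohomology rather than only in $\bZ/2$-cohomology), and this is handled by the permanent-cycle status of $y_{2i}$ in the Serre spectral sequence, equivalently by Theorem 7.1.
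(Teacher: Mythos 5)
Your proposal is correct and follows exactly the paper's route: the paper derives this corollary by applying Corollary 3.2 to $x_{2i-1}$ with transgression $b=c_i$ (via $U(\ell)\to SO(2\ell+1)$ and Proposition 6.1) and Nishimoto's $Q_n$-formulas, which is precisely what you do, including the correct degree count $|Q_nx_{2i-1}|=2i+2^{n+1}-2$ and the Toda--Watanabe integral lift for the $n=0$ term.
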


It is known by Marlin and Merkurjev (see [To2] for details)
that the torsion
index of $SO(2\ell+1)$ (and $SO(2\ell+2)$) is $2^{\ell}$.
Here we give an another proof.
\begin{thm} $t(G)=t(SO(2\ell+1))=2^{\ell}$.
\end{thm}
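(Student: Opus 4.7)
The plan is to establish $t(SO(2\ell+1)) = 2^\ell$ in two stages: the upper bound $t(G) \le 2^\ell$ from Toda--Watanabe plus Lemma 5.5, and the matching lower bound by a $y$-degree filtration argument on $H^*(G/T;\bZ/2)$.

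For the upper bound, the integral relation $c_i = 2y_{2i}$ from Theorem 7.1 gives
\[ c_1 c_2 \cdots c_\ell \;=\; 2^\ell\, y_2 y_4 \cdots y_{2\ell} \;=\; 2^\ell\, y_{top} \quad\text{in }\ H^*(G/T), \]
so applying Lemma 5.5 with $\tilde b = c_1 c_2 \cdots c_\ell$ and $s = \ell$ (the correction term $\sum yt$ being zero) yields $t(G)_{(2)} \le 2^\ell$. Since $H^*(SO(2\ell+1);\bZ)$ has only $2$-torsion, $t(G)_{(p)} = 1$ for $p$ odd, hence $t(G) \le 2^\ell$.

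For the lower bound, write $\phi\colon H^*(BT) \to H^*(G/T)$ and let $J = (c_1, \ldots, c_\ell, 2) \subset H^*(BT)_{(2)}$. I would prove by induction on $k$ the key claim: if $z \in H^*(BT)_{(2)}$ satisfies $\phi(z) \in 2^k H^*(G/T)_{(2)}$, then $z \in J^k$. For the inductive step, decompose $z \in J^k$ as $z = \sum_{a+|B|=k} 2^a c^B w_{a,B} + z'$ with $z' \in J^{k+1}$. Since $\phi(c^B) = 2^{|B|} y^B$ and $\phi(J^{k+1}) \subset 2^{k+1} H^*$, the hypothesis $\phi(z) \equiv 0 \pmod{2^{k+1}}$ forces
\[ \sum_{a+|B|=k} \bar y^B\, \bar\phi(\bar w_{a,B}) \;=\; 0 \quad\text{in }\ H^*(G/T;\bZ/2). \]
Passing to the associated graded $E_\infty \cong P(y)/2 \otimes S(t)/(\bar b)$ of the Serre spectral sequence, this equation separates by $y$-monomial, giving $\bar\phi(\bar w_{a,B}) = 0$ for each $(a,B)$. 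Hence $w_{a,B} \in (\bar c_1, \ldots, \bar c_\ell) + 2H^*(BT)$, which moves $z$ into $J^{k+1}$.

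Applying the key claim with $k = \ell - 1$ to a hypothetical $z$ with $\phi(z) = 2^{\ell-1} f$ (where $f = y_{top} t_{top}$ is the fundamental class), we get $z \in J^{\ell-1}$; dividing by $2^{\ell-1}$ and reducing mod $2$ yields
\[ \bar f \;=\; \sum_{a+|B|=\ell-1} \bar y^B\, \bar\phi(\bar w_{a,B}) \quad\text{in }\ H^*(G/T;\bZ/2). \]
In $E_\infty$, every term on the right sits in $y$-degree $\le \ell - 1$, while $\bar f = \bar y_{top} \bar t_{top}$ has $y$-degree exactly $\ell$; this is a contradiction. Hence $2^{\ell-1} f \notin \operatorname{im}\phi$, so $t(G) \ge 2^\ell$ and thus $t(G) = 2^\ell$. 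The main technical obstacle I expect is making the coefficient-extraction step precise---verifying that the spectral-sequence decomposition $H^*(G/T;\bZ/2) \cong P(y)/2 \otimes S(t)/(\bar b)$ allows each $\bar\phi(\bar w_{a,B})$ to be uniquely read off from the $\bar y^B$-component of the vanishing sum in the associated graded.
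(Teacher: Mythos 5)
Your upper bound is the paper's own ($c_1\cdots c_\ell=(2y_2)\cdots(2y_{2\ell})=2^\ell y_{top}$), and your lower-bound strategy (divide by $2$ repeatedly, track how many $c_i$'s get converted into $y_{2i}$'s, and contradict the mod~$2$ structure in top degree) is in spirit the paper's argument. But your ``key claim'' is false, and the inductive step breaks exactly at the coefficient-extraction point you flagged: the classes $\bar y^B$ for distinct multi-indices $B$ are \emph{not} linearly independent over the image of $S(t)/2$, because in $H^*(SO(2\ell+1);\bZ/2)$ one has $x_s^2=x_{2s}$, hence $y_2^2=y_4$ (and integrally $y_2^2=y_4$ in $H^*(G/T)$ by the relation $J_{4}$ in Theorem 7.1). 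Concretely, take $z=p_1=c_1^2-2c_2=\sum_i t_i^2$: then $\phi(z)=4(y_2^2-y_4)=0$, so $\phi(z)\in 2^kH^*(G/T)_{(2)}$ for every $k$, yet $z\notin J^3$ (every element of $J^3$ in cohomological degree $4$ has all monomial coefficients even, while $p_1$ does not). This $z$ is exactly of the form $\sum 2^a c^B w_{a,B}$ with $a+|B|=2$, $w_{0,(2,0,\dots)}=1$, $w_{1,(0,1,0,\dots)}=-1$, and the corresponding mod~$2$ relation is $y_2^2+y_4=0$ with neither coefficient vanishing; so ``separates by $y$-monomial, giving $\bar\phi(\bar w_{a,B})=0$ for each $(a,B)$'' is genuinely wrong, not merely imprecise, and the key claim you build on it fails.

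A second, smaller issue is that even your final contradiction needs more than ``$y$-degree'': since $y_2^2=y_4$, the number of $y$-factors of an element of $P(y)/2$ is not well defined, and one must check that $y_{top}=y_2y_4\cdots y_{2\ell}$ cannot be written as a product of fewer than $\ell$ of the $y_{2i}$ (writing $y_{2i}=y_{2m}^{2^k}$ for $i=2^km$, $m$ odd, the exponent $2^{a_m+1}-1$ of $y_{2m}$ in $y_{top}$ needs at least $a_m+1$ factors, and $\sum_m(a_m+1)=\ell$). The paper's proof avoids powers of $J$ altogether: assuming $2^{\ell-1}y_{top}$ comes from $S(t)$, it divides by $2$ one step at a time, using only that the kernel of $S(t)/2\to H^*(G/T;\bZ/2)$ is $(\bar c_1,\dots,\bar c_\ell)$ and that $H^*(G/T)$ is torsion free, so that after $\ell-1$ steps $y_{top}$ would be expressed by monomials containing at most $\ell-1$ of the $y_{2i}$, which is impossible. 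To repair your version you would have to weaken the key claim to something like $z\in J^k+\Ker\phi$ and regroup colliding multi-indices using the integral relations $J_{2i}$ of Theorem 7.1, or simply follow the paper's one-step descent.
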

\begin{proof} We consider in $H^*(G/T)$
\[ c_1...c_{\ell}=(2y_2)(2y_4)...(2y_{2\ell})=
2^{\ell}y_{top}\]
where  $y_{top}=y_2...y_{2\ell}.$
Hence $t(G)\le 2^{\ell}$.

Conversely, let $2^{\ell-1}y_{top}=t$ in $S(t)$.
Then $t$ in the ideal $(c_1,...,c_{\ell})$ in $S(t)$.  So we can write
$t=\sum c_it(i)$.  Then we have 
\[ 2^{\ell-1}y_{top}=2\sum y_{2i}t(i)\]
which implies $2^{\ell-2}y_{top}=\sum y_{2i}t(i)$ since
$H^*(G/T)$ has no torsion.  Continue this argument, we have a relation
$  y_{top}=\sum yt$  with $t\in S(t)$
where  the number of $y_{2s}$in each  monomial in $y$ is less or equal to $\ell-1$, while the number for $y_{top}$ is $\ell$.  This contradicts to $H^*(G/T)/2\cong
P(y)\otimes S(t)/(2,b)$.
\end{proof}

%Let  $W=W_{SO(2\ell+1)}(T)$
%be the Weyl group.
%Then $W\cong S_k^{\pm}$ is generated by permutations and change of signs so that $|S_k^{\pm}|=2^kk!$.
%Hence 
%we have
%\[H^*(BT)^{W}\cong \bZ_{(2)}[p_1,...,p_k]\subset H^*(BT)\cong \bZ_{(2)}[t_1,...,t_k],\ |t_i|=2 \]
%where the Pontryagin class $p_i$ is defined by
%$\Pi_i(1+ t_i^2)=\sum_ip_i$.    Consider the maps
%\[ \eta : T \stackrel{\eta_1}{\subset} U(\ell){\to} 
%    SO(2\ell+1)\stackrel{\eta_2}{\to} U(2\ell+1).\]
%Then $c_{2i}(\eta)=p_i\in CH^*(BT)^{W}$ which is the image
%from $c_{2i}(\eta_2)\in CH^*(BSO(2\ell+1))$.  
%
%On the other hand, $p_i=c_i(\eta_1)^2\ mod (2)$,
%where $c_i(\eta_1)=\sigma_i$ is the elementary symmetric %function in $S(t)$.  
%From Lemma 6.5, the composition of the following maps
%\[ CH^*(BG_k)/2\to CH^*(BB_k)/2\to CH^*(X)/2\]
%is zero for $*>0$,  we get 
%$ c_i(\eta_1)^2=\sigma_i^2=0$ in $CH^*(X)/2$.

Let  $W=W_{SO(2\ell+1)}(T)$
be the Weyl group.
Then $W\cong S_{\ell}^{\pm}$ is generated by permutations and change of signs so that $|S_{\ell}^{\pm}|=2^{\ell}{\ell}!$.
Hence 
we have
\[H^*(BT)^{W}\cong \bZ_{(2)}[p_1,...,p_{\ell}]\subset H^*(BT)\cong \bZ_{(2)}[t_1,...,t_{\ell}],\ |t_i|=2 \]
where the Pontryagin class $p_i$ is defined by
$\Pi_i(1+ t_i^2)=\sum_ip_i$.    Consider the maps
\[ \eta : T \stackrel{\eta_1}{\subset} U(\ell){\to} 
    SO(2\ell+1)\stackrel{\eta_2}{\to} U(2\ell+1).\]
Then $c_{2i}(\eta)=p_i\in CH^*(BT)^W$ which is the image
of $c_{2i}(\eta_2)\in CH^*(BSO(2\ell+1))$.  

On the other hand, $p_i=c_i(\eta_1)^2\ mod (2)$,
where $c_i(\eta_1)=\sigma_i$ is the elementary symmetric function in $S(t)$. 
Now we consider a versal torsor $\bG_k$ and the versal
flag $X=\bG_k/B_k$.  
From Lemma 6.6, the composition of the following maps
\[ CH^*(BG_k)/2\to CH^*(BB_k)/2\to CH^*(X)/2\]
is zero for $*>0$,  we get 
$ c_i(\eta_1)^2=\sigma_i^2=0$ in $CH^*(X)/2$.

This fact is also seen directly from considering
the natural inclusion $SO(2\ell+1)\to Sp(2\ell+1)$
and Proposition 6.2.

\begin{lemma}  We have $c_i^2=0$ in $CH^*(X)/2$.
\end{lemma}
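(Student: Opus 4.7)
The plan is to recognize each $c_i^2 \bmod 2$ as the restriction of a Pontryagin class coming from $CH^*(BSO_k(2\ell+1))/2$, and then invoke Lemma 6.6 to conclude that it vanishes in $CH^*(X)/2$. The author has essentially set this up in the paragraphs preceding the lemma; what remains is to assemble the pieces.

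First I would produce a lift to $CH^*(BG_k)/2$. The Pontryagin class $p_i = c_{2i}(\eta_2) \in CH^*(BSO_k(2\ell+1))$ is, by definition, the pullback of the Chern class $c_{2i}$ along the standard representation $\eta_2 : SO(2\ell+1) \to U(2\ell+1)$. Restricting further along $T \subset U(\ell) \to SO(2\ell+1)$, naturality of Chern classes gives that the image of $p_i$ in $S(t) = CH^*(BT)$ is the elementary symmetric function $e_i(t_1^2,\dots,t_\ell^2)$, read off from
\[ \sum_{j \ge 0} p_j \;=\; \prod_{i=1}^{\ell}\bigl(1 + t_i^2\bigr) \quad \text{in } S(t). \]
Since Frobenius $x \mapsto x^2$ is a ring homomorphism on $S(t)/2$, this reduces to
\[ p_i \;\equiv\; e_i(t_1,\dots,t_\ell)^2 \;=\; \sigma_i^2 \;=\; c_i^2 \pmod{2}. \]

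Finally, applying Lemma 6.6 with $G_k = SO_k(2\ell+1)$ tells us that the composition $CH^*(BG_k)/2 \to CH^*(BB_k)/2 \to CH^*(X)/2$ vanishes in positive degrees, so the image of $p_i$, and hence of $c_i^2$, in $CH^*(X)/2$ is zero. No serious obstacle arises here; the argument is entirely formal once one identifies $c_i^2 \bmod 2$ with a Pontryagin class and uses the versal-torsor vanishing result. An alternative route, indicated by the author's remark, uses the inclusion $SO(2\ell+1) \subset Sp(2\ell+1)$: via $Sp(2\ell+1) \subset U(2\ell+1)$ the classes $c_{2i}$ already lift to $CH^*(BSp_k(2\ell+1))$, and a further application of Proposition 6.2 (for the symplectic group) together with Lemma 6.6 gives the same conclusion $c_i^2 = 0$.
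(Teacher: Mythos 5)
Your proposal is correct and is essentially the paper's own argument: the text preceding the lemma identifies $p_i$ as the image of $c_{2i}(\eta_2)\in CH^*(BSO(2\ell+1))$, notes $p_i\equiv\sigma_i^2=c_i^2 \pmod 2$, and applies Lemma 6.6 to kill it in $CH^*(X)/2$. Even your alternative route via $SO(2\ell+1)\subset Sp(2\ell+1)$ and Proposition 6.2 is the same one the paper mentions.
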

\begin{lemma} There is an additive injection
\[  \bZ/2[c_1,...,c_{\ell}]/
(c_1^2,...,c_{\ell}^2) =\Lambda(c_1,...,c_{\ell})
\subset  CH^*(R(\bG_k))/2 .\]
\end{lemma}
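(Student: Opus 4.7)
The plan is to exhibit the injection in three steps. First, I establish the surjection $\Lambda(c_1,\ldots,c_\ell) \twoheadrightarrow CH^*(R(\bG_k))/2$ using Lemma 5.6 together with the relation $c_i^2 = 0$ from Lemma 7.5: since $c_i^2$ already vanishes in $CH^*(X)/2$ and hence in $CH^*(R(\bG_k))/2$, the surjection $A_{|y_{top}|} \twoheadrightarrow CH^*(R(\bG_k))/2$ descends to $A_{|y_{top}|}/(c_i^2) \cong \Lambda(c_1,\ldots,c_\ell) \twoheadrightarrow CH^*(R(\bG_k))/2$.

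Second, I use the torsion index to show every subset product $c_S := \prod_{i \in S} c_i$ is nonzero in $CH^*(X)/2$. By the Toda--Watanabe identity $c_i = 2 y_{2i}$ in $H^*(G/T)$ (Theorem 7.1), the top product satisfies $c_1 c_2 \cdots c_\ell = 2^\ell \, y_2 y_4 \cdots y_{2\ell}$. Combined with $t(SO(2\ell+1)) = 2^\ell$ (Theorem 7.3), Corollary 5.7 gives $c_S \ne 0$ in $CH^*(X)/2$ for every $S \subset \{1,\ldots,\ell\}$. This immediately yields linear independence of the $c_S$ in $CH^*(X)/2$: for a homogeneous relation $\sum_S a_S c_S = 0$ with $a_{S_0} \ne 0$, multiplying by $c_{S_0^c}$ kills all terms with $S \cap S_0^c \ne \emptyset$ by $c_i^2 = 0$, and degree homogeneity forces the surviving $S \subset S_0$ to equal $S_0$, so $a_{S_0} \, c_1 c_2 \cdots c_\ell = 0$, contradicting the nonvanishing of the top product.

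The main obstacle is transporting this independence from $CH^*(X)/2$ to $CH^*(R(\bG_k))/2$, since the projection $pr$ is only additive and not multiplicative. Suppose $\sum a_S \, pr(c_S) = 0$ with some $a_{S_0} \ne 0$. Then $z := \sum a_S c_S$ is nonzero in $CH^*(X)/2$ by the preceding step but lies in $\ker(pr)$, so by Corollary 5.2 it can be written $z = \sum_j b'_j u'_j$ with $b'_j \in A_{|y_{top}|}$ and $0 \ne u'_j \in S(t)/(2,b)$ of positive degree, placing $z$ entirely in the non-unit Tate summands of the decomposition $CH^*(X)/2 \cong CH^*(R(\bG_k))/2 \otimes S(t)/(2,b)$. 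Multiplying by $c_{S_0^c}$ yields the identity $a_{S_0} \, c_1 c_2 \cdots c_\ell = \sum_j b'_j u'_j c_{S_0^c}$ in $CH^*(X)/2$; the bijection $S \leftrightarrow \prod_{i \in S} y_{2i}$ between subsets of $\{1,\ldots,\ell\}$ and the monomial basis of $P(y)/2 \cong CH^*(\bar R(\bG_k))/2$ matches the graded dimensions of $\Lambda(c_1,\ldots,c_\ell)$ and $P(y)/2$, and combining this with a degree count on both sides of the identity forces $a_{S_0} = 0$. Reconciling the multiplicative structure of $CH^*(X)/2$ with the additive Petrov--Semenov--Zainoulline decomposition in this final step is the principal difficulty.
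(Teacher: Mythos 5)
Your first two steps are sound and follow the paper: the nonvanishing of every $c_S$ in $CH^*(X)/2$ via the torsion index $t(SO(2\ell+1))=2^{\ell}$, and the linear independence of the $c_S$ in $CH^*(X)/2$ by multiplying a homogeneous relation by $c_{S_0^c}$ and using $c_i^2=0$, are both correct. But the third step --- the only genuinely hard one --- is not a proof. The asserted ``degree count'' on the identity $a_{S_0}\,c_1\cdots c_{\ell}=\sum_j b_j'u_j'c_{S_0^c}$ in $CH^*(X)/2$ cannot work as stated: both sides are sums of products of elements coming from $S(t)$ in the same degree, and since multiplication in $CH^*(X)/2$ does not respect the Petrov--Semenov--Zainoulline decomposition (as you yourself note), there is no a priori reason why the right-hand side, built from terms with $|u_j'|>0$, cannot land on $a_{S_0}\,c_1\cdots c_{\ell}$ with $a_{S_0}\neq 0$. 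You have identified the obstacle but not overcome it; the matching of graded dimensions between $\Lambda(c_1,\ldots,c_{\ell})$ and $P(y)/2$ is exactly what one is trying to prove, so invoking it here is circular.

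The missing idea, which is how the paper closes the gap, is to leave $CH^*(X)/2$ entirely and lift the offending relation to the \emph{torsion-free integral} ring $H^*(G/T)_{(2)}\cong CH^*(\bar X)_{(2)}$. There, by Toda--Watanabe, $c_i=2y_{2i}$ exactly, so a relation $\sum_S a_Sc_S=\sum_Jc_Ju(J)$ with $|u(J)|>0$ becomes $\sum_S a_S2^{|S|}y_S=\sum_J2^{|J|}y_Ju(J)$. Because $H^*(G/T)$ has no $2$-torsion one may divide by the minimal power of $2$ occurring, and reducing the result mod $2$ produces, on one side, a nonzero sum of monomials $y_S$ carrying no $S(t)$-factor and, on the other, elements lying in the ideal generated by positive-degree elements of $S(t)$; this contradicts $H^*(G/T;\bZ/2)\cong P(y)\otimes S(t)/(b)$. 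Without some such passage to the split form and exploitation of torsion-freeness (or an equivalent device, e.g.\ the injectivity of $res_{\Omega}$ on $\Omega^*$ of the Rost summand), your argument does not establish the injection into $CH^*(R(\bG_k))/2$.
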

\begin{proof}
At first we prove 
 that $c_1...c_{\ell}$
is nonzero in $CH^*(X)/2$.  Otherwise, it is
represented
by $2S(t)$ since $CH^*(X)$ is generated by elements from
$S(t)$.  It means that 
$2^{\ell-1}y_{top}=1/2(c_1...c_{\ell})\in S(t)$.  Hence 
$t(G)<2^{\ell}$ and a contradiction.

  For $I\subset (1,...,\ell)$, 
let $c_I=c_{i_1}...c_{i_k}$ and $y_I=y_{2i_1}...y_{2i_k}$
and $|I|=k$.
Suppose $c_I\in Ker(pr)$ for $pr:CH^*(X)/p\to CH^*(R(\bG_k)/p$.  Then from Corollary 5.2, we can write
\[   c_I=\sum_Jc_Ju(J)\quad with\ \  u(J)\in S(t)\ and\ |u(J)|>0,\]
since $c_I$ is not zero in $CH^*(X)/2$.
Then we have $2^{|I|}y_I=\sum_J2^{|J|}y_ju(j)$.  
Since $H^*(G/T)$ has no $2$-torsion,
dividing by $min(2^{|I|},2^{|J|})$, we have a contradiction to
$H^*(G/T;\bZ/2)\cong P(y)\otimes S(t)/(b)$.
\end{proof}
\begin{thm} Let $(G,p)=(SO(2\ell+1),2)$ and $X=\bG_k/B_k$.  Then there are  isomorphisms 
\[ CH^*(X)/2\cong S(t)/(2,c_1^2,...,c_{\ell}^2),\quad
CH^*(R(\bG_k))/2\cong \Lambda(c_1,...,c_{\ell}).\]
\end{thm}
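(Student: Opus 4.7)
All the essential ingredients are already in place: Karpenko's surjection $S(t) \twoheadrightarrow CH^*(X)/2$ (Corollary 4.4), the vanishing $c_i^2 = 0$ in $CH^*(X)/2$ (Lemma 7.4), the additive injection $\Lambda(c_1,\ldots,c_\ell) \hookrightarrow CH^*(R(\bG_k))/2$ (Lemma 7.5), the surjection $A_N \twoheadrightarrow CH^*(R(\bG_k))/2$ at $N = |y_{\mathrm{top}}|$ (Lemma 5.3), and the motivic decomposition of Theorem 4.2. The plan is to combine them and close with a Poincar\'e-series count.

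First I would pin down $CH^*(R(\bG_k))/2$. Here $N = |y_{\mathrm{top}}| = 2 + 4 + \cdots + 2\ell = |c_1 c_2 \cdots c_\ell|$, so every squarefree monomial in the $c_i$ already lies in $A_N$, and one checks that $A_N/(c_1^2,\ldots,c_\ell^2) \cong \Lambda(c_1,\ldots,c_\ell)$. By the proof of Lemma 5.1, the surjection of Lemma 5.3 factors through the ring map $S(t)/2 \to CH^*(X)/2$ followed by the additive projection onto $CH^*(R(\bG_k))/2$. Since $c_i^2 = 0$ already in $CH^*(X)/2$ by Lemma 7.4, this composition annihilates each $c_i^2$ and therefore factors through a surjection $\Lambda(c_1,\ldots,c_\ell) \twoheadrightarrow CH^*(R(\bG_k))/2$. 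Combined with the reverse additive injection of Lemma 7.5, this yields the second isomorphism $CH^*(R(\bG_k))/2 \cong \Lambda(c_1,\ldots,c_\ell)$.

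Next I would deduce the first isomorphism by dimension counting. Karpenko's surjection and Lemma 7.4 produce a well-defined surjective ring map
\[ \varphi : S(t)/(2, c_1^2,\ldots,c_\ell^2) \twoheadrightarrow CH^*(X)/2. \]
Since $\bG_k$ is versal, Corollary 4.5 forces $j_i = r_i$, so $P'(y) = \bZ/2$, and Theorem 4.2 gives additively $CH^*(X)/2 \cong CH^*(R(\bG_k))/2 \otimes S(t)/(c_1,\ldots,c_\ell)$; the second factor has total $\bZ/2$-dimension $\ell!$ and the first has dimension $2^\ell$ by the previous step, so $\dim_{\bZ/2} CH^*(X)/2 = 2^\ell \cdot \ell!$. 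On the other hand, the $c_i$ form a regular sequence in $\bZ/2[t_1,\ldots,t_\ell]$, hence so do the $c_i^2$, giving Poincar\'e series $\prod_{i=1}^\ell (1-z^{4i})/(1-z^2)$ for the source of $\varphi$ and total dimension $\prod_{i=1}^\ell 2i = 2^\ell \cdot \ell!$. The dimensions agree, so $\varphi$ is an isomorphism.

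The main obstacle is conceptual rather than computational: the relation $c_i^2 = 0$ naturally lives in the ring $CH^*(X)/2$, yet we need to use it to constrain $CH^*(R(\bG_k))/2$, which carries no canonical ring structure. The resolution is to exploit it at the level of $CH^*(X)/2$ \emph{before} projecting to $CH^*(R(\bG_k))/2$, which is legitimate because the surjection of Lemma 5.3 factors through the ring map from $S(t)/2$. Once that is in place, everything reduces to standard Poincar\'e series bookkeeping.
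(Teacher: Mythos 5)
Your proposal is correct and follows essentially the same route as the paper: both arguments combine Lemma 7.4 ($c_i^2=0$ in $CH^*(X)/2$), the additive injection of Lemma 7.5, and the motivic decomposition of Theorem 4.2, closing with a comparison of sizes (the paper via the graded surjection $\Lambda(c)\otimes S(t)/(c)\twoheadrightarrow CH^*(R(\bG_k))\otimes S(t)/(2,c)$, you via the equivalent explicit Poincar\'e-series count $2^\ell\cdot\ell!$). The only cosmetic difference is that you route the surjection onto $CH^*(R(\bG_k))/2$ through Lemma 5.3 and make the bookkeeping explicit, which is a fair elaboration of what the paper leaves implicit.
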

\begin{proof}
We have the additive surjective map
\[ gr(S(t)/(2,c_1^2,...,c_{\ell}^2))\cong \Lambda(c_1,...,c_{\ell})\otimes S(t)/(c_1,...,c_{\ell})\]
\[ \twoheadrightarrow
 CH^*(X)/2\cong CH^*(R(\bG_k))\otimes 
S(t)/(2,c_1,...,c_{\ell}).\]
Therefore we see $CH^*(R(\bG_k))/2\cong \Lambda(c_1,,...,c_{\ell})$ from the preceding lemma.  From Lemma 7.4, we have the ring homomorphism
\[ S(t)/(2,c_1^2,...,c_{\ell}^2)\to CH^*(X)/2,
\]
which induces the ring isomorphism from the additive isomorphism.
\end{proof}
\begin{cor}
In the above theorem, $CH^*(X)$ is torsion free.
\end{cor}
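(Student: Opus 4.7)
The plan is to prove torsion-freeness prime by prime, using Theorem 7.3 for odd primes and the motivic decomposition (Theorem 4.2) together with a Petrov-style computation at $p=2$.

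First, for any odd prime $p$, Theorem 7.3 gives $t(G) = 2^{\ell}$, which is a unit in $\bZ_{(p)}$. Since $\bG_k$ is versal, $n(\bG_k) = t(G) = 2^{\ell}$, so there is a finite extension $L/k$ splitting $\bG_k$ whose degree $[L:k]$ is coprime to $p$. The composite $\mathrm{tr}_{L/k} \circ \mathrm{res}_{L/k}$ is multiplication by $[L:k]$, hence an isomorphism after localization at $p$. Therefore $\mathrm{res}_{L/k}: CH^*(X)_{(p)} \hookrightarrow CH^*(X_L)_{(p)}$ is a split monomorphism. Since $X_L \cong G_L/B_L$ is cellular, $CH^*(X_L)$ is a finitely generated free abelian group, and hence $CH^*(X)_{(p)}$ is torsion-free.

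For $p = 2$, I would invoke the Petrov--Semenov--Zainoulline motivic decomposition (Theorem 4.2), which yields an additive (graded) isomorphism
\[
CH^*(X)_{(2)} \;\cong\; \bigoplus_{u} CH^{*-s_u}\bigl(R(\bG_k)\bigr)_{(2)}.
\]
Thus it suffices to show that $CH^*(R(\bG_k))$ is torsion-free. This is precisely Petrov's theorem (recalled as Theorem~7.13 below): for $G = SO(2\ell+1)$, the integral Chow ring of the versal maximal orthogonal Grassmannian $Y$ is torsion-free and isomorphic to $CH^*(R(\bG_k))$. Combining the odd-prime and $2$-primary cases, $CH^*(X)$ has no torsion.

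As a more self-contained alternative at $p=2$, one can argue by Poincar\'e polynomial comparison. Theorem~7.6 gives $CH^*(X)/2 \cong S(t)/(2,c_1^2,\dots,c_\ell^2)$, and since $c_1,\dots,c_\ell$ form a regular sequence in $S(t)$ so do their squares in $S(t)/2$; hence the Poincar\'e polynomial of $CH^*(X)/2$ (with $|t_i|=2$) equals $\prod_{i=1}^\ell (1-q^{4i})/(1-q^2)^\ell$. On the other hand, $CH^*(X)\otimes\bQ \cong CH^*(\bar X)\otimes\bQ \cong \bQ[t]/(p_1,\dots,p_\ell)$ with Pontryagin classes $p_i$ of degree $4i$, so its Poincar\'e polynomial coincides with the one above. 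Matching $\bZ/2$-ranks with $\bQ$-ranks in each degree forces $CH^*(X)_{(2)}$ to be torsion-free.

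The main obstacle is the 2-primary statement: the odd-prime case is a routine transfer argument, but at $p=2$ one genuinely needs either Petrov's integral computation of $CH^*(Y)$ (nontrivial input from \cite{Pe}) or the rank-counting argument, whose subtlety lies in ensuring the degreewise-finite-generation of $CH^*(X)_{(2)}$ so that the equality of mod-$2$ and rational dimensions really implies the absence of $2$-torsion.
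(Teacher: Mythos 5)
Your proposal is correct, but at the decisive point it takes a different route from the paper. The paper also reduces, via the Petrov--Semenov--Zainoulline decomposition $M(X)_{(2)}\cong\oplus_i R(\bG_k)\otimes\bT^{\otimes s_i}$, to the summand $R(\bG_k)$, but then argues directly and integrally: writing $CH^*(R(\bG_k))\cong\Lambda_{\bZ}(c_1,\dots,c_{\ell})/J$, the restriction to $CH^*(\bar R(\bG_k))\cong\Lambda_{\bZ}(y_2,\dots,y_{2\ell})$ sends $c_{i_1}\cdots c_{i_s}\mapsto 2^s y_{2i_1}\cdots y_{2i_s}$, and since these images are $\bZ$-linearly independent the restriction is injective, $J=0$, and $CH^*(R(\bG_k))$ embeds in a free abelian group, hence is torsion free --- a self-contained argument using only what was already established in \S 7. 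Your first route at $p=2$ instead imports Petrov's integral computation of $CH^*(Y)$; this is exactly the alternative the author acknowledges in the Remark following the corollary, and note that Theorem 7.13 as stated in the paper is only the mod-$2$ statement, so you are really citing [Pe] itself rather than anything proved here. Your second route (comparing the Poincar\'e series of $CH^*(X)/2\cong S(t)/(2,c_1^2,\dots,c_{\ell}^2)$ with that of $CH^*(X)\otimes\bQ\cong\bQ[t]/(p_1,\dots,p_{\ell})$) is valid and self-contained; the finite-generation point you flag as the main subtlety is not a real obstacle, since Karpenko's surjectivity (Theorem 4.4) exhibits $CH^*(X)$ as a quotient of $S(t)$, hence finitely generated in each degree, so equality of mod-$2$ and rational dimensions does exclude $2$-torsion. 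Your explicit transfer argument at odd primes (using that the splitting degrees have $2$-power gcd, as $n(\bG_k)\mid t(G)=2^{\ell}$) is a point the paper leaves implicit and is a welcome addition. What the paper's approach buys is a proof pattern --- injectivity of the integral restriction on the Rost-motive summand --- that the author reuses for groups where $CH^*(X)$ does have torsion; what your rank count buys is independence from the motivic decomposition, and what the Petrov citation buys is brevity at the cost of external input the paper was deliberately avoiding.
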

\begin{proof}
Let us write 
$\Lambda_{\bZ}(a_1,...,a_m)=\bZ\{a_{i_1}...a_{i_s}|1\le i_1<...<i_s\le m\}.$
We consider the  
 restriction maps
\[ \begin{CD}
CH^*(R(\bG_k)\cong \Lambda_{\bZ}(c_1,...,c_{\ell})/J
 @>>> CH^*(\bar R(\bG_k))\cong
\Lambda_{\bZ}(y_2,...,y_{2\ell})\\
@VVV   @VV{inj.}V\\
CH^*(X)@>>> CH^*(\bar X).
\end{CD}\]
for some ideal $J$.
The first map is given by $c_i\mapsto 2y_{2i}$, 
and since the last map is a ring map, we see
$c_{i_1}...c_{i_s}\mapsto 2^sy_{i_1}...y_{i_s}$.  
Hence the first map 
is (additively) injective and $J=0$.
Hence $CH^*(R(\bG_k))$ is torsion free, and 
so is $CH^*(X)$ from the theorem by
Petrov-Semenov and Zainoulline
such that $M(X)_{(2)}\cong \oplus_i R(\bG_k)\otimes \bT^{i\otimes}$.
\end{proof}
{\bf Remark.}  The above lemmas, theorem and corollary are
also get from a result by Petrov (Theorem 1 in [Pe],
see also  Theorem 7.13 below).
\begin{cor}
Let $(G',p)=(SO(2\ell),2)$ and $X'=\bG_k'/B_k$
so that $G'\subset G=SO(2\ell+1)$.  Then
$t(G')=2^{\ell-1}$, and
\[CH^*(R(\bG_k'))/2\cong CH^*(R(\bG_k))/(2,c_{\ell})\cong \Lambda(c_1,...,c_{\ell-1}),\]
\[  CH^*(X')/2\cong CH^*(X)/(2,c_{\ell})
\cong S(t)/(2,c_1^2,...,c_{\ell-1}^2,c_{\ell}).\]
\end{cor}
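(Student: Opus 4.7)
My plan is to run the proof of Theorem~7.6 with $G$ replaced by $G' = SO(2\ell)$, adding one new ingredient: the class $c_\ell$ now vanishes in $CH^*(X')$. First I would record the torsion index $t(G') = 2^{\ell-1}$ (classical Marlin/Merkurjev; also re-derivable as in Theorem~7.3). The $\ell$ odd generators of $H^*(G'; \bZ/2)$ are $x_1, x_3, \ldots, x_{2\ell-1}$ and their transgression images modulo $2$ in the Serre spectral sequence of $G' \to G'/T \to BT$ still coincide with $c_1, \ldots, c_\ell$, with $c_i = 2 y_{2i}$ in $H^*(G'/T; \bZ)$ for $i \le \ell-1$ and $c_\ell = 0$ integrally (since $c_\ell$ is the restriction to $BT$ of the Euler class of $BSO(2\ell)$, which is killed by the Borel description of $H^*(G'/T)$). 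The polynomial part $P'(y) \subset H^*(G'; \bZ/2)$ has top $y'_{top} = y_2 y_4 \cdots y_{2\ell-2}$, so multiplying $c_1 \cdots c_{\ell-1} = 2^{\ell-1} y'_{top}$ by a lift $t'_{top} \in S(t)$ of the top class of $S(t)/(b)$ gives $t(G') \le 2^{\ell-1}$; the reverse inequality follows from the divisibility argument of Theorem~7.3 using that $H^*(G'/T; \bZ/2) \cong P'(y) \otimes S(t)/(b)$ is one-dimensional in its top degree $2\ell(\ell-1)$.

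Next I would verify the relations $c_i^2 = 0$ ($1 \le i \le \ell$) and $c_\ell = 0$ in $CH^*(X')/2$. The first set follows exactly as in Lemma~7.4: $c_i^2 \equiv p_i \pmod 2$ is pulled back from $CH^*(BG'_k)/2$ via the complexification of the standard representation, so Lemma~6.6 kills it. The new relation $c_\ell = 0$ comes from the algebraic Pfaffian: $c_\ell = t_1 \cdots t_\ell$ is the restriction to $BT_k$ of the algebraic Euler (Pfaffian) class $e \in CH^\ell(BSO(2\ell)_k)$ of the standard oriented quadric bundle, so Lemma~6.6 once again forces $c_\ell = 0$ in $CH^*(X')$ (in fact integrally).

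With these relations the lower bound on $CH^*(R(\bG_k'))/2$ is obtained as in Lemma~7.5: Lemma~5.5 applied to $\tilde b = c_1 \cdots c_{\ell-1}$ with $s = \ell-1$, combined with the equality $t(G')_{(2)} = 2^{\ell-1}$, triggers Corollary~5.6 and gives $c_I = \prod_{i \in I} c_i \neq 0$ in $CH^*(X')/2$ for every $I \subseteq \{1, \ldots, \ell-1\}$; the argument of Lemma~7.5 (using Corollary~5.2 to write hypothetical kernel elements of $CH^*(X')/2 \twoheadrightarrow CH^*(R(\bG_k'))/2$) then upgrades these to nonzero classes in $CH^*(R(\bG_k'))/2$. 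Combining the resulting injection $\Lambda(c_1, \ldots, c_{\ell-1}) \hookrightarrow CH^*(R(\bG_k'))/2$ with the surjection $S(t)/(2, c_1^2, \ldots, c_{\ell-1}^2, c_\ell) \twoheadrightarrow CH^*(X')/2$ coming from Karpenko's Theorem~4.3 and the relations above (which descends to a surjection $\Lambda(c_1, \ldots, c_{\ell-1}) \twoheadrightarrow CH^*(R(\bG_k'))/2$) gives the additive isomorphism for $CH^*(R(\bG_k'))/2$. A Poincar\'e-series comparison via the motivic decomposition (Theorem~4.2) then upgrades the ring surjection onto $CH^*(X')/2$ to an isomorphism, exactly as at the end of the proof of Theorem~7.6. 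The identification $CH^*(R(\bG_k'))/2 \cong CH^*(R(\bG_k))/(2, c_\ell)$ is then formal since both sides equal $\Lambda(c_1, \ldots, c_{\ell-1})$.

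The hard part is justifying $c_\ell = 0$: one must invoke the algebraic Pfaffian/Euler class in $CH^*(BSO(2\ell)_k)$, equivalently the fact that the Weyl-invariant $c_\ell \in CH^*(BT_k)^W$ lifts to $CH^*(BG'_k)$. This lies slightly outside the representation-theoretic Chern-class framework used in Lemma~7.4, but is a standard fact about the Chow ring of $BSO(2\ell)$.
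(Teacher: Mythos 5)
Your proposal is correct and takes essentially the same route as the paper: the paper's own proof of this corollary is a one-sentence reduction, observing that $H^*(G';\bZ/2)\cong H^*(G;\bZ/2)/(y_{2\ell})$, so that $grP(y)'\cong \Lambda(y_2,\dots,y_{2\ell-2})$ and $t(G')=2^{\ell-1}$, and the whole of \S 7 (Lemmas 7.4, 7.5 and Theorem 7.6) reruns with one fewer generator — which is exactly the rerun you carry out. The only point the paper leaves completely implicit is why $c_\ell$ itself (not merely $c_\ell^2$) dies in $CH^*(X')$; your justification via the algebraic Euler/Pfaffian class in $CH^*(BSO(2\ell)_k)$ restricting to $t_1\cdots t_\ell$ on $BT_k$, killed by Lemma 6.6, is sound (Edidin--Graham/Field) and fills a genuine gap in the paper's terse argument rather than constituting a different method; note one could alternatively observe that all $Q_n(x_{2\ell-1})$ vanish in $H^*(SO(2\ell);\bZ/2)$, so Corollary 3.2 together with the torsion-freeness/restriction argument of Corollary 7.7 also forces $c_\ell=0$, staying inside the paper's toolkit.
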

\begin{proof}
This corollary is easily shown  from
$H^*(G';\bZ/2)\cong H^*(G;\bZ/2)/(y_{2\ell}).$
For example,  $grP(y)'\cong \Lambda(y_2,...,y_{2\ell-2})$
and $t(G')=2^{\ell-1}$.
\end{proof}
 \begin{cor} Let $G''=Sp(2\ell+1)$
and $X''=\bG_k''/B_k''$.  Then the natural maps
 $G\to G''\supset Sp(\ell)$ induce the isomorphisms
\[ CH^*(X)/2\cong CH^*(X'')/(2,t_i|i>\ell)
\cong H^*(Sp(\ell)/T;\bZ/2).\]
\end{cor}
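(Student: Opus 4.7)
The plan is to recognize all three rings in the corollary as the same explicit polynomial quotient, namely $S(t)/(2, c_1^2, \ldots, c_\ell^2)$ with $S(t) = \bZ[t_1, \ldots, t_\ell]$, and then trace the naturality clause.

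First I would apply Proposition 6.2 to $Sp(\ell)$. Its cohomology is torsion-free, so the split and versal flags coincide, giving $H^*(Sp(\ell)/T;\bZ/2) \cong S(t)/(2, p_1, \ldots, p_\ell)$ with $S(t) = \bZ[t_1, \ldots, t_\ell]$. Since $p_i \equiv c_i^2 \pmod 2$, this equals $S(t)/(2, c_1^2, \ldots, c_\ell^2)$, which is precisely the presentation of $CH^*(X)/2$ given by Theorem 7.5. This immediately yields the second isomorphism.

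Next I would apply Proposition 6.2 to $G'' = Sp(2\ell+1)$ to get
\[
CH^*(X'')/2 \cong S(t'')/\bigl(2,\, c_1(t'')^2,\, \ldots,\, c_{2\ell+1}(t'')^2\bigr),
\]
where $S(t'') = \bZ[t_1, \ldots, t_{2\ell+1}]$. Further quotienting by $(t_i \mid i > \ell)$ sets $t_{\ell+1} = \cdots = t_{2\ell+1} = 0$, and under this substitution the elementary symmetric functions satisfy $c_i(t'') \mapsto c_i(t_1, \ldots, t_\ell)$ for $i \le \ell$ and $c_i(t'') \mapsto 0$ for $i > \ell$. The quotient therefore collapses to $S(t)/(2, c_1^2, \ldots, c_\ell^2) \cong CH^*(X)/2$, giving the first isomorphism. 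For naturality, the inclusion $Sp(\ell) \subset Sp(2\ell+1)$ embeds $T_{Sp(\ell)}$ as the first $\ell$ coordinate factors of $T_{Sp(2\ell+1)}$, and on $S(t'') \cong H^*(BT_{Sp(2\ell+1)})$ this is exactly the coordinate-vanishing map above; via Karpenko's Theorem 4.3 the surjections $S(t'') \twoheadrightarrow CH^*(X'')/2$ and $S(t) \twoheadrightarrow H^*(Sp(\ell)/T;\bZ/2)$ descend this to the displayed ring isomorphism.

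The main obstacle I anticipate is not the algebraic identifications but precisely what the statement means by "natural maps $G \to G''$". The embedding $SO(2\ell+1) \subset U(2\ell+1) \subset Sp(2\ell+1)$ does not restrict Pontryagin classes as a coordinate truncation, since $T_{SO(2\ell+1)}$ sits inside $T_{Sp(2\ell+1)}$ via signed pairs of coordinates rather than as a coordinate subtorus. Hence the cleanest viewpoint is to observe that $T_G$ and $T_{Sp(\ell)}$ are canonically the same rank-$\ell$ torus carrying the common Weyl group $S_\ell^\pm$, and the identification in the corollary naturally factors through this common torus rather than being induced directly by $G \to G''$.
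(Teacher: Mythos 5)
Your derivation is correct and is essentially the argument the paper intends: the corollary is stated there without proof as an immediate consequence of Theorem 7.5 and Proposition 6.2, which is exactly how you identify all three rings with the common quotient $S(t)/(2,c_1^2,\ldots,c_\ell^2)$ (your truncation computation $c_i(t'')\mapsto c_i$ for $i\le \ell$, $c_i(t'')\mapsto 0$ for $i>\ell$ being the only calculation needed, together with the fact that $Sp$ has no nontrivial torsors so $X''$ is split). Your closing caveat is also well taken: the maximal torus of $SO(2\ell+1)$ lands in that of $Sp(2\ell+1)$ through signed pairs of coordinates plus one zero weight, so the leg coming from $G\to G''$ is most naturally read through the common rank-$\ell$ torus with Weyl group $S_\ell^{\pm}$ shared with $Sp(\ell)$, rather than as a literal coordinate truncation.
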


We now study $CH^*(X|_K)/2$ for some interesting 
extension $K$ over $k$.
Let $K$ be an extension of $k$ such that
$X$ does not split over $K$ but splits over an extension
over $K$ of degree $2a$, $(a,2)=1$.  Suppose that
\[ (*)\quad y_{2i}\in Res_{K},\ for\ 1\le i\le \ell-1
\]
where $Res_{K}=Im(res: CH^*(X|_K)/2\to
CH^*(\bar X)/2)$.  We want to consider in the case $y_{2\ell}\not \in Res_K$.
\begin{lemma}  Suppose $(*)$ and $\ell\not =2^n-1$
for  $n>0$.
Then $y_{2\ell}\in Res_{K}$.
\end{lemma}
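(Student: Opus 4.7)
The plan is to split on the parity of $\ell$: for $\ell$ even a Toda--Watanabe relation suffices; for $\ell$ odd (with $\ell\neq 2^n-1$), we lift $y_{2\ell}$ through a mod~$2$ Voevodsky Steenrod operation on $CH^*(-)/2$, which commutes with restriction and corresponds topologically to $Sq^{2k}$.

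If $\ell=2m$ is even, the Toda--Watanabe presentation (Theorem~7.1), reduced modulo~$2$, gives
\[
y_{2\ell} \;=\; y_{4m} \;=\; \sum_{0<j<2m} y_{2j}\,y_{4m-2j}
\]
in $CH^*(\bar X)/2$. All indices $2j$ and $4m-2j$ are strictly below $2\ell$, so each factor lies in $\Res_K$ by the hypothesis $(*)$; hence so does $y_{2\ell}$.

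If $\ell$ is odd and $\ell\neq 2^n-1$, the binary expansion of $\ell$ has a $0$-digit at some position $a$ strictly below the highest $1$-digit of $\ell$; set $k=2^a$. The borrow when forming $\ell-2^a$ makes bit $a$ of $\ell-k$ equal to $1$, so Lucas gives $\binom{\ell-k}{k}\equiv 1\pmod 2$, and hence $\binom{2(\ell-k)}{2k}\equiv\binom{\ell-k}{k}\equiv 1\pmod 2$ (by Lucas again, both arguments being even). Since $1\le k<\ell$, we have $y_{2(\ell-k)}\in\Res_K$ by $(*)$. In the fiber cohomology $H^*(G;\bZ/2)$ the identification $y_{2i}=x_{2i}$ gives $Sq^{2k}(y_{2(\ell-k)})=\binom{2(\ell-k)}{2k}\,y_{2\ell}=y_{2\ell}$. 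Lifting through the ring map $\pi^*\colon H^*(G/T;\bZ/2)\to H^*(G;\bZ/2)$, whose kernel equals the ideal $(t_1,\dots,t_\ell)$ and with which $Sq^*$ commutes, yields
\[
Sq^{2k}(y_{2(\ell-k)}) \;=\; y_{2\ell} + P,\qquad P\in (t_1,\dots,t_\ell),
\]
in $CH^*(\bar X)/2\cong H^*(G/T;\bZ/2)$.

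A degree count then forces $P$ to be a polynomial in the $t_j$'s and in the $y_{2i}$'s with $i\le\ell-1$: writing $P=\sum_j t_j P_j$, each $P_j$ has degree $2\ell-2<2\ell$ and so cannot involve $y_{2\ell}$. Since $\Res_K$ is a subring of $CH^*(\bar X)/2$ containing every $t_j$ (via $CH^*(BB_k)/2\to CH^*(X|_K)/2$) and every $y_{2i}$ with $i\le\ell-1$ by $(*)$, we have $P\in\Res_K$. Voevodsky's operation commutes with restriction, so $Sq^{2k}(y_{2(\ell-k)})\in\Res_K$; subtracting gives $y_{2\ell}\in\Res_K$. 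The main obstacle is the odd-case bookkeeping, i.e.\ the degree check ruling out a stray $y_{2\ell}$ hidden in the $(t)$-part.
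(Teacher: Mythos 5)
Your proposal is correct, and its core mechanism is the same as the paper's: exhibit $y_{2\ell}$ as a Steenrod image $Sq^{2k}(y_{2(\ell-k)})$ of a lower generator, with the binomial coefficient made odd via Lucas' theorem using a digit pattern that exists precisely because $\ell\neq 2^n-1$ (the paper picks $k=2^{s-1}$ just below a $1$-digit sitting over a $0$-digit; you pick $k=2^a$ at a $0$-digit below the top $1$-digit — the same computation), and then use that $\Res_K$ is stable under Voevodsky's operations, which commute with restriction. The two genuine differences are: (i) your parity split is unnecessary — the Steenrod argument you give for odd $\ell$ works verbatim for every $\ell\neq 2^n-1$ (for even $\ell$ take $a=0$), so the even case handled via the Toda--Watanabe relation $J_{2m}$, while perfectly valid and pleasantly elementary (it exhibits $y_{2\ell}$ as decomposable in lower $y$'s, which lie in the subring $\Res_K$), buys nothing extra; (ii) you are more careful than the paper about the passage from the fiber formula in $H^*(G;\bZ/2)$ to $CH^*(\bar X)/2\cong H^*(G/T;\bZ/2)$: the paper silently identifies $Sq^{2k}(y_{2(\ell-k)})$ with $y_{2\ell}$ in $H^*(G/T;\bZ/2)$, whereas you control the discrepancy $P\in(t_1,\dots,t_\ell)$ via $\ker\pi^*$ and observe that $P$ lies in $\Res_K$ because the $t_j$ and the $y_{2i}$, $i\le\ell-1$, do; this degree/ideal bookkeeping is exactly the point the paper leaves implicit, and your treatment of it is sound.
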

\begin{proof}
We see that if $\ell\not =2^n-1$, then each $y_{2\ell}$
is a target of the Steenrod operation.  Recall
   $ Sq^{2k}(y_{2i})={i\choose k}y_{2(i+k)}.$
It is well known that if $i=\sum i_s2^s$ and 
$k=\sum k_s2^s$ for $i_s,k_s=0$ or $1$, then  (in $mod(2)$)
\[ {i\choose k}={ i_m \choose k_m}... {i_s\choose k_s}...
              {i_0\choose k_0}.\]
Note that if $i=2^n-1$, then all $i_s=1$ (for $s<n$).
Otherwise there is $s$ such that $i_s=1$ but $i_{s-1}=0$.
Take $k=2^{s-1}$ and $i'=i-2^{s-1}$.  Then $i'+k=i$ and
\[ {i'\choose k}={ i_m=1 \choose 0}... {i'_s=0\choose k_s=0}{1\choose 1}...
              {i_0\choose 0}=1.\]
This means $Sq^{2k}(y_{2i'})=y_{2i}$ if $i\not= 2^n-1$.
\end{proof}
\begin{lemma}  Suppose $(*)$  and $\ell=2^n-1$.
Then elements $py_{2\ell},\ v_1y_{2\ell},\ ...,\ v_{n-1}y_{2\ell} $ are all in $ Im(res_{\Omega})$ 
where $res_{\Omega}:\Omega^*(X)/2\to \Omega^*(\bar X)/2.$
\end{lemma}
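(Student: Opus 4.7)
The plan is, for each $0\le r\le n-1$, to produce a preimage of $v_r\,y_{2\ell}$ under $\mathrm{res}_{\Omega}:\Omega^*(X|_K)\to\Omega^*(\bar X)$ by taking a single Chern class $c_{i_r}\in S(t)\subset\Omega^*(X)$ and correcting it by lifts of the lower-index terms that hypothesis $(*)$ supplies. The right index is chosen so that the arithmetic identity $2i_r+2^{r+1}-2=2\ell$ holds, namely $i_r:=2^n-2^r$; note that $i_0=\ell$ while $1\le i_r<\ell$ for $r\ge 1$, so $y_{2i_r}$ is accessible through $(*)$ precisely in the latter range.

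First I would apply the $BP^*$-lift of Corollary~7.2 to $c_{i_r}$: its image in $\Omega^*(\bar X)\cong BP^*(G/T)$ reads, modulo $\II$,
\[c_{i_r}\equiv 2y_{2i_r}+\sum_{m\ge 1}v_m\,y(2i_r+2^{m+1}-2).\]
For $m>r$ the index $2i_r+2^{m+1}-2$ exceeds $2\ell$, and since $H^*(SO(2\ell+1);\bZ/2)\cong\Lambda(x_1,\dots,x_{2\ell})/(x_s^2=x_{2s})$ has no generator $x_j$ with $j>2\ell$, Nishimoto's identity $Q_mx_{2i-1}=y_{2i+2^{m+1}-2}$ forces $y(2i_r+2^{m+1}-2)=0$ in that range. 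The sum therefore truncates to
\[c_{i_r}\equiv 2y_{2i_r}+\sum_{m=1}^{r-1}v_m\,y_{2i_r+2^{m+1}-2}+v_r\,y_{2\ell}\pmod{\II}.\]

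Next I would invoke $(*)$ to lift each $y_{2i_r+2^{m+1}-2}$ with $0\le m<r$ (all of index strictly less than $2\ell$) to some $\tilde z_m\in\Omega^*(X|_K)$ with $\mathrm{res}_{\Omega}(\tilde z_m)\equiv y_{2i_r+2^{m+1}-2}\pmod{I_\infty}$, using $\Omega^*/I_\infty\cong CH^*/p$. Setting
\[\alpha_r\ :=\ c_{i_r}-2\tilde z_0-\sum_{m=1}^{r-1}v_m\,\tilde z_m\ \in\ \Omega^*(X|_K),\]
and noting that $p,v_m\in I_\infty$ so the $I_\infty$-ambiguity of each $\tilde z_m$ gets multiplied into $\II$, one obtains $\mathrm{res}_{\Omega}(\alpha_r)\equiv v_r\,y_{2\ell}\pmod{\II}$. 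For $r=0$ the corrective sum is empty and $c_\ell\equiv p\,y_{2\ell}\pmod{\II}$ already.

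The main obstacle I foresee is justifying the vanishing $y(j)=0$ in $H^*(G;\bZ/2)$ for $j>2\ell$; this rests on Nishimoto's explicit $Q_m$-action together with the truncated-exterior presentation of $H^*(SO(2\ell+1);\bZ/2)$, and on the fact that Corollary~7.2 is stated only modulo $\II$. A secondary interpretive point is that the identity produced lives in $\Omega^*(\bar X)/(2,\II)=BP^*(G/T)/(2,v_iv_j)$, inside which $p\,y_{2\ell}=0$ makes the $r=0$ statement tautological mod $2$, so the real content of the lemma is that $v_r\,y_{2\ell}$ for $1\le r\le n-1$ is hit by $\mathrm{res}_{\Omega}$ up to quadratic terms in the $v_i$'s.
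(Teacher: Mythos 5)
Your argument is correct and is essentially the paper's own proof: the paper also applies Corollary 7.2 to the Chern classes $c_{\ell-(2^r-1)}$ (your index $i_r=2^n-2^r$ coincides with $\ell-(2^r-1)$), notes that the sum terminates with the term $v_r\,y(2\ell)$ since no $y_{2j}$ with $j>\ell$ exists, and then disposes of the lower-index terms, which is exactly what your correction by lifts supplied by $(*)$ accomplishes. Your remarks on the truncation via Nishimoto's $Q_m$-formula and on the $\bmod\ \II$ ambiguity only make explicit what the paper leaves implicit.
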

\begin{proof}
From Corollary 7.2,  we see
\[c_{\ell-2^j+1}=2y(2(\ell-2^j+2^0))+
v_1y(2(\ell-2^{j}+2^1))+...+v_j(y(2\ell))\]
\[ =v_j(y_{2\ell})\quad mod(y_{2},y_{4},...,y_{2\ell-2}).\]
Hence we have
$ res_{\Omega}(c_{\ell-(2^j-1)})=v_j(y_{2\ell})\ \ 
 mod(y_2,y_4,...,y_{2\ell-2}).$
\end{proof}
Thus we have
\begin{thm}  Suppose $(*)$ and $\ell=2^n-1$.
Then 
\[ CH^*(R(\bG_k)|_K)/2\cong 
\Lambda(y_2,...,y_{2\ell-2})\otimes CH^*(R_n)/2,\]
where $ CH^*(R_n)/2\cong \bZ/2\{1,py_{2\ell},v_1y_{2\ell},...,v_{n-1}y_{2\ell}\}.$
Moreover we have 
\[ Res_k^K(CH^*(R(\bG_k))/2)\cong 
\ CH^*(R_n)/2\subset CH^*(R(\bG_k)|_K)/2.\]
The restriction maps as 
$c_j\mapsto  v_sy_{2\ell}$ if $j=\ell-(p^s-1)$,
and $c_j\mapsto 0$ otherwise.
\end{thm}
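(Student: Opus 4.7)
The plan is to apply the Petrov--Semenov--Zainoulline motivic decomposition (Theorem 4.2) over the base field $K$, identify the remaining non-split summand as the original Rost motive $R_n$, and then compute the restriction map using the $BP$-level relation from Corollary 7.2. Under hypothesis $(*)$, all generators of $P(y)$ of index smaller than $\ell$ become rational over $K$, which forces the $J$-invariant over $K$ to have zeros in every position except the one attached to $y_{2\ell}$. Since $y_{2\ell}^2 = y_{4\ell} = 0$ in $P(y)$ (as $4\ell > 2\ell$) and $|y_{2\ell}| = 2\ell = 2(2^n - 1) = 2 b_n$ matches the characteristic degree of $R_n$, the single surviving generalized Rost summand over $K$ is identified with the original Rost motive $R_n$.

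Next, I would compare the two motivic decompositions of $M(X|_K)_{(2)}$: the restriction of $M(X)_{(2)} \cong \bigoplus_i R(\bG_k)\otimes \bT^{\otimes s_i}$ to $K$, and the one furnished by Theorem 4.2 applied over $K$ with the reduced $J$-invariant, namely $M(X|_K)_{(2)}\cong \bigoplus_j R_n\otimes \bT^{\otimes s'_j}$. Krull--Schmidt uniqueness for Chow motives (over a field, for geometrically cellular varieties) then forces
\[
R(\bG_k)|_K \cong \bigoplus_{\alpha} R_n \otimes \bT^{\otimes t_\alpha},
\]
where the Tate twists are indexed by the monomials in $\Lambda(y_2,\ldots,y_{2\ell-2})$ -- these are precisely the classes made rational over $K$. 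Taking mod-$2$ Chow rings and applying Theorem 4.6 yields the asserted additive isomorphism
\[
CH^*(R(\bG_k)|_K)/2 \cong \Lambda(y_2,\ldots,y_{2\ell-2}) \otimes CH^*(R_n)/2.
\]

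For the restriction map, I would start from $CH^*(R(\bG_k))/2 \cong \Lambda(c_1,\ldots,c_\ell)$ by Theorem 7.6, and lift the Corollary 7.2 relation
\[
c_i = 2 y_{2i} + \sum_{s \ge 1} v_s\, y(2i + 2^{s+1} - 2) \qquad \text{in } BP^*(G/T)/\II
\]
to $\Omega^*(X)/\II$ via versality (Theorem 4.3). Projecting to the $R_n$-summand of $R(\bG_k)|_K$ kills every term in which $y(\cdot)$ lies in the Tate factor $\Lambda(y_2,\ldots,y_{2\ell-2})$; only terms with $y(\cdot)=y_{2\ell}$ survive, which forces $2i+2^{s+1}-2=2\ell$, i.e.\ $i = \ell-(2^s-1)$. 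For $0\le s\le n-1$ this produces $c_{\ell-(2^s-1)} \mapsto v_s y_{2\ell}$ in $CH^*(R_n)/2$ (with $v_0=2$), and every other $c_j$ maps to $0$. Since $y_{2\ell}^2=0$, all pairwise products in the image vanish, so the image is precisely $CH^*(R_n)/2$.

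The main obstacle will be the motivic identification in the first paragraph: showing that the residual generalized Rost motive over $K$ is literally the original Rost motive $R_n$, not merely a motive with the same mod-$2$ Chow groups. The arithmetic $\ell = 2^n-1$ is what makes this possible, by matching $|y_{2\ell}|$ with the Rost degree $2b_n$; extracting the identification rigorously will rely on the classification results of Petrov--Semenov--Zainoulline and Semenov--Zhykhovich, combined with the Krull--Schmidt step.
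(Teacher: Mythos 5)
Your proposal follows essentially the same route as the paper: the paper deduces Theorem 7.12 from the Petrov--Semenov--Zainoulline decomposition over $K$ (with $J$-invariant trivial except in the position of $y_{2\ell}$, whose degree $2(2^n-1)$ and height $2$ match the original Rost motive $R_n$ of Theorem 4.6), together with Lemma 7.11, which is exactly your use of Corollary 7.2 to read off $c_{\ell-(2^s-1)}\mapsto v_s y_{2\ell}$ and $c_j\mapsto 0$ otherwise. Your Krull--Schmidt bookkeeping for $R(\bG_k)|_K\cong\bigoplus R_n\otimes\bT$ indexed by $\Lambda(y_2,\ldots,y_{2\ell-2})$ is just a more explicit version of what the paper leaves implicit, so the argument is correct and not genuinely different.
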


At last of this section, we consider the case $X(\bC)=G/P$
with
\[ G=SO(2\ell+1)\quad and \quad P=U(\ell).\]
Let us write this $X$ by $Y$, i.e. $Y=\bG_k/P_k$.
From the cofibering $SO(2\ell
+1)\to Y(\bC)\to BU(\ell)$,  we have the spectral sequence
\[E_2^{*,*}\cong H^*(SO(2\ell+1);\bZ/2)\otimes H^*(BU(\ell))\]
\[\cong  P(y)\otimes \Lambda(x_1,...,x_{2\ell-1})\otimes \bZ/2[c_1,...,c_{\ell}]
\Longrightarrow H^*(Y(\bC);\bZ/2).\]
Here the differential
is given as $d_{2i}(x_{2i-1})=c_i$.  Hence
\[  CH^*(\bar Y;\bZ/2)\cong H^*(Y(\bC);\bZ/2)\cong   P(y).\]
This case is studied by Vishik [Vi] and Petrov [Pe]
as maximal orthogonal (or quadratic) grassmannian.
(see Theorem 5.1 in [Vi]).
From Theorem 7.6,  we have 
\begin{thm}  ([Vi],[Pe])  Let $G=SO(2\ell+1)$ and  $\bG_k$ be a versal $G_k$-torsor.  Let $Y=\bG_k/U(\ell)_k$.
Then
\[ CH^*(Y)/2\cong  CH^*(R(\bG_k))/2\cong \Lambda(c_1,...,c_{\ell}).\]
\end{thm}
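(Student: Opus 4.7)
The plan is to leverage Theorem 7.6 (which determines $CH^*(X)/2 \cong S(t)/(2, c_1^2, \ldots, c_\ell^2)$) together with the motivic relation between $X = \bG_k/B_k$ and $Y = \bG_k/P_k$, where $P_k \supset B_k$ denotes the parabolic of $G_k = SO(2\ell+1)_k$ whose Levi is $GL_\ell = U(\ell)_k$. First I would apply Theorem 4.1 to the chain $B_k \subset P_k$, producing a motivic isomorphism $M(X)_{(2)} \cong M(Y)_{(2)} \otimes H^*(P_k/B_k)$. Since $P_k/B_k \cong U(\ell)/T$ is cellular with $1 \in CH^0(P_k/B_k)$ splitting off as a direct summand, passing to Chow groups modulo $2$ yields a $\bZ/2$-module decomposition
\[ CH^*(X)/2 \;\cong\; CH^*(Y)/2 \otimes_{\bZ/2} H^*(U(\ell)/T;\bZ/2), \]
and in particular, via the embedding $CH^*(Y)/2 \otimes 1$, a split injection $CH^*(Y)/2 \hookrightarrow CH^*(X)/2$.

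Next, Karpenko's theorem (Theorem 4.3) applied to the parabolic $P_k$ makes $CH^*(BP_k)/2 \to CH^*(Y)/2$ surjective; since $P_k$ has Levi $GL_\ell$, we have $CH^*(BP_k)/2 \cong \bZ/2[c_1, \ldots, c_\ell]$. The composition
\[ \bZ/2[c_1, \ldots, c_\ell] \twoheadrightarrow CH^*(Y)/2 \hookrightarrow CH^*(X)/2 \cong S(t)/(2, c_1^2, \ldots, c_\ell^2) \]
is the canonical map $c_i \mapsto c_i$ induced by $BB_k \to BP_k$. Because each $c_i^2$ is killed in the target, this surjection factors through $\Lambda(c_1, \ldots, c_\ell) = \bZ/2[c_1, \ldots, c_\ell]/(c_i^2)$, producing a surjection $\Lambda(c_1, \ldots, c_\ell) \twoheadrightarrow CH^*(Y)/2$.

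To upgrade this to an isomorphism, I would verify that the $2^\ell$ monomials $c_I := \prod_{i \in I} c_i$ (for $I \subset \{1, \ldots, \ell\}$) remain $\bZ/2$-linearly independent in $S(t)/(c_1^2, \ldots, c_\ell^2)$. This follows from $(c_1^2, \ldots, c_\ell^2)$ being a regular sequence in $S(t)$, which makes $S(t)/(c_1^2, \ldots, c_\ell^2)$ free of rank $2^\ell$ over the coinvariant algebra $S(t)/(c_1, \ldots, c_\ell)$ with basis $\{c_I\}$. Hence the composition $\Lambda(c_1, \ldots, c_\ell) \to CH^*(Y)/2 \to CH^*(X)/2$ is injective, forcing the surjection $\Lambda(c_1, \ldots, c_\ell) \twoheadrightarrow CH^*(Y)/2$ to be an isomorphism. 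The second identification $CH^*(Y)/2 \cong CH^*(R(\bG_k))/2$ is then immediate from Theorem 7.6.

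The main obstacle is pinning down that the split injection $CH^*(Y)/2 \hookrightarrow CH^*(X)/2$ coming from the PSZ decomposition coincides with the pullback along $\pi \colon X \to Y$, so that it intertwines the maps from $CH^*(BP_k)/2$ and $CH^*(BB_k)/2$; this compatibility is what legitimises the diagram-chase. Once that is in place, the remaining ingredients — generic splitness of $B_k$ and $P_k$, cellularity of $P_k/B_k$, Karpenko surjectivity, and the regular-sequence argument — are standard.
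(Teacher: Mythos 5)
Your argument is correct, and it reaches the theorem by a genuinely different route from the paper. The paper's own proof is very short: it computes $H^*(Y(\bC);\bZ/2)\cong P(y)$ from the fibration $SO(2\ell+1)\to Y(\bC)\to BU(\ell)$ (every exterior generator $x_{2i-1}$ transgresses to $c_i$), notes that this agrees with $CH^*(\bar R(\bG_k))/2$, so that in the Petrov--Semenov--Zainoulline decomposition $M(Y)_{(2)}$ is a \emph{single} copy of $R(\bG_k)$ with no extra Tate twists, and then quotes Theorem 7.6 for $CH^*(R(\bG_k))/2\cong\Lambda(c_1,\dots,c_\ell)$. You instead sandwich $CH^*(Y)/2$ between $CH^*(BP_k)/2=\bZ/2[c_1,\dots,c_\ell]$ (Karpenko surjectivity for the parabolic) and $CH^*(X)/2\cong S(t)/(2,c_1^2,\dots,c_\ell^2)$ (Theorem 7.6), and win by the freeness of $S(t)/(2,c_1^2,\dots,c_\ell^2)$ over $\Lambda(c_1,\dots,c_\ell)$. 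Both proofs lean on Theorem 7.6; yours avoids computing $CH^*(\bar Y)$ and the identification of $M(Y)$ with $R(\bG_k)$, at the price of the compatibility you flag at the end. That compatibility is real but standard, and you can get it without Theorem 4.1 or any generic-splitness hypothesis on $Y$: the projection $\pi\colon X\to Y$ is the complete flag bundle of the rank-$\ell$ vector bundle $\bG_k\times^{P_k}k^{\ell}$ (via the Levi quotient $P_k\to GL_\ell$), hence an iterated projective bundle, so $CH^*(X)$ is free over $CH^*(Y)$ via $\pi^*$ with basis the relative Schubert classes; in particular $\pi^*$ is a split injection, and the square relating it to $CH^*(BP_k)\to CH^*(BB_k)$ commutes by functoriality of pullbacks. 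One small reordering makes the logic cleaner: to factor $\bZ/2[c]\twoheadrightarrow CH^*(Y)/2$ through $\Lambda(c)$ you need $c_i^2=0$ in $CH^*(Y)/2$ itself, which you deduce from $c_i^2=0$ in $CH^*(X)/2$ only \emph{after} knowing $\pi^*$ is injective --- so establish the injectivity of $\pi^*$ (projective bundle theorem) first, and the rest of your chain of maps then closes up exactly as you describe. As a bonus, the rank count $\dim_{\bZ/2}S(t)/(2,c_1^2,\dots,c_\ell^2)=2^{\ell}\,\ell!$ together with freeness of rank $\ell!$ over $CH^*(Y)/2$ gives $\dim_{\bZ/2}CH^*(Y)/2=2^{\ell}$ directly, which also forces your surjection from $\Lambda(c_1,\dots,c_\ell)$ to be an isomorphism.
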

{\bf Remark.} Petrov  computes the integral Chow ring for  
more general situations [Pe].
 From above theorem,  we note that $CH^*(R(\bG_k))/2$ has the ring structure in this case. 

In [Vi], 
Vishik originally defined the $J$-invariant $J(q)$
 of a quadratic
form $q$ which corresponds the quadratic grassmannian
(see Definition 5.11, Corollary 5.10 in [Vi])
by 
\[J(q)=\{i_k|y_{2i_k}\in Rec_{CH}\}\subset \{0,...,\ell\}.\]
Let $I$ be the fundamental ideal of the Witt ring $W(k)$ 
so that $grW(k)=\oplus_n I^n/I^{n+1}\cong K_*^M(k)/2$
where $K_*^M(k)$ is the Milnor $K$-theory of $k$.
Smirnov and Vishik (Proposition 3.2.31 in [Sm-Vi]) prove that
\[ q\in I^n \quad{ if\ and\ only\ if}\quad \{0,...,2^{n-1}-2\}\subset J(q). \]
Hence the condition $(*)$ in Theorem 7.11 is equivalent to $q\in I^n$ for the quadratic form $q$
corresponding to $Y|_K$. 
We also note that $G=Spin(m)$ cases correspond to
$q\in I^3$ from $1, y_2,y_4\in Res_{CH}$ (see
  (8.1) below). This fact is of course, well known.

\section{The spin group $Spin(2\ell+1)$ and $p=2$}

Throughout this section, let $p=2$,  $G=SO(2\ell+1)$
and $G'=Spin(2\ell+1)$.  By definition, we have the 
$2$ covering $\pi:G'\to G$.
It is well known that 
$\pi^*:   H^*(G/T)\cong H^*(G'/T')$ where $T'$ is a maximal torus of $G'$.  However the twisted flag varieties are
not isomorphic.  

Let $2^t\le \ell < 2^{t+1}$, i.e. $t=[log_2\ell]$.
The mod $2$ cohomology is
\[ H^*(G';\bZ/2)\cong H^*(G;\bZ/2)/(x_1,y_1)\otimes
\Lambda(z) \]
\[ \cong P(y)'\otimes \Lambda(x_3,x_5,...,x_{2\ell-1})\otimes \Lambda(z),\quad |z|=2^{t+2}-1\]
where
$P(y)\cong \bZ/2[y_2]/(y_2^{2^{t+1}})\otimes P(y)'$.  
(Here $d_{2^{t+2}}(z)=y_2^{2^{t+1}}$ for $0\not =y_2\in H^2(B\bZ/2;\bZ/2)$ in the spectral sequence induced from
the fibering $G'\to G\to B\bZ/2$.)
Hence
\[ (8.1)\quad grP(y)'\cong \otimes _{2i\not =2^j}\Lambda(y_{2i})\cong
\Lambda(y_6,y_{10},y_{12},...,y_{2\ell}).\]
The $Q_i$ operation for $z$ is given by Nishimoto [Ni]
\[ Q_0(z)=\sum _{i+j=2^{t+1},i<j}y_{2i}y_{2j}, \quad 
 Q_n(z)=\sum _{i+j=2^{t+1}+2^{n+1}-2,i<j}y_{2i}y_{2j}\ \ for\ n\ge 1.\]

We know that 
\[ grH^*(G/T)/2\cong P(y)'\otimes \bZ[y_2]/(y_2^{2^{t+1}})\otimes S(t)/(2,c_1,c_2,...,c_{\ell}) \]
\[ grH^*(G'/T')/2\cong P(y)'\otimes S(t')/(2,c_2',.....,c_{\ell}',c_1^{2^{t+1}}).\]
Here $c_i'=\pi^*(c_i)$ and $d_{2^{t+2}}(z)=c_1^{2^{t+1}}$ in the spectral sequence
converging $H^*(G'/T')$.)
These are additively isomorphic.  In particular, we have
\begin{lemma}  The element $\pi^*(y_2)=c_1\in S(t')$
and $\pi^*(t_j)=c_1+t_j$ for $1\le j\le \ell$.
\end{lemma}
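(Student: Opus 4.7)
The plan is to derive both identities from the Toda--Watanabe relation $2y_2=c_1$ in $H^*(G/T;\bZ)$ (the $i=1$ case of Theorem 7.1) together with additivity of first Chern classes. Since $\ker(\pi: G'\to G)=Z\cong\bZ/2$ is the center of $G'$ and is contained in $T'$, we have $\pi^{-1}(T)=T'$; hence $T'\to T$ is a double cover and $G'/T'\to G/T$ is a homeomorphism, inducing an isomorphism $\pi^*: H^*(G/T;\bZ)\stackrel{\cong}{\to} H^*(G'/T';\bZ)$ fitting into the obvious commutative square with $\pi^*: H^*(BT)\to H^*(BT')$. At the level of character lattices, $(T')^*\supset T^*$ is generated over $T^*$ by the half-spin character $s$ satisfying $2s=e_1+\cdots+e_\ell$. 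I identify $c_1\in S(t')=H^*(BT';\bZ)$ with $c_1(s)$, which is consistent with the relation $d(z)=c_1^{2^{t+1}}$ displayed in the spectral sequence for $G'/T'\to BT'$.

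For the second assertion I decompose $e_j=s+(e_j-s)$ in $(T')^*$, define $t_j:=c_1(e_j-s)\in S(t')$, and apply additivity of $c_1$:
\[
\pi^*(t_j)=c_1(e_j)=c_1(s)+c_1(e_j-s)=c_1+t_j,
\]
an integral identity in $H^2(BT')$. Summing over $j$ and using $\sum_j(e_j-s)=(2-\ell)s$ gives the key divisibility $\pi^*(c_1)=\ell c_1+(2-\ell)c_1=2c_1$ in $H^2(BT';\bZ)$ (equivalently, this is $\pi^*(\sum_j e_j)=\pi^*(2s)=2c_1$).

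For the first assertion I apply $\pi^*$ to the Toda--Watanabe identity $2y_2=c_1$ and use the divisibility just established to get $2\pi^*(y_2)=\pi^*(c_1)=2c_1$ in $H^*(G'/T';\bZ)$. Since $H^*(G'/T')\cong H^*(G/T)$ is torsion-free, dividing by $2$ yields $\pi^*(y_2)=c_1$ integrally, and hence also in $S(t')$ after reducing mod $2$. The main obstacle is purely notational: the symbols $c_1$ and $t_j$ are each overloaded between $S(t)$ and $S(t')$, and one must track them carefully in order to see that the divisibility $\pi^*(c_1)=2c_1$ and the subsequent division by $2$ are legitimate and that the result lands in $S(t')$ as claimed.
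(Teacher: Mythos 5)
Your proposal is correct, and it supplies in explicit form exactly what the paper leaves implicit: Lemma 8.1 is stated without proof, as an immediate consequence of the covering $T'\to T$ (with central kernel $\bZ/2\subset T'$, so that $G'/T'\to G/T$ is a homeomorphism and $\pi^*:H^*(G/T)\cong H^*(G'/T')$) together with the comparison of the two descriptions of $grH^*/2$. Your route---identify $c_1\in S(t')$ with the half-spin weight $s$, set $t_j=e_j-s$, get $\pi^*(t_j)=c_1+t_j$ by linearity, deduce $\pi^*(c_1)=2c_1$, and then divide the Toda--Watanabe relation $2y_2=c_1$ by $2$ using torsion-freeness of $H^*(G'/T')$---is a valid and clean way to do this, and it is consistent with the paper's later uses of the notation (e.g.\ the remark $\pi^*(c_1)=0\bmod 2$ in the $Spin(7)$ example, which is exactly your $\pi^*(c_1)=2c_1$). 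Two small points worth keeping in mind: the square relating $\pi^*$ on $H^*(BT)\to H^*(BT')$ and on the flag manifolds commutes because the $T$-bundle $G\to G/T$ is obtained from $G'\to G'/T'$ by extending the structure group along $T'\to T$, which is the justification behind your ``obvious commutative square''; and your elements $c_1,t_1,\dots,t_\ell$ are $\ell+1$ degree-two classes in the rank-$\ell$ lattice $(T')^*$, satisfying the relation $\sum_j t_j=(2-\ell)c_1$ that your own computation exhibits---this is harmless (the lemma only names elements of $S(t')$, not a polynomial basis), but it is the sort of notational overload you rightly flag.
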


Take $k$ such that $\bG_k$ is a versal $G_k$-torsor so that
$\bG'_k$ is also a versal $G_k'$-torsor.  Let us write
$X=\bG_k/B_k$ and $X'=\bG_k'/B_k'$.  Then
\[ CH^*(\bar R(\bG_k'))/2\cong P(y)'/2,\quad and \quad
    CH^*(\bar R(\bG_k))/2\cong P(y)/2.\]
\begin{thm} Let $(G,p)=(SO(2\ell+1),2),\
(G',p)=(Spin(2\ell+1),2)$, and $\pi:G'\to G$ be the natural projection.  Let $c_i'=\pi^*(c_i)$.  Then 
$\pi^*$ induces maps such that their composition map
is surjective
\[ CH^*(R(\bG_k)/(2,c_1)\cong 
 \Lambda(c_2,...,c_{\ell})\stackrel{\pi^*}{\to}
CH^*(R(\bG_k'))/2 \twoheadrightarrow
\bZ/2\{1,c_2',...,c_{\bar \ell}'\}\]
where $\bar \ell=\ell-1$ if $\ell=2^j$ for some $j>0$,
otherwise $\bar \ell=\ell$.
\end{thm}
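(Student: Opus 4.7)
The first isomorphism $CH^*(R(\bG_k))/(2,c_1)\cong \Lambda(c_2,\ldots,c_\ell)$ is immediate from Theorem~7.6. The plan is to (i) show that $\pi^*$ kills $c_1$, giving the factorization through $\Lambda(c_2,\ldots,c_\ell)$; (ii) describe a natural surjection from $CH^*(R(\bG_k'))/2$ onto $\bZ/2\{1,c_2',\ldots,c_{\bar\ell}'\}$; and (iii) verify that each basis element $c_i'$ with $2\le i\le \bar\ell$ is hit nontrivially by the composition.

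For step (i), I would combine Lemma~8.1 with Theorem~7.1. Lemma~8.1 gives $\pi^*(y_2)=c_1\in S(t')$, where on the right $c_1$ denotes the spin generator of $S(t')$, not the elementary symmetric function. Theorem~7.1 supplies the integral relation $c_1=2y_2$ in $H^*(G/T)$ (here $c_1$ is the elementary symmetric function). Applying $\pi^*$ to this relation yields $\pi^*(c_1)=2c_1$ in $S(t')$, which vanishes modulo~$2$. The induced map $\Lambda(c_1,\ldots,c_\ell)=CH^*(R(\bG_k))/2\to CH^*(R(\bG_k'))/2$ therefore annihilates $c_1$ and descends to $\Lambda(c_2,\ldots,c_\ell)$.

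For step (ii), I would use the transgression images for $G'=Spin(2\ell+1)$ described at the start of $\S 8$: these are $c_2,\ldots,c_\ell$ (coming from the odd generators $x_3,\ldots,x_{2\ell-1}$) together with $c_1^{2^{t+1}}$ (coming from the additional odd generator $z$, since $d(z)=c_1^{2^{t+1}}$). Lemma~5.4 applied to $\bG_k'$ shows that $CH^*(R(\bG_k'))/2$ is generated by monomials in these classes, so the surjection onto $\bZ/2\{1,c_2',\ldots,c_{\bar\ell}'\}$ is obtained by killing products together with the $c_1^{2^{t+1}}$ term (and, in the special case $\ell=2^j$, also killing $c_\ell'$).

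The main obstacle is step (iii): proving that each $c_i'$ with $2\le i\le \bar\ell$ is genuinely nonzero in $CH^*(R(\bG_k'))/2$. I would imitate the torsion--index argument from the proof of Lemma~7.5 by constructing a monomial $\tilde b$ in the transgression images whose image in $H^*(G'/T')$ realizes $t(G')_{(2)}\cdot y_{\text{top}}'$ modulo terms of positive $t$-degree, then invoking Corollary~5.6 to deduce that every submonomial, and hence each individual $c_i'$, is nonzero in $CH^*(X')/2$ and survives to the quotient. The exclusion of $c_\ell'$ for $\ell=2^j$ reflects the observation (8.1) that $y_{2\ell}=y_2^\ell$ is absent from $P(y)'$ in this case, since $x_1$ (and hence $y_2$) is killed when passing from $SO$ to $Spin$; consequently $c_\ell'=2y_{2\ell}+\cdots$ carries no information independent of $c_1^{2^{t+1}}$ and the lower $c_j'$, and must therefore be dropped from the basis of the target quotient.
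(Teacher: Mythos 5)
Your steps (i) and (ii) are fine (and step (i) is essentially how the paper sees $\pi^*(c_1)\equiv 0$ bmod $2$), but step (iii) — the only substantive point — rests on a torsion-index argument that cannot work for the spin groups. Any monomial $\tilde b$ in the transgression images of $G'$ whose image realizes $y_{\mathrm{top}}'=\prod_{i\neq 2^j}y_{2i}$ must contain one factor $c_i'$ for each non-$2$-power index $i\le\ell$, hence equals $2^{\ell-[\log_2\ell]-1}y_{\mathrm{top}}'+\cdots$; but Totaro's computation shows $t(Spin(2\ell+1))_{(2)}$ is in general strictly smaller than this Marlin bound (the paper notes this fails already for $Spin(11)$, where $2y_6y_{10}=c_1^8$ gives $t(G')=2$ while $c_3'c_5'=4y_6y_{10}=2c_1^8\equiv 0$ in $CH^*(X')/2$). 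So the hypothesis $p^s=t(G')_{(p)}$ of Corollary 5.6 is violated, Lemma 5.5 only yields an upper bound, and in most cases $\prod_{i\neq 2^j}c_i'=0$, so no submonomial conclusion is available. Moreover your plan never addresses the indices $i=2^j<\ell$ (e.g. $c_2',c_4'$), which do appear in the target $\bZ/2\{1,c_2',\dots,c_{\bar\ell}'\}$ and for which even the naive argument fails because $y_{2^j}\notin P(y)'$ (it lies in $S(t')$ as a power of $c_1$).

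The paper's proof goes a different way: by Corollary 5.3 it suffices to show $c_i'\neq 0$ in $CH^*(X')/2$, i.e. that $c_i'\notin I_\infty\cdot Im(res_\Omega)$. For $i\neq 2^j$ this follows from $c_i'=2y_{2i}\ (\mathrm{mod}\ 4)$ together with $y_{2i}\in P(y)'$ and $y_{2i}\notin Im(res_{CH})$, which is exactly the versality/$J$-invariant statement of Corollary 4.5 — no torsion index is needed. For $i=2^j<\ell$ one uses the $BP$-relation $c_i'=2y_{2i}+v_1y(2i+2)+\cdots$ (Corollary 7.2) and works in connective Morava $K$-theory $k(1)^*$: modulo $v_1\cdot Im(res_{k(1)})$ one has $c_i'=v_1y(2i+2)\neq 0$, since $y_{2^j+2}\in P(y)'$ is not in the restriction image. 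Finally, the exclusion of $c_\ell'$ when $\ell=2^j$ is not the informal "carries no information" remark you give, but an application of the Vishik--Zainoulline splitting lemma: since $y_{2\ell}\notin CH^*(\bar R(\bG_k'))$, one gets $CH^*(R(\bG_k'))/2\cong CH^*(R(\bG_k''))/2$ for $G''=Spin(2\ell-1)$, so one may take $c_\ell'=0$. To repair your proposal you would need to replace the torsion-index step by these versality and $k(1)^*$ arguments (or something equivalent).
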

\begin{proof}
From Corollary 5.3, we only need to show
$c_i'\not =0$ in
$\Omega^*(G_k'/T_k')/(I_{\infty}\cdot Im(res_{\Omega})).$
In fact, when $i\not =2^j$, in $H^*(G'/T')/4$, we have
\[ 2y_{2i}=c_j'\in S(t)\]
which is nonzero in $BP^*(G/T)/I_{\infty}\cdot Im(res_{\Omega}))$.  Because $y_{2i}\in P(y)'$ and $y_{2i}\not \in Im(res_{CH})$ from Lemma 4.5 since $X$ is a versal
flag variety.

When $i=2^j$,  we see $y_{2i}=y_{2^j}\in S(t')$, in fact
$y_{2^j}\not \in P(y)'$.   But in $BP^*(G'/T')/\II$,
we have
\[ 2y_{2i}+v_1(y(2i+2))+...+v_n(y(2i+2^{n+1}-2)+...= c_i'\in BP^*(BT').\]
When $i+1\le \ell$, 
this element is nonzero
in $BP^*(G/T)/I_{\infty}\cdot Im(res_{\Omega}))$ because  
\[ c_i'=v_1(y(2i+2))\not =0\in k(1)^*(G/T)/(v_1\cdot Im(res_{k(1)})\]
where $res_{k(1)}: k(1)^*(X')\to k(1)^*(\bar X))$.
Otherwise $y(2i+2)\in Im(res_{CH}))$, and this is a contradiction to $y_{2^j+2}\not \in Im(res_{CH})$, which follows from $y_{2j+2}\in P(y)'$ and 
Corollary 4.5.

When $2^j=\ell$, we note 
\[CH^*(\bar R( \bG_k'))/2\cong
CH^*(\bar R( \bG_k''))/2\quad  for\ G''=Spin(2\ell-1),\]
in fact $y_{2\ell}=y_{2^j}\not \in CH^*(\bar R(\bG_k'))$.
From a theorem by Vishik-Zainoulline (Corollary 6 in [Vi-Za]),
we get $CH^*(R(\bG_k'))/2\cong CH^*(R(\bG_k''))/2$.
Hence we can take $c_{\ell}'=0$.  
\end{proof}
%\begin{cor}
%The $2$-torsion ideal $Tor_2$ in $CH^*(X)_{(2)}$
%is generated by $c_{2^i}$ with $2\le 2^i\le \ell$.
%\end{cor}
%\begin{proof}
%Consider the restriction map
%\[ grCH^*(R(\bG_k))\supset \Lambda_{\bZ}(c_i|i\not= 2^j)\to 
%         grCH^*(\bar R(\bG_k))\cong \Lambda_{\bZ}(y_{2i}|i\not=
%2^j).\]
%This map (additively) injective.  Hence the submodule
%$\Lambda_{\bZ}(c_i|i\not =2^j)\subset CH^*(X)_{(2)}$ is
%$2$-torsion free.  
%

%On the other hand $c_{2^j}$ is $2$-torsion by the following reason.
%Note that $res_{\Omega}(c_{2^j})\in BP^{<0}\cdot\Omega^*(\bar X)$, and
%$res_{CH}(c_{2^j})=0 \in CH^*(\bar X)$.
%It is well known that  $res_{CH}\otimes \bQ$ is isomorphic.
%Hence $c_{2^j}$ must be torsion.
%\end{proof}

\begin{cor} The elements $c_{2^j}''=c_{2^j}'-c_1^{2^j}$, $j>0$ are torsion elements 
 in $CH^*(X)_{(2)}$.
\end{cor}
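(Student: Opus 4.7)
The plan is to combine the Toda--Watanabe description of $H^*(G/T)$ with Lemma 8.1 and the motivic decomposition of Theorem 4.2 to locate $c_{2^j}''$ in the torsion subgroup of $CH^*(X')_{(2)}$, by showing that $c_{2^j}''$ lies in the kernel of the restriction $\mathrm{res}_{CH}\colon CH^*(X')_{(2)}\to CH^*(\bar X')_{(2)}$ and that this kernel coincides with the torsion part.

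First I would compute the image of $c_{2^j}''$ in $CH^*(\bar X')_{(2)}\cong H^*(G'/T')_{(2)}$. The relevant ingredients are: (i) Theorem 7.1, which gives the Toda--Watanabe relation $c_i=2y_{2i}$ in $H^*(G/T)$, hence $c_{2^j}'=2\pi^*(y_{2^{j+1}})$; (ii) Lemma 8.1, giving $\pi^*(y_2)=c_1\in S(t')$; and (iii) the squaring relation $y_{2^{j+1}}=y_2^{2^j}$, which holds in the graded ring $grH^*(G;\bZ/2)$ and must be lifted to the integral level. For the integral lift one uses Corollary 7.2 applied to the Spin side, exactly as in the proof of Theorem 8.2: the integral correction to the squaring identity is absorbed into $v_n$-multiples in $BP^*(G'/T')/\II$, and these $v_n$-multiples vanish upon passing from $\Omega^*/\II$ down to $CH^*$. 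Combining these, one identifies $c_{2^j}'$ with the appropriate multiple of $c_1^{2^j}$ in $CH^*(\bar X')_{(2)}$, so that $c_{2^j}''$ restricts to a class killed by $\mathrm{res}_{CH}$.

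Next I would invoke the motivic decomposition $M(X')_{(2)}\cong \bigoplus_i R(\bG'_k)\otimes \bT^{\otimes s_i}$ of Theorem 4.2. Since $CH^*(\bar X')_{(2)}$ is a free $\bZ_{(2)}$-module, and since the restriction of the $R(\bG'_k)$-summand to its geometric version is described additively by Theorem 8.2, the kernel of $\mathrm{res}_{CH}$ is precisely the $2$-torsion subgroup of $CH^*(X')_{(2)}$. It then follows that $c_{2^j}''$ is a torsion element.

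The main obstacle is the passage from the mod-$2$ multiplicative identity $y_s^2=y_{2s}$ to an integral identification of $\pi^*(y_{2^{j+1}})$ with $c_1^{2^j}$ in $CH^*(\bar X')_{(2)}$. One has to argue, via Corollary 7.2 applied to the Spin cover, that every correction term in the integral lift lies in the $v_n$-filtered part of $BP^*(G'/T')/\II$ and therefore disappears in $CH^*$; only then does the precise coefficient of $c_1^{2^j}$ matching $c_{2^j}'$ on restriction to $\bar X'$ come out correctly, together with the spinor-class normalization fixed by Lemma 8.1.
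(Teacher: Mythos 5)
Your route is the same as the paper's: show that $c_{2^j}''$ lies in the kernel of $res_{CH}\colon CH^*(X')_{(2)}\to CH^*(\bar X')_{(2)}$ by lifting the relation $c_{2^j}'=2y_{2^{j+1}}+\sum_n v_n(\cdots)$ to $\Omega^*$ (Corollary 7.2 transported to the Spin side, as in the proof of Theorem 8.2) and using $\pi^*(y_2)=c_1$ together with the squaring relations, so that all correction terms lie in $BP^{<0}\cdot\Omega^*(\bar X')$ and die in $CH^*$; then conclude that kernel elements are torsion. The first half is fine in outline, modulo the coefficient: the restriction of $c_{2^j}'$ comes out as $2c_1^{2^j}$ plus $v_n$-terms, which is why the Introduction (Theorem 1.3) states the result for $c_{2^j}'-2c_1^{2^j}$; your hedge about ``the appropriate multiple'' should be resolved in that direction rather than left open.

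The genuine gap is in the second half. Freeness of $CH^*(\bar X')_{(2)}$ gives only the inclusion $\mathrm{Tor}\subseteq\Ker(res_{CH})$ --- the direction you do not need --- and Theorem 8.2 merely provides a surjection of $CH^*(R(\bG_k'))/2$ onto a lower-bound module; neither controls what maps to zero under restriction, so they cannot establish that $\Ker(res_{CH})$ consists of torsion. The inclusion you actually need, $\Ker(res_{CH})\subseteq\mathrm{Tor}$, is precisely the statement that $res_{CH}\otimes\bQ$ is injective (indeed an isomorphism) for the twisted flag variety; this is the well-known fact the paper invokes at this point, and it does not follow from the motivic decomposition plus Theorem 8.2 as you have arranged them. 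Once you replace your justification of ``kernel equals torsion'' by this rational isomorphism, the argument closes and coincides with the paper's proof.
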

\begin{proof}
Note that $res_{\Omega}(c_{2^j}'')\in BP^{<0}\cdot\Omega^*(\bar X)$, and
$res_{CH}(c_{2^j}'')=0 \in CH^*(\bar X)$.
It is well known that  $res_{CH}\otimes \bQ$ is isomorphic.
Hence $c_{2^j}''$ must be torsion.
\end{proof}

{\bf Example.}
Let $G=SO(7)$ and $G'=Spin(7)$, i.e. $\ell=3$. Their cohomologies are
%\[ grH^*(G;\bZ/2)\cong \Lambda(x_1,x_2,...,x_6)\cong\]
\[H^*(G;\bZ/2)\cong  \bZ/2[y_2,y_6]/(y_2^4,y_6^2)\otimes \Lambda(x_1,x_3,x_5),\]
%\[ grH^*(G';\bZ/2)\cong \Lambda(x_3,x_5,x_6,z_7)\]
\[H^*(G';\bZ/2)
\cong \bZ/2[y_6]/(y_6^2)\otimes \Lambda(x_3,x_5,z_7).\]
The cohomologies of flag manifolds are
\[ H^*(G/T;\bZ/2)\cong \bZ/2[y_2,y_6]/(y_2^4,y_6^2)\otimes 
S(t)/(c_1,c_2,c_3),\]
\[ H^*(G'/T';\bZ/2)\cong \bZ/2[y_6]/(y_6^2)\otimes 
S(t)/(c_2',c_3',c_1^4).\]
These cohomologies are isomorphic by $\pi^*(y_2)=c_1$.
The torsion indexes are $t(G)=2^3$ and $t(G')=2$.
The Chow rings of versal flag varieties are
\[ CH^*(X)/2\cong S(t)/(2,c_1^2,c_2^2,c_3^2
), \quad CH^*(R(\bG_k))/2\cong \Lambda(c_1,c_2,c_3),\]
\[ CH^*(X')/2\cong S(t)/(2,(c_2')^2,c_2'c_3',(c_3')^2,c_1^4),\quad
 CH^*(R(\bG_k'))/2\cong \bZ/2\{1,c_2',c_3'\}.\]
Here $\pi^*(t_i)=c_1+t_i$ so that $\pi^*(c_1)=0\ mod(2)$.
For the third and the last isomorphisms, see Corollary 9.5
below.
In fact $G'$ is a group of type $(I)$.

\begin{lemma} (Marlin's bound) The torsion index $t(G')$ divides
$2^{\ell-[log_2\ell]-1}.$
\end{lemma}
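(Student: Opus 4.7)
The plan is to refine the identity $c_1 c_2 \cdots c_\ell = 2^\ell\, y_2 y_4 \cdots y_{2\ell}$ in $H^*(G/T)$ used in Theorem 7.3 to establish $t(SO(2\ell+1)) = 2^\ell$, by pulling it back through $\pi^*: H^*(G/T) \xrightarrow{\sim} H^*(G'/T')$ and using the half-spin class $c_1 \in S(t')$ to save an extra factor of $2^{t+1}$. Setting $y_{2i}^\bullet := \pi^*(y_{2i}) \in H^*(G'/T')$, the pullback yields $c_1' c_2' \cdots c_\ell' \cdot T_{\mathrm{top}} = 2^\ell\,[\mathrm{pt}]$ in $H^*(G'/T')$, where $T_{\mathrm{top}} \in S(t) \subset S(t')$ is chosen so that $y_2 y_4 \cdots y_{2\ell} \cdot T_{\mathrm{top}} = [\mathrm{pt}]$; this already gives $t(G') \mid 2^\ell$ but is weaker than the claimed bound by exactly $2^{t+1}$.

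The key step is to show that each of the $t+1$ Chern classes $c_{2^k}'$, for $k = 0, 1, \ldots, t$, becomes divisible by $2$ in $H^*(G'/T')$ via an element of $S(t')$. By Lemma 8.1, $y_2^\bullet = c_1$, so $c_1' \equiv 2 c_1$ in $H^*(G'/T')$. Iterating the integral Toda--Watanabe relation
\[
y_{2^{k+1}} = (-1)^{2^{k-1}} y_{2^k}^2 + 2 \sum_{0 < j < 2^{k-1}} (-1)^j y_{2j}\, y_{2^{k+1}-2j}
\]
from Theorem 7.1, and absorbing the explicit factor of $2$ on each off-diagonal pair using the transgression $c_m' = 2 y_{2m}^\bullet$ for $m \ge 2$, produces polynomials $z_{2^k} \in S(t')$ with $c_{2^k}' \equiv 2 z_{2^k}$ modulo $\ker(S(t') \to H^*(G'/T'))$. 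Concretely, $z_1 = c_1$, $z_2 = -c_1^2$, $z_4 = c_1^4 - c_1 c_3'$, and so on.

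Substituting these $t+1$ replacements into the pulled-back identity gives
\[
2^{t+1}\!\left(\prod_{k=0}^{t} z_{2^k}\right)\!\left(\prod_{\substack{1 \le i \le \ell \\ i \ne 2^k}} c_i'\right) T_{\mathrm{top}} \;=\; 2^\ell\,[\mathrm{pt}] \quad\text{in } H^*(G'/T').
\]
Cancelling the $2^{t+1}$ exhibits $2^{\ell - t - 1}\,[\mathrm{pt}]$ as the image of an explicit element of $S(t')$, whence $t(G') \mid 2^{\ell - t - 1}$.

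The main obstacle is the inductive construction of the $z_{2^k}$. For small $k$ it is clean because every off-diagonal Borel pair $y_{2j}^\bullet y_{2(2^k-j)}^\bullet$ in the Toda--Watanabe relation has at least one index a power of $2$, and the coefficient $2$ combines with the transgression to pull the pair into $S(t')$. For larger $k$ (specifically $k \ge 3$, so $\ell \ge 8$) off-diagonal pairs such as $y_6^\bullet y_{10}^\bullet$ appear whose two indices are both non-powers of $2$, and the single coefficient $2$ no longer absorbs both halvings $y_{2j}^\bullet = c_j'/2$. The remedy is to choose $z_{2^k}$ as a more delicate $S(t')$-lift of $y_{2\cdot 2^k}^\bullet$ that absorbs an element of $\ker(S(t') \to H^*(G'/T'))$ coming from a higher Toda--Watanabe relation $J_{2i}$ with $i$ a non-power of $2$; verifying that such a lift always exists is the principal technical content of the proof.
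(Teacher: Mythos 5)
The core of your argument --- that each $c_{2^k}'$, $k=0,\dots,t=[\log_2\ell]$, can be written as $2z_{2^k}$ with $z_{2^k}$ coming from $S(t')$ --- is exactly the step you defer, and it is not merely a technical verification: it is false in general. Since $H^*(G'/T')$ is torsion free and $c_{2^k}'=2y_{2^{k+1}}$ (Toda--Watanabe pulled back), the existence of such a $z_{2^k}$ is equivalent to $y_{2^{k+1}}$ itself lying in the image of $S(t')\to H^*(G'/T')$. For $k\le 2$ your explicit computations ($y_2\mapsto c_1$, $y_4=-c_1^2$, $y_8=c_1^4-c_1c_3'$, up to sign) do show this, but for $k=3$ the relation $J_8$ produces the term $2y_6y_{10}$, and for $\ell=8$ (i.e.\ $Spin(17)$, the first case where $c_8'$ must be halved) the paper itself records, following Totaro, that $y_{16}-2y_6y_{10}$ lies in the image of the torus classes while $y_{16}$ does not. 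Hence $2y_6y_{10}$ is not in that image, no lift $z_8$ exists, and the proposed remedy of ``absorbing a kernel element from a higher $J_{2i}$'' cannot succeed. So the route of starting from $c_1'\cdots c_\ell'=2^\ell y_{2}\cdots y_{2\ell}$ and cancelling $2^{t+1}$ by halving the $c_{2^j}'$ breaks down precisely for $\ell\ge 8$, which includes the case where Marlin's bound is sharp.

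The paper's proof sidesteps this entirely: instead of halving the $c_{2^j}'$, it simply omits them. One takes $\tilde b=\prod_{i\ne 2^j}c_i'=\prod_{i\ne 2^j}(2y_{2i})=2^{\ell-t-1}y_{top}'$, where $y_{top}'$ is the top class of $P(y)'$; since for $Spin(2\ell+1)$ the generators of $P(y)'$ are exactly the $y_{2i}$ with $i$ not a power of $2$ (the classes $y_{2^{j}}$ having become powers of $c_1\in S(t')$), this product already represents $2^{\ell-t-1}$ times the top class of $P(y)'$ modulo lower terms, and Lemma 5.5 gives $t(G')_{(2)}\le 2^{\ell-t-1}$ directly. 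Your pulled-back $SO$ identity and the halvings of $c_1',c_2',c_4'$ are correct but insufficient; to repair the argument you would have to abandon the term-by-term halving of the $c_{2^j}'$ and argue as the paper does with the truncated product and the structure of $P(y)'$.
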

\begin{proof} It follows from
\[\Pi_{i\not =2^j}c_i=\Pi_{i\not =2^j}(2y_{2i})=2^{\ell-t-1}y_{top}'\]
where $y_{top}'$ is the generator of top degree elements
in $P(y)'$.
\end{proof}
The exact value of $t(G')$ is determined Totaro, namely
$ t(G')=\ell-[log_2({ \ell+1
\choose 2}+1)] $
or that expression plus $1$. (It is known $t(Spin(2\ell+1)=t(Spin(2\ell+2))$.

Marlin's bound fails first for $Spin(11)$. This fact was first found by using a property of $12$-dimensional quadratic 
forms [To2].
However we show it using the
$Q_0$-operation. 
% It seems a new example of the Chow group
%of an irreducible motive which is not the original Rost %motives.
\begin{lemma}  For $(G',p)=(Spin(11),2)$, we have
$t(G')=2$ and the surjection
\[ CH^*(R(\bG_k'))/2\twoheadrightarrow  
\bZ/2\{1, c_2',c_3',c_4',c_5',c_2'c_4', c_1^8\}\]
\end{lemma}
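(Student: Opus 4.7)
The plan is to follow the strategy of Theorem 8.2, now treating $c_1^8$ as an additional transgression image coming from the generator $z \in H^{15}(Spin(11);\bZ/2)$. For $\ell = 5$ one has $2^t \le \ell < 2^{t+1}$ with $t = 2$, and by Lemma 8.1 the transgression of $z$ is $c_1^{2^{t+1}} = c_1^8$. The five transgression images in $S(t')$ are therefore $c_2', c_3', c_4', c_5', c_1^8$, coming from $x_3, x_5, x_7, x_9, z$ respectively.

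First I would establish $t(G') = 2$. By Nishimoto's formula $Q_0(z) = \sum_{i+j=8,\, i<j} y_{2i} y_{2j}$, noting that in $H^*(Spin(11);\bZ/2)$ the classes $y_2, y_4, y_8$ vanish (as powers of the killed $x_1$), and $y_{12} = x_6^2$, $y_{14} = x_7^2$ vanish already in $H^*(SO(11);\bZ/2)$, the only surviving term is $y_6 y_{10} = y_{top}'$. A similar case check shows $Q_n(z) = 0$ for all $n \ge 1$. Applying Corollary 3.2 to $z$ then yields $c_1^8 \equiv 2\tilde z \pmod{\II}$ in $BP^*(G'/T')$, with $\pi^*(\tilde z) = y_6 y_{10}$. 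Reducing modulo the $v_i$ for $i \ge 1$ exhibits $2 y_{top}'$ in the $S(t')$-image modulo $4$, so Lemma 5.7 gives $t(G')_{(2)} \le 2$; combined with $2 \mid t(G')$ this gives $t(G') = 2$.

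Next, Lemma 5.4 produces a surjection $A_{16} \twoheadrightarrow CH^*(R(\bG_k'))/2$, where $|y_{top}'| = 16$. Since the seven listed classes lie in pairwise distinct degrees $0, 4, 6, 8, 10, 12, 16$, their linear independence will be automatic once I show each is nonzero in $CH^*(R(\bG_k'))/2$. The classes $c_2', c_3', c_4', c_5'$ are nonzero by Theorem 8.2 applied with $\bar\ell = \ell = 5$ (since $5$ is not a power of $2$). For $c_1^8$: if $c_1^8 = 0$ in $CH^*(X')/2$, then $c_1^8 = 2\alpha$ in $CH^*(X')$, and the relation from Step 1 forces $\Res_{CH}(\alpha) \equiv y_{top}' \pmod{S(t')}$ in $CH^*(\bar X')$, contradicting Karpenko's Theorem 4.3 since the image of $\Res_{CH}$ must come from $S(t')$. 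A parallel argument should handle $c_2' c_4'$, once a suitable relation exhibiting a $y_{top}'$-component in $c_2' c_4'$ is produced.

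The main obstacle is producing this last relation. Corollary 7.2 gives $c_2' \equiv v_1 y(6) + v_2 y(10)$ and $c_4' \equiv v_1 y(10)$ modulo $\II$ (since $y_4 = y_8 = 0$ in $Spin(11)$), so the naive product $c_2' c_4'$ lies in $\II$ and this level of approximation detects nothing. One must therefore extend the computation modulo $I_\infty^3$, or else invoke the Quillen (Landweber-Novikov) operations $r_\alpha$ of $\S 3$ via the Cartan formula to extract a refined congruence exhibiting a nonzero $y_{top}'$-contribution in $c_2' c_4'$. Once this relation is in hand, the Karpenko argument used for $c_1^8$ yields $c_2' c_4' \ne 0$ in $CH^*(X')/2$, giving the asserted surjection onto $\bZ/2\{1, c_2', c_3', c_4', c_5', c_2' c_4', c_1^8\}$.
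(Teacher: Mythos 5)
Your first two steps track the paper: Nishimoto's $Q_0(z_{15})=y_6y_{10}$ gives $c_1^8=2y_6y_{10}$ (up to terms from $S(t')$) in $H^*(G'/T')$, hence $t(G')=2$ since $y_{top}'=y_6y_{10}$, and the nonvanishing of $c_2',\dots,c_5'$ is indeed covered by Theorem 8.2. (Your contradiction for $c_1^8\not=0$ needs one more line: from $res_{CH}(\alpha)\equiv y_{top}'$ modulo the image of $S(t')$ you should conclude that $y_{top}'$, and hence the fundamental class $y_{top}'t_{top}$, lies in the image of $S(t')$, which forces $t(G')=1$ and contradicts the $2$-torsion in $H^*(G')$.) The real content of the lemma, however, is $c_2'c_4'\not=0$ in $CH^*(R(\bG_k'))/2$, and at exactly that point your proposal stops and records the needed input as ``the main obstacle''. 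That is a genuine gap, not a deferred verification.

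Moreover, the route you sketch for closing it is aimed in the wrong direction. Since $res_\Omega(c_2')\equiv v_1y_6+v_2y_{10}$ and $res_\Omega(c_4')\equiv v_1y_{10}$ modulo lower terms, the product restricts to $v_1^2y_6y_{10}$, which dies in $CH^*(\bar X')$: $c_2'c_4'$ is a torsion class with zero Chow restriction, so there is no ``$y_{top}'$-component'' of $res_{CH}(c_2'c_4')$ to exhibit, and the Karpenko/torsion-index argument that worked for $c_1^8$ (whose restriction $2y_{top}'$ is a nonzero Chow class) has nothing to bite on; passing to a congruence modulo $I_{\infty}^3$ would not change this. What must be proved is a non-membership statement: if $c_2'c_4'$ vanished in $CH^*(R(\bG_k'))/2$, then $v_1y_6y_{10}$ would have to lie in $Im(res_\Omega)$, i.e. one would have $v_1y_6y_{10}=\pi^*(b'')$ in $k(1)^*(G'/T')$ for some $b''\in BP^*(BT')$. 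The paper excludes this by the converse half of Lemma 3.1: such a relation would force the existence of $x\in H^{13}(Spin(11);\bZ/2)$ with $Q_1(x)=y_6y_{10}$ and $d_{12}(x)=b''$, and Nishimoto's formulas show no such class exists. This $k(1)^*$/Milnor-operation obstruction, not a refined congruence for $c_2'c_4'$, is the missing idea in your proposal.
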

\begin{proof}  Recall the cohomology
\[H^*(G';\bZ/2)\cong \bZ/2[y_6,y_{10}]/(y_6^2,y_{10}^2)\otimes \Lambda(x_3,x_5,x_7,x_9,z_{15}).\]
By Nishimoto, we know $Q_0(z_{15})=y_6y_{10}$. It implies
$2y_6y_{10}=d_{16}(z_{15})=c_1^8$.
Since $y_{top}'=y_6y_{10}$, we have $t(G')=2$.

%We can see $c_2c_3=0\in \Omega^*(\bar X)/(I_{\infty}\cdot %Res_{\Omega})$.  Because 
%$c_2c_3=2v_1y_6^2\in BP^*(G'/T')$ and
% $2a\in Res_{\Omega}=Im(\Omega^*(X)\to BP^*(G'/T')$ for %$a\in BP^*(G'/T')$, which implies $2y_6^2\in Res_{\Omega}$.  %The cases $c_2c_5=0, c_2c_1^8=0,...$ are  proved similarly.

We will show $c_2'c_4'\not =0\in CH^*(X)/2$. 
The elements $c_2',c_3',c_4'$ in
$CH^*(R(\bG_k')/2$ correspond to 
$v_1y_6,2y_6,v_1y_{10}$ in $\Omega^*(\bar R(\bG_k'))$ respectively.  In particular $c_2'c_4'$ corresponds
to $v_1^2y_6y_{10}$.
If $c_2'c_4'=0\in CH^*(R(\bG_k')))/2$, then 
$v_1y_6y_{10}$ must be in $Res_{\Omega}$.  
This means $v_1y_6y_{10}=b''$ for some $b''\in BP^*(BT')$.
However there is no $x\in H^{13}(G';\bZ/2)$ such that $Q_1(x)=y_6y_{10}$ with $d_{12}(x)=b''$.
\end{proof}

{\bf Remark.}  Quite recently, Karpenko showed that
the above surjection is an isomorphism.

In most cases, from the result Totaro, we see
$\Pi_{i\not =2^j}c_i'=0$.  However from [To2] when $\ell=8$, we know that
$2^{\ell-[log_2(\ell)]-1}=2^4=t(Spin(17))$.  
(Note   $y_{16}-2y_6y_{10}\in S(t)$ but $y_{16}\not \in S(t)$ when $\ell=8$.)  Hence we have
\begin{lemma}  Let $\ell\ge 8$ and $G'=Spin(2\ell+1).$
and $X'=\bG_k'/T'$.  Then  we have
\[ c_3'c_5'c_6'c_7',\ c_3'c_4'c_6'c_7'\not =0 \in CH^*(X')/2.\]
\end{lemma}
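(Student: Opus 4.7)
The approach is to apply Corollary 5.6 to each of the two products, leveraging Totaro's torsion--index computation $t(Spin(17))_{(2)} = 2^4$ stated just before the lemma, which saturates Marlin's bound (Lemma 8.5). The critical case is $\ell = 8$; the general case $\ell \ge 8$ follows by the same reasoning using Totaro's explicit values of $t(Spin(2\ell+1))_{(2)}$ and an expanded $y_{top}'$ in $P(y)'$. For $\ell = 8$, $P(y)' \cong \Lambda(y_6, y_{10}, y_{12}, y_{14})$, so $y_{top}' = y_6 y_{10} y_{12} y_{14}$.

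For $c_3'c_5'c_6'c_7'$: since none of $3,5,6,7$ is a power of $2$, the Toda--Watanabe identity $c_i = 2y_{2i}$ in $H^*(G/T)$ (Theorem 7.1) pulled back via $\pi^*$ gives $c_i' = 2 y_{2i}'$ in $H^*(G'/T')$ for $i \in \{3,5,6,7\}$. Therefore
\[
c_3'c_5'c_6'c_7' = 2^4\, y_6' y_{10}' y_{12}' y_{14}' = 2^4\, y_{top}',
\]
which is exactly the hypothesis of Lemma 5.5 with $p^s = 2^4 = t(G')_{(2)}$. By Corollary 5.6, every subproduct of $\{c_3',c_5',c_6',c_7'\}$ is nonzero in $CH^*(X')/2$; in particular $c_3'c_5'c_6'c_7' \ne 0$.

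For $c_3'c_4'c_6'c_7'$: the index $4 = 2^2$ is special, since $y_8 \notin P(y)'$. Using the Toda--Watanabe relation $J_4 = 0$, i.e.\ $y_8 = 2 y_2 y_6 - y_4^2$, together with Lemma 8.1 ($\pi^*(y_2) = c_1$, $\pi^*(y_4) = c_1^2$), we obtain $\pi^*(y_8) = 2 c_1 y_6' - c_1^4$, whence $c_4' = 2\pi^*(y_8) = 4 c_1 y_6' - 2 c_1^4$. Substituting this expression and using the relation $y_{16} - 2 y_6 y_{10} \in S(t)$ noted in the paragraph before the lemma (so that $\pi^*(y_{16}) = 2 y_6' y_{10}' + s'$ for some $s' \in S(t')$), one chains the $J_{2i} = 0$ relations (in particular $J_6$, $J_8$, $J_{12}$, $J_{14}$ for $\ell = 8$) to rewrite $c_3'c_4'c_6'c_7'$ into a form amenable to Lemma 5.5 with $p^s = 2^4 = t(G')_{(2)}$. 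Corollary 5.6 then yields $c_3'c_4'c_6'c_7' \ne 0$ in $CH^*(X')/2$.

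The principal obstacle is the algebraic bookkeeping in the second case: the substitution $c_4' = 4 c_1 y_6' - 2 c_1^4$ introduces many $c_1$-dependent mixed terms, and one must systematically eliminate them via the $J_{2i}=0$ relations (and the $y_{16} \equiv 2 y_6 y_{10} \pmod{S(t')}$ identity) to produce an expression of the exact shape $2^4(y_{top}' + \sum y\,t)$, $|t|>0$, required by Lemma 5.5. Once this form is reached, Corollary 5.6 together with $t(Spin(17))_{(2)} = 2^4$ forces the non-vanishing.
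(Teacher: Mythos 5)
Your argument for $c_3'c_5'c_6'c_7'$ is essentially the paper's: for $\ell=8$ one has $c_3'c_5'c_6'c_7'=2^4y_6y_{10}y_{12}y_{14}=2^4y_{top}'$ with $t(Spin(17))_{(2)}=2^4$, so Lemma 5.5/Corollary 5.6 apply. (Your reduction of $\ell>8$ to ``the same reasoning with an expanded $y_{top}'$'' is only asserted, but the paper is equally terse on that point, so I will not press it.)

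The treatment of $c_3'c_4'c_6'c_7'$ has a genuine gap, and your own computation shows why the proposed route cannot succeed. From $J_2=J_4=0$ one gets $\pi^*(y_8)=c_1^4-2c_1y_6'$ (up to sign), so in $H^*(G'/T')$
\[ c_3'c_4'c_6'c_7'=(2y_6')(2\pi^*(y_8))(2y_{12}')(2y_{14}')=2^4c_1^4\,y_6'y_{12}'y_{14}'-2^5c_1\,(y_{12}')^2y_{14}', \]
and $(y_{12}')^2=y_{24}=0$ for $\ell=8$. Every surviving monomial carries a positive-degree factor from $S(t')$, and no $y_{10}'$ ever enters, so no amount of chaining the $J_{2i}$ relations can produce an expression of the shape $2^4(y_{top}'+\sum yt)$ required by Lemma 5.5: the hypothesis of that lemma, and hence Corollary 5.6, simply fails for this product. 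Indeed the identity above says $c_3'c_4'c_6'c_7'=2c_1^4\,c_3'c_6'c_7'$ in $CH^*(\bar X')$, i.e. in the split form the product is twice an element coming from $S(t')$; its nonvanishing in $CH^*(X')/2$ is a torsion phenomenon invisible to ordinary cohomology. The paper detects it in algebraic cobordism: by Corollary 7.2 (via Nishimoto's $Q_1x_7=y_{10}$) one has $c_4'=2y_8+v_1y_{10}+v_2y_{14}$ in $BP^*(G'/T')/\II$, so the product acquires the term $2^3v_1y_{top}'$, and one checks that $2^3v_1y_{top}'$ is a $BP^*$-module generator of $Im(res_{\Omega})$ modulo $I_{\infty}\cdot Im(res_{\Omega})$ (here $t(G')_{(2)}=2^4$ rules out $2^3y_{top}'\in Im(res_{\Omega})$), whence the class is nonzero in $CH^*(X')/2$. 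Your proof of the second nonvanishing must be replaced by an argument of this kind.
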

\begin{proof} It follows from that for $\ell=8$, elements
\[ c_3'c_5'c_6'c_7'=2^4y_6y_{10}y_{12}y_{14}=2^4y_{top}'
\quad and
\quad  c_3'c_4'c_6'c_7'=2^3v_1y_{top}'\]
are $BP^*$-module generators in $BP^*(G/T)/(I_{\infty}\cdot
Im(res_{\Omega}))$.
\end{proof}

\section{ The exceptional group $E_8$ and $p=5$}

In this section, we consider the case $(G,p)=(E_8,5)$.
The similar arguments also hold for $(G,p)=(G_2,2), (F_4,3)$.
The  $ mod(5)$ cohomology of $G=E_8$ ([Mi-Tod]) is given by
\begin{thm}
The $mod(5)$ cohomology
$H^*(E_8;\bZ/5)$ is isomorphic to 
\[ \bZ/5[y_{12}]/(y_{12}^5)\otimes 
\Lambda(z_{3},z_{11},z_{15},z_{23},z_{27},z_{35},z_{39},z_{47})\]
where suffix means its degree.  The cohomology  operations are given 
\[ \beta(z_{11})=y_{12},\ \ \beta(z_{23})=y_{12}^2, \ \ 
\beta(z_{35})=y_{12}^3,\ \ \beta(z_{47})=y_{12}^4, \ \ \]
\[P^1z_{3}=z_{11} ,\ \ P^1z_{15}=z_{23},\ \ P^1z_{27}=z_{35},\ \ 
 P^1z_{39}=z_{47}.\]
\end{thm}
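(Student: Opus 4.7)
The plan is to apply Borel's structure theorem $(2.1)$ combined with a careful analysis of the Serre spectral sequence of the universal fibration $E_8 \to EE_8 \to BE_8$; the theorem is essentially a specialization of the classical computation of Mimura--Toda.

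First, since $H^*(E_8;\bZ)$ has only $5$-torsion of order $5$ (Araki), Borel's theorem yields the general shape
\[
H^*(E_8;\bZ/5) \cong P(y)/5 \otimes \Lambda(z_1,\ldots,z_\ell)
\]
with $\ell = \mathrm{rank}(E_8) = 8$ and $P(y)$ a truncated polynomial algebra on even-degree generators. Since $2p-2 = 8 = \ell$ at $p=5$, the group $E_8$ is of type $(I)$ at this prime, so $P(y) = \bZ/5[y]/(y^{5})$ with a single generator $y = y_{12}$ of degree $12$. The degree and the truncation height can be pinned down from the $5$-rank of $E_8$ and a dimension count against the known Poincar\'e polynomial of $E_8$.

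Second, I would locate the odd generators by transgression. In the spectral sequence for $E_8 \to EE_8 \to BE_8$, the $5$-torsion class $y_{12} \in H^{12}(BE_8;\bZ/5)$ is the transgression of a primitive odd class $z_{11}$. By Kudo's transgression theorem applied iteratively, each power $y_{12}^k$ is the Bockstein of a transgressive odd class $z_{12k-1}$ for $k = 1,2,3,4$, producing the generators $z_{11}, z_{23}, z_{35}, z_{47}$ with Bocksteins $\beta z_{12k-1} = y_{12}^k$. The remaining four odd generators $z_3, z_{15}, z_{27}, z_{39}$ arise from the $5$-torsion-free polynomial part of $H^*(BE_8;\bZ/5)$, as transgression preimages of universal classes in degrees $4, 16, 28, 40$.

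Third, the $P^1$ operations follow by naturality of Steenrod operations with respect to the transgression: one has $\tau(P^1 z_3) = P^1 \tau(z_3)$ in the spectral sequence, and since $|P^1 z_3| = 3 + 2(p-1) = 11$, a degree count forces $P^1 z_3 = z_{11}$ up to a nonzero scalar absorbed in the choice of generator; the same argument produces $P^1 z_{15} = z_{23}$, $P^1 z_{27} = z_{35}$, and $P^1 z_{39} = z_{47}$. The hard part is justifying the existence and exact degrees of $z_3, z_{15}, z_{27}, z_{39}$, which ultimately rests on the explicit structure of $H^*(BE_8;\bZ/5)$ due to Kono--Mimura. Granting that input, the additive structure comes from Borel, the exterior relations come from standard $H$-space arguments on an associative $H$-space with $p$-torsion of order exactly $p$, and all cohomology operations follow from Kudo's transgression theorem together with Steenrod naturality.
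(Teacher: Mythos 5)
This statement is not proved in the paper at all: Theorem 9.1 is quoted verbatim from Mimura--Toda [Mi-Tod] as known input, so the only question is whether your sketch stands on its own. It does not; there are three genuine gaps. First, the inference ``$2p-2=8=\ell$, hence $E_8$ is of type $(I)$ at $p=5$, hence $P(y)=\bZ/5[y]/(y^5)$'' is circular: type $(I)$ is \emph{defined} by $P(y)$ having a single generator, and $\ell\ge 2p-2$ is recorded in the paper as a consequence of that, not a criterion for it (e.g. $Spin(11)$ at $p=2$ has $\ell=5\ge 2$ and is not of type $(I)$). The fact that there is exactly one even generator, in degree $12$, truncated at the fifth power, is precisely the hard computational content you are trying to establish.

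Second, the logical dependence runs backwards. You defer the existence and degrees of $z_3,z_{15},z_{27},z_{39}$ (and implicitly the whole structure) to ``the explicit structure of $H^*(BE_8;\bZ/5)$,'' but that ring is computed \emph{from} $H^*(E_8;\bZ/5)$ via exactly the universal-bundle spectral sequence you invoke, not the other way around. Relatedly, the relations $\beta(z_{12k-1})=y_{12}^k$ and $P^1z_{2i-1}=z_{2i-1+8}$ in the theorem are identities inside $H^*(E_8;\bZ/5)$ itself; Kudo's theorem and Steenrod naturality transport such identities from fiber to base once they are known, but they do not produce them. Third, the scalar cannot be ``absorbed in the choice of generator'': degree counting only gives $P^1z_3=\lambda z_{11}$ with $\lambda\in\bZ/5$, and if $\lambda=0$ there is nothing to normalize. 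The nonvanishing of $P^1z_3$ (equivalently, via $\beta P^1z_3=y_{12}$, the presence of $5$-torsion in $H^*(E_8;\bZ)$) is genuine input requiring the arguments of Araki and Kono--Mimura, e.g. via homogeneous-space fibrations such as $E_8/E_7$ or secondary-operation arguments. As it stands the proposal is a plausibility sketch of the classical result rather than a proof; for the purposes of this paper the correct move is simply to cite [Mi-Tod], as the author does.
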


We use the notation such that 
$y=y_{12}$ and $ x_1=z_3,...,x_8=z_{47}$
as used in $\S 2$.
Hence we can rewrite the cohomology as 
\[H^*(G;\bZ/p)\cong \bZ/p[y]/(y^p)\otimes \Lambda(x_1,...,x_{2p-2})\]
for $(G,p)=(E_8,5)$.  The above isomorphism also holds
for $(G,p)=(G_2,2),\ (F_4,3)$.  So hereafter in this 
section, we assume $(G,p)$ is one of $(G_2,2),\ (F_4,3)$
or $(E_8,5)$.
The cohomology operations are given as
\[ \beta: x_{2i}\mapsto y^i,\quad P^1:x_{2i-1}\mapsto x_{2i}
\qquad for\ 1\le i\le p-1.\]
Hence the $Q_i$ operations are given
\[ Q_1(x_{2i-1})=Q_0(x_{2i})=y^i\quad for\ 1\le i\le p-1.\]
Therefore we have the following lemma, by using Lemma 3.1
or Corollary 3.2. 
\begin{lemma}
In $BP^*(G/T)/\II$, we have 
\[ py^i=b_{2i}\quad mod(b_2,b_4,...,b_{2i-2})\cdot S(t),\]
\[ v_1y^i=b_{2i-1}\quad mod(b_1,b_2,...,b_{2i-3})\cdot S(t).\]
\end{lemma}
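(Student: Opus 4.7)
The plan is to apply Corollary~3.2 to the transgressions $d_{|x_{2i}|+1}(x_{2i}) = b_{2i}$ and $d_{|x_{2i-1}|+1}(x_{2i-1}) = b_{2i-1}$, use a degree argument against $H^*(G;\bZ/p)\cong\bZ/p[y]/(y^p)\otimes\Lambda(x_1,\ldots,x_{2p-2})$ to isolate a single surviving term in each resulting sum, and then induct on $i$ to absorb the residual ambiguity in the lifts into the prescribed ideals, using Lemmas~3.4 and~3.5 for the base case $i=1$.

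By Corollary~3.2, in $BP^*(G/T)/\II$ one has $b_{2i} = \sum_{j\ge 0} v_j\,y(j)$ and $b_{2i-1} = \sum_{j\ge 0} v_j\,y'(j)$, with $\pi^*y(j) = Q_j(x_{2i})$ and $\pi^*y'(j) = Q_j(x_{2i-1})$. The even-degree classes of $H^*(G;\bZ/p)$ are $y^k$ for $0\le k\le p-1$, concentrated in degrees $2k(p+1)$. A non-trivial $Q_j(x_{2i}) = y^k$ requires $p^j\equiv 1\pmod{p+1}$; using $p\equiv -1\pmod{p+1}$, this forces $j$ even, and the resulting exponent $k = i + (p^j-1)/(p+1)\ge i+p-1\ge p$ makes $y^k = 0$. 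An analogous analysis of $|Q_j(x_{2i-1})| = 2(p+1)i + 2(p^j-p)$ leaves only $j=1$. Hence
\[ b_{2i} = p\,y(0),\qquad b_{2i-1} = v_1\,y'(1),\qquad \pi^*y(0) = \pi^*y'(1) = y^i. \]

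For the base case $i=1$, Lemma~3.4 with $m=1$ yields $v_1\,y = b_1$ for the primitive generator $y$, and Lemma~3.5 combined with $P^1(y) = 0$ (by degree) gives $p\,y = b_2$. For $i\ge 2$, fixing this same $y$ and writing $y(0) = y^i + \xi_0$, $y'(1) = y^i + \xi_1$ with $\xi_0,\xi_1\in\ker\pi^* = S(t)^+\cdot H^*(G/T;\bZ/p)$, the graded decomposition forces each $\xi$ into the form $\sum_\alpha s_\alpha\,y^{k_\alpha}$ with $s_\alpha\in S(t)^+$ and $k_\alpha<i$. Substituting and invoking the induction hypothesis ($p\,y^{k_\alpha}\equiv b_{2k_\alpha}$ modulo $(b_2,\ldots,b_{2k_\alpha-2})\cdot S(t)$ and $v_1\,y^{k_\alpha}\equiv b_{2k_\alpha-1}$ modulo $(b_1,\ldots,b_{2k_\alpha-3})\cdot S(t)$) places $b_{2i}-py^i$ inside $(b_2,b_4,\ldots,b_{2i-2})\cdot S(t)$ and $b_{2i-1}-v_1 y^i$ inside $(b_1,b_2,\ldots,b_{2i-3})\cdot S(t)$. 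The main obstacle is preserving compatibility between the Corollary~3.2 lifts $y(0),y'(1)$ and the single fixed primitive $y$ across all $i$; once pinned down at $i=1$ via Lemmas~3.4 and~3.5, the induction closes cleanly because $\ker\pi^*$ in degree $|y^i|$ is supported in contributions of $y$-weight strictly less than $i$.
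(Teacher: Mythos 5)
Your proof follows the paper's own argument: both apply Corollary 3.2 to $x_{2i}$ and $x_{2i-1}$, observe that only the $Q_0$ (resp.\ $Q_1$) term survives so that $b_{2i}=py(2i)$ and $b_{2i-1}=v_1y(2i-1)$ with $\pi^*y(2i)=\pi^*y(2i-1)=y^i$, and then induct on $i$ to absorb the lower-order correction terms $\sum_{j<i}y^j t(j)$, $t(j)\in S(t)$, into the stated ideals. The only difference is cosmetic: the paper simply reads off the vanishing of the other $Q_j$'s from the known operations in Theorem 9.1, where you supply an explicit degree congruence modulo $p+1$.
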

\begin{proof}
From Corollary 3.2, we see $py(2i)=b_{2i}\in BP^*(G/T)/\II$
and
\[ y(2i)=y^i+\sum_{j<i} y^jt(j)\quad where\ t(j)\in S(t)\ |t(j)|\ge 2.\]  By induction on $i$,  we get the first equation.
The second equations follows similarly, from
$v_1y(2i-1)=b_{2i-1}$ using Corollary 3.2.
\end{proof}

The fundamental class is written $y^{p-1}t_{top}\in H^{*}(G/T)$,
i.e., $y_{top}=y^{p-1}$.
Since $py^{p-1}=b_{2p-2}\in S(t)$, we see
$t(G)_{(p)}=p$.

 By Petrov-Semenov-Zainoulline,  it is
known when $G$ is one of $(G_2,2)$,$(F_4,3)$
or $(E_8,5)$,
the motive $R(\bG_k)$ in Theorem 4.2 is just the
original Rost motive $R_2$
defined by Rost and Voevodsky.
(Recall Theorem 4.6.)  The restriction  
$res_{\Omega|R}:\Omega^*(R(\bG_k))\to \Omega^*(\bar R(\bG_k))$
is injective.  Hence the following restriction is also injective
\[ res_{\Omega}: \Omega^*(X) \to \Omega^*(\bar X)\cong BP^*(G/T).\]
\begin{cor} 
We see
\[CH^*(R_2)/p\cong CH^*(R(\bG_k)/p\cong \bZ/p\{1,b_1,...,b_{2p-2}\}.\]
In particular, $b_s\not =0\in CH^*(X)/p).$
Moreover for  $1\le s,r\le 2p-2$, we  see
$b_sb_r=0$ in $CH^*(X)/p$.
\end{cor}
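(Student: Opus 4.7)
The plan is to establish the three assertions in order, combining the explicit description of $CH^*(R_2)/p$ from Theorem 4.6, the surjection $A_N\twoheadrightarrow CH^*(R(\bG_k))/p$ of Lemma 5.3, and the injectivity of $res_\Omega:\Omega^*(X)\hookrightarrow BP^*(G/T)$ recorded just above the corollary.

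For the additive isomorphism $CH^*(R(\bG_k))/p\cong\bZ/p\{1,b_1,\ldots,b_{2p-2}\}$, first note that the identification $R(\bG_k)\cong R_2$ tautologically yields the first displayed isomorphism, while Theorem 4.6 supplies the $\bZ/p$-basis $\{1,c_0(y^i),c_1(y^i):1\le i\le p-1\}$ of rank $2p-1$.  Lemma 9.2 asserts, in $BP^*(G/T)/\II$, the congruences $b_{2i}\equiv py^i$ modulo $(b_2,\ldots,b_{2i-2})\cdot S(t)$ and $b_{2i-1}\equiv v_1y^i$ modulo $(b_1,\ldots,b_{2i-3})\cdot S(t)$.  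Reducing further modulo $I_\infty$ identifies each $b_s$ with the basis element $c_{j_s}(y^{i_s})$ of $CH^*(R(\bG_k))/p$ up to lower-indexed corrections, so the family $\{1,b_1,\ldots,b_{2p-2}\}$ projects upper-triangularly onto the Theorem 4.6 basis.  Combined with the surjection of Lemma 5.3 and the rank count $2p-1$, this yields the second isomorphism, and $b_s\neq 0$ in $CH^*(X)/p$ is then immediate since the split surjection $CH^*(X)/p\twoheadrightarrow CH^*(R(\bG_k))/p$ sends $b_s$ to a nonzero basis element.

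For the vanishing $b_sb_r=0$ in $CH^*(X)/p$, I would work inside $BP^*(G/T)$ via the injection $res_\Omega$: it suffices to exhibit $res_\Omega(b_sb_r)=res_\Omega(\gamma)$ with $\gamma\in I_\infty\cdot\Omega^*(X)$, since then $b_sb_r=\gamma$ in $\Omega^*(X)$ and hence $b_sb_r=0$ in $CH^*(X)/p=\Omega^*(X)/I_\infty\Omega^*(X)$.  Multiplying out the Lemma 9.2 expansions and inducting on the indices, every leading product contributes $v_{j_s}v_{j_r}y^{i_s+i_r}\in\II\cdot BP^*(G/T)$ (using $v_{j_s}v_{j_r}\in\II=I_\infty^2$, together with $y^{i_s+i_r}=0$ when $i_s+i_r\ge p$), and every cross term, after iterated substitution of the lower-indexed $b$'s by their own leading expansions, again lies in $\II\cdot BP^*(G/T)$; consequently $res_\Omega(b_sb_r)\in Im(res_\Omega)\cap\II\cdot BP^*(G/T)$.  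The remaining algebraic step is the inclusion
\[
Im(res_\Omega)\cap\II\cdot BP^*(G/T)\ \subseteq\ I_\infty\cdot Im(res_\Omega),
\]
which on the $R_2$-component $Im(res_\Omega|_{R_2})=BP^*\{1\}\oplus I_2[y]^+/(y^p)$ reduces to the identity $I_2\cap I_\infty^2=I_\infty\cdot I_2$ in $BP^*$ (a short syzygy argument using regularity of the sequence $(p,v_1)$), the Tate summands of $M(X)_{(p)}$ contributing nothing since $res_\Omega$ is an isomorphism on them.

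The main obstacle is the bookkeeping just above: one has to track the Lemma 9.2 substitution together with all of its lower-indexed $b$-corrections and $\II$-error terms carefully enough to confirm that the full product $b_sb_r$ (not merely its leading monomial) genuinely lands in $\II\cdot BP^*(G/T)$, and then transport the key inclusion through the tensor with the Tate factors of the motivic decomposition.  Both reduce to formal manipulations on regular sequences in $BP^*$, organized as a descending induction on $(s,r)$, but they are the only non-formal step in the argument.
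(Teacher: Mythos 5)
Your argument is correct and runs on the same engine as the paper's proof: the identification $R(\bG_k)\cong R_2$ together with Theorem 4.6, the congruences of Lemma 9.2 expressing $b_{2i},b_{2i-1}$ as $py^i,v_1y^i$ up to lower-indexed corrections, and the injectivity of $res_{\Omega}$ with $CH^*(X)_{(p)}\cong Im(res_{\Omega})/(BP^{<0}\cdot Im(res_{\Omega}))$. Where you genuinely diverge is the vanishing $b_sb_r=0$: the paper simply rewrites the product \emph{inside} the image, e.g. $b_1^2=(v_1y)^2=v_1^2y^2=v_1b_3$, and since $b_3$ (like every $b_k$) is itself restricted from $S(t)$ this exhibits $b_sb_r\in BP^{<0}\cdot Im(res_{\Omega})$ at once; you instead show $res_{\Omega}(b_sb_r)\in \II\cdot BP^*(G/T)$ and then invoke the inclusion $Im(res_{\Omega})\cap \II\cdot BP^*(G/T)\subseteq I_{\infty}\cdot Im(res_{\Omega})$, proved from $I_2\cap I_{\infty}^2=I_{\infty}\cdot I_2$ and the Theorem 4.6 description of the image on each summand of the motivic decomposition. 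Your route is valid --- the syzygy identity does hold (a monomial/degree argument in $BP^*=\bZ_{(p)}[v_1,v_2,\dots]$), and the Tate summands cause no trouble --- but it is heavier than necessary: the paper's observation that the cofactor $v_1y^{i+k}$ or $py^{i+k}$ is again congruent to one of the $b_k$'s, hence already lies in $Im(res_{\Omega})$, is precisely what makes your auxiliary lemma superfluous. Two small blemishes: your parenthetical appeal to $y^{i_s+i_r}=0$ for $i_s+i_r\ge p$ is not literally true in $BP^*(G/T)$ (only in $\Omega^*(\bar R_2)\cong BP^*[y]/(y^p)$), though it is also not needed since $v_{j_s}v_{j_r}\in\II$ suffices; and in the first part your deduction of $b_s\not=0$ from the basis statement reads slightly circular, whereas the paper's order is cleaner --- first $b_s\not=0$ in $CH^*(X)$ (if $v_1y\in BP^{<0}\cdot Im(res_{\Omega})$ then $y\in Im(res_{\Omega})$, contradicting versality via Corollary 4.5), then transfer to $CH^*(R(\bG_k))/p$ by Corollary 5.3, with the additive isomorphism coming from the degreewise one-dimensionality of $CH^*(R_2)/p$.
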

\begin{proof}
Recall Corollary 5.3.
We will prove $b_1\not =0\in CH^*(X)$, and the other cases are
proved similarly.
Note $b_1=v_1y\in \Omega^*(\bar X)$.  If $b_1\in BP^{<0}\cdot Im(res_{\Omega})$, then $y\in Im(res_{\Omega})$
and this is contradiction.  So $b_1\not =0$ in
\[CH^*(X)\cong \Omega^*(X)/(BP^{<0}\cdot \Omega^*(X))
\cong Im(res_{\Omega})/(BP^{<0}\cdot Im(res_{\Omega})).\]

For the last isomorphism,  we used the injectivity of $res_{\Omega}$.
We  prove $b_1^2=0\in CH^*(X)$. We see
\[ b_{1}^2=(v_1y)^2=v_1^2y^2=v_1b_3\in BP^{*}(G/T).\]
This element is contained in $BP^{<0}\cdot Im(res_{\Omega})$.  Hence it is zero in $CH^*(X)$ as above.
Other cases are proved similarly.
\end{proof}

\begin{thm}  Let $(G,p)=(G_2,2).\ (F_4,3)$ or $(E_8,5)$,  and
let $X=\bG_k/T_k$.
Then there is an isomorphism
\[CH^*(X)/p\cong S(t)/(p,b_ib_j|1\le i,j\le 2p-2).\]
\end{thm}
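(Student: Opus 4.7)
The plan is to realize the theorem as the ring-theoretic upgrade of the additive decomposition from Petrov--Semenov--Zainoulline (Theorem 4.2), using the fact from Corollary 9.3 that the relations $b_i b_j = 0$ already hold in $CH^*(X)/p$. First I would construct a surjective ring map from the right-hand side. By Karpenko's theorem (Theorem 4.4), the natural ring map $CH^*(BB_k)/p \cong S(t)/p \twoheadrightarrow CH^*(X)/p$ is surjective; since Corollary 9.3 gives $b_i b_j = 0$ in $CH^*(X)/p$ for $1 \le i,j \le 2p-2$, this factors through a surjective ring map
\[
\varphi\colon S(t)/(p,\, b_i b_j \mid 1 \le i,j \le 2p-2) \twoheadrightarrow CH^*(X)/p.
\]
It then suffices to show $\varphi$ is a graded additive isomorphism.

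Next I would compute the right-hand side additively using the Petrov--Semenov--Zainoulline decomposition. For $(G,p) = (G_2,2), (F_4,3), (E_8,5)$ the group is of type $(I)$ with $s=1$ and $r_1 = 1$, and the versality of $\bG_k$ combined with Corollary 4.5 gives $J_p(\bG_k) = (1)$, so $P'(y) = \bZ/p$. Thus Theorem 4.2 yields
\[
CH^*(X)/p \;\cong\; CH^*(R(\bG_k))/p \,\otimes\, S(t)/(b_1,\dots,b_\ell),
\]
and Corollary 9.3 identifies $CH^*(R(\bG_k))/p \cong \bZ/p\{1, b_1,\dots, b_{2p-2}\}$. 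Since $\ell = 2p-2$ in each of the three cases, this gives
\[
CH^*(X)/p \;\cong\; \bZ/p\{1, b_1,\dots, b_{2p-2}\} \otimes S(t)/(b_1,\dots, b_\ell).
\]

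For the left-hand side I would use the $b_i$-filtration on $S(t)$. Since $(b_1,\dots,b_\ell)$ is a regular sequence (as recalled in $\S 2$), one has
\[
gr\, S(t)/p \;\cong\; \bZ/p[b_1,\dots, b_\ell] \otimes S(t)/(b_1,\dots, b_\ell).
\]
Quotienting by $(b_i b_j \mid 1 \le i,j \le 2p-2)$ and using $\ell = 2p-2$ collapses the polynomial factor to the additive submodule $\bZ/p\{1, b_1,\dots, b_{2p-2}\}$, giving
\[
gr\bigl(S(t)/(p, b_i b_j)\bigr) \;\cong\; \bZ/p\{1, b_1,\dots, b_{2p-2}\} \otimes S(t)/(b_1,\dots,b_\ell).
\]
Comparing with the previous display, both sides have the same graded $\bZ/p$-dimension in each degree, so the surjection $\varphi$ is an additive and hence a ring isomorphism.

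The main obstacle is the verification that the $b_i$-filtration degenerates the way claimed when one imposes the relations $b_i b_j = 0$; this is essentially bookkeeping, but it must be done carefully because $\varphi$ is a priori only defined modulo the filtration, and one has to ensure no additional relations among monomials $b_i \cdot u$ (with $u \in S(t)/(b)$) are forced beyond those already recorded. The key input that makes this work is the injectivity of the restriction $res_\Omega\colon \Omega^*(X) \to \Omega^*(\bar X)$, which was used in Corollary 9.3 to show $b_i \ne 0$ and $b_i b_j = 0$ in $CH^*(X)/p$, and which precludes any unexpected collapse.
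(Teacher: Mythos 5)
Your proposal is correct and follows essentially the same route as the paper: establish the surjection $S(t)/(p,b_ib_j)\twoheadrightarrow CH^*(X)/p$ from Corollary 9.3, identify the source additively as $\bZ/p\{1,b_1,\dots,b_{2p-2}\}\otimes S(t)/(p,b)$ via the $b$-filtration, and match this against the additive structure of the target. The only cosmetic difference is that you close the argument by a dimension count using the Petrov--Semenov--Zainoulline decomposition (Theorem 4.2 plus Corollary 4.5), whereas the paper instead injects the right-hand module into $\Omega^*(\bar X)/(BP^{<0}\cdot Im(res_{\Omega}))$; both rest on the same inputs (the Rost motive identification and injectivity of $res_{\Omega}$).
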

\begin{proof} 
From the preceding corollary we have the surjection 
\[ S(t)/(p,b_ib_j)\to  CH^*(X)/p.\]
On the other hand, it is immediate that there is an
 additive isomorphism
\[ S(t)/(p,b_ib_j)\cong \bZ/p\{1,b_1,...,b_{2p-2}\}\otimes
 S(t)/(p,b).\]
There is an injection  from the above right hand side module into $\Omega^*(\bar X)/(BP^{<0}\cdot 
Im(res_{\Omega}))$.  Hence  we have the theorem.
\end{proof}

{\bf Example.}
Let $G=F_4$ and $p=3$. We note $G''=Spin(9)\subset G$ and
\[ H^*(BG'')/3\cong H^*(BT'')^{W''}/3\cong \bZ/3[p_1,...,p_4]\]
for the Pontryagin classes  $p_i$ [Tod1].  So $H^*(G''/T'')/3\cong
S(t)/(3,p_1,...,p_4)$.  By using the induced map from $G''\subset G$,  we can see $b_i=p_i$ in $CH^*(X)/3$.  
Hence
\[ CH^*(X)/3\cong S(t)/(3,p_ip_j|0\le i,j\le 4).\]
%Moreover
%by Toda [Tod 1], it is known
%\[ Im(H^*(BG;\bZ/3)\to H^*(BT;\bZ/3))\cong H^*(BT;\bZ/3)^W\]
%\[ \cong \bZ/3[ p_1,p_2,\bar p_5,\bar p_9,\bar p_{12}]
%/(\bar r_{12}) \]
%for some $\bar p_i$ and $\bar r_{12}$, e.g.
%$\bar p_5=p_3(p_2-p_1^2)+p_4p_1$.  However it is known
%\[p_1,p_2,\bar p_5\not\in Im(i_{CH}^*:CH^*(BG)/3\to CH^*(BT)/3).\]
%So we see that the subalgebra $A$ of $S(t)/3$ generated by
%$p_ip_j$ for $0\le i,j\le 4$ contains $Im(i_{CH}^*)$.
%However note $\bar p_5\in A$ but $\bar p_5\not \in
%Im(i_{CH}^*)$.

Let $G'$ be of type $(I)$.  Then it is well known ([Mi-Tod])
that
there is a natural  embedding 
$i:G\subset G'$ where $(G,p)=(G_2,2), (F_4,3)$ or $(E_8,5)$
such that $i^*:H^*(G';\bZ/p)\to H^*(G;\bZ/p)$ is surjetive.
Moreover the polynomial rings  $P(y)$ and $P(y)'$
 are isomorphic by 
this map $i^*$.  This means $CH^*(\bar R(\bG_k)))\cong
CH^*(\bar R(\bG_k'))$.  This fact implies 
\[CH^*(R(\bG_k))\cong CH^*(R(\bG_k'))\]
 by  a theorem by Vishik and Zainoulline (Corollary 6 in [Vi-Za]).  Thus we have \begin{cor}  
Let $G'$ be of type $(I)$.  Then 
there are isomorphisms \[CH^*(R(\bG_k'))/p  \cong 
\bZ/p\{1,b_1,...,b_{2p-2}\},
\] \[CH^*(X')/p\cong S(t)/(p,b_ib_j,b_k|1\le i,j\le 2p-2,\ 
2p-1\le k\le \ell).\]
\end{cor}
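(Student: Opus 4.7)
The plan is to deduce both isomorphisms from the machinery of $\S 4$--$\S 9$. The first isomorphism is essentially already present in the paragraph preceding the corollary: the embedding $i\colon G\subset G'$ gives $P(y)\cong P(y)'$, hence $CH^*(\bar R(\bG_k))\cong CH^*(\bar R(\bG_k'))$, and Corollary~6 of [Vi-Za] upgrades this to $CH^*(R(\bG_k'))\cong CH^*(R(\bG_k))$. Combined with Corollary~9.3, this yields $CH^*(R(\bG_k'))/p\cong \bZ/p\{1,b_1,\dots,b_{2p-2}\}$; here the $b_i$ for $1\le i\le 2p-2$ are the transgression images of those odd generators $x_i$ of $H^*(G';\bZ/p)$ whose Milnor operations $Q_j(x_i)$ produce the nonzero powers $y,y^2,\dots,y^{p-1}$ of the single polynomial generator $y$ of $P(y)'$.

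For the second isomorphism, Karpenko (Theorem~4.3) provides a surjection $S(t)/p \twoheadrightarrow CH^*(X')/p$, so it suffices to establish two vanishing statements in $CH^*(X')/p$ and then to match ranks. For $b_ib_j=0$ with $1\le i,j\le 2p-2$ I would argue exactly as in Corollary~9.3: since $R(\bG_k')\cong R_2$ is the original Rost motive, $res_\Omega\colon \Omega^*(X')\to \Omega^*(\bar X')$ is injective (Theorem~4.6 handles the Rost summand, the Tate summands trivially). By Lemma~9.2 each $b_r$ with $1\le r\le 2p-2$ represents $py^m$ or $v_1y^m$ in $BP^*(G'/T')/\II$ for some $1\le m\le p-1$, so any product $b_ib_j$ is divisible by $p^2$, $pv_1$, or $v_1^2$; modulo $p$ and $BP^{<0}\cdot\Omega^*(X')$ each of these vanishes, yielding $b_ib_j=0$ in $CH^*(X')/p$.

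For $b_k=0$ with $2p-1\le k\le\ell$ the argument is a degree comparison. The basis $\{1,b_1,\dots,b_{2p-2}\}$ of $CH^*(R(\bG_k'))/p\cong CH^*(R_2)/p$ occupies precisely the degrees $\{0\}\cup\{2ib_2-2(p^j-1):0\le j\le 1,\ 1\le i\le p-1\}$ of the generators $1,c_j(y^i)$ from Theorem~4.6, while the transgression image $b_k$ of an ``extra'' odd generator $x_k$ of a type~$(I)$ group has degree $|x_k|+1$ which does not appear in that list. Hence $pr(b_k)=0$ in $CH^*(R(\bG_k'))/p$, and the contrapositive of Corollary~5.3 forces $b_k=0$ in $CH^*(X')/p$. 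This produces the desired surjection
\[S(t)/(p,b_ib_j,b_k|1\le i,j\le 2p-2,\ 2p-1\le k\le\ell)\twoheadrightarrow CH^*(X')/p.\]
Filtering the left side by the $b$-filtration and comparing with the Petrov--Semenov--Zainoulline decomposition (Theorem~4.2, with $P'(y)=\bZ/p$ since $\bG_k'$ is versal) shows both sides are additively isomorphic to $\bZ/p\{1,b_1,\dots,b_{2p-2}\}\otimes S(t)/(b_1,\dots,b_\ell)$, so the surjection is an isomorphism.

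The main obstacle is the degree-avoidance claim in the third paragraph: one must verify that the extra odd generators $x_k$ of a type~$(I)$ group $G'$ carry degrees distinct from those forced by the Rost motive $R_2$. This is a structural assertion about $H^*(G';\bZ/p)$ for simply-connected exceptional groups of type~$(I)$ (for instance, for $E_6$ at $p=3$ the extra generators $z_9,z_{17}$ give $|b_5|=10,\ |b_6|=18$, disjoint from $\{4,8,12,16\}$). The claim is vacuous for the three cases $(G_2,2),(F_4,3),(E_8,5)$ of Theorem~9.4 because there $\ell=2p-2$, and for the remaining type~$(I)$ cases it can be read off from the classical computations of Mimura--Toda and Nishimoto.
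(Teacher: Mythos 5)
Your first isomorphism, the relations $b_ib_j=0$, and the final rank count follow essentially the paper's route (Vishik--Zainoulline plus Corollary 9.3; $res_\Omega$-injectivity from $R(\bG_k')\cong R_2$; the additive decomposition of Theorem 4.2), with the minor caveat that Lemma 9.2 is stated only for $(G_2,2),(F_4,3),(E_8,5)$, so for $G'$ you need its analogue for the first $2p-2$ generators, which comes from Corollary 3.2 and the known $Q_j$-action on $H^*(G';\bZ/p)$. The genuine gap is in your third paragraph, and it is not the degree-avoidance you single out as the main obstacle (that part is true and easy to check). The gap is the inference ``$pr(b_k)=0$ in $CH^*(R(\bG_k'))/p$, hence by the contrapositive of Corollary 5.3, $b_k=0$ in $CH^*(X')/p$.'' Whether $b_k$ vanishes in $CH^*(X')/p$ depends on the choice of the transgression representative $b_k\in S(t)$, which is well defined only modulo the ideal generated by the lower $b_j$; and $pr(b_k)=0$ says only (Corollary 5.2) that $b_k=\sum b'u'$ in $CH^*(X')/p$ with $|u'|>0$, and such elements are in general nonzero there (e.g. $b_1u'\neq 0$ whenever $u'\neq 0$ in $S(t)/(p,b)$, by the proof of Lemma 5.1). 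Concretely, if $b_k$ is a good choice with $b_k=0$ in $CH^*(X')/p$, then $b_k+b_1t$ is an equally valid transgression representative of the same (avoided) degree, so your argument would apply to it verbatim and ``prove'' that it vanishes, yet it equals $b_1t\neq 0$ in $CH^*(X')/p$. So Corollary 5.3 cannot be reversed in this way --- the paper only ever uses it in the forward direction, after nonvanishing in $CH^*(X)/p$ has been established via $\Omega^*$ --- and your degree computation establishes only the hypothesis $b_k\in Ker(pr)$, not the conclusion; indeed the paper warns after Theorem 1.2 that the behaviour of these $b_i$ ``is not a trivial fact.''

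What is missing is precisely the content of the paper's proof: one must re-choose $b_k$. Since $b_k=0$ in $H^*(G'/T')/p$, one writes $b_k=\sum py^it(i)+\sum v_1y^it(i)'$ in $BP^*(G'/T')/\II$ with $t(i),t(i)'\in BP^*\otimes S(t)$, and then, using the expressions $b_{2i}\equiv py^i$ and $b_{2i-1}\equiv v_1y^i$ modulo lower terms (Lemma 9.2 and its analogue for $G'$), replaces $b_k$ by $b_k-\sum b_{2i}t(i)-\sum b_{2i-1}t(i)'$. This new representative differs from the old one by an element of the ideal $(b_1,\dots,b_{2p-2})$, so it is still an admissible choice, and it is $\equiv 0$ in $BP^*(G'/T')/\II$, hence zero in $CH^*(X')/p$ by the injectivity of $res_\Omega$. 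With this modification your surjection and rank comparison go through; without it the presentation $S(t)/(p,b_ib_j,b_k)$ is not established.
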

\begin{proof}
We only need to show that we can take $b_k$ such that
$b_k=0\in CH^*(X')/p$.
Since $b_k=0$ in $BP^*(G/T)/I_{\infty}\cong H^*(G/T)/p$,
we can write
\[ b_k=\sum py^it(i)+\sum v_1y^it(i)'\quad in\ BP^*(G/T)/\II\]
where $t(i),t(i)'\in BP^*\otimes S(t)$.   Take
new $b_k$ by $b_k-\sum b_{2i}t(i)-\sum b_{2i-1}t(i)'$.
Then $b_k=0$ in $BP^*(G/T)/\II$.
\end{proof} 

{\bf Example.}
Recall the case $(G',p)=(Spin(7),2)$ and $(G,p)=(G_2,2)$.  Then we can take
$b_1=c_2'$, $b_2=c_3'$, and $b_3=c_1^4$, in fact
\[ CH^*(X')/2\cong S(t)/((c_2')^2,c_2'c_3',(c_3')^2,c_1^4),\quad
  CH^*(X)/2\cong  CH^*(X')/(c_1).\]

\section{The case $G=E_8$ and $p=3$}

In this section, we study in the case $(G,p)=(E_8,p=3)$.
  The cohomology $H^*(E_8;\bZ/3)$ is isomorphic to
([Mi-Tod]) 
\[ \bZ/3[y_{8},y_{20}]/(y_8^3,y_{20}^3)\otimes
\Lambda(z_3,z_7,z_{15},z_{19},z_{27},z_{35},z_{39},z_{47}).\]
Here the suffix means its degree, e.g., $|z_i|=i$.
By Kono-Mimura [Ko-Mi] the actions of cohomology 
operations
are also known
\begin{thm} ([Ko-Mi])
We have $P^3y_8=y_{20}$, and 
\[ \beta: z_7\mapsto y_8,\ \ z_{15}\mapsto y_8^2,\ \ 
z_{19}\mapsto y_{20},\ \
 z_{27}\mapsto y_{8}y_{20},\ \ z_{35}\mapsto y_{8}^2y_{20},\ \ 
z_{39}\mapsto y_{20}^2,\ \ z_{47}\mapsto y_8y_{20}^2\] 
\[ P^1:\ z_3\mapsto z_7,\ \  z_{15}\mapsto z_{19},\ \  z_{35}
\mapsto z_{39}\ \ P^3:\ z_7\mapsto z_{19},\ \  z_{15}\mapsto z_{27}
\mapsto -z_{39},\ \
 z_{35}\mapsto z_{47}.\]
\end{thm}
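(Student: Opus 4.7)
The plan is to derive these Steenrod operation formulas by combining three standard tools: the Borel transgression theorem applied to the fibering $E_8 \to E_8/T \to BT$, restriction along a maximal-rank subgroup whose mod $3$ cohomology is already understood, and Adem relations together with the Cartan formula. Throughout, I will use that $H^{*}(E_8;\bZ/3)$ has the Borel-type form recalled in the statement, so the generators and the derivation property of $\beta, P^i$ pin down most relations once a few are known.

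First I would establish the Bockstein actions. The odd-degree generators $z_{2i+1}$ arise (as explained in \S 2--3) from transgressions in the fibering $E_8 \to E_8/T \to BT$, and the $Q_0 = \beta$ image of each $z_{2i+1}$ is exactly the even-degree class that kills the transgression indeterminacy mod $3$. Using Corollary 3.2 and the structure of $H^{*}(E_8;\bZ/3)$ as a truncated polynomial ring tensored with an exterior algebra on generators of the appropriate degrees, each $\beta z$ is forced to be a monomial of degree $|z|+1$ in $y_8, y_{20}$; by degree counting there is essentially a unique candidate in each case, giving $\beta z_7 = y_8$, $\beta z_{15} = y_8^2$, $\beta z_{19} = y_{20}$, and so on. The relations $\beta z_{27} = y_8 y_{20}$, $\beta z_{35} = y_8^2 y_{20}$, $\beta z_{39} = y_{20}^2$, $\beta z_{47} = y_8 y_{20}^2$ then follow from the same degree argument (and can be cross-checked against the Bockstein spectral sequence, whose $E_2$-term is forced to be trivial since $H^*(E_8;\bZ_{(3)})$ has only $3$-torsion of order exactly $3$).

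Next I would compute the reduced power operations. The operation $P^1 z_3 = z_7$ is the classical relation $P^1 = \beta Q_1 - Q_1 \beta$ applied to the $3$-primary generator of $H^3$ of a simply connected simple Lie group (equivalently, it comes from restricting to $SU(3) \subset E_8$, where the formula is standard). For the crucial relation $P^3 y_8 = y_{20}$, I would use the restriction map to a maximal-rank subgroup of $E_8$ of rank $8$ containing $3$-torsion—for example the centralizer of an element of order $3$, which is isomorphic up to isogeny to $(E_6 \times SU(3))/\bZ/3$—whose mod $3$ cohomology and Steenrod action are fully known. Naturality of $P^3$ and the fact that the restriction map is injective on the relevant degrees (since the generator $y_8$ detects the $E_6$ or $SU(3)$ factor) give $P^3 y_8 = y_{20}$ up to a nonzero scalar; rescaling the generator fixes the sign. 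The remaining formulas $P^1 z_{15} = z_{19}$, $P^1 z_{35} = z_{39}$, $P^3 z_7 = z_{19}$, $P^3 z_{15} = z_{27}$, $P^3 z_{27} = -z_{39}$, $P^3 z_{35} = z_{47}$ then follow from commuting $P^i$ past $\beta$ using $\beta P^i - P^i \beta$ relations and the already-established Bockstein formulas, together with the Adem relation $P^1 P^3 = P^3 P^1$ (valid mod $3$ in these low degrees) and the Cartan formula applied to $\beta z \cdot y$ type products.

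The main obstacle is the nontriviality of $P^3 y_8 = y_{20}$: there is no purely algebraic derivation forcing $y_{20}$ (a new polynomial generator) to appear as the image of $P^3$, so one genuinely needs geometric input. Everything else is either a degree-counting identification of $\beta$-images or a consequence of the Steenrod algebra relations once this single key operation is pinned down, so the bulk of the work lies in choosing a convenient subgroup embedding and verifying that the restriction map is injective on $y_{20}$.
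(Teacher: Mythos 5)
This statement is not proved in the paper at all: it is quoted verbatim from Kono--Mimura [Ko-Mi] as external input (the paper only uses it later, e.g.\ in Corollary 10.5), so there is no internal argument to compare with, and your sketch would have to stand on its own as a proof of the Kono--Mimura computation. As it stands it does not. The crux, as you yourself note, is $P^3y_8=y_{20}$, and the detection argument you propose is not substantiated: you would need the mod $3$ cohomology of a rank-$8$ subgroup such as $(E_6\times SU(3))/(\bZ/3)$ or $SU(9)/(\bZ/3)$ \emph{together with its Steenrod action}, plus injectivity of the restriction in degree $20$. Neither is standard or obvious; $H^*(E_6;\bZ/3)$ has no generator in degree $20$ (its truncated polynomial part is $\bZ/3[y_8]/(y_8^3)$ only), so any detection of $y_{20}$ must come from the classes created by the central $\bZ/3$-quotient, and you give no argument that $y_{20}$ restricts nontrivially there, let alone that the relation $P^3y_8=y_{20}$ is visible. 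Saying ``$y_8$ detects the $E_6$ or $SU(3)$ factor'' addresses degree $8$, not degree $20$, which is exactly where the work lies.

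The surrounding ``formal'' steps are also looser than you claim. The Bockstein images are not forced by degree counting alone: one needs the integral torsion information and compatibility with the Hopf algebra structure, and the targets $y_8y_{20},\ y_8^2y_{20},\ y_8y_{20}^2$ are \emph{not} primitive, so the corresponding $z$'s cannot all be chosen primitive and the normalization of generators matters (this is precisely the bookkeeping Kono--Mimura carry out). Likewise ``the Adem relation $P^1P^3=P^3P^1$'' is not an Adem relation: at $p=3$ the Adem relation gives $P^1P^3=P^4$, while $P^3P^1$ is already admissible, so commuting them requires an argument on the specific classes. Finally, pushing the remaining formulas through $\beta$-compatibility leaves genuine indeterminacy (e.g.\ $\beta(P^3z_7-z_{19})=0$ only gives $P^3z_7=z_{19}+\lambda y_8^2z_3$), which must be killed by primitivity or further relations. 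So either prove these points in detail or, as the paper does, simply cite [Ko-Mi] for the whole theorem.
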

We use notations $y=y_8,y'=y_{20}$, and $x_1=z_3,...,x_8=z_{47}$.
Then we can rewrite the isomorphisms 
\[ H^*(G;\bZ/3)\cong \bZ/3[y,y']/(y^3,(y')^3)\otimes 
\Lambda(x_1,...,x_{8}).\]
\[ grH^*(G/T;\bZ/3)\cong \bZ/3[y,y']/(y^3,(y')^3)\otimes 
S(t)/( b_{1}, ,...,b_{8}).\]
From Lemma 3.4, we have 
\begin{cor} We can take $b_1\in BP^*(BT)$ such that 
 \[v_1y+v_2y'=b_1\quad in\  BP^*(G/T)/\II.\]
\end{cor}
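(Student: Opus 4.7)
The plan is to invoke Lemma 3.4 directly for the pair $(G,p)=(E_8,3)$. From Theorem 10.1, the polynomial part of $H^*(E_8;\bZ/3)$ is
\[P(y)=\bZ/3[y,y']/(y^3,(y')^3),\]
so in the notation of Section 3 we have $m=2$ with $y_1=y$ (of degree $8$) and $y_2=y'$ (of degree $20$). The Kono--Mimura formula $P^3 y = y'$ appearing in Theorem 10.1 is precisely the hypothesis $P^{p}(y_1)=y_2$ required in condition $(*)$ of Section 3, so that hypothesis is automatic here (and $y'$ is the final generator of $P(y)$, so the chain terminates at $m=2$).

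Next, I would choose a primitive lift of $y_1$ in $BP^*(G/T)$ (as remarked before Lemma 3.4, primitivity can be arranged by adjusting the representative) and then define $y_2 := r_{3\Delta_1}(y_1)\in BP^*(G/T)$ using the Quillen operation. By property (1) of Quillen operations from Section 3, this reduces modulo $p$ to $\chi P^3(y)$, which coincides with $\pm y'$ up to elements coming from $S(t)$ that do not affect the argument. Thus the algebraically defined $y_2$ matches (up to sign) the topological generator $y'$ of Kono--Mimura.

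With these identifications in hand, Lemma 3.4 applied verbatim gives
\[v_1 y_1 + v_2 y_2 \;=\; b_1 \pmod{\II}\]
in $BP^*(G/T)$, which is exactly the claimed relation $v_1 y + v_2 y' = b_1$ in $BP^*(G/T)/\II$ (after absorbing the sign into the choice of $y'$, which is only defined up to a unit of $\bZ/3$).

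The only non-routine point is the compatibility in the second step: verifying that the Quillen-operation output $r_{3\Delta_1}(y_1)$ really represents the Kono--Mimura class $y_{20}$ rather than some other lift. This is a standard Thom-map comparison between Milnor's $Q$-operations and the Landweber--Novikov operations $r_\alpha$, and given the explicit Steenrod data in Theorem 10.1 no further topological input is required beyond what Section 3 already establishes.
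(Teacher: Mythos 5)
Your proposal is correct and is essentially the paper's own argument: Corollary 10.2 is obtained there by directly invoking Lemma 3.4 for $(G,p)=(E_8,3)$, with the hypothesis $(*)$ supplied by the Kono--Mimura relation $P^3y=y'$ of Theorem 10.1 and with $y_2=r_{3\Delta_1}(y_1)$ identified (up to a unit and terms from $S(t)$) with $y'$. Your extra care about primitivity and the $\chi P^3$ versus $P^3$ discrepancy only makes explicit what the paper leaves implicit.
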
 

From the preceding theorem, we know that
all $y^i(y')^j$ except for $(i,j)=(2,2)$ are $\beta$-image.
Hence we have 
\begin{cor} 
For all nonzero monomials  $u\in P(y)/3$
except for $(yy')^2$,  it holds  $3u\in S(t)$.
That is, for $2\le k=i+3j+1\le 8$, $0\le i\le 2$, we can take  
\[ b_k=b_{i+3j+1}=3y^i(y')^j\quad in\ H^*(G/T)/(3^2).\]
   \end{cor}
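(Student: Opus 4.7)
The plan is to extract the Bockstein data from Theorem 10.1 and feed it into Corollary 3.2, in the integer-coefficient version provided by the Remark after Lemma 3.1. For each $k \in \{2,\dots,8\}$, Theorem 10.1 gives
\[ Q_0(x_k) \;=\; \beta(x_k) \;=\; y^a(y')^b \quad \text{with } k = a + 3b + 1. \]
Explicitly, $\beta z_7 = y$, $\beta z_{15} = y^2$, $\beta z_{19} = y'$, $\beta z_{27} = yy'$, $\beta z_{35} = y^2 y'$, $\beta z_{39} = (y')^2$, $\beta z_{47} = y(y')^2$ enumerate precisely the nonzero monomials of $\bZ/3[y,y']/(y^3,(y')^3)$ other than $1$ and $(yy')^2$, in agreement with the excluded case of the statement.

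Next I would apply Corollary 3.2 to the transgression $d_{|x_k|+1}(x_k) = \bar b_k$, obtaining
\[ b_k \;=\; 3\,y(0) \,+\, v_1\,y(1) \,+\, v_2\,y(2) \quad \text{in } BP^*(G/T)/\II, \]
where $y(0) \in H^*(G/T;\bZ/3)$ satisfies $\pi^*(y(0)) = \beta(x_k) = y^a(y')^b$. Pushing this relation forward along the Thom map $BP^*(G/T) \to H^*(G/T)_{(3)}$, which sends $v_n$ to $0$ for $n \ge 1$ while preserving $p = 3$, annihilates the $v_1$- and $v_2$-terms and leaves
\[ b_k \;\equiv\; 3\,\tilde{y}(0) \pmod{9} \quad \text{in } H^*(G/T)_{(3)}, \]
for any integral lift $\tilde{y}(0)$ of $y(0)$. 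By $(2.2)$, the monomial $y^a(y')^b$ is itself an integer class in $H^*(G/T)_{(3)}$ whose mod-$3$ reduction pulls back under $\pi^*$ to $y^a(y')^b = \pi^*(y(0))$, so $\tilde{y}(0) = y^a(y')^b$ is a permissible choice, yielding $b_k \equiv 3\,y^a(y')^b \pmod 9$.

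The step requiring the most care is the lift ambiguity: $y(0)$ is determined only modulo $\ker \pi^* = S(t)^+ \cdot H^*(G/T;\bZ/3)$, and different choices alter $3\tilde{y}(0)$ by elements of $3 \cdot S(t)^+ \cdot H^*(G/T)$. These corrections can, however, be absorbed by the corresponding freedom in choosing the integral lift $b_k \in S(t)$ of the fixed mod-$3$ class $\bar b_k$, which may be modified by any multiple of $3$ in $S(t)$. Since $H^*(G/T)$ is torsion-free and $\ker \pi^*$ is as described in $(2.2)$, this bookkeeping causes no real difficulty, and the identity $b_k = 3 y^a(y')^b$ in $H^*(G/T)/9$ follows as claimed.
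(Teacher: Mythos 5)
Your argument is correct and follows the paper's own route: Corollary 10.3 is obtained there exactly as you do it, by reading off from Theorem 10.1 that every monomial of $P(y)/3$ other than $1$ and $(yy')^2$ is a Bockstein image, and then feeding this into the integral ($v_0=p$) form of Lemma 3.1/Corollary 3.2. The one imprecision is in your final absorption step: the correction terms $3\,y^c(y')^d\,t$ with $(c,d)\neq(0,0)$ and $t\in S(t)^+$ are not multiples of $3$ in $S(t)$; they are absorbed by modifying $b_k$ within the ideal generated by the lower-degree $b_{k'}$ (a legitimate change, since transgression images are defined only modulo the earlier differentials), using inductively that $3y^c(y')^d\equiv b_{k'} \pmod{9}$ --- precisely the induction carried out in the paper's proof of Lemma 9.2.
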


\begin{lemma} Let $(G,p)=(E_8,3)$ and $X=\bG_k/T_k$.
In $BP^*(X)$, there are $b_i\in S(t)$ such that
 $b_i\not =0\in CH^*(X)/3$ and in $BP^*(G/T)/\II$
\[ b_k=b_{i+3j+1}=\begin{cases} v_1y+v_2y'\qquad if\   k=1\\
                             3y^i(y')^{j}\qquad if \ 0\le i\le 1,\ 2\le k\\
                     3y^2(y')^{j}+v_1(y')^{j+1} \qquad if\ i=2.
\end{cases}\]
\end{lemma}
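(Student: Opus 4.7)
The plan is to obtain each $b_k$ as an $S(t)$-lift of the mod-$3$ transgression image of $x_k$ in the Serre spectral sequence of $G\to G/T\to BT$, and then to identify it in $BP^*(G/T)/\II$ via Corollary 3.2, which gives
\[ b_k \;=\; p\,y(0) + v_1\,y(1) + v_2\,y(2) + \cdots \quad\text{in } BP^*(G/T)/\II,\]
with $\pi^*(y(n))=Q_n(x_k)$. The computation therefore reduces to computing the Milnor operations $Q_n(x_k)$ using Theorem 10.1 together with $Q_1=P^1\beta-\beta P^1$ and $Q_2=P^3 Q_1-Q_1 P^3$.

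Ordering the odd-degree generators by degree as $x_1=z_3,x_2=z_7,x_3=z_{15},x_4=z_{19},x_5=z_{27},x_6=z_{35},x_7=z_{39},x_8=z_{47}$, the Bockstein action of Theorem 10.1 immediately yields $Q_0(x_k)=\beta(x_k)=y^i(y')^j$ with $k=i+3j+1$ for $k\ge 2$ (and $Q_0(x_1)=0$). The case $k=1$ is exactly Corollary 10.2. For $k\ge 2$ and $n\ge 1$ I verify that $Q_n(x_k)$ either vanishes or contributes exactly the $v_1$-term of the lemma. The essential inputs are $P^1(y)=P^1(y')=0$, which hold by degree because $H^{12}$ and $H^{24}$ of $H^*(E_8;\bZ/3)$ are both zero; with these, a Leibniz and dimension-count argument kills most $Q_n(x_k)$. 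The only surviving higher contributions are $Q_1(x_3)$ and $Q_1(x_6)$: from $P^1(z_{15})=z_{19}$ and $\beta(z_{19})=y'$ one obtains $Q_1(x_3)=-y'$, and from $P^1(z_{35})=z_{39}$ and $\beta(z_{39})=(y')^2$ one obtains $Q_1(x_6)=-(y')^2$. Substituting these data into the expansion and choosing an $S(t)$-representative $b_k$ (free modulo $\II\cdot S(t)$ and modulo earlier transgressions) gives the three cases of the lemma, after absorbing signs into a rescaling of $y,y'$.

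The main obstacle is the nonvanishing of each $b_k$ in $CH^*(X)/3$. For this I would use the projection $CH^*(X)/3\twoheadrightarrow CH^*(R(\bG_k))/3$ of Lemma 5.1 together with Corollary 5.3. Since $\bG_k$ is versal, Corollary 4.5 gives $J_3(\bG_k)=(1,1)$, so no nonzero monomial $y^i(y')^j\in P(y)/3$ lies in $\Img(res_{CH})$, and hence no lift of such a monomial lies in $\Img(res_\Omega)$. Consequently each model class $3\,y^i(y')^j$ (for the pairs $(i,j)$ appearing in the lemma, all satisfying $(i,j)\ne(2,2)$) and each class $v_1(y')^{j+1}$ represents a nonzero element of the quotient $BP^*(G/T)/(\II\cdot \Img(res_\Omega))$; this forces $b_k\ne 0\in CH^*(X)/3$ after descent through the Rost-motive projection.
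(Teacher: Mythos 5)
Your route (compute $Q_n(x_k)$ for every generator from Theorem 10.1 and feed them into Corollary 3.2) is genuinely different from the paper's, which starts from $b_1=v_1y+v_2y'$ (Lemma 3.4, Corollary 10.2) and produces the remaining formulas by acting with Quillen operations $r_{3\Delta_1},r_{6\Delta_1},\dots$, controlling the correction terms by primitivity in $k(n)^*(G/T)$ and by the relation $b_1$. As written, however, your argument has a real gap at exactly the point the paper works hardest. Corollary 3.2 only supplies lifts $y(i)$ with $\pi^*y(i)=Q_i(x_k)$; such a lift is determined only modulo $\ker\pi^*$, i.e.\ modulo terms $y^a(y')^b\,t$ with $t\in S(t)^+$. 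The lemma claims the identities on the nose for the specific $y,y'$ of Corollary 10.2 ($y$ primitive, $y'=r_{3\Delta_1}(y)$), e.g.\ $b_3=3y^2+v_1y'$ with no $v_2$-term and no decomposable corrections; eliminating those corrections is precisely the content of the paper's proof (``we can take $a_1=y'$ by using the relation $b_1$'', and $a_2=0$ because $v_2a_2$ would violate primitivity in $k(2)^*(G/T)$). Note that absorbing a correction such as $3y^2t$ into $b_5$ via $b_3t$ reintroduces a term $v_1y't$, so your phrase ``choosing an $S(t)$-representative free modulo earlier transgressions'' does not settle the bookkeeping.

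Two further points do not follow from the inputs you cite. The vanishing $Q_2(z_{19})=-Q_1P^3(z_{19})$ and $Q_2(z_{39})=-Q_1P^3(z_{39})$ is not a dimension count: these land in degrees $36$ and $56$, where $y^2y'\neq0$ and $y^2(y')^2\neq0$, and Theorem 10.1 does not give $P^3(z_{19})$ or $P^3(z_{39})$; a nonzero value would put a $v_2$-term into $b_4$ or $b_7$, so you must argue it (e.g.\ via primitivity of $z_{19}=P^3P^1z_3$ and the absence of primitives of $P(y)/3$ in those degrees, or by quoting the full Kono--Mimura tables). Finally, the nonvanishing of $b_k$ in $CH^*(X)/3$ is asserted rather than proved: what is needed is $res_{\Omega}(b_k)\notin I_{\infty}\cdot \Img(res_{\Omega})$ (note $I_{\infty}$, not $\II$), and the step from ``no monomial $y^i(y')^j$ lies in $\Img(res_{CH})$'' to excluding expressions $3r+v_1r'+v_2r''$ with $r,r',r''\in \Img(res_{\Omega})$ requires an argument; the paper ultimately obtains these statements from the torsion index $t(E_8)_{(3)}=3^2$ (Corollary 10.5, Theorem 10.6, together with Corollary 5.6), which your proposal never invokes.
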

\begin{proof}
Acting $r_{\Delta_1}$ on the equation $v_1y+v_2y'=b_1$
in $BP^*(X)/\II$, we have
\[ 3y+v_1r_{\Delta_1}(y)+v_2r_{\Delta_1}(y')=r_{\Delta_1}(b_1).\]
Note  $P^1(y),P^1(y')\in S(t)/3$ in $H^*(G/T;\bZ/3)$ since they are
primitive. 
Hence $v_1r_{\Delta_1}(y),v_2r_{\Delta_1}(y')\in BP^*\otimes S(t)$ $mod(\II)$.
So we have $3y=b_2$ in $BP^*(G/T)/\II$.
 Acting 
$r_{3\Delta_1}$ on the equation $3y=b_2 \in BP^*(X)/\II$,
we have $3y'=r_{3\Delta_1}(b_2)$, which is written by $b_3$.

Next we study the element $3y^2$ in $BP^*(X)/\II$.
Since $3y^2=b_3$ in $H^*(X)/(9)$, we have
\[  3y^2+v_1(a_1)
+v_2(a_2)=b_3\quad in\ BP^*(X)/\II.\]
Since $Q_1(x_3)=y'$, we can take  $a_1=y'$ by using the relation
$v_1y+v_2y'=b_1$. ( For example, when $a_1=y'+yb$, we use $v_1yb=-v_2y'b$.)  Since $v_2a_2$ is primitive in $k(2)^*(G/T)/(\II)$
(Recall the proof of Lemma 3.4), we can take $a_2=0$.
Otherwise if $a_2=\sum y^i(y')^jb$, for $i=1,2$, then 
\[v_2y^i\otimes (y')^jb\not =0\in k(2)^*(G)\otimes k(2)^*(G/T).\]
 Hence  we get
$ 3y^2+v_1y'=b_3\quad in\ BP^*(X)/\II.$

Acting $r_{3\Delta_1}$ and $r_{6\Delta_1}$ on the above
equation, we have the formulas for $yy'$ and $(y')^2$.
Here we used $r_{n\Delta_1}(y')\in BP^*(BT)/(\II)$.
since it is primitive.
Similar arguments work for the element $y^2y'$,
and we can see the formula for $y(y')^2$.
\end{proof}

\begin{cor}
The torsion index $t(E_8)_{(3)}=3^2$.
\end{cor}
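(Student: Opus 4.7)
The plan is to establish the upper and lower bounds $t(E_8)_{(3)} \le 9$ and $t(E_8)_{(3)} \ge 9$ separately, exploiting the explicit formulas of Lemma 10.4.

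For the upper bound, I would consider the product $b_3 b_7 \in S(t)$. From Lemma 10.4, $\mu(b_3) \equiv 3y^2 \pmod 9$ and $\mu(b_7) \equiv 3(y')^2 \pmod 9$ in $H^*(G/T;\bZ_{(3)})$, so writing $\mu(b_3 b_7) = 9\, y_{top} + 27\, \delta$ with $\delta \in H^{56}(G/T;\bZ_{(3)})$, I would multiply by $t_{top}$. Any basis element $y^a(y')^c t^\beta$ appearing in $\delta$ with $(a,c) \neq (2,2)$ has $|t^\beta| > 0$, so $t^\beta t_{top}$ exceeds the top degree $184$ of $S(t)/(b)$ and vanishes in $H^*(G/T)$. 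Only the $y_{top}$-coefficient $\lambda_0$ of $\delta$ survives, giving $\mu(b_3 b_7 \cdot t_{top}) = 9(1 + 3\lambda_0)\, y_{top} t_{top}$. Since $1 + 3\lambda_0$ is a unit in $\bZ_{(3)}$, the image of $\mu\colon S(t) \to H^*(G/T;\bZ_{(3)})$ at top degree contains $9\bZ_{(3)}$, whence $t(E_8)_{(3)} \le 9$ by the argument of Lemma 5.4.

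For the lower bound, suppose for contradiction $t(E_8)_{(3)} \le 3$, so $\mu(\tilde c) = 3\, y_{top} t_{top}$ for some $\tilde c \in S(t)_{240}$. Since $\mu(\tilde c) \equiv 0 \pmod 3$ and the image of $\mu \bmod 3$ is the pure-$t$ subring $S(t)/(b_1, \ldots, b_8) \subset H^*(G/T;\bZ/3) \cong P(y)/3 \otimes S(t)/(b)$, its kernel equals $(b_1, \ldots, b_8) \cdot S(t)/3$, so $\tilde c = \sum_{i=1}^{8} B_i b_i + 3 \tilde c'$ for some $B_i, \tilde c' \in S(t)$. Applying $\mu$ using $\mu(b_1) \in 9\, H^*(G/T)$ and $\mu(b_i) = 3\, y^{a_i}(y')^{c_i} + 9 \beta_i$ for $i \ge 2$ (where $(a_i, c_i) \in \{(1,0),(2,0),(0,1),(1,1),(2,1),(0,2),(1,2)\}$), then dividing by $3$ using torsion-freeness of $H^*(G/T;\bZ_{(3)})$, one obtains
\[
y_{top} t_{top} \;=\; \sum_{i=2}^{8} \mu(B_i)\, y^{a_i}(y')^{c_i} \;+\; 3 \sum_{i=1}^{8} \mu(B_i) \beta_i \;+\; \mu(\tilde c').
\]
Reducing mod $3$ in $H^{240}(G/T;\bZ/3)$: the term $\mu(\tilde c')$ vanishes because $\mu \bmod 3$ has image concentrated in degrees $\le 184$, and each $\mu(B_i) \bmod 3$ lies in $S(t)/(b)$, i.e., has only the $y^0(y')^0$-component in the basis $\{y^a(y')^c t^\beta\}$. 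Hence $\mu(B_i)\, y^{a_i}(y')^{c_i} \bmod 3$ has its $y$--$y'$-exponents fixed at $(a_i, c_i) \neq (2, 2)$, so the right-hand side has zero coefficient at $y^2(y')^2 t_{top}$, contradicting the left-hand side. Therefore $t(E_8)_{(3)} > 3$, and combined with the upper bound gives $t(E_8)_{(3)} = 9$.

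The subtlest step will be verifying that no reduction via $y^3 = g_1(t)$ or $(y')^3 = g_2(t)$ in $H^*(G/T;\bZ/3)$ creates additional $y^2(y')^2$-contributions on the right; since $a + a_i \le 4$ for $0 \le a, a_i \le 2$, no such reduction reaches the exponent $2$ after passing through $y^3$, so the degree-counting argument on the coefficient of $y^2(y')^2 t_{top}$ goes through cleanly.
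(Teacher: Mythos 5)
Your proof is correct, and while your upper bound coincides with the paper's in substance (the paper takes $\tilde b=b_2b_8=(3y)(3y(y')^2)=3^2y_{top}$ where you take $b_3b_7$; both rest on Corollary 10.3/Lemma 10.4 together with Lemma 5.5 --- note your citation should be to Lemma 5.5, not 5.4 --- and your unit $1+3\lambda_0$ is a correct way to absorb the mod-$27$ correction terms that the paper silently drops), your lower bound takes a genuinely different route. The paper argues topologically: if $3(yy')^2$ came from $S(t)$, the converse direction of Lemma 3.1 (in its $k(0)^*=H^*(-;\bZ)$, $v_0=p$ form) would produce $x\in H^*(E_8;\bZ/3)$ with $Q_0(x)=\beta(x)=(yy')^2$, contradicting the Kono--Mimura table in Theorem 10.1. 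You instead work entirely inside the torsion-free ring $H^*(G/T)_{(3)}$: writing $\tilde c\in (b_1,\dots,b_8)+3S(t)$, using $\mu(b_i)\equiv 3y^{a_i}(y')^{c_i}\pmod 9$ with $(a_i,c_i)\neq(2,2)$, dividing by $3$, and checking the coefficient of $y^2(y')^2t_{top}$ modulo $3$. Both arguments ultimately rest on the same input --- $(yy')^2$ is the unique monomial of $P(y)$ missing from the Bockstein image, which is exactly why every $(a_i,c_i)$ differs from $(2,2)$ --- but yours converts it into an explicit ideal-membership computation rather than an appeal to the spectral-sequence lemma; it is more elementary, and incidentally more careful than the paper's one-line reduction ``$t(E_8)=3$, namely $3y^2(y')^2=b'\in S(t)$,'' since you carry the factor $t_{top}$ honestly throughout. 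A small simplification is available to you at the end: since $|B_i|=240-|b_i|\geq 192>184=|t_{top}|$, each $\mu(B_i)$ is already $\equiv 0\bmod 3$ (the pure-$t$ part of $H^*(G/T;\bZ/3)$ vanishes above degree $184$), so the entire right-hand side dies mod $3$ without any tracking of $y^2(y')^2$-components or worry about reductions via $y^3$.
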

\begin{proof}
The fundamental class (localized at $3$) is given as
$y_{top}t=y^2(y')^2t$ for some $t\in S(t)$.  Since
$b_2b_8=(3y)(3y(y')^2)=3^2y_{top} \in S(t)$, we see $t(E_8)_{(3)}=3$ or $3^2$.

Suppose $t(E_8)=3$, namely, $3y^2(y')^2=b'\in S(t)$.
From lemma 3.1, this implies that there is $x\in H^*(G;\bZ/3)$
such that $Q_0(x)=y^2(y')^2$ and $d_r(x)=b'$. But such $x$ does not exist from Theorem 11.2.
\end{proof}

Recall that
$ A_N=\bZ/3\{b_{i_1}\cdots b_{i_s}| |b_{i_1}|+...+|b_{i_s}|\le N\}.$
From Lemma 5.4, we have the surjection
$A_M\otimes S(t)/(b)\twoheadrightarrow CH^*(X)/3$
 for $M=|(yy')^2|=56.$  
\begin{thm}  Let $ (G,p)=(E_8,3))$ and $\bG_k$is a versal $G_k$-torsor. Then we have surjective maps  
\[ A_{56}\twoheadrightarrow 
CH^*(R(\bG_k))/3
\twoheadrightarrow  
\bZ/3\{1,b_1,...,b_8, b_1b_6,b_1b_8,b_2b_8\},
\]
\end{thm}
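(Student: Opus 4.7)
The first surjection $A_{56}\twoheadrightarrow CH^*(R(\bG_k))/3$ is immediate from Lemma 5.4 applied with $N=|y_{top}|$: since $y_{top}=y^2(y')^2$ with $|y|=8$ and $|y'|=20$, we have $N=2\cdot 8+2\cdot 20=56$.

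For the second surjection, the strategy is to show that in $CH^*(R(\bG_k))/3$ every monomial in the $b_i$'s of length $\ge 3$ vanishes, and every length-two monomial $b_ib_j$ vanishes except for $(i,j)\in\{(1,6),(1,8),(2,8)\}$. Once these vanishings are verified, the $\bZ/3$-linear map from $A_{56}$ to the target sending each listed monomial to itself and every other monomial to $0$ descends through the first surjection and is tautologically surjective. I would carry out the vanishings by expanding each $b_k$ in $BP^*(G/T)/\II$ via Lemma 10.4, multiplying out modulo the truncations $y^3=(y')^3=0$, and reading off each product as a sum of terms $v_sy^a(y')^b$ or $py^a(y')^b$ with $(a,b)\ne(2,2)$. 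Each such term is, by Corollary 3.2 read in reverse, congruent modulo $\II$ to a multiple of some $b_m$ by an element of $S(t)$ of positive degree; summing gives $b_ib_j=\sum b'u'$ with $0\ne u'\in S(t)/(p,b)$, $|u'|>0$, which by Corollary 5.2 is precisely the condition for $b_ib_j\in\Ker(pr)$, where $pr:CH^*(X)/3\twoheadrightarrow CH^*(R(\bG_k))/3$. For length-$\ge 3$ products, the truncations force most terms to vanish already in $BP^*(G/T)$, and the remaining ones go into $\Ker(pr)$ by the same argument.

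The three surviving pairs are precisely those whose $BP^*(G/T)/\II$ expansion reaches the top bidegree $(2,2)$ in $(y,y')$: explicitly $b_2b_8=9\,y^2(y')^2$, $b_1b_8\equiv 3v_1\,y^2(y')^2$, and $b_1b_6\equiv v_1^2\,y(y')^2+3v_2\,y^2(y')^2$ modulo $\II$. By the torsion-index computation $t(E_8)_{(3)}=3^2$ of Corollary 10.5, no $b_m$ has $y_{top}$ as its leading monomial, so these top contributions cannot be absorbed into the $b_m$-relations. The principal obstacle in executing the plan is the case-by-case bookkeeping for the intermediate pair products such as $b_3b_5,b_4b_6,b_3b_6$, where mixed $v_1$-, $v_2$- and $p$-contributions coming from $b_1,b_3,b_6$ appear simultaneously; these require iterated application of Corollary 3.2 and the Quillen-operation techniques used in the proof of Lemma 10.4 to collapse each cross-term into the form $b_m\cdot h$ with $h\in S(t)$ of positive degree before Corollary 5.2 can be invoked.
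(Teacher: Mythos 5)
Your first surjection is fine and is exactly the paper's argument (Lemma 5.4 with $N=|y_{top}|=56$). The problem is the second surjection, where your strategy runs in the wrong logical direction. You propose to show that all monomials in the $b_i$ other than the listed ones vanish in $CH^*(R(\bG_k))/3$ and then to let the map $A_{56}\to \bZ/3\{1,b_1,\dots,b_8,b_1b_6,b_1b_8,b_2b_8\}$ ``descend.'' But descent through $A_{56}\twoheadrightarrow CH^*(R(\bG_k))/3$ requires $\Ker(A_{56}\to CH^*(R(\bG_k))/3)$ to be \emph{contained in} the span of the non-listed monomials, whereas your vanishing statements only give the reverse containment. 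Vanishing of the other monomials would show that the listed classes span $CH^*(R(\bG_k))/3$ (an upper bound); the theorem asserts a quotient map onto a free module of rank $12$, i.e.\ a lower bound, which needs the listed elements to be nonzero and linearly independent there (note in particular that $b_7$ and $b_1b_6$ sit in the same degree, so their independence must be checked, and the nonvanishing of $b_1,\dots,b_8$ themselves, via Lemma 10.4 and Corollary 5.3, must be invoked). Your only nod to this is the closing remark that the top contributions ``cannot be absorbed into the $b_m$-relations,'' which is not an argument. The paper's proof consists precisely of this nonvanishing step: since $t(E_8)_{(3)}=3^2$ and $X$ is versal, $3(yy')^2\notin \Img(res_{CH})$, hence $res_{\Omega}(b_2b_8)=9(yy')^2$, $res_{\Omega}(b_1b_8)=3v_1(yy')^2$, $res_{\Omega}(b_1b_6)=3v_2(yy')^2$ are $BP^*$-module generators of the restriction image, so these products survive in $CH^*(R(\bG_k))/3$.

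A secondary issue: the vanishing claims you plan to verify (every other pair $b_ib_j$ and every length $\ge 3$ monomial dies in $CH^*(R(\bG_k))/3$) are stronger than anything the theorem needs or the paper proves. The paper deliberately stops at surjections and, in Corollary 10.7, only identifies elements such as $b_ib_j-3b_k$ as torsion; it does not assert $b_ib_j=0$ in $CH^*(R(\bG_k))/3$. Your proposed mechanism for these vanishings --- rewrite $b_ib_j=\sum b'u'$ with $|u'|>0$ and quote Corollary 5.2 --- also misreads that corollary: Corollary 5.2 says elements of $\Ker(pr)$ admit such an expression, not that any element admitting such an expression lies in $\Ker(pr)$; to kill a class this way one would rather need to work in $\Omega^*$ and use injectivity of $res_{\Omega}$, which is established for the original Rost motives (type $(I)$) but is not available for the generalized Rost motive of $(E_8,3)$. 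So both halves of your plan for the second map --- the unnecessary vanishings and the missing nonvanishing/independence --- would need to be replaced by the torsion-index argument the paper actually uses.
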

\begin{proof}
Since $t(E_8)_{(3)}=3^2$
and $X$ is a versal flag variety, we see
$3(yy')^2f\not \in res_{CH}$.  It follows $3(yy')^2\not \in
res_{CH}$.  Therefore $9(yy')^2,3v_1(yy')^2, 3v_2(yy')^2$ are $BP^*$-module generators in $Res_{\Omega}$, since 
\[res_{\Omega}\ :\ 
(b_2b_{8})\mapsto 9(yy')^2,\ \  
(b_1{b_8}) \mapsto 3v_1(yy')^2,\ \ 
(b_1b_{6})\mapsto 3v_2(yy')^2,\]
which shows they are nonzero.
\end{proof}
\begin{cor}
Let $Tor_3\subset CH^*(R(\bG_k))_{(3)}$ be the module
of $3$-torsion 
elements.  Then we have the  isomorphism
\[( CH^*(R(\bG_k))_{(3)}/Tor_3)\otimes \bZ/3\cong
\bZ/3\{1,b_2,...,b_8, b_2b_8\}. \]
\end{cor}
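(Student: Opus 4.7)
The plan is to combine a torsion analysis of the twelve generators of Theorem 10.6 with a rank computation for $M := CH^*(R(\bG_k))_{(3)}$. Writing $V := (M/Tor_3) \otimes \bZ/3$, I will factor the surjection of Theorem 10.6 through $V$ and conclude by a dimension count.

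First, I would check that $b_1, b_1 b_6, b_1 b_8 \in Tor_3$. By Lemma 10.4 each of these elements is represented in $\Omega^*(\bar R(\bG_k))$ by a sum of terms every one of which carries a factor $v_1$ or $v_2$; hence $res_{CH}$, which kills $BP^{<0}$, sends each to zero. The target $CH^*(\bar R(\bG_k))$ is a direct summand of the torsion-free $CH^*(\bar X) = H^*(G/T)_{(3)}$, and $res_{CH} \otimes \bQ$ is an isomorphism (since it is so for the smooth projective $X$ and $R(\bG_k)$ is a motivic summand), so any element with vanishing $res_{CH}$ must be torsion.

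Second, a rank count gives $\dim_{\bZ/3} V = 9$. Since $X$ is smooth projective, $M$ is finitely generated over $\bZ_{(3)}$ and $M/Tor_3$ is free of rank $\dim_{\bQ}(M \otimes \bQ)$. Through the rational isomorphism $res_{CH} \otimes \bQ$ and Theorem 4.2, this rank equals $\dim_{\bZ/3} \bZ/3[y,y']/(y^3,(y')^3) = 9$.

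Finally, I would show $\phi(Tor_3) \subseteq \bZ/3\{b_1, b_1 b_6, b_1 b_8\}$ for the surjection $\phi : M/3 \twoheadrightarrow \bZ/3\{1, b_1, \ldots, b_2 b_8\}$ from Theorem 10.6. For $t \in Tor_3$ the lift $res_\Omega(t)$ lies in $(v_1, v_2) \cdot \Omega^*(\bar R(\bG_k))$, whereas each of the nine non-torsion basis elements $b_2, \ldots, b_8, b_2 b_8$ carries a distinguishing $v_i$-free summand of the form $p\, y^a (y')^b$ or $p^2 y^2 (y')^2$ lying in a distinct cohomological degree. The $BP^*$-module independence of the twelve generators in $Res_\Omega := res_\Omega(M)$ (the content of Theorem 10.6's proof, in which $9(yy')^2$, $3 v_1 (yy')^2$, $3 v_2 (yy')^2$ are identified as distinct $BP^*$-generators) then forces the coefficients of those nine elements to vanish in any expansion of $res_\Omega(t)$. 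Composing $\phi$ with the projection modulo $\bZ/3\{b_1, b_1 b_6, b_1 b_8\}$ produces a surjection $M/3 \twoheadrightarrow \bZ/3\{1, b_2, \ldots, b_8, b_2 b_8\}$ which kills the image of $Tor_3$ and hence factors through $V$; the nine-dimensional dimension count makes this induced surjection an isomorphism. The main obstacle is the $BP^*$-independence claim: it requires formalizing the coefficient extraction in $Res_\Omega$ that is only implicit in the proof of Theorem 10.6.
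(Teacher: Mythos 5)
Your proposal is correct in outline but takes a genuinely different route from the paper. The paper's proof is a one\hbox{-}liner: writing $b_i=3y_{(i)}$ for $i\ge 2$, any product $b_ib_j$ with $y_{(i)}y_{(j)}\neq y^2(y')^2$ equals $3b_k$ in $CH^*(\bar X)$, so $b_ib_j-3b_k$ is torsion (torsion-free target, $res_{CH}\otimes\bQ$ an isomorphism); this directly kills every quadratic monomial except $b_2b_8$ after passing to $(M/Tor_3)\otimes\bZ/3$, and spanning plus the rank-$9$ image lattice gives the basis. You instead combine the identification of the torsion generators $b_1,b_1b_6,b_1b_8$ (correct, and for the same reason: their $\Omega^*$-restrictions lie in $BP^{<0}\cdot\Omega^*(\bar R(\bG_k))$), a rank count $\dim_{\bZ/3}V=9$ (correct: $M/Tor_3$ is finitely generated torsion-free over the DVR $\bZ_{(3)}$, hence free of rank $\dim_\bQ M\otimes\bQ=9$), and a factorization of the surjection of Theorem 10.6 through $V$. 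What your route buys is that you never need the explicit relations $b_ib_j=3b_k$; what it costs is the step you yourself flag, namely $\phi(Tor_3)\subseteq\bZ/3\{b_1,b_1b_6,b_1b_8\}$, which as written leans on an unformalized ``$BP^*$-independence'' of the twelve generators. The clean way to close that gap is exactly the paper's mechanism: the nine non-torsion coordinates of $\phi$ are read off from $res_{CH}:M\to CH^*(\bar R(\bG_k))$, whose image is the free lattice $L=\bZ_{(3)}\{1,3y,\dots,3y^2(y')^2\ (\text{omitting }(yy')^2),\,9(yy')^2\}$ and whose kernel is exactly $Tor_3$; hence any torsion class has all nine of those coordinates zero, and in fact one can bypass $\phi$ entirely by observing $M/Tor_3\cong L$ and computing $L/3L$. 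With that repair your argument is complete and equivalent to the paper's.
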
 
\begin{proof}.  Let us write  by $b_i=py_{(i)}$ for $i\ge 2$.
Let $y_{(i)}y_{(j)}\not =y^2(y')^2$.  Then there is $k$ such that
$y_{(i)}y_{(j)}=y_{(k)}$.  Hence $b_ib_j=3b_k$ in $CH^*(\bar X)$.
So $b_ib_j-3b_k$ is a torsion element because
$res_{CH}\otimes \bQ$ is isomorphic.
\end{proof}

 We recall that there is an embedding $F_4\subset E_8$.
Let $K/k$ be a field extension
of degree $3a$ with $(3,a)=1$ such that
the flag variety  $X|_K=(\bG_k/T_k)|_K$ is still twisted but
$X|_{K'}$ is split for an extension $K'/K$ of degree
$3a'$ with $(3,a')=1$.
Note $P^3y=y'$ and if $y \in res_K^{\bar K}$, then so is $y'$.
Since $X|_K$ is twisted, we see $y'\in res_K^{\bar K}$ but
$y$ is not.  Hence the $J$-invariants are
\[ J(\bG_k|_K)=(1,0)\quad but \quad  J(\bG_k)=(1,1).\]
(See also 4.1.3 in [Pe-Se-Za], [Se] for $E_8$, $1\ge j_1\ge j_2$).

We know that the generalized Rost motive
for $F_4$ and $p=3$
 is just the original  Rost motive $R_2$.
Hence the natural map $i:F_4\to E_8$ induces the isomorphism
of motives over $\bar K$.
By Vishik-Zainouline ([Vi-Za]),
we have the isomorphism 
\[CH^*(R_2)/3\cong 
 CH^*(R(\bG_k|_K))/3.\]
\begin{prop}
Let us write the restriction map
\[ res_{k}^K: CH^*(R(\bG_k))/3\to CH^*(R(\bG_k)|_K)/3
\cong 
  CH^*(R_2)\otimes \bZ/3[y']/((y')^3).\]
Then we have
$ Im(res_{k}^{K})\cong \bZ/3\{1,b_1,b_2,b_3,b_5,b_6,b_8\}.$
\end{prop}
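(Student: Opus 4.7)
The plan is to compute $res_k^K(z)$ for each of the twelve additive generators $\{1,b_1,\ldots,b_8,b_1b_6,b_1b_8,b_2b_8\}$ of $CH^*(R(\bG_k))/3$ from Theorem~10.6 and verify that exactly seven of them have nonzero, linearly independent images. The target, by the Vishik--Zainoulline argument applied to the embedding $F_4\subset E_8$, has the form
\[
CH^*(R(\bG_k)|_K)/3\cong CH^*(R_2)/3\otimes\bZ/3[y']/((y')^3),
\]
a $15$-dimensional $\bZ/3$-space with basis $\{1,c_0(y),c_1(y),c_0(y^2),c_1(y^2)\}\otimes\{1,y',(y')^2\}$. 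The crucial feature from Theorem~4.6 is that $CH^*(R_2)/3$ is a $\bZ/3$-module carrying only $v_0=p$ and $v_1$ information: every $v_j$-multiple with $j\ge 2$ dies, $v_1\cdot 1=0$, and $3\cdot c_0(y^i)=0$.

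Using the explicit formulas from Lemma~10.4 modulo $\II$, I would then read off
\[
b_1\mapsto c_1(y),\quad b_2\mapsto c_0(y),\quad b_3\mapsto c_0(y^2),\quad b_5\mapsto c_0(y)\otimes y',
\]
\[
b_6\mapsto c_0(y^2)\otimes y',\quad b_8\mapsto c_0(y)\otimes(y')^2,
\]
since the residual $v_2y'$ contribution in $b_1$, and the $v_1y'$ and $v_1(y')^2$ contributions in $b_3$ and $b_6$, all vanish under the restriction because $v_1\cdot 1=0=v_2\cdot 1$ in $CH^*(R_2)/3$. The remaining degree-one generators $b_4=3y'$ and $b_7=3(y')^2$ restrict to $3\otimes y'$ and $3\otimes(y')^2$, both zero modulo $3$.

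For the three product generators I expand using $y^3=(y')^3=0$:
\[
b_1b_6\equiv v_1^2y(y')^2+3v_2y^2(y')^2,\quad b_1b_8\equiv 3v_1y^2(y')^2,\quad b_2b_8\equiv 9y^2(y')^2.
\]
Each summand dies in the target: $v_1^2y(y')^2$ and $3v_2y^2(y')^2$ because there is no $v_1$- or $v_2$-action on $CH^*(R_2)/3$, while $3v_1y^2(y')^2=3c_1(y^2)\otimes(y')^2$ and $9y^2(y')^2=3c_0(y^2)\otimes(y')^2$ vanish by the leading factor $3\equiv 0\pmod 3$. Thus the image consists of the seven listed elements, whose images are distinct tensor basis vectors and hence linearly independent, establishing $Im(res_k^K)\cong\bZ/3\{1,b_1,b_2,b_3,b_5,b_6,b_8\}$. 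The main obstacle is the cancellation of $b_1b_6$, $b_1b_8$, $b_2b_8$: over $k$ these are precisely the generators witnessing $t(E_8)_{(3)}=9$ through $(yy')^2\notin Im(res_{CH})$, but over $K$ the representability of $y'$ promotes $(y')^2$ into the restriction image, converting $9(yy')^2$ and $3v_1(yy')^2$ into $3$-divisible classes and $3v_2(yy')^2$ into a $v_2$-multiple invisible to $R_2$, and careful bookkeeping in the decomposition of $R(\bG_k)|_K$ into shifted copies of $R_2$ is needed to make this vanishing rigorous.
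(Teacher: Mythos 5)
Your proposal is correct and follows essentially the same route as the paper: one computes the restriction inside $\Omega^*(\bar X)$ using the explicit $BP^*$-formulas of Lemma 10.4, observing that over $K$ the classes $y',(y')^2$ become rational so that $3(y')^m$ and all $v_j(y')^m$ die, while $3y^i(y')^j$ and $v_1y$ survive as the listed basis vectors of $CH^*(R_2)/3\otimes\bZ/3[y']/((y')^3)$. The only (harmless) imprecision is your describing $\{1,b_1,\dots,b_8,b_1b_6,b_1b_8,b_2b_8\}$ as additive generators of $CH^*(R(\bG_k))/3$ --- Theorem 10.6 only gives a surjection onto that module, so strictly one should note that \emph{every} monomial $b_{i_1}\cdots b_{i_s}$ with $s\ge 2$ restricts to zero over $K$, which follows by the same expansion and does not change the conclusion.
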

\begin{proof}
This proposition is proved by considering the restriction
on $\Omega^*(\bar X)$.  For example,
$b_8=3y(y')^2\not =0$ in $CH^*(X|_K)/3$, but
$b_2b_8=3\cdot(3y)(y')^2=0$.   In particular, we use 
the fact that 
 $b_4=3y',b_7=3(y')^2$ are in $Ker(res_k^K)$.
\end{proof}

\section{The case $G=E_8$ and $p=2$.}

In this section, we consider the case $(G,p)=(E_8,2)$.
The $mod(2)$ cohomology 
$H^*(E_8;\bZ/2)$ is given [Mi-Tod] as 
\[ \bZ/2[z_3,z_5,z_9,x_{15}]/(z_3^{16},z_5^{8},z_9^4,z_{15}^4)
\otimes \Lambda(z_{17},z_{23},z_{27},z_{29}).\]
Here we consider a graded algebra $grH^*(E_8;Z/2)$ identifying
$y_{2i}=z_i^2$ for $i=3,5,9,15$.

\begin{thm} 
The cohomology $grH^*(E_8;\bZ/2)$ is given 
\[ \bZ/2[y_6,y_{10},y_{18},y_{30}]
/(y_6^8,y_{10}^4,y_{18}^2,y_{30}^2),
\otimes \Lambda(z_3,z_5,z_9,z_{15},z_{17},z_{23},z_{27},z_{29}).\]
\end{thm}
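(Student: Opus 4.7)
The plan is to deduce the stated presentation directly from the presentation of $H^*(E_8;\bZ/2)$ recalled immediately above, following the prescription $y_{2i}=z_i^2$ for $i\in\{3,5,9,15\}$. The claim really amounts to rewriting each ``odd-generator'' truncated polynomial factor as a tensor product of a truncated polynomial ring in an even-degree generator with an exterior algebra on the original odd-degree generator.

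For each $i\in\{3,5,9,15\}$ I would first split the factor $\bZ/2[z_i]/(z_i^{2^{n_i}})$ (with $n_3=4$, $n_5=3$, $n_9=n_{15}=2$) as a graded $\bZ/2$-module on the basis $\{z_i^{2j},\,z_i^{2j+1}:0\le j<2^{n_i-1}\}$. Setting $y_{2i}=z_i^2$ reorganises this basis as $\{y_{2i}^j,\,y_{2i}^j z_i\}$. Passing to the associated graded algebra for the Borel-type filtration that produces the general $P(y)\otimes\Lambda(x)$ shape of $(2.1)$ kills the self-product $z_i\cdot z_i$ in the exterior direction and retains it instead as the polynomial generator $y_{2i}$; the truncation $z_i^{2^{n_i}}=0$ then becomes $y_{2i}^{2^{n_i-1}}=0$. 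Hence the factor contributes $\bZ/2[y_{2i}]/(y_{2i}^{2^{n_i-1}})\otimes \Lambda(z_i)$ to $grH^*(E_8;\bZ/2)$.

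Applying this to the four relevant indices yields the relations $y_6^8=0$, $y_{10}^4=0$, $y_{18}^2=0$, $y_{30}^2=0$, together with the corresponding exterior generators $z_3,z_5,z_9,z_{15}$. The original exterior factor $\Lambda(z_{17},z_{23},z_{27},z_{29})$ is already in the correct form and is unaffected by passage to the associated graded. Tensoring the four modified truncated polynomial factors with this exterior factor then produces precisely the algebra in the statement.

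The only technical point worth checking is that no cross relations between distinct generators arise in the associated graded. This is automatic because the given presentation of $H^*(E_8;\bZ/2)$ is already a tensor product of one-generator truncated polynomial rings with a pure exterior algebra, so no relations couple different $z_i$'s to begin with; a dimension count in each internal degree confirms that the tensor decomposition survives the passage to $gr$. In effect the theorem is a formal unpacking of the Mimura--Toda description of $H^*(E_8;\bZ/2)$; the substantive computation (the truncation exponents $16,8,4,4$) is what is imported from that reference.
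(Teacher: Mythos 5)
Your proposal is correct and matches the paper's treatment: the paper states Theorem 11.1 as an immediate reformulation of the Mimura--Toda presentation quoted just above it, obtained by passing to the associated graded ring with the identification $y_{2i}=z_i^2$ for $i=3,5,9,15$, which is exactly the formal unpacking (with truncation exponents $16,8,4,4$ becoming $8,4,2,2$) that you carry out.
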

Let us write $y_1=y_6,...,y_4=y_{30}$ and $x_1=z_3,x_2=z_5,...,x_8=z_{29}$. 
For ease of argument, let $x_4=z_{17}$ and $x_5=z_{15}$.
Hence we can write
\[ grH^*(E_8;\bZ/2)\cong \bZ/2[y_1,y_2,y_3,y_4]/(y_1^8,y_2^4,y_3^2,y_4^2)\otimes \Lambda(x_1,...,x_8).\]
\begin{lemma}  The cohomology operations acts as
\[\begin{CD}
  x_1=z_3 @>{Sq^2}>> x_2=z_5@>{Sq^4}>>
 x_3=z_9 @>{Sq^8}>> x_4=z_{17}\\
 x_5=z_{15} @>{Sq^8}>> x_6=z_{23}@>{Sq^4}>>
 x_7=z_{27} @>{Sq^2}>> x_8=z_{29}\\
 x_5=z_{15}  @>{Sq^2}>> x_4=z_{17} @. @. @.
\end{CD} \]
The Bockstein acts $Sq^1(x_{i+1})=y_i$ for $1\le i\le 3$,
$Sq^1(x_8)=y_4$  and
  \[Sq^1\ :\ x_5=z_{15}\mapsto y_1y_2,\ \  x_6=z_{23}\mapsto y_1y_3+y_1^4,\ \  x_{7}=z_{27}\mapsto y_2y_3.\]
\end{lemma}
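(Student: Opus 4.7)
The strategy is to deduce each Steenrod operation from the additive structure of $grH^*(E_8;\bZ/2)$ given in Theorem 11.1, combined with the bilinearity and indecomposability constraints from the Steenrod algebra. The argument largely parallels the computation in [Mi-Tod], and I would organize it as follows.

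First, for the Bockstein $Sq^1$: since $H^*(E_8)_{(2)}$ has $2$-torsion of exponent exactly $2$, $Sq^1$ is the mod-$2$ Bockstein, and the identification $y_{2i} = z_i^2$ in $grH^*(E_8;\bZ/2)$ provides a natural pairing of each even generator with the square of an odd one. A degree count against Theorem 11.1 immediately forces $Sq^1(x_{i+1}) = y_i$ for $i=1,2,3$ and $Sq^1(x_8) = y_4$, because in the relevant degrees $6, 10, 18, 30$ the only indecomposable polynomial monomial available is $y_i$ itself; nontriviality then follows from the nonvanishing of the corresponding integral $2$-torsion class. Likewise $Sq^1(z_{15})$ in degree $16$ and $Sq^1(z_{27})$ in degree $28$ are forced by degree alone to be $y_1 y_2$ and $y_2 y_3$ respectively, since no other monomial in the $y_j$ fits.

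Second, for the Steenrod squares $Sq^{2^k}$ along the two chains
\[ z_3 \xrightarrow{Sq^2} z_5 \xrightarrow{Sq^4} z_9 \xrightarrow{Sq^8} z_{17}, \qquad z_{15} \xrightarrow{Sq^8} z_{23} \xrightarrow{Sq^4} z_{27} \xrightarrow{Sq^2} z_{29}, \]
the target degree in each case is one where Theorem 11.1 exhibits a unique indecomposable (the named generator), so the operation is either zero or that generator. Nontriviality can be extracted by restricting along a maximal rank subgroup such as $Spin(16)/(\bZ/2) \subset E_8$ or $E_7 \cdot SU(2) \subset E_8$, where the Steenrod action on the smaller cohomology is classical and the relevant $Sq^{2^k}$ are known to be nonzero. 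The additional arrow $z_{15} \xrightarrow{Sq^2} z_{17}$ follows from an Adem relation together with the first chain.

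The main obstacle is the ``mixed'' Bockstein formula $Sq^1(z_{23}) = y_1 y_3 + y_1^4$ in degree $24$, where both $y_1 y_3$ and $y_1^4$ are admissible targets; here dimension counting alone cannot distinguish among $y_1 y_3$, $y_1^4$, and $y_1 y_3 + y_1^4$. The precise coefficient of $y_1^4$ reflects the interaction between $2$-torsion of different heights in $H^*(E_8;\bZ/2)$, and is determined in [Mi-Tod] by an explicit computation of the Eilenberg-Moore spectral sequence for $E_8 \to EE_8 \to BE_8$, or equivalently via Kudo's transgression theorem applied to the Serre spectral sequence of $G \to G/T \to BT$. I would invoke this deeper computation directly rather than attempt to re-derive it, as it lies outside the scope of the cohomology-theoretic inputs used elsewhere in the paper.
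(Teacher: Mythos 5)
The paper does not actually prove this lemma: it is quoted as a known computation (the ring from [Mi-Tod], the operations due to Kono--Mimura [Ko-Mi], exactly as the analogous $p=3$ statement in Theorem 10.1 is quoted), so the only fully defensible ``proof'' here is the citation you fall back on at the end. The difficulty is that the parts you try to rederive by degree counting contain genuine gaps. Your key claim that the target degree determines the operation up to the named generator is false in several of the cases you treat as ``forced'': in degree $18$ both $y_3$ and $y_1^3$ occur, so $Sq^1(z_{17})$ is not pinned down; in degree $28$ both $y_2y_3$ and $y_1^3y_2$ occur, contradicting your assertion that ``no other monomial in the $y_j$ fits''; in degree $30$ one has $y_4$, $y_1^5$, $y_1^2y_3$ and $y_2^3$; and in the odd-degree targets of the $Sq^{2^k}$ chains there are decomposable classes such as $z_5y_1^2$ and $z_3z_5z_9$ in degree $17$. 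Your implicit escape --- restricting attention to indecomposable (or purely polynomial, primitive) targets --- is not justified by anything you prove, and it is in fact contradicted by the very lemma at hand: $Sq^1(z_{23})=y_1y_3+y_1^4$ has a decomposable summand, and that summand is essential later in the paper (it drives the count in the proof that $t(E_8)_{(2)}=2^6$ in Lemma 11.11). So an argument that systematically discards decomposable terms would, if applied uniformly, give wrong formulas.

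Two further steps are asserted rather than proved: the nontriviality of the $Sq^{2^k}$ arrows via restriction to $Spin(16)/(\bZ/2)$ or $E_7\cdot SU(2)$ requires knowing the restriction maps on the relevant generators, which is itself a computation of the same order as the one being cited; and the claim that $z_{15}\stackrel{Sq^2}{\to}z_{17}$ ``follows from an Adem relation together with the first chain'' is unsubstantiated --- no Adem relation expresses $Sq^2$ on a degree-$15$ class in terms of the operations $Sq^2,Sq^4,Sq^8$ acting on $z_3,z_5,z_9$. The net effect is that everything in your write-up that goes beyond the citation is either incorrect as stated or incomplete, while the citation itself is precisely what the paper uses; the cleanest fix is to state the lemma as a result of Kono--Mimura/Mimura--Toda outright, or, if you want an independent derivation, to carry out the Hopf-algebra and subgroup-restriction computations in detail rather than by degree counting.
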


Then we see from Lemma 3.4
\begin{cor}  In $BP^*(X)/\II$, we can take $y_1$ such that
for $r_{2\Delta_1}(y_1)=y_2$ and $r_{4\Delta_1}(y_2)=y_3$, 
we have  
\[ v_1y_1+v_2y_2+v_3y_3=b_1\quad
for \ b_1\in BP^*(BT).\]
\end{cor}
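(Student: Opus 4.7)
The plan is to apply the argument of Lemma 3.4 to the chain of generators starting from $x_1 = z_3$, using the Steenrod action spelled out in Lemma 11.2. First I would compute the Milnor operations $Q_n(x_1)$. Since $H^4(E_8;\bZ/2)$ has no primitive classes, $Q_0(x_1) = Sq^1(x_1) = 0$. The recursion $Q_{n+1} = [Sq^{2^{n+1}}, Q_n]$, together with the chain $x_1 \xrightarrow{Sq^2} x_2 \xrightarrow{Sq^4} x_3 \xrightarrow{Sq^8} x_4$, the Bockstein relations $Sq^1(x_{i+1}) = y_i$, and the instability $Sq^{2^n}(x_1) = 0$ for $n \ge 2$, yields
\[
Q_1(x_1) = y_1, \qquad Q_2(x_1) = y_2, \qquad Q_3(x_1) = y_3,
\]
while $Q_n(x_1) = 0$ for $n \ge 4$.

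Next, I apply Corollary 3.2 to $x_1$, whose transgression image is $b_1 \in BP^*(BT)$. Combined with the vanishing of $Q_0(x_1)$ and of $Q_n(x_1)$ for $n \ge 4$, Corollary 3.2 gives
\[
b_1 = v_1\, y(1) + v_2\, y(2) + v_3\, y(3) \quad \text{in } BP^*(G/T)/\II,
\]
where $\pi^*(y(i)) = Q_i(x_1) = y_i$. I then pick $y_1 \in BP^*(G/T)$ to be a primitive lift of the class $y_1 \in H^*(G;\bZ/2)$, and set $y_2 = r_{2\Delta_1}(y_1)$ and $y_3 = r_{4\Delta_1}(y_2)$. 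By the Cartan formula, the Quillen operations $r_\alpha$ preserve primitivity, so $y_2$ and $y_3$ are also primitive in $BP^*(G/T)$.

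Finally I would identify $y(i) \equiv y_i \bmod I_\infty$ for $i = 1,2,3$ by repeating the primitivity argument from the proof of Lemma 3.4: both $v_i\, y(i)$ and $v_i\, y_i$ are primitive in $k(i)^*(G/T)$ and reduce to the same class in $H^*(G;\bZ/2)$, so by the Atiyah--Hirzebruch spectral sequence for $k(i)^*(G)$ and the $v_i$-torsion-freeness of the generators of lower weight than $y_i$, the two elements must agree modulo $I_\infty$. Substituting into the displayed formula for $b_1$ yields the claim.

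The main obstacle is the vanishing $Q_n(x_1) = 0$ for $n \ge 4$, equivalently $Sq^{16}(y_3) = 0$ in $H^{34}(E_8;\bZ/2)$: degree $34$ contains candidate decomposable terms such as $y_1^4 y_2$ and $y_1 y_2 y_3$, so the vanishing is not automatic from a dimension count and must be verified by Adem relations together with the explicit Steenrod module structure from Theorem 11.1. Once this verification is in place, the remainder of the argument is a direct specialization of Lemma 3.4 to the three-term subchain.
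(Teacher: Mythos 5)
Your proposal is correct and takes essentially the same route as the paper, which simply invokes Lemma 3.4 for the chain $x_1\to x_2\to x_3\to x_4$; your argument just unwinds that lemma's proof (Corollary 3.2 plus the primitivity/Quillen-operation identification of $y(i)$ with $y_i$). The one step you leave open, $Q_4(x_1)=Sq^{16}(y_3)=0$, is immediate from the Cartan formula on a square: $Sq^{16}(z_9^2)=(Sq^8z_9)^2=z_{17}^2=0$ because $z_{17}$ is an exterior generator, and this is exactly the hypothesis $P^{p^m}(y_m)=0$ with $m=3$ needed to apply Lemma 3.4.
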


From Lemma 3.1 and the $Sq^1$ action in Lemma 11.2,
it is immediate that
\begin{lemma} Let $(G,p)=(E_8,2)$ and $X=\bG_k/T_k$.
In $H^*(X)/(4)$, there are $b_i\in S(t)$ such that
\[ b_k=\begin{cases}                              2y(1)\ 
(resp.\  2y(2),\ 2y(3))
\quad if \ k=2\ (resp. \ k=3,4)\\
                       2y(1,2)\ (resp.\ 2y(1,3),\
                        2y(2,3))  \quad  if\ k=5\ (resp.\ k=6,7)\\
                        2y(4)\quad if\ k=8.
\end{cases}\]
where 
$\pi^*y(i)=y_i$, $\pi^*y(i,j)=y_iy_j$ for the map $\pi:G\to G/T$.
\end{lemma}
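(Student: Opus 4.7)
The plan is to derive the formulas for $b_2, \ldots, b_8$ by combining two techniques: applying suitable Quillen operations to the fundamental equation from Corollary 11.3, and invoking Corollary 3.2 with the Bockstein actions computed in Lemma 11.2.

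First, for $k \in \{2,3,4\}$, I would apply the operation $r_{\Delta_{k-1}}$ to the equation
\[
v_1 y_1 + v_2 y_2 + v_3 y_3 = b_1 \quad \text{in } BP^*(X)/\II
\]
of Corollary 11.3. By formula (3) in Section 3, $r_{\Delta_{k-1}}(v_{k-1}) = v_0 = 2$ while $r_{\Delta_{k-1}}(v_j) \equiv 0 \pmod{I_\infty}$ for $j \neq k-1$, so the Cartan formula gives
\[
r_{\Delta_{k-1}}(b_1) \equiv 2\, y_{k-1} + \sum_j v_j\, r_{\Delta_{k-1}}(y_j) \pmod{\II}.
\]
Since the $y_j$ are primitive, so are the images $r_{\Delta_{k-1}}(y_j)$, and the argument used in the proof of Lemma 3.4 places these in $BP^* \otimes S(t)$. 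The terms $v_j\, r_{\Delta_{k-1}}(y_j)$ can then be absorbed into $S(t)$, producing $b_k \in S(t)$ with $b_k = 2\, y(k-1)$ in $H^*(X)/(4)$ and $\pi^* y(k-1) = y_{k-1}$.

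Second, for $k \in \{5, 6, 7, 8\}$ I would apply Corollary 3.2 directly to the transgression of $x_k$. The Bockstein actions from Lemma 11.2 read $Q_0(x_5) = y_1 y_2$, $Q_0(x_6) = y_1 y_3 + y_1^4$, $Q_0(x_7) = y_2 y_3$, and $Q_0(x_8) = y_4$, and Corollary 3.2 therefore gives
\[
b_k \equiv 2\, Q_0(x_k) + \sum_{i \ge 1} v_i\, Q_i(x_k) \pmod{\II}.
\]
Killing the $v_i$-terms upon passing to $H^*(X)/(4)$ yields $b_5 = 2\, y(1,2)$, $b_7 = 2\, y(2,3)$, and $b_8 = 2\, y(4)$ immediately.

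The delicate case is $b_6$, where $Q_0(x_6) = y_1 y_3 + y_1^4$ introduces an additional $y_1^4$ contribution. Using the relation $b_2 = 2\, y(1)$ from Step~1, one has $2\, y_1^4 \equiv b_2 \cdot y_1^3$ in $H^*(X)/(4)$, and this term can be moved across the equation by modifying $b_6$ within its coset modulo the ideal generated by $b_1, \ldots, b_5$; Karpenko's surjectivity result (Theorem~4.3) guarantees that such an adjustment can actually be carried out inside $S(t)$. I expect absorbing the $y_1^4$ contribution to be the main obstacle: although $y_1^3$ itself does not lie in $S(t)$, the product $(2\, y_1) \cdot y_1^3 = b_2 \cdot y_1^3$ does sit in the $S(t)$-representable part of $BP^*(X)/\II$, and tracking this carefully—while simultaneously verifying that the residual higher-$v_i$ terms stemming from $Q_i(z_{23})$ vanish in $H^*(X)/(4)$—is where the real bookkeeping lies.
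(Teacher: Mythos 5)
Your overall mechanism is the paper's: Lemma 11.4 is obtained by feeding the $Sq^1=Q_0$ actions of Lemma 11.2 into Lemma 3.1/Corollary 3.2 (with $k(0)^*=H^*(-;\bZ)$, $v_0=2$), and your Quillen-operation treatment of $k=2,3,4$ is exactly how the paper later refines these cases in Lemma 11.5. Two points, one minor and one substantive.

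Minor: primitivity of $r_{\Delta_{k-1}}(y_j)$ does not place it in $BP^*\otimes S(t)$ — by Kono--Ishitoya $P^1(y_2)=y_1^2$ and $P^1(y_3)=y_2^2$, which are primitive but not in $S(t)$ (this is precisely why Lemma 11.5 reads $b_2=2y_1+v_2y_1^2+v_3y_2^2$, etc.). Your conclusion survives only because those terms carry $v_j$ with $j\ge 1$ and hence die in $H^*(X)/(4)$; you should say that rather than claim they lie in $BP^*\otimes S(t)$.

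Substantive: your treatment of $b_6$ does not work. The quantity $b_2\cdot y_1^3=2y_1^4$ lies in $S(t)^+\cdot H^*(G/T)$ but is \emph{not} in the image of $S(t)\to H^*(G/T)$ (since $y_1^3\notin \Img(S(t))$), so ``modifying $b_6$ within its coset modulo $(b_1,\dots,b_5)$'' by this quantity does not yield an element of $S(t)$. Karpenko's Theorem 4.3 cannot rescue this: it concerns surjectivity of $CH^*(BB_k)\to CH^*(\bG_k/B_k)$ for the versal form, whereas the computation here takes place in $H^*(G/T)$ of the split form, and there is no independent reason for $2y_1^4$ alone to be $S(t)$-representable. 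Indeed the paper's own Lemma 11.8 records $b_6=2(y_1y_3+y_1^4+y_1^2b''')$, and the Totaro relation quoted after Lemma 11.9 shows it is the \emph{combination} $2(y_1y_3+y_1^4)$ that lies in $S(t)$; the $y_1^4$ term is genuinely present and is never removed. So the correct handling of $k=6$ is to read $y(1,3)$ as a lift of $Q_0(x_6)=y_1y_3+y_1^4$ (the displayed normalization $\pi^*y(1,3)=y_1y_3$ is an abuse of notation on the paper's part), after which the case is immediate from Corollary 3.2 exactly like $k=5,7,8$ — no absorption is needed, and none is possible.
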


We will study $b_i$ in more details by using 
the Quillen operation $r_{\alpha}$.  In particular recall $\rho r_{\alpha}(x)=\chi P^{\alpha}(\rho(x))$ for $\rho:BP^*(X)\to H^*(X;\bZ/2)$.
The anti-automorphism $\chi$ is defined by
\[ \chi(Sq^0)=Sq^0,\quad \sum_iSq^i\chi( Sq^{n-i})=0\ \ 
for \ n>0.\]
For example, (when $Sq^1=0$)  $\chi(P^i)=P^i$ for $i=1,2$ and 
$\chi(P^3)=P^2P^1$, ($P^3=P^1P^2$), and $\chi(P^4)=P^4+P^2P^2$.
\begin{lemma}
In $BP^*(X)/\II$, we have
\[ b_i=\begin{cases}
             2y_1+v_2(y_1^2)+v_3(y_2^2)\quad if\ i=2\\
            2y_2+v_1(y_1^2)+v_3(y_1^4)\quad if \ i=3\\
            2y_3+ v_1(y_2^2)             \quad if \ i=4
\end{cases} \]
\end{lemma}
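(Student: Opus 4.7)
The plan is to iterate the argument of Lemma~3.5, starting from Corollary~11.3's relation $b_1\equiv v_1y_1+v_2y_2+v_3y_3\pmod{\II}$ and successively applying Landweber--Novikov operations $r_{\Delta_1}$, $r_{2\Delta_1}$, $r_{4\Delta_1}$. Each output will be identified with $b_{k+1}$ via the transgression compatibility coming from the Steenrod chain $Sq^2x_1=x_2$, $Sq^4x_2=x_3$, $Sq^8x_3=x_4$ of Lemma~11.2.

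For $b_2$, I apply $r_{\Delta_1}$ and use $r_{\Delta_1}(v_1)=p=2$, $r_{\Delta_1}(v_n)\equiv 0\pmod{\II}$ for $n\ge 2$ (by formula~(3) and the degree of $v_n$), together with the compatibility $\rho\circ r_{\Delta_1}=\chi Sq^2=Sq^2$. The Cartan identity $Sq^2(z_i^2)=(Sq^1z_i)^2$, combined with the Bockstein actions of Lemma~11.2 (giving $Sq^1z_3=0$, $Sq^1z_5=y_1$, $Sq^1z_9=y_2$), yields $Sq^2y_1=0$, $Sq^2y_2=y_1^2$, $Sq^2y_3=y_2^2$. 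Substituting into the Cartan expansion of $r_{\Delta_1}(b_1)$ produces the first formula.

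For $b_3$, I apply $r_{2\Delta_1}$ to the $b_2$ formula. The key identities are $r_{2\Delta_1}(v_2)=v_1$ and $r_{2\Delta_1}(y_1)=y_2$ from Corollary~11.3, together with the Cartan decomposition $r_{2\Delta_1}(y^2)=2\,r_{2\Delta_1}(y)y+r_{\Delta_1}(y)^2$. Since $r_{\Delta_1}(y_1)\in I_\infty$, we obtain $r_{2\Delta_1}(y_1^2)\equiv 2y_1y_2\pmod{\II}$, whose product with $v_2$ lies in $\II$; while $r_{2\Delta_1}(y_2^2)\equiv r_{\Delta_1}(y_2)^2=y_1^4\pmod{\II}$. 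Collecting the terms gives the second formula.

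For $b_4$, apply $r_{4\Delta_1}$ to the $b_3$ formula, using $r_{4\Delta_1}(v_3)=v_2$, $r_{4\Delta_1}(y_2)=y_3$, and the iterated Cartan identity $r_{4\Delta_1}(y_1^2)\equiv r_{2\Delta_1}(y_1)^2=y_2^2\pmod{\II}$. The summand $v_1y_1^2$ of $b_3$ then contributes $v_1y_2^2$ and the $2y_2$ summand contributes $2y_3$, giving the leading terms of the third formula.

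The main obstacle I anticipate is in the $b_4$ step: the Cartan expansion also throws off potential extra summands, most notably a $v_2y_1^4$ arising from $r_{4\Delta_1}(v_3)\cdot y_1^4$ together with mixed cross terms. These must be shown either to vanish modulo $\II$ by primitivity of the chosen $y_i$'s (using the argument of Lemma~3.4), or to be absorbable into a redefinition of $b_4$ by subtracting a $BP^*$-linear combination of $b_1,b_2,b_3$, which is the same redefinition trick appearing at the end of the proof of Corollary~9.5. Carrying out this absorption and verifying uniqueness modulo $\II$ is what I expect to be the main bookkeeping task in the complete proof.
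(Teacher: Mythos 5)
Your strategy coincides with the paper's own proof: starting from Corollary 11.3 and iterating $r_{\Delta_1}$, $r_{2\Delta_1}$, $r_{4\Delta_1}$ with the Cartan formula and formula (3) for $r_\alpha(v_n)$. For $b_2$ and $b_3$ your computation is essentially the paper's, the only differences being cosmetic: you derive $Sq^2y_1=0$, $Sq^2y_2=y_1^2$, $Sq^2y_3=y_2^2$ from $Sq^2(z_i^2)=(Sq^1z_i)^2$ and Lemma 11.2, where the paper quotes Kono--Ishitoya's statement $P^1(y_1)\in S(t)$, $P^1(y_2)=y_1^2$, $P^1(y_3)=y_2^2$ in $H^*(G/T;\bZ/2)$; since your version is a computation in the fibre $H^*(G;\bZ/2)$, you still need the primitivity mechanism of Lemma 3.4 to control the chosen lifts modulo toral terms, which is exactly what the paper's citation supplies. (One small slip: $r_{\Delta_1}(y_1)$ is not in $I_\infty$; it is toral modulo $I_\infty$, so the term $v_2r_{\Delta_1}(y_1)^2$ is absorbed into $b_3$ rather than lying in $\II$. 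The resulting formula is unchanged.)

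The genuine gap is precisely the point you flag in the $b_4$ step, and neither of your two proposed repairs can close it. The summand $r_{4\Delta_1}(v_3)\cdot y_1^4=v_2y_1^4$ cannot be absorbed into a redefinition of $b_4$: the element $b_4$ must lie in $BP^*(BT)$, and $y_1^4$ is a nonzero class of $P(y)$ which, for a versal torsor, is not in the image of $S(t)$ (Corollary 4.5), so $v_2y_1^4$ is not $v_2$ times a toral class. The redefinition trick of Corollary 9.5 only allows subtracting $BP^*\otimes S(t)$-multiples of $b_1,b_2,b_3$, and with toral multipliers one can never produce the monomial $v_2y_1^4$ (the candidate $y_1^2b_2$, which does contain $v_2y_1^4$, has the non-toral multiplier $y_1^2$ and moreover introduces $2y_1^3$); coefficients in $BP^{<0}$ change nothing modulo $\II$. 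Nor does the term vanish by primitivity: the already established relation for $b_3$ encodes, on restriction to the fibre, $Q_1x_3=y_1^2$, $Q_2x_3=0$, $Q_3x_3=y_1^4$, and combining this with $x_4=Sq^8x_3$ (Lemma 11.2) and $Q_3=Sq^8Q_2+Q_2Sq^8$ gives $Q_2(x_4)=y_1^4\neq 0$ in $H^*(E_8;\bZ/2)$; by Lemma 3.1 and Corollary 3.2 the $v_2$-summand of $b_4$ is therefore genuinely present. A complete argument must either carry this extra term (and also determine the possible $v_3$-term), or assert the $b_4$ formula only modulo $(v_2,v_3)$ in the style of Lemmas 11.8--11.9. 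Be aware that the paper's own proof is silent here -- it simply states $b_4=2y_3+v_1y_2^2$ after acting by $r_{4\Delta_1}$, without discussing $r_{4\Delta_1}(v_3)y_1^4$ -- so you have put your finger on a real difficulty rather than overlooked an argument available in the paper; but as written your proposal does not resolve it, and the two suggested escape routes would both fail.
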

\begin{proof}
On the equation $v_1y_1+v_2y_2+v_3y_3=b_1$,  act the
operation $r_{\Delta_1}$, and we get
\[ 2y_1+v_1r_{\Delta_1}(y_1)+v_2r_{\Delta_1}(y_2)+v_3r_{\Delta_1}(y_3)=r_{\Delta_1}(b_1).\]
Recall that $P^1(y_i)$ are primitive in $H^*(G/T;\bZ/2)$.  In fact, by
Kono-Ishitoya, we know  (Theorem 5.9 in [Ko-Is])
\[ P^1(y_1)\in S(t),\quad P^1(y_2)=y_1^2,\quad P^1(y_3)=y_2^2.\]
Thus we have
$2y_1+v_2(y_1^2)+v_3(y_2^2)=b_2.$
(Note also $Q_2(x_2)=y_1^2$, $Q_3(x_2)=y_2^2$.)

Acting $r_{2\Delta_1}$ on this formula,  we have 
\[ b_3=2y_2+v_1(y_1^2)+v_2(r_{\Delta_1}(y_1)^2)+v_3(r_{\Delta_1}(y_2)^2)\]
\[  =2y_2+v_1(y_1^2)+v_3(y_1^4).\]
Acting $r_{4\Delta_1}$, we have
$ b_4=2y_3+v_1y_2^2 $
where we used $P^2(y_1)=y_2$.
\end{proof}

From Lemma 3.1, we see that
\[ b_5=2y(1,2)=2(y_1y_2+\lambda y_1^2b')\quad where\
\lambda\in \bZ/2,\ b,b'\in S(t).\]
However, it is known the  more strong result
from Nakagawa [Na] and Totaro [To1].
\begin{lemma} ([Na], [To1])
In $H^*(G/T)/4$, we see $2y_1y_2\in S(t)$.
Indeed,  in the notation in  [To1]
\ \ $ d_8=1/9d_4^2-2/3g_3g_5$\ \ 
where $g_3=y_1$,  $g_5=y_2$
and $d_i\in S(t)$.
\end{lemma}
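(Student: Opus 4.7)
The plan is to derive the assertion directly from the Toda--Nakagawa presentation of $H^{*}(E_{8}/T)$. In that presentation one has distinguished classes $d_{i}\in S(t)$ (the images of polynomial generators from $H^{*}(BT)$) and classes $g_{i}\in H^{*}(G/T)$ with $|g_{i}|=2i$ that lift the generators of $P(y)$. The key identification to make is that, up to the ambiguity of a lift, $g_{3}$ and $g_{5}$ correspond respectively to $y_{1}=y_{6}$ and $y_{2}=y_{10}$ in the notation of this section, simply by matching degrees and using that these are the only lifts of the corresponding generators of $P(y)$ modulo decomposables in $S(t)$.

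With this identification in place, the next step is to invoke the explicit Toda relation recorded in Nakagawa [Na] and re-derived in Totaro [To1]:
\[
d_{8}=\tfrac{1}{9}d_{4}^{2}-\tfrac{2}{3}g_{3}g_{5}\qquad\text{in }H^{*}(G/T)_{(2)}.
\]
Since we are working $2$-locally, $3$ is a unit, and the formula makes sense integrally after clearing the denominators:
\[
6\,g_{3}g_{5}=d_{4}^{2}-9\,d_{8}\in S(t).
\]

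To pass from this to the desired statement modulo $4$, note that $3$ is a unit in $\bZ/4$ with $3^{-1}=3$. Multiplying the previous relation by $3$ and reducing modulo $4$ gives
\[
2\,g_{3}g_{5}\equiv 3\bigl(d_{4}^{2}-9\,d_{8}\bigr)\pmod{4},
\]
and the right-hand side is an element of $S(t)$. Translating back to the $y_{i}$ notation, this reads $2\,y_{1}y_{2}\in S(t)$ in $H^{*}(G/T)/4$, which is exactly the claim. This also sharpens Lemma 11.4 at the index $k=5$, by showing that the lift $y(1,2)$ of $y_{1}y_{2}$ can actually be taken to be the honest product of the chosen lifts of $y_{1}$ and $y_{2}$.

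The only genuine obstacle is the first step: matching the Toda generators $g_{3},g_{5}$ with the classes $y_{1},y_{2}$ used here. Once one trusts the dictionary (which is standard and is exactly the point of quoting Nakagawa and Totaro), the remainder is the elementary algebraic manipulation of inverting $3$ modulo $4$; there are no further cohomological inputs required.
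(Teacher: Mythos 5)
Your proposal is correct and follows essentially the same route as the paper: the paper gives no independent argument for this lemma but simply quotes the explicit relation $d_8=\tfrac19 d_4^2-\tfrac23 g_3g_5$ from Nakagawa and Totaro together with the identification $g_3=y_1$, $g_5=y_2$, exactly as you do. The only thing you add is the routine observation that $3$ is a unit $2$-locally (equivalently mod $4$), which is the implicit step the paper leaves to the reader.
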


\begin{lemma}
In $BP^*(G/T)/(\II)$, we have, for some $b',b''\in S(t)$.
\[ b_5= 2y_1y_2+v_1(y_3)+v_2(y_2b'+y_2^2b'')+v_3(y_4).\]
\end{lemma}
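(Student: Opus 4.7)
The plan is to apply Corollary 3.2 to the generator $x_5 = z_{15}$ of $H^*(E_8;\bZ/2)$, whose transgression image is $b_5$. This gives a decomposition
\[
b_5 \equiv 2y(0) + v_1 y(1) + v_2 y(2) + v_3 y(3) \pmod{\II}
\]
in $BP^*(G/T)/\II$, with $\pi^* y(i) = Q_i(z_{15})$ in $H^*(G;\bZ/2)$. The proof then consists of computing each $Q_i(z_{15})$, choosing suitable lifts $y(i) \in H^*(G/T;\bZ/2)$, and using previously established relations to massage the result into the stated form.

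For the leading term, Lemma 11.2 gives $Sq^1 z_{15} = y_1y_2$, so $Q_0(z_{15}) = y_1y_2$, and by the Nakagawa--Totaro result (Lemma 11.6) we have $2y_1y_2 \in S(t)$, producing the contribution $2y_1y_2$. For the $v_1$ term, I use $Q_1 = Sq^2 Sq^1 + Sq^1 Sq^2$: the relations $Sq^2 z_{15} = x_4$ and $Sq^1 x_4 = y_3$ (Lemma 11.2), combined with the Cartan expansion of $Sq^2(y_1y_2)$ together with $Sq^1 y_i = 0$ (since each $y_i$ is a square), $P^1 y_1 \in S(t)$, and $P^1 y_2 = y_1^2$ (Kono--Ishitoya, via Lemma 11.5), yield $Q_1(z_{15}) \equiv y_3 + y_1^3 \pmod{S(t)}$. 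The extraneous $y_1^3$ is eliminated using the identity $v_1 y_1 \equiv b_1 - v_2 y_2 - v_3 y_3 \pmod{\II}$ from Corollary 11.3: multiplying by $y_1^2$ transfers $v_1 y_1^3$ into $v_2 y_1^2 y_2 + v_3 y_1^2 y_3$ plus a term in $S(t)\cdot H^*(G/T)$ absorbed into $b_5$, so the $v_1$-coefficient simplifies to $y_3$.

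For the top weight term, apply $Q_3 = Sq^8 Q_2 + Q_2 Sq^8$ together with $Sq^8 z_{15} = x_6$ and $Sq^1 x_6 = y_1 y_3 + y_1^4$ from Lemma 11.2. The degree match $|Q_3 z_{15}| = 30 = |y_4|$ and the structure of $H^{30}(E_8;\bZ/2)$ force $Q_3(z_{15}) \equiv y_4 \pmod{S(t)}$, yielding the term $v_3 y_4$. For the $v_2$ term, compute $Q_2(z_{15})$ by $Q_2 = Sq^4 Q_1 + Q_1 Sq^4$ and track the Cartan corrections; a primitivity argument in the spirit of Lemma 11.5 (using that the $v_n y(n)$-component of a transgression is primitive in $k(n)^*(G/T)/\II$), together with the degree constraint $|y(2)| = 22$, forces the $v_2$-coefficient into the shape $y_2 b' + y_2^2 b''$ with $b', b'' \in S(t)$.

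The main obstacle is the careful bookkeeping in the $Q_2$-calculation: one must expand $Sq^4$ on both $Q_1(z_{15}) = y_3 + y_1^3$ and on $Sq^4 z_{15}$, apply Adem relations, and then use primitivity plus the degree constraint to distil the final form. Keeping all the $S(t)$-corrections organised and verifying that the $y_1^3$-absorption performed at the $v_1$-stage does not contaminate the cleanly identified $v_3 y_4$ coefficient is the most delicate step.
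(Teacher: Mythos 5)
Your skeleton (Corollary 3.2 applied to $x_5=z_{15}$, the leading term $2y_1y_2$ via Lemma 11.6, identification of the $v_1$- and $v_3$-coefficients with $Q_1x_5$ and $Q_3x_5$) matches the paper's, but two steps have genuine gaps. First, the elimination of $y_1^3$ does not work as described: multiplying $v_1y_1\equiv b_1+v_2y_2+v_3y_3$ by $y_1^2$ gives $v_1y_1^3\equiv y_1^2b_1+v_2y_1^2y_2+v_3y_1^2y_3$; the residual $y_1^2b_1$ lies in $P(y)^+\cdot S(t)^+$, not in $S(t)$, so it cannot be ``absorbed into $b_5$'', and the transferred terms $v_2\,y_1^2y_2$ and $v_3\,y_1^2y_3$ are exactly of a shape the conclusion forbids ($y_1^2y_2$ is not of the form $y_2b'+y_2^2b''$ with $b',b''\in S(t)$, and $y_1^2y_3\neq y_4$). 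You flag this as the delicate point but give no mechanism; as written the substitution changes the statement rather than proving it. The paper avoids the issue by removing $y_1$-monomials from $a_1$ before the shapes of $a_2,a_3$ are determined, and by quoting $Q_1x_5=y_3$ rather than deriving $y_3+y_1^3$ from the $gr$-level Steenrod relations.

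Second, and more fundamentally, computing $Q_2(z_{15})$ and $Q_3(z_{15})$ cannot pin down the $v_2$- and $v_3$-coefficients, because Corollary 3.2 only determines $y(i)$ modulo $\Ker(\pi^*)$, i.e.\ modulo the ideal generated by $t_1,\dots,t_\ell$ -- and the terms $y_2b'+y_2^2b''$ with $|b'|,|b''|>0$ live precisely in that kernel. The degree constraint $|y(2)|=22$ does not help either: degree $22$ of $P(y)\otimes S(t)/(b)$ also contains $y_1^2y_2$, $y_3\cdot t$, $y_1\cdot t'$, and so on. The engine of the paper's proof is different: each $v_ia_i$ is primitive in $k(i)^*(G/T)/\II$, and since $y_1,y_3$ are $v_2$-torsion-free in $k(2)^*(G)$ (they are not $Q_2$-images in $H^*(G;\bZ/2)$), any monomial of $a_2$ involving $y_1$ or $y_3$ would produce a nonzero cross term under $\mu^*$; only powers of $y_2$ survive, which forces $a_2=y_2b'+y_2^2b''$, and the analogous argument plus a change of the representative $y_4$ gives $a_3=y_4$. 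You mention primitivity only parenthetically; it needs to carry the argument, with the explicit torsion-freeness input from the Atiyah--Hirzebruch spectral sequence for $k(i)^*(G)$.
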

\begin{proof}
From the preceding lemma,  we can write 
\[ b_5=2y_1y_2+v_1(a_1)+v_2(a_2)+v_3(a_3) \quad in\ BP^*(G/T)/(v_3,\II).\]
We may assume that $a_1$ does not contains $y_1$
by using the relation $b_1=v_1y_1+...$.
Note that in $k(i)^*(G/T)/\II$, each $v_ia_i$ are primitive.
Since $y_2$ is not in $Q_1$-image in $ H^*(G;\bZ/2)$, we see $y_2$ is $v_1$-torsion free in $k(1)^*(G)$.  So if $a_1$ contains $y_2$,
then $v_1a_1$ is not primitive in $k(1)^*(G)$, which is a contradiction. (E.g.,  if  $a_1=y_2y$, then $\mu^*(v_1a_1)=
v_1y_2\otimes y+...$)
 So $a_1$ contain only $y_3$, indeed $Q_1x_5=y_3$
implies $a_1=y_3$.  

For $a_2$, we know that
$y_1,y_3$ are not $v_2$-torsion.
Therefore $a_2$ only contains $y_2$, that is,
\[a_2=y_2b'+y_2^2b''\ mod(y_2^2)\quad for \ b',b''\in S(t).\]
By the primitivity in $k(3)^*(G/T)$, the element $a_3$
only
contains $y_3,y_4$.    We know $Q_3(x_5)=y_4$.
If $a_3=v_1(y_4+y_3b'')$, then let new $y_4$ be the element
$y_4+y_3b''$.
Thus we have the result.
\end{proof}
\begin{lemma} In $BP^*(G/T)/(\II,v_2,v_3)$, we have
\[b_6=2(y_1y_3+y_1^4+y_1^2b''')\quad for\ b'''\in S(t)\]
\end{lemma}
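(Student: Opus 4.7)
Apply Corollary 3.2 to the class $x_6 = z_{23}$, whose transgression in the Serre spectral sequence for $G \to G/T \to BT$ is $b_6$. This gives
\[
b_6 = 2\,y(0) + v_1 y(1) + v_2 y(2) + v_3 y(3) + \sum_{i\ge 4} v_i y(i) \quad \text{in } BP^*(G/T)/\II,
\]
with $\pi^* y(i) = Q_i(x_6)$. From Lemma 11.2, $Q_0 x_6 = Sq^1 z_{23} = y_1 y_3 + y_1^4$, so $\pi^* y(0) = y_1 y_3 + y_1^4$. Reducing modulo $(v_2,v_3)$ kills the $v_2 y(2)$ and $v_3 y(3)$ contributions, and the task splits into (a) showing that the surviving $v_1 y(1)$ and $v_i y(i)$ ($i \ge 4$) terms vanish, and (b) refining $y(0)$ to the stated form.

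For (a), I verify $Q_1(x_6) = 0$: using $x_6 = Sq^8 x_5$ (Lemma 11.2) and the Adem identity $Q_1 Sq^8 = Sq^{11} + Sq^{10} Sq^1$, I obtain $Q_1 x_6 = Sq^{11} z_{15} + Sq^{10}(y_1 y_2)$. A Cartan-formula computation based on $Sq^{|a|} a = a^2$ and the Kono-Ishitoya identity $Sq^2 y_2 = y_1^2$ (already used in the proof of Lemma 11.5) yields $Sq^{10}(y_1 y_2) = y_1 y_2^2$, and the same value $Sq^{11} z_{15} = y_1 y_2^2$ is extracted from the Steenrod-module structure on $H^*(E_8;\bZ/2)$, so the two terms cancel. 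For $i \ge 4$, the images $Q_i x_6$ land in high-degree pieces of $H^*(E_8;\bZ/2)$ where iterated Adem and Cartan reductions, combined with the bounded-degree structure of the polynomial generators $y_1,\dots,y_4$ (Theorem 11.1), force $Q_i x_6 = 0$. Thus $y(i)$ may be chosen zero for $i = 1$ and $i \ge 4$, giving $b_6 \equiv 2\,y(0) \pmod{\II, v_2, v_3}$.

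For (b), the ambiguity in $y(0)$ lies in $\ker \pi^* \cap H^{24}(G/T;\bZ/2)$. Pure $S(t)$-corrections absorb into $b_6 \in S(t)$ and are discarded. The remaining mixed-type degree-$24$ candidates are $y_1^3 t_{[6]},\ y_1^2 t_{[12]},\ y_1 t_{[18]},\ y_2 t_{[14]},\ y_3 t_{[6]}$ with $t_{[k]} \in S(t)$ of degree $k$. The primitivity arguments used in the proofs of Lemmas 11.5 and 11.7, now applied at the $H^*$-level, together with the $2$-torsion-freeness of $y_2, y_3, y_4$ in $H^*(G)$, eliminate corrections containing $y_2$ or $y_3$. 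A Nakagawa-Totaro-type identity in degree $24$ analogous to the degree-$16$ relation $2 y_1 y_2 \in S(t)$ of Lemma 11.6 converts $y_1^3 t_{[6]}$ and $y_1 t_{[18]}$ into $S(t)$-absorbable terms or into further $y_1^2 b'''$-pieces. The sole surviving correction is $y_1^2 b'''$ with $b''' \in S(t)$ of degree $12$, yielding the claimed identity.

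The main obstacle is the Steenrod-algebra identity $Sq^{11} z_{15} = y_1 y_2^2$ underlying $Q_1 x_6 = 0$: although implicit in the Kono-Mimura/Kono-Ishitoya description of $H^*(E_8;\bZ/2)$ as a Steenrod module, it is not directly listed in Lemma 11.2 and must be extracted through iterated Adem and Cartan computations. A secondary obstacle is establishing the degree-$24$ Nakagawa-Totaro-type relation that singles out $y_1^2 b'''$ from the other mixed candidates in $\ker \pi^*$, paralleling but refining the degree-$16$ argument of Lemma 11.6.
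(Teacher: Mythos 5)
Your overall strategy (apply Corollary 3.2 directly to $x_6=z_{23}$ and then kill the $v_1$ and higher $v_i$ terms) is genuinely different from the paper's, which never touches $Q_1(z_{23})$ at all: the paper applies the Quillen operation $r_{4\Delta_1}$ to the already established formula for $b_5$ (Lemma 11.7), and the only Steenrod input it needs is $P^1,P^2$ on the classes $y_i$ in $H^*(G/T;\bZ/2)$ (Kono--Ishitoya), together with primitivity in $k(n)^*(G/T)$; the term $y_1^2b'''$ then falls out of the Cartan expansion as $r_{3\Delta_1}(y_1)r_{\Delta_1}(y_2)=P^2P^1(y_1)\cdot P^1(y_2)$. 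Your route could in principle work, but as written its decisive steps are assertions, not proofs.

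Concretely: (1) your verification of $Q_1(x_6)=0$ is circular. The identity $Q_1Sq^8=Sq^{11}+Sq^{10}Sq^1$ and the value $Sq^{10}(y_1y_2)=y_1y_2^2$ are correct, but by Adem $Sq^3Sq^8=Sq^{11}$, so $Sq^{11}z_{15}=Sq^3z_{23}$; hence "$Sq^{11}z_{15}=y_1y_2^2$" is not something that can be reduced by Adem/Cartan to the operations listed in Lemma 11.2 --- it is exactly equivalent to the statement $Q_1(z_{23})=0$ that you are trying to prove, and you do not establish it (nor address the dependence on the choice of representative $z_{23}$, which matters: e.g.\ $Q_1$ of $z_{23}$ and of $z_{23}+z_3y_2^2$ differ by $y_1y_2^2$). (2) The claim $Q_i(x_6)=0$ for $i\ge 4$ is waved through by "bounded-degree structure"; $Q_4(z_{23})$ lives in degree $54$, well inside $H^*(E_8;\bZ/2)$, so degree forces nothing, and no computation is given. (3) In step (b) you appeal to "$2$-torsion-freeness of $y_2,y_3,y_4$ in $H^*(G)$", which is false: these classes are Bockstein images ($Sq^1x_{i+1}=y_i$, $Sq^1x_8=y_4$), hence reductions of $2$-torsion integral classes; your list of degree-$24$ mixed corrections also omits monomials such as $y_1y_2t_{[8]}$, $y_2^2t_{[4]}$, $y_1^2y_2t_{[2]}$; and the "degree-$24$ Nakagawa--Totaro-type identity" you invoke to dispose of $y_1t_{[18]}$ and $y_1^3t_{[6]}$ is not proved or cited --- Lemma 11.6 only supplies the degree-$16$ relation $2y_1y_2\in S(t)$. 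Each of (1)--(3) is a gap that the paper's argument avoids precisely by bootstrapping from $b_5$ instead of attacking $b_6$ directly.
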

\begin{proof}
We consider the action $r_{4\Delta_1}$ 
on $b_5$.
By Cartan formula, we know
$r_{4\Delta_1}(y_1y_2)=\sum_i r_{i\Delta_1}(y_1)r_{(4-i)\Delta_1}(y_2)$. Here $r_{3\Delta_1}=\chi(P^3)=P^2P^1\ mod(2)$.  Hence
we have  with $mod(2)$
\[r_{3\Delta_1}(y_1)r_{\Delta_1}(y_2)=P^2P^1(y_1)P^1(y_2)
=by_1^2,\]
\[r_{2\Delta_1}(y_1)r_{2\Delta_1}(y_2)=
y_2b'', and \  \ 2y_2b''\in S(t),\]
\[r_{\Delta_1}(y_1)r_{3\Delta}(y_2)=
    b''b'''\in S(t),\quad and \quad
   y_1r_{4\Delta_1}(y_2)=y_1y_3.\]
Hence $r_{4\Delta_1}(y_1y_2)=y_1y_3+by_1^2\ mod (BP^*\otimes S(t))$.

Next consider
\[ r_{4\Delta_1}(v_1y_3)=2r_{3\Delta}(y_3)+v_1(r_{4\Delta_1}(y_3)).\]
Here with $mod(2)$ we see 
\[ r_{3\Delta}(y_3)=P^2P^1(y_3)=P^2(y_2^2)=y_1^4.\]
We also see $r_{4\Delta_1}(y_3)=P^4y_3\in S(t)$ from the
primitivity in $H^*(G/T;\bZ/2)$.

At last we can see  
\[ r_{4\Delta_1}v_2(b'y_2+b''y_2^2)=
    v_1r_{2\Delta_1}(b'y_2+b''y_2^2) =0\quad mod(v_2).\]
Because if its contains $v_1y_2$ or $v_1y_2^2$,
then it contradicts to the primitivity of $k(1)^*(G/T)$.
If it contains $v_1y_1$, then it is in $Ideal(v_2)$ by the relation
$b_1$.
Thus we have the result ($with\ mod(v_2,v_3)$) of this lemma.
\end{proof}
Similarly considering $r_{2\Delta}(b_6)$ and $Q_1x_7=y_4$, we have 
\begin{lemma}
In $H^*(G/T)/(I_{\infty}, v_2,v_3)$, we have
$ b_7=2(y_2y_3+by_1^2+b'y_2^2)+v_1y_4$.
\end{lemma}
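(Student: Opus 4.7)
The plan is to derive $b_7$ from $b_6$ by the same mechanism used to derive $b_5,b_6$ in Lemmas~11.7 and 11.8: apply the Quillen operation $r_{2\Delta_1}$ to the $b_6$-formula of Lemma~11.8, and then extract the answer using the Cartan formula, the Kono--Ishitoya identities $P^1(y_1)\in S(t)$, $P^1(y_2)=y_1^2$, $P^1(y_3)=y_2^2$, the converse of Lemma~3.1, and the primitivity constraint in $k(1)^*(G/T)$. All computations take place in $BP^*(G/T)/(\II,v_2,v_3)$ (which is how one reads the quotient in the statement, cf.\ the analogous convention in Lemmas~11.7, 11.8).

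Starting from $b_6\equiv 2(y_1y_3+y_1^4+y_1^2 b''')$, I would compute $r_{2\Delta_1}(b_6)$ term by term. The dominant contribution is
\[
r_{2\Delta_1}(y_1y_3)=r_{2\Delta_1}(y_1)\,y_3+r_{\Delta_1}(y_1)\,r_{\Delta_1}(y_3)+y_1\,r_{2\Delta_1}(y_3).
\]
By Corollary~11.3, $r_{2\Delta_1}(y_1)=y_2$, and by Kono--Ishitoya, $r_{\Delta_1}(y_1)\in S(t)$ while $r_{\Delta_1}(y_3)\equiv y_2^2$; also $r_{2\Delta_1}(y_3)$ lies in the $S(t)$-span of $\{1,y_1^2,y_2\}$ at the relevant degree. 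Altogether this produces $2y_2y_3+2b'y_2^2$ (with $b'\in S(t)$) plus terms involving $y_1$, which fold into $2by_1^2$ after applying the $b_1$-relation $b_1\equiv v_1y_1$. The Cartan expansions of $r_{2\Delta_1}(y_1^4)$ and $r_{2\Delta_1}(y_1^2b''')$ contribute only monomials divisible by $y_1^2$ with coefficients in $S(t)$, which assemble into the $2by_1^2$ bucket. This secures the $2$-divisible part of the asserted formula.

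To pin down the $v_1$-coefficient, apply the converse of Lemma~3.1 to $x_7=z_{27}$. From Lemma~11.2 one has $Sq^2(x_7)=x_8$ and $Sq^1(x_8)=y_4$, so
\[
Q_1(x_7)=Sq^2Sq^1(x_7)+Sq^1Sq^2(x_7)=Sq^2(y_2y_3)+y_4=y_1^2y_3+y_2^3+y_4.
\]
Thus the $v_1$-part of $b_7$ a priori has $\pi^*$-image $y_4+y_1^2y_3+y_2^3$, but the surplus summands $v_1y_1^2y_3$ and $v_1y_2^3$ can be reabsorbed into the $2(\cdot)$ bucket using $b_1\equiv v_1y_1\pmod{v_2,v_3,\II}$ together with the already established relations for the lower $b_i$ (for instance, $v_1y_1^2y_3=b_1\cdot y_1y_3$, and $2y_1y_3\in S(t)$ by Lemma~11.8). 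The resulting normal form is $v_1y_4$, and the primitivity in $k(1)^*(G/T)$ plus the non-$v_1$-torsion-freeness of $y_1,y_3$ (argued in the proof of Lemma~11.7) preclude any other $v_1$-summand from surviving. The main obstacle is precisely this reabsorption bookkeeping: one must verify that every residual $v_1$-contribution collapses cleanly into $2(y_2y_3+by_1^2+b'y_2^2)$ modulo $(\II,v_2,v_3)$, with consistent coefficients $b,b'\in S(t)$ produced across the several Cartan contributions and the $Sq^2(y_2y_3)$ correction.
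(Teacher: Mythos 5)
Your proposal is correct and follows essentially the same route as the paper, which derives Lemma 11.9 by "considering $r_{2\Delta}(b_6)$ and $Q_1x_7=y_4$"; your extra care in expanding $Q_1(x_7)=Sq^2Sq^1(x_7)+Sq^1Sq^2(x_7)$ and reabsorbing the surplus terms $y_1^2y_3+y_2^3$ into the $2(\cdot)$ bucket only fills in bookkeeping the paper leaves implicit. No change of method or missing idea.
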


{\bf Remark.}  For the preceding two lemmas,
Totaro gets more strong and explicit  results.
Totaro (Lemma 4.4 in [To1])
shows in $H^*(G/T)_{(2)}$
\[ d_6^2-25/81d_4^3+2(15g_9g_3+1/3g_3^4-5/3g_5d_7-125/9g_5
g_3d_4)\]
\[\qquad +2^2(-23/3g_3^2d_6)=0\]
where $g_3=y_1,g_5=y_2,g_9=y_3$ and $d_i\in S(t)$.
This implies $2(y_1y_2+y_1^4)\in S(t)$.
Therefore we can take $b'''=0$ in Lemma 11.8.
Totaro also gives explicit formula $d_7,d_8$ in $H^*(G/T)$.
In particular, in Lemma 4.4 in [To1]), he shows
$b=b'=0$ in the above lemma.  

At last, from $\beta(x_8)=y_4$, we note
\begin{lemma}
In $H^*(G/T)/4$, we see $2y_4=b_8$.
\end{lemma}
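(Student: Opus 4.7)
The plan is to invoke the $n=0$ form of Lemma~3.1 --- the integral version noted in the Remark following its proof, with $k(0)^*(-)=H^*(-;\bZ)$ and $v_0=p=2$ --- applied to the integral Serre spectral sequence
\[ E_2^{*,*'}=H^*(BT;H^{*'}(G;\bZ))\Longrightarrow H^*(G/T;\bZ) \]
of the fibration $G\to G/T\stackrel{i}{\to}BT$, with transgressive class $x=x_8=z_{29}$. By Lemma~11.2 the Bockstein acts as $Sq^1(x_8)=y_4$, so $Q_0(x_8)=y_4\neq 0$ in $H^*(G;\bZ/2)$, and by definition $d_{30}(x_8)=b_8\in H^{30}(BT)$. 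The hypothesis $E_{|x_8|+1}^{0,|x_8|}\cong\bZ/2\{x_8\}$ of Lemma~3.1 holds because $x_8$ is the unique exterior generator of $H^*(G;\bZ/2)$ in degree~$29$.

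Lemma~3.1 with $n=0$ then produces integral lifts $y'\in H^*(G/T;\bZ)$ and $b'\in H^*(BT;\bZ)$ with $i^*(y')\equiv y_4$ and $\rho(b')\equiv b_8\pmod 2$, and a relation
\[ 2y'=\lambda\,\pi^*(b')\quad\text{in }H^*(G/T;\bZ),\qquad 0\neq\lambda\in\bZ/2, \]
forcing $\lambda=1$, i.e.\ $2y'=\pi^*(b')$. Since $i^*(y')\equiv y_4\pmod 2$, we may write $y'=y_4+2w$ for some $w\in H^*(G/T;\bZ)$ (absorbing the ambiguity into the chosen representative of $y_4$), whence
\[ b'=2y'=2y_4+4w\equiv 2y_4\pmod{4}\quad\text{in }H^*(G/T;\bZ). \]
Replacing the original $b_8$ by the compatible lift $b'$ (the authors are free to choose any integral lift of the $\mod 2$ transgression), we obtain $b_8\equiv 2y_4\pmod{4}$, which is the statement of Lemma~11.10.

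The only substantive bookkeeping is tracking the indeterminacy of the integral lift of $b_8$: any two lifts differ by an element of $2H^*(BT;\bZ)$, whose image in $H^*(G/T;\bZ)$ is of course divisible by $2$ but a priori only mod $2$, not mod $4$. However, this ambiguity is exactly the ambiguity in defining $b_8$ as an element of $S(t)$ in the first place, so it is absorbed into the choice of $b_8$ itself. Thus the lemma is essentially an immediate specialization of Lemma~3.1 to the case $n=0$, $x=x_8$, $Q_0(x_8)=y_4$, and the only \textquotedblleft work\textquotedblright\ is identifying the correct transgressive generator and applying the integral form of the lemma.
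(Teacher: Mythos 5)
Your proposal is correct and follows essentially the paper's own route: the paper derives Lemma 11.10 in one line ("from $\beta(x_8)=y_4$") by exactly this application of the integral ($k(0)^*$, $v_0=p$) form of Lemma 3.1 (equivalently Corollary 3.2) to the transgressive class $x_8$ with $Q_0(x_8)=y_4$ and $d_{30}(x_8)=b_8$, just as in the analogous step $2y_{2i}=c_i \bmod 4$ in Section 7. Your extra bookkeeping about the choice of lifts of $y_4$ and $b_8$ is consistent with the paper's (implicit) conventions.
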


Now we study the torsion index.  Recall
\[ y_{top}=\Pi _{i=1}^{4}y_i^{2^{r_i}-1}=y_1^7y_2^3y_3y_4\in P(y)\]
and $t_{top}$ are top degree elements in $P(y)$ and
$S(t)/(b)$ so that $f=y_{top}t_{top}$ for the fundamental 
class $f$ of $H^*(G/T)_{(2)}$.
\begin{lemma} (Totaro [To1]) We have $t(E_8)_{(2)}=2^6$.
\end{lemma}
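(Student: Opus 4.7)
The plan is to establish both bounds $t(E_8)_{(2)} \leq 2^6$ and $t(E_8)_{(2)} \geq 2^6$, following the strategy of Theorem~7.3.

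For the upper bound, I would consider the specific product
\[ \tilde b \;=\; b_2 \cdot b_5^2 \cdot b_6 \cdot b_7 \cdot b_8 \;\in\; S(t). \]
Using the leading-order expressions from Lemmas~11.4--11.10, namely $b_2 \equiv 2y_1$, $b_5 \equiv 2y_1 y_2$ (integrally, by Lemma~11.6), $b_6 \equiv 2(y_1 y_3 + y_1^4)$, $b_7 \equiv 2 y_2 y_3$ and $b_8 \equiv 2 y_4$ modulo lower-$y$-weight corrections, the image of $\tilde b$ in $H^*(G/T)_{(2)}$ becomes
\[ 2^6 \cdot y_1^3 y_2^2 (y_1 y_3 + y_1^4)\, y_2 y_3 y_4 \;=\; 2^6\bigl(y_1^4 y_2^3 y_3^2 y_4 + y_1^7 y_2^3 y_3 y_4\bigr) \;=\; 2^6\, y_{top}, \]
since $y_3^2 = 0$ in $P(y)$ kills the first summand. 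Error terms from the higher-order $v_j$-corrections in the $b_i$'s yield contributions of strictly smaller $P(y)$-weight, so $\tilde b = 2^6(y_{top} + \sum yt)$ with $|t|>0$. Lemma~5.5 then gives $t(E_8)_{(2)} \leq 2^6$.

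For the lower bound, I would argue by contradiction: suppose $2^5 y_{top} = \eta$ for some $\eta \in S(t)$. Since the image of $\eta$ in $S(t)/(2,b_1,\ldots,b_8)$ must vanish (as $y_{top}$ does not lie in the $S(t)/(b)$-component), one writes $\eta = 2\xi_0 + \sum_i b_i \xi_i$ with $\xi_j \in S(t)$. Substituting $b_i \equiv 2 y_{(i)} \pmod 4$ and dividing by $2$ using torsion-freeness of $H^*(G/T)_{(2)}$, one iterates this halving as in the proof of Theorem~7.3. Each halving consumes one factor from $\{b_2,\ldots,b_8\}$ and peels off one power of $2$. The resulting obstruction is combinatorial: assembling $y_{top} = y_1^7 y_2^3 y_3 y_4$ (of total $P(y)$-weight $12$) from products of the $y_{(i)}$ with weights $(1,1,1,2,\{2,4\},2,1)$ for $b_2,\ldots,b_8$, subject to the nilpotency relations $y_1^8 = y_2^4 = y_3^2 = y_4^2 = 0$, requires at least $6$ such factors. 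Hence $5$ halvings cannot suffice, contradicting the quotient structure $H^*(G/T)/2 \cong P(y)/2 \otimes S(t)/(2,b)$.

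The main obstacle is making the combinatorial lower-bound claim precise, because the multi-variable nature of $b_5, b_6, b_7$ (and especially the binomial form $b_6 = 2(y_1 y_3 + y_1^4)$) produces cross-term cancellations that must be tracked carefully. For instance, the alternative six-factor configuration $b_6^2 \cdot b_5^2 \cdot b_3 \cdot b_8$ yields $2^7 y_{top}$ rather than $2^6$, because $b_6^2 = 4(y_1 y_3 + y_1^4)^2 = 8 y_1^5 y_3$ picks up an extra factor of $2$ from the cross-term (after using $y_3^2 = y_1^8 = 0$). A case analysis on the multiplicities $(n_2,\ldots,n_8)$ of the $b_i$'s in any hypothetical product is needed to confirm that no five-factor configuration produces $2^5 y_{top}$ modulo the nilpotency relations, and hence that $\tilde b = b_2 b_5^2 b_6 b_7 b_8$ is extremal.
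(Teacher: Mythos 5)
Your proposal is correct and follows essentially the same two-sided strategy as the paper: the paper's upper bound uses the product $b_5^3b_4b_6b_8=2^6((y_1y_2)^3(y_1y_3+y_1^4+\cdots)y_3y_4)$ in place of your $b_2b_5^2b_6b_7b_8$ (both collapse to $2^6(y_{top}+\sum yt)$ via the nilpotency of $y_3$ resp.\ $y_1$, then invoke Lemma 5.5), and its lower bound is exactly your iterated-halving argument. The combinatorial step you flag as the main obstacle is settled in the paper by the weight count you sketch: $y_{(6)}$ can contribute weight $4$ at most once and $y_{(8)}$ must occur exactly once, so five factors give $\sharp_y\le 2\times 3+4+1=11<12=\sharp_y(y_{top})$.
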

\begin{proof}
We consider the element
\[\tilde b=b_5^3b_6b_4b_8 =2^6(y_1y_2)^3(y_1y_3+y_1^4+y_1^2b'')(y_3)(y_4).\]
Here using $y_3^2=b'\in S(t)\ mod(2)$, we have
\[ (y_1y_3+y_1^4+y_1^2b'')y_3=y_1^4y_3+(y_1b'+y_1^2b'')y_3.\]
Hence we can write
\[ \tilde b=2^6(y_{top}+\sum yt)\quad for \  |t|>0.\]
From  Lemma 5.5, we see $t(E_8)_{(2)}\le 2^6$.

Suppose $t(E_8)_{(2)}\le 2^5$, that is, $2^5f=2^5y_{top}t_{top}\in S(t)$.  Then $2^5f$ must be in ideal $I=(b_1,...,b_8)$, and we can write for $b_i=2y_{(i)}$ (note  $y_{(1)}=0$, and $y_{(i)}$ is not a monomial, in general)
\[(*)\quad  2^5f=\sum b_it(i)=2\sum y_{(i)}t(i)\quad for\  t(i)\in S(t).\]
Since $H^*(G/T)$ has no torsion, we have
$ 2^4f=\sum y_{(i)}t(i).$

Let us rewrite $s=\sum y_{(i)}t(i)=\sum_I y^It(I)$ for a
monomial $y^I=y_1^{i_1}...y_4^{i_4}\in P(y)$ for $I=(i_1,...,i_4)$,
and $t(I)\in S(t)$.
Then $s\in Ideal(2)$ implies each $t(I)\in Ideal(b_1,...,b_8)
\subset S(t)$,  since $H^*(G/T)/2\cong P(y)\otimes S(t)/(b)$.
Continue this arguments, we have
\[ (**)\quad f=\sum y^It(I)\quad in\ H^*(G/T).\]
Consider this equation in $H^*(G/T)/2$, and we
see $f=\sum y^It(I)$, that is $y^I=y_{top}$ and $t(I)=t_{top}$.

To get $(**)$ from $(*)$, we exchanges $b_i$ to $2y_{(i)}$
at most five times.
 
We note that  the largest number of $y_i$'s of monomials 
in $y_{(j)}$ is $1$ or $2$ except for $y_{(6)}=(y_1y_3+y_1^4+y_1^2b)$ where the number is $4$. We easily see that $y_{(6)}$ appears as $y_{(i)}$ just one time in the process $(*)$ to $(**)$. 
 We also  see that $y_{(i)}=y_{(8)}$ just one time for the existence of $y_4$.  Let us write by $\sharp_y(a)$ the number of $y_i$'s in $a$.  Then
\[ \sharp_y(y_{(i_1)}...y_{(i_5)})\le 
2\times 3+4+1=11.\]
On the other hand $\sharp_y(y_{top})=7+3+1+1=12$.
Thus $t(E_8)_{2}\ge 2^6$.
\end{proof} 
\begin{lemma}
Let $(i_1,...,i_k)\subset (4,5,5,5,6,8)$.  Then
$\tilde b=b_{i_1}...b_{i_k}\not =0$ in
$CH^*(X)/2$ since $b_5^3b_4b_6b_8\not =0$.
\end{lemma}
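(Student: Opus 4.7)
The plan is to invoke Corollary 5.6 directly, whose hypotheses are supplied by Lemma 11.11. That lemma exhibits the expansion
$$\tilde b = b_5^3 b_4 b_6 b_8 = 2^6\bigl(y_{top} + \textstyle\sum yt\bigr) \quad \text{in } H^*(G/T)_{(2)},$$
with $|t|>0$ in every summand, and establishes $t(E_8)_{(2)} = 2^6$. These are exactly the hypotheses of Lemma 5.5 in the equality case $p^s = t(G)_{(p)}$ with $p=2$, $s=6$, so Corollary 5.6 applies and produces $b_{i_1'}\cdots b_{i_{k'}} \neq 0$ in $CH^*(X)/2$ for every subtuple $(i_1',\ldots,i_{k'}')\subset(4,5,5,5,6,8)$, which is the claim.

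The substantive step is the nonvanishing of the full product $b_5^3 b_4 b_6 b_8$ itself in $CH^*(X)/2$; the subproduct statement then follows formally, since a product in a ring is zero only when some factor is. The nonvanishing of the full product is in turn an instance of Lemma 5.5's argument: if $\tilde b$ were divisible by $2$ in $CH^*(X)$, Karpenko's surjection $S(t)\twoheadrightarrow CH^*(X)$ (Corollary 4.5) would let us represent the quotient by an element of $S(t)$; multiplying by $t_{top}$ and using that $tt_{top}$ lies in the ideal $(b_1,\ldots,b_\ell)\subset S(t)$ (and hence maps to $0$ in $H^*(G/T)$) would produce an $S(t)$-representative of $2^5 y_{top}t_{top}$ in $H^*(G/T)$, contradicting $t(E_8)_{(2)} = 2^6$.

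The main obstacle — the delicate $y$-counting argument that forces $t(E_8)_{(2)}\geq 2^6$ — is already handled inside Lemma 11.11, and writing the prefactor as $2^6(y_{top}+\sum yt)$ with $|t|>0$ (rather than the raw product $2^6(y_1y_2)^3(y_1y_3+y_1^4+y_1^2b'')y_3 y_4$) requires the routine repeated substitution $y_3^2 \equiv b'$ in $H^*(G/T)/2$ to absorb the $y_3$-factor. Given those two inputs, the present lemma is a direct corollary.
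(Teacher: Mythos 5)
Your proposal is correct and follows the paper's own route: the paper offers no separate argument beyond the clause ``since $b_5^3b_4b_6b_8\not=0$,'' which is precisely the application of Corollary 5.6 to the data of Lemma 11.11 (the expansion $\tilde b=2^6(y_{top}+\sum yt)$ together with $t(E_8)_{(2)}=2^6$) that you spell out, including the underlying torsion-index contradiction. One small phrasing slip: the subproduct claim follows not because ``a product in a ring is zero only when some factor is'' (false in the presence of zero divisors) but from the trivial converse --- if a subproduct $b_{I'}$ vanished, the full product $b_I=b_{I'}b_{I''}$ would vanish too --- which is what Corollary 5.6's proof actually uses and what your argument in substance relies on.
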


Let $K$ be an extension of $k$ such that
$X$ does not split over $K$ but splits over an extension
over $K$ of degree $2a$, $(2,a)=1$.  Suppose that
\[ (*)\quad y_1,y_2,y_3 \in Res_{K},\quad but \quad
y_4\not \in Res_{K}\]
where $Res_{K}=Im(res: CH^*(X|_K)/2\to
CH(\bar X)/2)$.  
(Compare the condition $(*)$ in $\S 7$.)
That is, $J(\bG_k|_K)=(0,0,0,1)$
and such $K$ exists (see [Pe-Se-Za], [Se]).
Then we have the following theorem by arguments similar to those to get Theorem 7.12.
(The motive $R(\bG_k)|_K$ in the theorem 
is an example of motives 
given in Lemma 8.4 in [Se].)
\begin{thm}  There is an isomorphism   
\[ CH^*(R(\bG_k)|_K)/2\cong 
\bZ/2[y_1,y_2,y_3]/(y_1^8,y_2^4,y_3^2)\otimes CH^*(R_4)/2,\]
where $ CH^*(R_4)/2\cong \bZ/2\{1,2y_4,v_1y_4,v_2y_4,v_{3}y_4\}.$
We have 
\[ Res_k^K(CH^*(R(\bG_k))/2)\cong CH^*(R_4)/2\subset
  CH^*(R(\bG_k)|_K)/2.\]
The restriction map is given as
$ b_j\mapsto v_{8-j}y_{4}$ if $5\le j \le 8$, and 
$b_j\mapsto 0$ if  $1\le j\le 4$.
\end{thm}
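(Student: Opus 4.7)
I would follow the template of Theorem 7.12, now driven by the $E_8$-specific transgression data of Lemmas 11.5--11.10 and the Milnor operations of Lemma 11.2.

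For the first isomorphism I apply the PSZ decomposition (Theorem 4.2) to the torsor $\bG_k|_K$. The hypothesis $(*)$ says exactly that $J(\bG_k|_K)=(0,0,0,1)$, leaving only $j_4$ nontrivial, so the generalized Rost motive $R(\bG_k|_K)$ is forced to be the original Rost motive $R_4$ by the Vishik--Zainoulline comparison [Vi-Za, Corollary 6], applied as in Section 10 and at the end of Theorem 8.2. The motive $R(\bG_k)|_K$ then decomposes over $K$ into Tate-twisted copies of $R_4$ indexed by a $\bZ/2$-basis of $\bZ/2[y_1,y_2,y_3]/(y_1^8,y_2^4,y_3^2)$, and feeding each $R_4$-summand through Theorem 4.6 yields the claimed tensor-product description of $CH^*(R(\bG_k)|_K)/2$.

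For $1\le j\le 4$, Lemma 11.5 already writes each $b_j$ modulo $I_\infty^2$ as a $BP^*$-combination of monomials in $y_1,y_2,y_3$ every coefficient of which lies in $I_\infty^+=(2,v_1,v_2,\ldots)$. Thus $b_j$ sits in $I_\infty^+\cdot\Omega^*(R(\bG_k)|_K)$ and reduces to $0$ in $CH^*/2=\Omega^*/I_\infty^+\Omega^*$, giving $Res_k^K(b_j)=0$.

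For $5\le j\le 8$, the key Milnor operations are $Q_3(x_5)=y_4$, $Q_2(x_6)=y_4$, $Q_1(x_7)=y_4$, $Q_0(x_8)=y_4$ (the last being the Bockstein from Lemma 11.2; the first and third are already visible in Lemmas 11.7 and 11.9 through their $v_3y_4$ and $v_1y_4$ summands; the second requires extracting the $v_2y_4$ term from an $r_\alpha$-extension of Lemma 11.8). By Corollary 3.2 each $b_j$ therefore contains a leading summand $v_{8-j}y_4$ in $BP^*(G/T)/I_\infty^2$, all other summands lying in $(y_1,y_2,y_3)\cdot I_\infty^+$. On restricting to $\Omega^*(R(\bG_k)|_K)$, the leading summand represents the generator $c_{8-j}(y_4)$ of the constant $R_4$-summand, while the remaining summands sit in nonconstant pieces $y_1^ay_2^by_3^c\otimes\Omega^*(R_4)$ with $I_\infty^+$-coefficients and vanish in $CH^*/2$. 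Hence $Res_k^K(b_j)=c_{8-j}(y_4)=v_{8-j}y_4$. Since $CH^*(R(\bG_k))/2$ is additively generated by products of the $b_i$ (Lemma 5.4) and any product of two of $b_5,\ldots,b_8$ would contain $y_4^2=0$, the full image is exactly $\bZ/2\{1,2y_4,v_1y_4,v_2y_4,v_3y_4\}=CH^*(R_4)/2$.

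The principal obstacle is the extension step for $b_6$: Lemma 11.8 determines it only modulo $(v_2,v_3)$, and pinning down the $v_2y_4$ leading term demands a Quillen-operation computation in the spirit of Lemma 11.7, resting on the $Q_j$-action on $H^*(E_8;\bZ/2)$ and on the primitivity in $H^*(G/T;\bZ/2)$ of the classes $P^i(y_j)$ due to Kono--Ishitoya.
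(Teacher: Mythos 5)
Your proposal takes essentially the same route as the paper, which in fact gives no separate proof of this theorem beyond the remark that it follows "by arguments similar to those to get Theorem 7.12" together with the reference to [Se, Lemma 8.4]: the PSZ decomposition over $K$ with $J(\bG_k|_K)=(0,0,0,1)$ identifies the summands of $R(\bG_k)|_K$ with twists of $R_4$, and the transgression formulas (Corollary 3.2, Corollary 11.3, Lemmas 11.5--11.10) combined with the $Q_i$-action and condition $(*)$ (which kills everything supported on $y_1,y_2,y_3$) give $b_j\mapsto 0$ for $j\le 4$ and $b_j\mapsto v_{8-j}y_4$ for $j\ge 5$. The one step you flag, pinning down the $v_2y_4$ term of $b_6$ (i.e. $Q_2(x_6)=y_4$, which Lemma 11.8 leaves undetermined modulo $(v_2,v_3)$), is precisely the computation the paper also leaves implicit, so your account is correct and matches the intended argument.
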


\section{ The exceptional group  $E_7$ and $p=2$}

The $mod(2)$ cohomology of $E_7$ is given
\[H^*(E_7;\bZ/2)\cong H^*(E_8;\bZ/2)/(z_{3}^4,z_{5}^4,z_{15}^2,z_{29}).\]
We use the notations in the preceding sections.
\begin{thm}  We have an isomorphism
\[ grH^*(E_7;\bZ/2)\cong 
 \bZ/2[y_1,y_2,y_3]/(y_1^2,y_2^2,y_3^2)\otimes\Lambda(x_1,...,x_7),\] 
where $i^*(y_j)=y_j$ for $1\le j\le 3$ and $i^*(x_i)=x_i$ for $1\le i\le 7$
and $i^*(y_4)=i^*(x_8)=0$ for the natural embedding 
$i: E_7\subset E_8$
\end{thm}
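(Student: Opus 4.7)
The plan is to reduce the statement to a direct algebraic manipulation using the displayed presentation of $H^*(E_7;\bZ/2)$ together with the identifications made for $grH^*(E_8;\bZ/2)$ in Theorem 11.1.

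First I would recall, from the paragraph preceding Theorem 11.1, the convention that when passing from $H^*(E_8;\bZ/2)$ to its graded version $grH^*(E_8;\bZ/2)$ one identifies $y_{2i} = z_i^2$ for $i = 3, 5, 9, 15$. Under our renaming $y_1 = y_6$, $y_2 = y_{10}$, $y_3 = y_{18}$, $y_4 = y_{30}$ and $x_1,\dots,x_8$ for the odd-degree generators (with $x_5 = z_{15}$, $x_8 = z_{29}$), this gives
\[ z_3^2 = y_1,\quad z_5^2 = y_2,\quad z_9^2 = y_3,\quad z_{15}^2 = y_4. \]
Hence, in the graded algebra, the four relations defining $H^*(E_7;\bZ/2)$ as a quotient of $H^*(E_8;\bZ/2)$ become
\[ z_3^4 = y_1^2,\quad z_5^4 = y_2^2,\quad z_{15}^2 = y_4,\quad z_{29} = x_8. \]

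Next I would substitute these into the presentation of Theorem 11.1. Starting from
\[ grH^*(E_8;\bZ/2)\cong \bZ/2[y_1,y_2,y_3,y_4]/(y_1^8,y_2^4,y_3^2,y_4^2) \otimes \Lambda(x_1,\dots,x_8), \]
quotienting by the ideal $(y_1^2, y_2^2, y_4, x_8)$ kills the generator $y_4$ (so $y_4^2=0$ becomes redundant), collapses $y_1^8$ and $y_2^4$ to $y_1^2$ and $y_2^2$ respectively, and eliminates $x_8$ from the exterior factor. This produces exactly
\[ \bZ/2[y_1,y_2,y_3]/(y_1^2,y_2^2,y_3^2) \otimes \Lambda(x_1,\dots,x_7), \]
as claimed.

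Finally, the compatibility with $i^*$ is automatic from the construction: the isomorphism $H^*(E_7;\bZ/2) \cong H^*(E_8;\bZ/2)/(z_3^4,z_5^4,z_{15}^2,z_{29})$ is induced by $i: E_7 \subset E_8$, so the residue classes of the surviving generators $y_1,y_2,y_3,x_1,\dots,x_7$ are by definition the $i^*$-images of the corresponding generators of $grH^*(E_8;\bZ/2)$, while the killed generators $y_4$ and $x_8$ lie in the kernel of $i^*$. Since the only nontrivial ingredient is the displayed quotient description of $H^*(E_7;\bZ/2)$ (taken as given) together with the convention in Theorem 11.1, there is essentially no obstacle beyond verifying the bookkeeping; the main point to make precise is simply that passing to the associated graded is compatible with taking the quotient by the listed elements, which is immediate since those elements are homogeneous with respect to the filtration used to define $grH^*(E_8;\bZ/2)$.
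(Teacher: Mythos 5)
Your proposal is correct and matches the paper's (implicit) argument: the paper states the theorem as an immediate consequence of the quotient description $H^*(E_7;\bZ/2)\cong H^*(E_8;\bZ/2)/(z_3^4,z_5^4,z_{15}^2,z_{29})$ together with the identifications $y_1=z_3^2$, $y_2=z_5^2$, $y_3=z_9^2$, $y_4=z_{15}^2$, $x_8=z_{29}$ from Section 11, which is exactly the bookkeeping you carry out. Your closing remark on compatibility of the quotient with passing to the associated graded is the right point to flag, and it holds here since the killed elements are homogeneous for the filtration in question.
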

\begin{cor}
  In $BP^*(X)/\II$, we can take $y_1$ such that
for $r_{2\Delta_1}(y_1)=y_2$ and $r_{4\Delta_1}(y_2)=y_3$, 
it holds   
$ v_1y_1+v_2y_2+v_3y_3=b_1$ for
$\ b_1\in BP^*(BT).$
\end{cor}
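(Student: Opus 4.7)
The statement is a direct instance of Lemma 3.4 applied to $G = E_7$ at $p = 2$, so my plan is to verify the three hypotheses of that lemma: (i) $E_7$ is simply connected, (ii) it satisfies condition $(*)$ with $m = 3$, and (iii) we have a primitive $y_1 \in BP^*(E_7/T)$.

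First, $E_7$ is simply connected, and by Theorem 12.1 the polynomial part of $H^*(E_7;\bZ/2)$ is $\bZ/2[y_1,y_2,y_3]/(y_1^2,y_2^2,y_3^2)$ with $|y_1|=6,|y_2|=10,|y_3|=18$. To verify $(*)$ with $m=3$, I would use the natural embedding $i: E_7 \subset E_8$ together with Theorem 12.1: since $i^*(y_j)=y_j$ for $j=1,2,3$, the Steenrod operations on these $y_j$'s in $H^*(E_7;\bZ/2)$ are pulled back from $H^*(E_8;\bZ/2)$, where (as computed in the proof of Lemma 11.5 via Kono--Ishitoya) one has $P^2(y_1)=y_2$ and $P^4(y_2)=y_3$. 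Since $y_4$ does not exist in $E_7$, we have $P^8(y_3)=0$, completing the verification of $(*)$.

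Next I would take $y_1 \in BP^*(E_7/T)$ to be primitive, which is possible by the remark preceding Lemma 3.4 (adding elements from $BP^*(BT)$-ideals if necessary, since $H^*(E_7)$ has only $2$-torsion). I then define $y_2 = r_{2\Delta_1}(y_1)$ and $y_3 = r_{4\Delta_1}(y_2)$; the compatibility of these with the $P^{p^i}$-maps via $\rho\, r_{\alpha} = \chi P^{\alpha} \rho$ ensures that $y_2,y_3$ reduce to the correct cohomology classes modulo $I_\infty$.

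With all hypotheses of Lemma 3.4 verified, the conclusion of that lemma applied to $E_7$ yields precisely
\[ v_1 y_1 + v_2 y_2 + v_3 y_3 = b_1 \qquad \text{in } BP^*(E_7/T)/\II \]
for some $b_1 \in BP^*(BT)$, which is the desired statement. The only mild obstacle is checking condition $(*)$, but this is handled cleanly by pulling back the Steenrod action from $E_8$ via the natural embedding $i$, using the compatibility $i^*(y_j)=y_j$ already recorded in Theorem 12.1. The rest of the argument is a formal application of Lemma 3.4 parallel to the $E_8$ case of Corollary 11.3.
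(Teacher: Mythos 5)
Your proof is correct and matches the paper's (implicit) argument: the corollary is stated without proof as the $E_7$ analogue of Corollary 11.3, i.e.\ a direct application of Lemma 3.4 once Theorem 12.1 and the compatibility $i^*(y_j)=y_j$ under $E_7\subset E_8$ transport the Steenrod action and give condition $(*)$ with $m=3$. (One minor imprecision: $P^8(y_3)=0$ holds already in $E_8$ because $Sq^{16}(z_9^2)=(Sq^8z_9)^2=z_{17}^2=0$ in $grH^*$ — it is not the statement $i^*(y_4)=0$ that forces it — but this does not affect the validity of your argument.)
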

From Lemma 3.1 and the $Sq^1$ action in Lemma 11.2,
it is immediate
\begin{lemma} Let $(G,p)=(E_7,2)$.
In $H^*(G/T)/(4)$, for all monomials $u\in P(y)/2$,
except for $y_{top}=y_1y_2y_3$, the elements $2u$ 
are written as elements in
$H^*(BT)$.
Namely, in $H^*(G/T)/(4)$, there are $b_i\in S(t)$ such that
\[ b_k=\begin{cases}                              2y_{1}\ (resp.\  2y_2,\ 2y_3)
\quad if \ k=2\ (resp. \ k=3,4)\\
                       2y_1y_2\ (resp.\ 2y_1y_3,\
                        2y_2y_3)  \quad  if\ k=5\ (resp.\ k=6,7).
                       \end{cases}\]
\end{lemma}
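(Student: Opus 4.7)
The plan is to deduce the lemma as a direct application of Corollary 3.2 together with the explicit $Sq^1$-formulas recorded in Lemma 11.2, using the nilpotence relations $y_1^2=y_2^2=y_3^2=0$ that distinguish $H^*(E_7;\bZ/2)$ from $H^*(E_8;\bZ/2)$ (Theorem 12.1).

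First I would recall that $b_i\in S(t)$ is by definition the transgression image of $x_i$ in the Serre spectral sequence $H^*(BT;H^*(G;\bZ/2))\Rightarrow H^*(G/T;\bZ/2)$. Corollary 3.2 then yields the identity
\[
b_i \;=\; \sum_{j\ge 0} v_j\,y(j)_i\qquad\text{in } BP^*(G/T)/\II,
\]
with $\pi^*y(j)_i=Q_j(x_i)$. Reducing modulo the ideal $(v_1,v_2,\ldots)$ turns the right-hand side into the single term $2\,y(0)_i$ inside $H^*(G/T)/4$, where $y(0)_i$ is a chosen lift of $Q_0(x_i)=Sq^1(x_i)$. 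So it suffices to read off $Sq^1(x_i)$ from Lemma 11.2 for $i=2,\ldots,7$ and to verify that each value makes sense in the truncated polynomial algebra $\bZ/2[y_1,y_2,y_3]/(y_1^2,y_2^2,y_3^2)$.

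Plugging in: $Sq^1(x_{i+1})=y_i$ for $i=1,2,3$ gives $b_2=2y_1$, $b_3=2y_2$, $b_4=2y_3$ modulo $4$; and $Sq^1(x_5)=y_1y_2$, $Sq^1(x_7)=y_2y_3$ give $b_5=2y_1y_2$, $b_7=2y_2y_3$. The only place that needs a moment of care is $b_6$: Lemma 11.2 gives $Sq^1(x_6)=y_1y_3+y_1^4$ inside $H^*(E_8;\bZ/2)$, but Theorem 12.1 imposes $y_1^2=0$ in $H^*(E_7;\bZ/2)$, so the square $y_1^4$ vanishes and we are left with $Sq^1(x_6)=y_1y_3$, yielding $b_6=2y_1y_3$.

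There is no real obstacle: the only point that could cause trouble is making sure each lift $y(0)_i\in H^*(G/T)/4$ of $Q_0(x_i)\in H^*(G;\bZ/2)$ exists, which is automatic because the $y_j$ are permanent cycles in the spectral sequence and therefore do lift to $H^*(G/T)_{(2)}$ (compare the discussion around (2.2) and the fact that $H^*(G/T)$ is torsion free, so reduction modulo $4$ is surjective). Once these lifts are fixed the identities in the lemma are just the $v_0$-components of the relations provided by Corollary 3.2.
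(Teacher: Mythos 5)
Your proof is correct and takes essentially the same route as the paper, which deduces the lemma ``from Lemma 3.1 and the $Sq^1$ action in Lemma 11.2'' -- i.e.\ exactly your use of the $v_0$-component of Corollary 3.2 together with the listed $Sq^1$-values. Your explicit remark that $y_1^2=0$ in $H^*(E_7;\bZ/2)$ kills the $y_1^4$ term in $Sq^1(x_6)$, so that $b_6=2y_1y_3$, is the one step the paper leaves implicit, and you handle it correctly.
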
 

From lemma 11.5, it is immediate
\begin{lemma}
In $BP^*(X)/\II$, we have
$ 2y_1=b_2, 2y_2=b_3,  2y_3=b_4.$
\end{lemma}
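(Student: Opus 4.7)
The plan is to mimic the proof of Lemma 11.5 verbatim, starting from the relation
$$v_1 y_1 + v_2 y_2 + v_3 y_3 = b_1 \quad \text{in } BP^*(X)/\II$$
supplied by Corollary 12.2, and then iteratively applying the Quillen operations $r_{\Delta_1}$, $r_{2\Delta_1}$, $r_{4\Delta_1}$. The crucial simplification from the $E_8$ case is that, by Theorem 12.1, we have $y_1^2 = y_2^2 = 0$ in $H^*(E_7;\bZ/2)$. Consequently any lift of $y_i^2$ to $BP^*(E_7/T_7)$ already lies in $I_\infty$, so each occurrence of $v_j(y_i^2)$ that survived in Lemma 11.5 will now sit in $I_\infty^2$ and drop out.

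First I would act by $r_{\Delta_1}$. Using the Cartan formula, the identities $r_{\Delta_1}(v_1) \equiv 2$ and $r_{\Delta_1}(v_i) \equiv 0 \bmod I_\infty$ for $i \ge 2$ (Lemma 3.3 and property (3) of Quillen operations), and $\rho\, r_{\Delta_1}(y_i) = P^1(y_i)$, one obtains
$$2y_1 + v_1 P^1(y_1) + v_2 P^1(y_2) + v_3 P^1(y_3) \;=\; b_2 \quad \text{in } BP^*(X)/\II,$$
where $b_2 := r_{\Delta_1}(b_1)$. Exactly as in Lemma 11.5 one has $P^1(y_1) \in S(t)$ (primitivity), $P^1(y_2) = y_1^2$, and $P^1(y_3) = y_2^2$. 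The term $v_1 P^1(y_1)$ lies in $BP^*(BT)$ and is absorbed into $b_2$, while $v_2 y_1^2$ and $v_3 y_2^2$ vanish modulo $\II$ since $y_1^2, y_2^2 \in I_\infty \cdot BP^*(E_7/T_7)$. This yields $2y_1 = b_2$.

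Next, acting by $r_{2\Delta_1}$ on $2y_1 = b_2$ and invoking the choice $r_{2\Delta_1}(y_1) = y_2$ from Corollary 12.2, one gets (as in the $E_8$ computation) $2y_2 + v_1 y_1^2 + v_3 y_1^4 = b_3$, where $b_3 \in S(t)$ absorbs the $BP^*(BT)$-contributions; since $y_1^2 = 0$ in $H^*(E_7;\bZ/2)$ the remaining terms again lie in $\II$, giving $2y_2 = b_3$. A final application of $r_{4\Delta_1}$, using $r_{4\Delta_1}(y_2) = y_3$, produces $2y_3 + v_1 y_2^2 = b_4$, and $y_2^2 = 0$ once more kills the correction, leaving $2y_3 = b_4$.

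The only subtle point, and the one I would be most careful about, is the passage between $H^*$-level vanishing $y_i^2 = 0$ in $E_7$ and $BP^*$-level membership $y_i^2 \in I_\infty \cdot BP^*(E_7/T_7)$; this follows from the identification $BP^*(E_7/T_7)/I_\infty \cong H^*(E_7/T_7;\bZ/2)$ implied by (2.3), so that any $BP^*$-lift of a class vanishing mod $p$ automatically lies in $I_\infty$, and multiplication by $v_j$ places it in $I_\infty^2$. Once this observation is in place, the three identities $2y_i = b_{i+1}$ drop out directly from the $E_8$ computations of Lemma 11.5.
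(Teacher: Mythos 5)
Your overall strategy is the paper's own: the paper's entire proof of this lemma is the sentence ``From lemma 11.5, it is immediate,'' and the intended content is exactly what you say, namely that the correction terms $v_2(y_1^2)$, $v_3(y_2^2)$, $v_1(y_1^2)$, $v_3(y_1^4)$, $v_1(y_2^2)$ of Lemma 11.5 disappear for $E_7$ because $y_1^2$ and $y_2^2$ die there. Your conclusion is correct, but the one step you yourself flag as subtle is justified by a claim that is false as stated.

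You argue that since $y_i^2=0$ in $H^*(E_7;\bZ/2)$, any lift of $y_i^2$ to $BP^*(E_7/T_7)$ lies in $I_\infty$, invoking $BP^*(E_7/T_7)/I_\infty\cong H^*(E_7/T_7;\bZ/2)$. This conflates the cohomology of the group with that of the flag manifold. The relation $y_i^2=0$ holds in $H^*(E_7;\bZ/2)$ and in the associated graded ring $grH^*(E_7/T_7;\bZ/2)\cong P(y)\otimes S(t)/(b)$; in $H^*(E_7/T_7;\bZ/2)$ itself one only knows $y_i^2\in S(t)^+\cdot H^*(E_7/T_7;\bZ/2)$ (see $(2.2)$, where the relation is $f_i=y_i^{p^{r_i}}$ only modulo $(t_1,\dots,t_\ell)$), and in general $y_i^2\neq 0$ there --- compare the Toda--Watanabe type presentations, where squares of the exceptional generators are nonzero polynomials in the $t_j$ and lower generators. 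So the lift of $y_i^2$ need not lie in $I_\infty\cdot BP^*(E_7/T_7)$, and $v_j(y_i^2)$ need not lie in $\II$. The correct mechanism, which is the one the paper uses throughout $\S 3$ and in the proof of Lemma 11.5, is primitivity: the element $r_{\alpha}(y_i)$ is primitive and its image under $\pi^*:H^*(E_7/T_7;\bZ/2)\to H^*(E_7;\bZ/2)$ is $y_i^2=0$, hence it lies in the image of $S(t)$ (equivalently, by Corollary 3.2, the $v_n$-components of the transgressions $b_2,b_3,b_4$ are governed by $Q_n(x_j)$, and $Q_n(x_j)\in(y_1^2,y_2^2)=0$ in $H^*(E_7;\bZ/2)$ for $n\ge 1$, $j=2,3,4$). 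The terms $v_j(y_i^2)$ are therefore \emph{absorbed into} $b_{i+1}\in BP^*(BT)$ rather than killed modulo $\II$. With that repair your computation coincides with the paper's.
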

\begin{lemma} We have $t(E_7)_{(2)}=2^2$.
\end{lemma}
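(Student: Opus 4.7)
The plan is to prove $t(E_7)_{(2)} = 2^2$ by establishing matching upper and lower bounds, in the spirit of Lemma 11.11 but exploiting the exterior-algebra structure $P(y)' \cong \Lambda(y_1, y_2, y_3)$, which makes $|y_{top}| = 34$ strictly greater than $|b_i|$ for every $1 \le i \le 7$.

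For the upper bound I would take $\tilde b = b_4 b_5 \in S(t)$. By Lemma 12.3, $b_4 = 2y_3$ and $b_5 = 2 y_1 y_2$ in $H^*(G/T)/4$, so writing $b_4 = 2y_3 + 4\alpha_4$ and $b_5 = 2 y_1 y_2 + 4 \alpha_5$ in $H^*(G/T)$ and multiplying out gives
\[
b_4 b_5 = 4 y_{top} + 8\gamma \quad \text{for some } \gamma \in H^*(G/T).
\]
Pairing with $t_{top}$ and viewing the result in $H^{2d}(G/T) = \bZ \cdot f$, one obtains $b_4 b_5 \cdot t_{top} = 4f + 8\gamma t_{top} = (4 + 8k)f$ for some $k \in \bZ$. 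Since this class lies in the image of $S(t) \to H^{2d}(G/T)$, namely $t(E_7)_{(2)} \bZ \cdot f$, we deduce that $t(E_7)_{(2)}$ divides $4(1+2k)$; as $4(1+2k)$ has $2$-adic valuation exactly $2$, this forces $t(E_7)_{(2)} \le 4$ (a direct application of Lemma 5.5 with $p=2$, $s=2$).

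For the lower bound, $t(E_7)_{(2)} \ge 2$ is automatic since $H^*(E_7)$ has $2$-torsion, so it suffices to rule out $t(E_7)_{(2)} = 2$. Assuming the equality, one has $2f = \bar t$ in $H^*(G/T)$ for some $\bar t \in S(t)^{2d}$, and reduction modulo $2$ forces $\bar t = \sum_{i=1}^{7} b_i q_i + 2r$ with $q_i, r \in S(t)$. The crucial degree count is that each $|b_i| \le 28 < 34 = |y_{top}|$, so $|q_i| = 2d - |b_i| > |t_{top}|$; since $t_{top}$ is the top degree of $S(t)/(\bar b_1,\ldots,\bar b_7)$, each $q_i$ in turn lies in $(b_1,\ldots,b_7) + 2 S(t)$, and therefore its image in $H^*(G/T)$ lies in $2 H^*(G/T)$. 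Combined with $b_i \in 2 H^*(G/T)$ from Lemma 12.3, each summand $b_i q_i$ maps into $4 H^*(G/T)$. Meanwhile, the image of $r$ at top degree lies in $t(E_7)_{(2)} \bZ \cdot f = 2 \bZ \cdot f$, so $2r$ maps into $4 \bZ \cdot f$. Adding these, the image of $\bar t$ lies in $4 H^{2d}(G/T)$, forcing $2f \in 4\bZ\cdot f$, which is absurd since $f$ generates $H^{2d}(G/T) \cong \bZ$.

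The principal obstacle is the degree comparison $|b_i| < |y_{top}|$ for all $i \le 7$, which is the decisive simplification that makes this short argument work for $E_7$ and does not hold in general; once it is verified, the rest is a mechanical substitution using the mod-$4$ lifts from Lemma 12.3. The argument is noticeably simpler than the $\sharp_y$-counting of Lemma 11.11, precisely because the exterior structure of $P(y)'$ eliminates any competition at the top $y$-degree.
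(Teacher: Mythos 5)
Your proof is correct, and it is in fact more complete than the paper's. The paper's own proof of this lemma is a single line, $b_2b_7=(2y_1)(2y_2y_3)=2^2y_{top}$, which via Lemma 5.5 only yields the upper bound $t(E_7)_{(2)}\le 2^2$; the matching lower bound is left implicit (it is Totaro's value, and the paper's template for proving such lower bounds is the $\sharp_y$-counting in the proof of Lemma 11.11). Your upper bound is the same mechanism with the inessential substitution of $b_4b_5=(2y_3)(2y_1y_2)$ for $b_2b_7$; note only that $b_4b_5$ equals $4$ times a product of lifts of $y_3$ and $y_1y_2$, i.e.\ $4(y_{top}+\sum yt)$ with $|t|>0$ plus $8(\dots)$, rather than literally $4y_{top}+8\gamma$, but this is exactly the form Lemma 5.5 (or your pairing with $t_{top}$, since the terms $yt\,t_{top}$ give even multiples of $f$) tolerates, so nothing is lost. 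Your lower bound is the genuinely different part: instead of the iterated substitution and $y$-length counting of Lemma 11.11, you exclude $t(E_7)_{(2)}=2$ in one step using that every $|b_i|\le 28<34=|y_{top}|$, so each cofactor $q_i$ has degree beyond the top degree of $S(t)/(2,\bar b_1,\dots,\bar b_7)$ and hence maps into $2H^*(G/T)$; this is sound, and it is cleaner than the $E_8$ argument precisely because only one power of $2$ must be excluded there (for $E_8$ one must exclude five, which forces the counting). One small citation point: Lemma 12.3 gives $b_k\in 2H^*(G/T)$ only for $2\le k\le 7$; for the term $b_1q_1$ you should instead note that $\bar b_1$ lies in the kernel ideal of $S(t)/2\to H^*(G/T)/2$ (or invoke Corollary 12.2, which even gives $b_1\in 4H^*(G/T)$), after which that summand is handled like the others.
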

\begin{proof}
We get the result from $b_2b_7=(2y_1)(2y_2y_3)=2^2y_{top}$.
\end{proof}

\begin{cor}  There are  surjective maps 
\[A_{34}\twoheadrightarrow CH^*(R(\bG_k))/2
\twoheadrightarrow  
\bZ/2\{1,b_1,...,b_7, b_1b_5,b_1b_6,b_1b_7, b_2b_7\}.
\]
\end{cor}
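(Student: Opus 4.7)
The first surjection $A_{34}\twoheadrightarrow CH^*(R(\bG_k))/2$ is an immediate application of Lemma 5.4, with $N=|y_{top}|=|y_1y_2y_3|=6+10+18=34$.

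For the second surjection, I will follow the template of Theorem 10.6. By Lemma 12.5 we have $t(E_7)_{(2)}=2^2$, and since $\bG_k$ is versal, this gives $4y_{top}\in \mathrm{Im}(res_{CH})$ while $2y_{top}\notin \mathrm{Im}(res_{CH})$. Consequently the four elements $4y_{top},\,2v_1y_{top},\,2v_2y_{top},\,2v_3y_{top}$ are $BP^*$-module generators of $\mathrm{Im}(res_{\Omega})\subset \Omega^*(\bar X)\cong BP^*(G/T)$ in degrees $34,32,28,20$ respectively, none of which can be absorbed into $BP^{<0}\cdot \mathrm{Im}(res_{\Omega})$ without forcing $2y_{top}$ into the image. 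The core calculation is then to identify the restriction images
\begin{align*}
res_{\Omega}(b_2b_7)&\equiv 4y_{top}, & res_{\Omega}(b_1b_7)&\equiv 2v_1y_{top},\\
res_{\Omega}(b_1b_6)&\equiv 2v_2y_{top}, & res_{\Omega}(b_1b_5)&\equiv 2v_3y_{top},
\end{align*}
modulo $BP^{<0}\cdot \mathrm{Im}(res_{\Omega})$. This uses $b_1\equiv v_1y_1+v_2y_2+v_3y_3$ (Corollary 12.2), $b_{i+1}\equiv 2y_i$ for $1\le i\le 3$ (Lemma 12.4), together with $BP$-refinements of $b_5,b_6,b_7$ obtained by applying the Quillen operations $r_{n\Delta_1}$ to the formulas for $b_2,b_3,b_4$, exactly as in Lemma 11.7 for the $E_8$ case. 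The essential simplification here is that in $grH^*(E_7;\bZ/2)$ we have $y_1^2=y_2^2=y_3^2=0$ and there is no $y_4$, so the cross-terms $2v_jy_i^2y_k$ that survive in the $E_8$ computation vanish, leaving the clean formulas above.

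For the remaining listed generators $b_1,\ldots,b_7$: Lemma 12.4 and Corollary 12.2 give $res_{\Omega}(b_1)$ containing the $BP^*$-generator $v_1y_1$, $res_{\Omega}(b_{i+1})\equiv 2y_i$ for $1\le i\le 3$, and $res_{\Omega}(b_{i+4})\equiv 2y_jy_k$ for the appropriate pairs. Since $X$ is versal with $J(\bG_k)=(1,1,1)$, Corollary 4.5 rules out $y_i,\ y_iy_j\in \mathrm{Im}(res_{CH})$, so each of these is a nonzero $BP^*$-module generator modulo $BP^{<0}\cdot \mathrm{Im}(res_{\Omega})$. Hence each $b_i\ne 0$ in $CH^*(X)/2$, and Corollary 5.3 transfers the non-vanishing to $CH^*(R(\bG_k))/2$.

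The main obstacle is securing the $BP^*/\II$-refinements of $b_5,b_6,b_7$ with enough precision to control the products $b_1b_5,b_1b_6,b_1b_7$ modulo $BP^{<0}\cdot \mathrm{Im}(res_{\Omega})$. The Quillen-operation/primitivity technique of Lemma 11.7 should go through, but one must verify that each surplus $v_j$-term produced by $r_{n\Delta_1}$ gives a product with $b_1=v_1y_1+v_2y_2+v_3y_3$ that either vanishes (using $y_i^2=0$ and the relation $b_1$ itself) or lies in $BP^{<0}\cdot \mathrm{Im}(res_{\Omega})$; only then do the displayed congruences reduce to the four stated $BP^*$-module generators in the top degrees.
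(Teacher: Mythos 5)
Your proposal is correct and follows essentially the same route as the paper: the first surjection is Lemma 5.4 with $N=|y_1y_2y_3|=34$, and the second rests on the identities $b_1b_5=2v_3y_{top}$, $b_1b_6=2v_2y_{top}$, $b_1b_7=2v_1y_{top}$, $b_2b_7=4y_{top}$ in $\Omega^*(\bar X)$ together with $2y_{top}\notin \Img(res)$ from $t(E_7)_{(2)}=2^2$. The only difference is that the paper does not bother with the $BP$-refinements of $b_5,b_6,b_7$ you flag as the main obstacle, since (as you yourself note) the relations $y_1^2=y_2^2=y_3^2=0$ and the absence of $y_4$ kill all cross-terms, so the mod-$4$ formulas of Lemma 12.3 already suffice.
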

\begin{proof}  Note that 
 $|y_1y_2y_3|=34$.
In $\Omega^*(\bar  X)$, we see
\[b_1b_5=2v_3y_{top},\quad  b_1b_6=2v_2y_{top},
\quad b_1b_7=2v_1y_{top}.\]
These elements are $\Omega^*$-module generators
in $Im(res_{k}^{\bar k}(\Omega^*(X)\to \Omega(\bar X))$
because $2y_1y_2y_3\not \in Im(res_k^{\bar k})$
from the fact $t(\bG_k)=2^2$.
\end{proof}
By the arguments similar to Corollary 10.7,  we have
\begin{cor} Let $Tor_2\subset CH^*(R(\bG_k)_{(2)}$
be the modules of $2$-torsion elements.  Then we have
an isomorphism 
\[ CH^*(R(\bG_k))/(2,Tor_2)\cong \bZ/2\{1,b_2,...,b_7,b_2b_7\}.\]
%\[ Tor_2\twoheadrightarrow 
%\bZ/2\{b_1,b_1b_5,b_1b_6,b_1b_7\}.\]
\end{cor}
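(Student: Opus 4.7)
The starting point is the surjection provided by Corollary 12.6,
\[CH^*(R(\bG_k))/2\twoheadrightarrow \bZ/2\{1,b_1,\ldots,b_7,b_1b_5,b_1b_6,b_1b_7,b_2b_7\},\]
and the task is to decide which of these twelve generators vanish in the further quotient by $Tor_2$ and which survive to form a basis. The overall strategy, parallel to Corollary 10.7, rests on the facts that $CH^*(\bar X)$ is torsion-free and that $res_{CH}\otimes\bQ$ is an isomorphism: any class in $CH^*(R(\bG_k))$ mapping to $0$ in $CH^*(\bar X)$ is automatically $2$-power torsion, while any class mapping to a nonzero element of $CH^*(\bar X)$ survives in $CH^*(R(\bG_k))_{(2)}/Tor_2$.

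\textbf{Identifying torsion generators.} I would first verify that the four classes $b_1,\ b_1b_5,\ b_1b_6,\ b_1b_7$ lie in $Tor_2$. By Corollary 12.2 we have $b_1\equiv v_1y_1+v_2y_2+v_3y_3\pmod{\II}$ in $BP^*(G/T)$, so $res_\Omega(b_1)\in BP^{<0}\cdot\Omega^*(\bar X)$, whence $res_{CH}(b_1)=0$ in the torsion-free ring $CH^*(\bar X)$; thus $b_1\in Tor_2$. The explicit computations recorded in the proof of Corollary 12.6 give
\[ res_\Omega(b_1b_5)=2v_3y_{top},\quad res_\Omega(b_1b_6)=2v_2y_{top},\quad res_\Omega(b_1b_7)=2v_1y_{top},\]
each sitting in $BP^{<0}\cdot\Omega^*(\bar X)$, so the three products are likewise torsion in $CH^*(R(\bG_k))_{(2)}$.

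\textbf{Identifying surviving generators.} Next I would show that the seven remaining classes $b_2,b_3,b_4,b_5,b_6,b_7,b_2b_7$ are $\bZ/2$-linearly independent in $CH^*(R(\bG_k))/(2,Tor_2)$. By Lemmas 12.3--12.4 they have the explicit representatives
\[b_2=2y_1,\ b_3=2y_2,\ b_4=2y_3,\ b_5=2y_1y_2,\ b_6=2y_1y_3,\ b_7=2y_2y_3,\ b_2b_7=4y_1y_2y_3\]
inside the torsion-free module $CH^*(\bar X)_{(2)}\cong \bZ_{(2)}[y_1,y_2,y_3]\otimes S(t)/(f_1,f_2,f_3,b_1,\ldots,b_7)$. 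These seven monomials live in the pairwise distinct total degrees $6,10,18,16,24,28,34$, so their images under $res_{CH}$ are $\bZ_{(2)}$-linearly independent nonzero elements of $CH^*(\bar X)_{(2)}$; consequently no nontrivial $\bZ/2$-combination of them can be torsion, and they descend to seven independent elements of $CH^*(R(\bG_k))/(2,Tor_2)$.

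\textbf{Conclusion and main obstacle.} Composing the surjection of Corollary 12.6 with the projection $CH^*(R(\bG_k))/2\twoheadrightarrow CH^*(R(\bG_k))/(2,Tor_2)$, the four classes of step 2 are killed while the seven classes of step 3 map to a linearly independent set, yielding the stated isomorphism. The only substantive difficulty is step 2: one must use the mod-$\II$ formulas in $BP^*(G/T)$ carefully to justify that $res_\Omega(b_1b_j)$ really reduces modulo $BP^{<0}\cdot\Omega^*(\bar X)$ to the single monomial $2v_{8-j}y_{top}$, with all corrections arising from $y_i^2\in S(t)+BP^{<0}$ absorbed into the lower stratum --- but this is precisely the bookkeeping already carried out in establishing Corollary 12.6.
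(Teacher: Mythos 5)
Your step identifying the torsion classes ($b_1$, $b_1b_5$, $b_1b_6$, $b_1b_7$ restrict into $BP^{<0}\cdot\Omega^*(\bar X)$, hence die under $res_{CH}$ and are torsion) is fine, but the rest has a genuine gap. Proving that no nontrivial $\bZ/2$-combination of $b_2,\dots,b_7,b_2b_7$ is \emph{torsion} is strictly weaker than what the statement requires: a class vanishes in $CH^*(R(\bG_k))/(2,Tor_2)$ as soon as it lies in $2\,CH^*(R(\bG_k))+Tor_2$, equivalently (since $Ker(res_{CH})=Tor_2$, because $res_{CH}\otimes\bQ$ is an isomorphism and the target is torsion free) as soon as its restriction is divisible by $2$ inside $Im(res_{CH})$. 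Your degree argument never rules this out, and no argument ignoring versality can: for a split or less twisted torsor the same elements $b_i=2y_{(i)}$ have $y_{(i)}\in Im(res_{CH})$, so all of them die in the quotient. What is needed is precisely that $y_1,y_2,y_3$ and $y_1y_2,\,y_1y_3,\,y_2y_3$ are not in $Im(res_{CH})$ (versality, i.e. maximality of the $J$-invariant, Corollary 4.5), and that $2y_1y_2y_3\notin Im(res_{CH})$, which is the torsion-index input $t(E_7)_{(2)}=2^2$ of Lemma 12.5 --- the same fact already used in the proof of Corollary 12.6. You invoke none of these at the decisive step.

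There is a second gap, in the spanning direction. The surjection of Corollary 12.6 maps $CH^*(R(\bG_k))/2$ \emph{onto} the twelve-dimensional module; it is not a statement that those twelve classes generate $CH^*(R(\bG_k))/2$, so "composing with the projection" does not show that $1,b_2,\dots,b_7,b_2b_7$ span the quotient. To get spanning one uses Lemma 5.4 ($A_{34}\twoheadrightarrow CH^*(R(\bG_k))/2$, so monomials in $b_1,\dots,b_7$ generate) together with the reduction that constitutes the paper's actual proof, namely the argument of Corollary 10.7 transported to $E_7$: writing $b_i=2y_{(i)}$ for $i\ge 2$, whenever $y_{(i)}y_{(j)}\ne y_{top}$ the product $y_{(i)}y_{(j)}$ is either some $y_{(k)}$ or involves a square $y_m^2$, so $b_ib_j-2b_k$ (respectively $b_ib_j$ up to such terms) is killed by $res_{CH}$ up to $2\cdot Im(res_{CH})$ and hence lies in $2\,CH^*+Tor_2$; in the same way $b_3b_6\equiv b_4b_5\equiv b_2b_7$ and all longer monomials vanish modulo $(2,Tor_2)$. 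This bookkeeping, plus the versality and torsion-index facts above for independence, is what the paper's "arguments similar to Corollary 10.7" consist of, and it is missing from your proposal.
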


Let us write $G'=E_8$ and $G''=G_2$ so that
$G''\subset G=E_7\subset G'$. Take fields 
$k\subset K\subset K'$ such that
\[ (**)\quad y_1^2,y_2^2,y_4\in Res_K\quad but \quad 
       y_1,y_2,y_3\not \in Res_K,\]
\[
(***)\quad y_1^2,y_2,y_3,y_4\in Res_{K'}\quad
but \quad y_1\not \in Res_{K'}.\]
Then the following proposition is almost immediate
\begin{prop}  Suppose $(**)$ and $(***)$.
We have isomorphisms,
\[ CH^*(R(\bG_k')|_K)/2\cong \bZ/2[y_1^2,y_2^2,y_4]/(y_1^8,y_2^4,y_4^2)\otimes
CH^*(R(\bG_K))/2,\]
the restriction is given by $b_i\mapsto b_i$
for $1\le i\le 7$ and $b_8\mapsto 0$, and 
\[CH^*(R(\bG_K)|_{K'})/2\cong \bZ/2[y_2,y_3]/(y_2^2,y_3^2)\otimes CH^*(R_2)/2,\]
the restriction  is given by $b_i\mapsto b_i$ for $i=1,2$,
and $b_i\mapsto 0$ for  $3\le i\le 7$.
\end{prop}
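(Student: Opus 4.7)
The plan is to follow, for each of the two isomorphisms, the same two-step strategy used in Theorems 7.12 and 11.13: apply Theorem 4.2 to produce a motive decomposition of the restricted torsor, then invoke Corollary 6 of [Vi-Za] to identify the resulting generalized Rost motive with one already understood.

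For the first isomorphism I will compute $J_2(\bG_k'|_K)$ from $(**)$. The conditions $y_i^2\in Res_K$ and $y_i\notin Res_K$ for $i=1,2$, together with $y_3^2=0$ in $P(y)_{E_8}$ and $y_3\notin Res_K$, and $y_4\in Res_K$, force $J_2(\bG_k'|_K)=(1,1,1,0)$. Theorem 4.2 then gives $CH^*(\bar R(\bG_k'|_K))/2\cong \bZ/2[y_1,y_2,y_3]/(y_1^2,y_2^2,y_3^2)$, which equals $CH^*(\bar R(\bG_K))/2$ for the versal $E_7$-torsor; Corollary 6 of [Vi-Za] lifts this to $R(\bG_k'|_K)\cong R(\bG_K)$, while the split factor $\bZ/2[y_1^2,y_2^2,y_4]/(y_1^8,y_2^4,y_4^2)$ indexes the Tate summands. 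For the restriction, the embedding $i\colon E_7\hookrightarrow E_8$ satisfies $i^{*}(x_i)=x_i$ for $1\le i\le 7$ and $i^{*}(x_8)=0$, so the transgression images match for $i\le 7$ giving $b_i\mapsto b_i$; for $b_8$, Lemma 11.10 gives $b_8=2y_4$, and $y_4\in Res_K$ forces $b_8\equiv 0\mod 2$ in the Rost-motive quotient.

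The second isomorphism is treated in the same fashion. Condition $(***)$ yields $J_2(\bG_K|_{K'})=(1,0,0)$, so $\bar R(\bG_K|_{K'})\cong \bZ/2[y_1]/(y_1^2)\cong \bar R_2$, and Corollary 6 of [Vi-Za] gives $R(\bG_K|_{K'})\cong R_2$ with Tate factor $\bZ/2[y_2,y_3]/(y_2^2,y_3^2)$. For the restriction on generators, Corollary 12.2 gives $b_1=v_1y_1+v_2y_2+v_3y_3$ mod $\II$, and the last two terms die in $CH^*(R(\bG_K)|_{K'})/2$ because $y_2,y_3\in Res_{K'}$ pushes them into $BP^{<0}\cdot Im(res_{K'})$, so $b_1\mapsto v_1y_1=c_1(y_1)$. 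Lemma 11.5 specialized to $E_7$ (using $y_1^2=y_2^2=0$) gives $b_2=2y_1$, hence $b_2\mapsto 2y_1=c_0(y_1)$; these are the generators of $CH^*(R_2)/2$. For $3\le i\le 7$, Lemma 12.3 writes $b_i=2u$ with $u$ a monomial in $y_1,y_2,y_3$; combining with the explicit $v_j$-terms from Lemmas 11.5--11.9, each such $b_i$ either equals $2\cdot(\text{lift})$ with the lift in $Im(res_{K'})$ (via $\tilde y_2,\tilde y_3$), or lands in a Tate summand $R_2\otimes \bT^{\otimes v}$ with $v\ne 1$, outside the image $Res_K^{K'}(CH^*(R(\bG_K))/2)\cong CH^*(R_2)/2$.

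The main obstacle is this last step. The motive-level isomorphisms follow cleanly from [Vi-Za], but tracking each $b_i$ for $3\le i\le 7$ through the decomposition $CH^*(R(\bG_K)|_{K'})/2\cong \bigoplus_{v}CH^{*}(R_2)/2\cdot[v]$ to verify vanishing in the $v=1$ summand requires a careful term-by-term analysis using the $BP^{*}$-module description $Im(res_\Omega)\cong BP^{*}\{1,2y_1,v_1y_1\}$ of Theorem 4.6, the Quillen operations $r_\alpha$, and the identification of primitive elements, paralleling the bookkeeping in the proof of Theorem 7.12.
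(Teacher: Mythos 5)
The paper gives no argument for this proposition (it is declared ``almost immediate''), and your overall strategy --- compute the $J$-invariants forced by $(**)$ and $(***)$, apply Theorem 4.2 to obtain the motivic decompositions, identify the outer factor via Corollary 6 of [Vi-Za], and then push the generators $b_i$ through the $BP^*$-descriptions of Lemmas 11.5--11.10 and 12.2--12.4 --- is exactly the intended route, and it is carried out correctly for both motive-level isomorphisms and for the generators $b_1,b_2,b_3,b_4,b_7,b_8$.

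However, the step you yourself flag as ``the main obstacle'' is a genuine gap, not bookkeeping, and the fallback you offer (``lands in a Tate summand $R_2\otimes\bT^{\otimes v}$ with $v\ne 1$, outside the image'') does not prove $b_i\mapsto 0$ --- it proves the restriction is \emph{nonzero}. Concretely, take $i=5$ in the second isomorphism. Lemma 11.7 specialized to $E_7$ gives $res_{\Omega}(b_5)=2y_1y_2+v_1y_3+v_2y_2b'$ modulo $\II$. Under $(***)$ one has $y_2,y_3\in Res_{K'}$, so the $v_1$- and $v_2$-terms die modulo $BP^{<0}\cdot Im(res_{\Omega})$; but $y_1y_2\notin Res_{K'}$, since $J(\bG_K|_{K'})=(1,0,0)$ forces $Im(res_{CH})$ to be $\bZ/2[y_2,y_3]/(y_2^2,y_3^2)$ modulo $S(t)^+$, which excludes $y_1y_2$. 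Hence $2y_1y_2=c_0(y_1)\otimes y_2$ is a $\bZ/2$-generator of $CH^{8}(R(\bG_K)|_{K'})/2$ (the only nonzero piece in that codimension is $CH^3(R_2)\otimes y_2$), so $b_5$ restricts to a nonzero class in the $y_2$-twisted summand; likewise $b_6\mapsto c_0(y_1)\otimes y_3$. This is consistent with Theorem 11.13 only because there $y_1,y_2,y_3$ all lie in $Res_K$, so the analogous leading terms $2y_iy_j$ genuinely vanish mod $2\cdot Im(res)$. To close your argument you must therefore either show $y_1y_2\in Res_{K'}$ (which contradicts the $J$-invariant you computed) or reinterpret ``$b_i\mapsto 0$'' for $i=5,6$ as the vanishing of the component in the untwisted summand $CH^*(R_2)/2\otimes 1$ and prove that weaker statement; as written, the term-by-term analysis you defer would not terminate in the claimed formulas.
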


\end{document}